\DeclareMathAlphabet{\mathpzc}{OT1}{pzc}{m}{it}
\DeclareMathOperator{\dom}{Dom}
\DeclareMathOperator{\fix}{Fix}
\DeclareMathOperator{\per}{Per}
\DeclareMathOperator{\supp}{Supp}
\DeclareMathOperator{\diam}{diam}
\DeclareMathOperator{\leb}{Leb}
\newcommand{\Mane}{Mañé }
\def\XXint#1#2#3{{\setbox0=\hbox{$#1{#2#3}{\int}$ }
\vcenter{\hbox{$#2#3$ }}\kern-.6\wd0}}
\newtheorem{theo}{Theorem}[section]
\newtheorem{prop}[theo]{Proposition}
\newtheorem{lem}[theo]{Lemma}
\newtheorem{cor}[theo]{Corollary}
\theoremstyle{definition}
\newtheorem{defi}[theo]{Definition}
\newtheorem{rem}[theo]{Remark}
\theoremstyle{remark}
\numberwithin{equation}{section}
\title{Representation of Global Viscosity Solutions for Tonelli Hamiltonians}
\author{CHARFI Skander}
\date{}
\begin{document}

\maketitle

\begin{abstract} 
	We consider the Lax-Oleinik operator $\mathcal{T}$ associated with the non-stationary Hamilton-Jacobi equation $\partial_tu + H(t,x,\partial_xu) = \alpha_0$ for a Tonelli Hamiltonian $H$ and its \Mane critical value $\alpha_0$. It is known from the work of A. Fathi and J.N. Mather \cite{MR1792479} that the convergence of this semigroup fails in the non-autonomous framework.   
	
	In this context, we study the action of $\mathcal{T}$ on its non-wandering set $\Omega(\mathcal{T})$. First, we show that $\mathcal{T}$ acts as an isometry on this set, and then we characterize $\Omega(\mathcal{T})$ as the set of global viscosity solutions of the Hamilton-Jacobi equation, i.e. solutions that are defined for all real times.\\  
	
	Next, we introduce a generalized Peierls barrier $\underline{k}$ and a set of generalized static classes $\underline{\mathbb{M}}$ within the Mather set. Using these, we represent elements $u$ of $\Omega(\mathcal{T})$ as
	\begin{equation*}
		u(x) = \inf_{y \in \underline{\mathbb{M}}} \{ u(y) + \underline{k}(y,x) \}
	\end{equation*}
	
	We apply this representation formula to prove Fathi's convergence theorem for autonomous systems and provide a representation formula for $n$-periodic viscosity solutions. Additionally, we establish that the dynamics of non-wandering viscosity solutions are governed by the Lagrangian flow on the Mather set. Specifically, we show that if the Mather set consists solely of $N$-periodic orbits for some integer $N$, then all non-wandering viscosity solutions are $N$-periodic. Furthermore, we show that if the restriction of the Lagrangian flow to the Mather set is uniformly recurrent for a time sequence $p_n$, then all non-wandering viscosity solutions are uniformly recurrent for the same time sequence $p_n$.
\end{abstract}

\tableofcontents

\newpage

\section{Introduction}

\subsection{Overview}

\par Let $M$ be a connected closed manifold. Denote by $TM$ its tangent bundle and by $T^*M$ its cotangent bundle with their respective projections $\pi_{TM} : TM \to M$ and $\pi_{T^*M} : T^*M \to M$. Unless it is ambiguous, these projections will be simply denoted by $\pi$. Denote by $\mathcal{C}(M,\mathbb{R})$ the set of continuous scalar maps on $M$ endowed with its usual infinite norm $\Vert.\Vert_\infty$ given by $||u||_\infty = \sup_{x \in M} |u(x)|$. The set $\mathbb{T}^1 = \mathbb{R} / \mathbb{Z}$ refers to the circle.\\

Let $H(t,x,p) : \mathbb{T}^1 \times T^*M \to \mathbb{R}$ be a $1$-time periodic Tonelli Hamiltonian (See Definition \ref{TonelliDef}) and let $\alpha_0$ be the critical \Mane value associated to $H$ (See Definition \ref{LODef}). We consider the \textit{Hamilton-Jacobi equation}

\begin{equation} \label{HJalpha}
	\partial_tu + H(t,x,\partial_xu) = \alpha_0
\end{equation} 

\par This equation has a well posed Cauchy problem in the viscosity sense (see \cite{MR667669}), \cite{MR0690039}). And A.Fathi \cite{MR1451248} developed the weak-KAM theory which states that in the Tonelli framework, these solutions are generated by the Lax-Oleinik operator $\mathcal{T}^{s,t}$ defined on $\mathcal{C}(M,\mathbb{R})$ (see Definition \ref{LODef}). More specifically, for any real time $s \in \mathbb{R}$ and any $u_0 \in \mathcal{C}(M,\mathbb{R})$, there exists a unique viscosity solution $u(t,x):[s,+\infty) \times M \to \mathbb{R}$ of (\ref{HJ}) with $u(s,\cdot) = u_0$ defined by $u(t,x) = \mathcal{T}^{s,t}u_0(x)$.\\

The weak-KAM theory, initially developed in the autonomous framework by A.Fathi, focused on weak-KAM solutions, which are fixed points of the operator $\mathcal{T}^{0,1}$. These solutions correspond to viscosity solutions that are one-periodic in time. Furthermore, Fathi explicited examples of weak-KAM solutions using the Peierls barrier $h^\infty(x,\cdot)$ (see Definition \ref{pPeierlsDefi}) and established various properties of weak-KAM solutions, such as their calibration on the Mather set (see Definitions \ref{CalibDefi} and \ref{MatherDefi}) and domination by the barrier $h^\infty$ on the Mather set (see Definition \ref{DominationDef}). These were foundational for subsequent results on weak-KAM solutions, including their $C^{1,1}$-regularity when restricted to the Aubry set, uniqueness on the Mather set, and representation through the Peierls barrier (see \cite{fathi2008weak}).\\

The representation formula provides a description of any weak-KAM solutions $u$ using the Peierls barrier $h^\infty$ as follows 
\begin{equation} \label{IntroRepresentation}
	u(x) = \inf_{y \in \mathbb{M}} \{ \psi(y) + h^\infty(y,\cdot) \}
\end{equation}
where $\mathbb{M}$ is a subset of the Mather set called the set of static classes (See Definition \ref{StaticClassesDefi}) and the map $\psi : \mathcal{M} \to \mathbb{R}$ is dominated by the Peierls barrier $h^\infty$. This formula was later extended for non-autonomous Tonelli Hamiltonians by G. Contreras, R. Iturriaga, and H. S\'anchez-Morgado in \cite{CIS}.\\

In the non-autonomous framework, P. Bernard and J.-M. Roquejoffre \cite{MR2041603} demonstrated that the calibration on the Mather set applies to non-wandering viscosity solutions of the Hamilton-Jacobi equation \ref{HJalpha}. This crucial result implies that non-wandering viscosity solutions share similar properties with weak-KAM solutions, such as uniqueness on the Mather set (see Theorem \ref{CalibNW}) and $C^{1,1}$-regularity when restricted to the Aubry set.\\

As a result, it is reasonable to consider the set $\Omega(\mathcal{T}^{0,1})$ of non-wandering solutions in the non-autonomous framework, rather than focusing solely on the set $\fix (\mathcal{T}^{0,1})$ of weak-KAM solutions. This paper aims to study the action of the Lax-Oleinik operator $\mathcal{T}^{0,1}$ on $\Omega(\mathcal{T}^{0,1})$, which reveals to be the set of all global viscosity solutions, and to generalize the representation formula using a generalized Peierls barrier.

\subsection{Main Results}

We set $\mathcal{T} = \mathcal{T}^{0,1}$ and $\mathcal{T}^\tau = \mathcal{T}^{0,\tau}$.\\

In the first part, we study the action of the Lax-Oleinik operator on its non-wandering set, $\Omega(\mathcal{T})$. We begin by noting that $\mathcal{T}$ is non-expanding on $\mathcal{C}(M, \mathbb{R})$, which has several implications for the behavior of viscosity solutions. One key implication is that for a viscosity solutions, being non-wandering is equivalent to recurrent (see Proposition \ref{LONW}). Additionally, we establish that $\mathcal{T}$ acts as an isometry on the non-wandering set $\Omega(\mathcal{T})$, as demonstrated by the following proposition 

\begin{prop} \label{Isometry} 
	\begin{enumerate}
		\item The restriction of $\mathcal{T}$ to the non-wandering set $\Omega(\mathcal{T})$ is an isometric bijection, i.e $\mathcal{T}$ is invertible and for all $v$ and $w$ in $\Omega(\mathcal{T})$, $\Vert \mathcal{T} v - \mathcal{T} w \Vert_\infty = \Vert v -  w \Vert_\infty$.
	
		\item More generally, if we denote by $\Omega_{\tau}(\mathcal{T}) = \mathcal{T}^\tau \big( \Omega(\mathcal{T}) \big)$, then for all times $s<t$, the operator $\mathcal{T}^{s,t} : \Omega_s(\mathcal{T}) \to \Omega_t(\mathcal{T})$ is an isometric bijection. We denote its inverse map by $\mathcal{T}^{t,s}$.
	
		\item For all times $s$, $t$ and $\tau$ in $\mathbb{R}$, $\mathcal{T}^{s,t} = \mathcal{T}^{\tau,t} \circ \mathcal{T}^{s,\tau}$.
	\end{enumerate}
\end{prop}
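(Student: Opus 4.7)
The plan is to leverage the non-expansion of $\mathcal{T}$ on $\mathcal{C}(M, \mathbb{R})$ together with the characterization of non-wandering points as recurrent points given by Proposition \ref{LONW}. I will prove statement (1) first, then derive (2) and (3) from (1) using the Lax--Oleinik semigroup structure and the $1$-periodicity of $H$.

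For (1), the crucial ingredient is a \emph{joint recurrence} statement: for any $v, w \in \Omega(\mathcal{T})$, there exist integers $n_k \to +\infty$ such that $\mathcal{T}^{n_k} v \to v$ and $\mathcal{T}^{n_k} w \to w$ simultaneously. I will obtain this by considering the product map $\mathcal{T} \times \mathcal{T}$ on $\mathcal{C}(M, \mathbb{R})^2$: it is again non-expanding, the orbit of $(v, w)$ is precompact by Lax--Oleinik smoothing (the iterates are equi-Lipschitz, so Arzelà--Ascoli applies in each factor), and the classical fact that the orbit closure of a recurrent point of a non-expansive continuous dynamical system is a minimal equicontinuous set makes $(v, w)$ uniformly recurrent, yielding the common sequence. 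Once joint recurrence is available, non-expansion makes $d_n := \|\mathcal{T}^n v - \mathcal{T}^n w\|_\infty$ non-increasing while $d_{n_k} \to d_0$; hence $(d_n)$ is constant, giving the isometry $\|\mathcal{T} v - \mathcal{T} w\|_\infty = \|v - w\|_\infty$. Injectivity follows from the isometry. For surjectivity, given $v \in \Omega(\mathcal{T})$ with $\mathcal{T}^{n_k} v \to v$, I extract a convergent subsequence from $\mathcal{T}^{n_k - 1} v$ by precompactness with some limit $u$; continuity then gives $\mathcal{T} u = v$, and $u \in \omega(v) \subseteq \Omega(\mathcal{T})$ because $\omega$-limit points are automatically non-wandering.

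For (2), the Lax--Oleinik semigroup identity $\mathcal{T}^{s, t} \circ \mathcal{T}^{0, s} = \mathcal{T}^{0, t}$ and the definition $\Omega_\tau(\mathcal{T}) = \mathcal{T}^{0, \tau}(\Omega(\mathcal{T}))$ give $\mathcal{T}^{s, t}(\Omega_s(\mathcal{T})) = \Omega_t(\mathcal{T})$ for free, handling surjectivity. For the isometry, I write $v = \mathcal{T}^{0, s} \hat v$ and $w = \mathcal{T}^{0, s} \hat w$ with $\hat v, \hat w \in \Omega(\mathcal{T})$, and consider $f(\tau) := \|\mathcal{T}^{0, \tau} \hat v - \mathcal{T}^{0, \tau} \hat w\|_\infty$. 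Continuous-time non-expansion makes $f$ non-increasing on $[0, +\infty)$, while statement (1) applied iteratively (using $\mathcal{T}^{0, n} = \mathcal{T}^n$ from the semigroup and the $1$-periodicity of $H$) yields $f(n) = f(0)$ for every non-negative integer $n$. A non-increasing function that coincides with its initial value along an unbounded set of integers is constant, so $\|\mathcal{T}^{s, t} v - \mathcal{T}^{s, t} w\|_\infty = f(t) = f(s) = \|v - w\|_\infty$. Injectivity follows from the isometry, and $\mathcal{T}^{t, s}$ is defined as the inverse. Statement (3) is the standard Lax--Oleinik semigroup property for $s \leq \tau \leq t$, and an algebraic consequence of the inverses just defined for other orderings.

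The main obstacle is the joint recurrence used for (1). This relies on the structural fact that orbit closures of recurrent points under non-expansive continuous dynamics are minimal equicontinuous systems, which forces pairs of such points to be jointly uniformly recurrent. Once this ingredient is in place, the remainder of the argument is a formal consequence of non-expansion and the Lax--Oleinik semigroup structure.
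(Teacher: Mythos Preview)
Your approach for part (1) is valid in outline but differs from the paper's and leaves its central step underjustified. You invoke joint recurrence of $(v,w)$ under $\mathcal{T}\times\mathcal{T}$, citing ``the classical fact that the orbit closure of a recurrent point of a non-expansive continuous dynamical system is a minimal equicontinuous set''. The problem is that this fact applies to $v$ and to $w$ \emph{individually}; it does not by itself say that the pair $(v,w)$ is recurrent for the product map, and you cannot apply it to $(v,w)$ directly since recurrence of $(v,w)$ is precisely what you are trying to establish. What actually makes joint recurrence work here is the following chain: on each of $\omega(v)$ and $\omega(w)$, $\mathcal{T}$ is surjective (minimality, Proposition~\ref{LONW}) and non-expansive on a compact set, hence a bijective isometry by the classical result; then $\mathcal{T}\times\mathcal{T}$ is a bijective isometry of the compact product $\omega(v)\times\omega(w)$, and for such a map the iterates accumulate at the identity in the uniform topology (Arzel\`a--Ascoli together with the group structure of the isometry group), which yields a common return sequence $n_k$. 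This chain should be made explicit rather than compressed into a single citation.

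The paper takes a more self-contained route that sidesteps forward joint recurrence entirely: after proving surjectivity of $\mathcal{T}$ on $\Omega(\mathcal{T})$, it builds \emph{backward} orbits $(v_n,w_n)$ inside $\omega(v)\times\omega(w)$, extracts a convergent subsequence $(v_{k_n},w_{k_n})$ by compactness, and shows via non-expansion alone that $v_{k_{n+1}-k_n}\to\mathcal{T}v$ and $w_{k_{n+1}-k_n}\to\mathcal{T}w$, from which the isometry follows by a sandwich inequality. This avoids any appeal to the structure theory of equicontinuous systems. Your argument for part (2), reducing to the constancy of a non-increasing function $f(\tau)=\Vert\mathcal{T}^{0,\tau}\hat v-\mathcal{T}^{0,\tau}\hat w\Vert_\infty$ that agrees with $f(0)$ at every non-negative integer, is cleaner than the paper's floor/ceiling sandwich $\mathcal{T}^{\lfloor s\rfloor,\lceil t\rceil}=\mathcal{T}^{t,\lceil t\rceil}\circ\mathcal{T}^{s,t}\circ\mathcal{T}^{\lfloor s\rfloor,s}$ and is a genuine simplification. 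Part (3) is handled identically in both.
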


This property, demonstrated in Subsection \ref{SectionIsometry}, arises from the fact that the Lax-Oleinik operator $\mathcal{T}$ is both surjective and non-expanding on the $\omega$-limit sets of scalar maps, which are compact. It follows from a classical result that any surjective, non-expanding map on a compact set is a bijective isometry. It is a classical result that surjective, non-expanding maps on compact sets are bijective isometries.\\    

From this proposition, we establish that all non-wandering elements of $\Omega(\mathcal{T})$ can be associated with global viscosity solutions of the form $u(t,x) = \mathcal{T}^{0,t}u(x) : \mathbb{R} \times M \to \mathbb{R}$, where the initial data is $u(0,\cdot) = u$. This reveals to be a characterization of non-wandering viscosity solutions. Indeed, if we denote by $\mathcal{V}(\mathcal{T})$ the set of the initial datum of global viscosity solutions i.e. defined for all real times, then we have the following result.

\begin{theo} \label{Glob=NW}
	A viscosity solution of the Hamilton-Jacobi equation \eqref{HJalpha} is global if and only if it is non-wandering. In other words, the following equality holds
	\begin{equation}
		\Omega(\mathcal{T}) = \mathcal{V}(\mathcal{T})
	\end{equation}
\end{theo}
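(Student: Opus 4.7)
The plan is to prove the two inclusions of $\Omega(\mathcal{T}) = \mathcal{V}(\mathcal{T})$ separately. The inclusion $\Omega(\mathcal{T}) \subseteq \mathcal{V}(\mathcal{T})$ falls out almost directly from Proposition \ref{Isometry}: for $u \in \Omega(\mathcal{T})$, I set $u(t,\cdot) := \mathcal{T}^{0,t}u$ for every $t \in \mathbb{R}$, where for $t < 0$ this is the inverse map furnished by item (2) on $\Omega_t(\mathcal{T}) \to \Omega_0(\mathcal{T})$. The semigroup identity of item (3) then gives $\mathcal{T}^{s,t}u(s,\cdot) = u(t,\cdot)$ for all $s<t$, so on each time slab $[s,t]\times M$ the map $u(\cdot,\cdot)$ coincides with the unique viscosity solution with initial datum $u(s,\cdot)$ given by the Lax--Oleinik Cauchy theory, and patching these together produces a global viscosity solution.

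For the reverse inclusion $\mathcal{V}(\mathcal{T}) \subseteq \Omega(\mathcal{T})$, I fix $u \in \mathcal{V}(\mathcal{T})$ with bi-infinite orbit $u_n := u(n,\cdot)$, so that $\mathcal{T} u_n = u_{n+1}$ by the $1$-periodicity of $H$. The strategy is to exhibit a point $v^\ast \in \mathcal{C}(M,\mathbb{R})$, a subsequence $n_k \to \infty$, and a constant $c^\ast \in \mathbb{R}$ such that $\mathcal{T}^{n_k} v^\ast \to u - c^\ast$ uniformly. This forces $u - c^\ast$ into the $\omega$-limit set of $v^\ast$ under $\mathcal{T}$, hence into $\Omega(\mathcal{T})$, and translation invariance of $\Omega(\mathcal{T})$ (which holds because $\mathcal{T}$ commutes with additive constants) yields $u \in \Omega(\mathcal{T})$. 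To build $v^\ast$, I fix $x_0 \in M$ and normalize $\tilde u_n := u_n - u_n(x_0)$; Tonelli regularization of one step of Lax--Oleinik makes $\{u_n\}$ equi-Lipschitz with a constant depending only on $H$, so $\{\tilde u_n\}$ is equi-Lipschitz and vanishes at $x_0$, hence equi-bounded. Arzel\`a--Ascoli produces $\tilde u_{-n_k} \to v^\ast$, and commuting constants through plus non-expansion of $\mathcal{T}^{n_k}$ give
\begin{equation*}
	\bigl\|\mathcal{T}^{n_k}v^\ast - (u - c_k)\bigr\|_\infty \;\leq\; \bigl\|v^\ast - \tilde u_{-n_k}\bigr\|_\infty \;\longrightarrow\; 0, \qquad c_k := u_{-n_k}(x_0).
\end{equation*}

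The remaining step, and the main obstacle, is to show that the sequence $(c_k)$ is bounded: a priori, the backward orbit of $u$ could drift to infinity in $\mathcal{C}(M,\mathbb{R})$, breaking the scheme. The fix is to use any weak-KAM solution $\phi$ as a gauge: $\phi$ is a fixed point of $\mathcal{T}$ whose existence is classical in Fathi's theory, and non-expansion yields $\|\mathcal{T}^{n_k}v^\ast - \phi\|_\infty \leq \|v^\ast - \phi\|_\infty$. Evaluating at $x_0$ bounds $\mathcal{T}^{n_k}v^\ast(x_0)$ uniformly in $k$, and the identity $c_k = u(x_0) - \mathcal{T}^{n_k}v^\ast(x_0) + o(1)$ then forces $(c_k)$ to be bounded. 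A further diagonal extraction produces $c_{k_j} \to c^\ast \in \mathbb{R}$ and hence $\mathcal{T}^{n_{k_j}} v^\ast \to u - c^\ast$, closing the argument.
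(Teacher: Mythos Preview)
Your proof is correct. The inclusion $\Omega(\mathcal{T})\subseteq\mathcal{V}(\mathcal{T})$ matches the paper's route via Proposition~\ref{Isometry} (the paper packages this as Proposition~\ref{ViscosityGlobalNW}).

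For the harder inclusion $\mathcal{V}(\mathcal{T})\subseteq\Omega(\mathcal{T})$ you take a genuinely different path. The paper factors through the intermediate set $\mathcal{B}(\mathcal{T})$ of \emph{bounded} global solutions: it first proves, as an independent statement (Proposition~\ref{Global=Bdd}), that every global viscosity solution is bounded, using the uniform estimate $\sup_{n\geq 1}\|h^n\|_\infty\leq 2\kappa_1\diam(M)$ from Proposition~\ref{hfinite}; then it shows $\mathcal{B}(\mathcal{T})=\Omega(\mathcal{T})$ (Proposition~\ref{Bdd=NW}) by extracting a limit of $u(-k_n,\cdot)$ directly via Arzel\`a--Ascoli. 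Your argument bypasses the a~priori boundedness step entirely: you force compactness by normalizing $\tilde u_{-n}=u_{-n}-u_{-n}(x_0)$, extract a limit $v^\ast$, and then control the drift constants $c_k=u_{-n_k}(x_0)$ \emph{a~posteriori} by gauging against a weak-KAM fixed point $\phi$, finishing with translation invariance of $\Omega(\mathcal{T})$. This is slicker and uses only the soft input that $\mathcal{T}$ has a fixed point, rather than the quantitative Aubry--Mather bound on $h^n$; the trade-off is that the paper's decomposition isolates Proposition~\ref{Global=Bdd} as a result of independent interest (and indeed, within the paper's development, the existence of $\phi$ itself rests on Proposition~\ref{hfinite}, so the two routes are not fully independent in that framework).
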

	
The idea of the proof is to first show that non-wandering viscosity solutions correspond to bounded global viscosity solutions. Then, we use the fact —pointed out to us by M. Zavidovique— that... 

\begin{prop} \label{Global=Bdd}
	For a Tonellu Hamiltonian, All the global viscosity solutions $u : \mathbb{R} \times M \to \mathbb{R}$ of the Hamilton-Jacobi equation \eqref{HJalpha} are bounded.
\end{prop}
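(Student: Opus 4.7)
The plan is to compare $u$ with an explicit bounded global solution and to exploit the order-preservation and constant-shift properties of the Lax-Oleinik semigroup, combined with standard Tonelli regularization.

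First, by the definition of the \Mane critical value, $\mathcal{T}$ admits a fixed point $w_0 \in \mathcal{C}(M,\mathbb{R})$. Set $w(t,x) := \mathcal{T}^{0,t} w_0(x)$; the time-periodicity of $H$, which gives $\mathcal{T}^{s+1,t+1} = \mathcal{T}^{s,t}$, yields
\[
w(t+1,\cdot) = \mathcal{T}^{1,t+1}\,\mathcal{T} w_0 = \mathcal{T}^{1,t+1} w_0 = \mathcal{T}^{0,t} w_0 = w(t,\cdot).
\]
Hence $w$ is $1$-periodic in $t$ and continuous on $\mathbb{R}\times M$, hence bounded. It therefore suffices to bound $u-w$.

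Second, I invoke the standard Tonelli regularization: there exists $K=K(H)$ such that, for every $s<t$ with $t-s\geq 1$ and every $v\in\mathcal{C}(M,\mathbb{R})$, the function $\mathcal{T}^{s,t}v$ is $K$-Lipschitz on $M$ (the proof relies on the uniform velocity bound on calibrated minimizers from Tonelli's theorem and the standard curve-modification argument at the endpoint). Since $u$ is global, the semigroup identity $u(t,\cdot) = \mathcal{T}^{t-1,t} u(t-1,\cdot)$ shows that $u(t,\cdot)$ is $K$-Lipschitz for every $t$, and the same holds for $w$. Setting
\[
a(t) := \min_x (u-w)(t,x),\quad b(t) := \max_x (u-w)(t,x),\quad D := 2K\,\diam(M),
\]
the map $(u-w)(t,\cdot)$ is $2K$-Lipschitz and therefore $b(t) - a(t) \leq D$ for every $t\in\mathbb{R}$.

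Third, $a$ is non-decreasing and $b$ is non-increasing in $t$. Indeed, $\mathcal{T}^{s,t}$ is order-preserving on $\mathcal{C}(M,\mathbb{R})$ and satisfies $\mathcal{T}^{s,t}(v+c) = \mathcal{T}^{s,t} v + c$ for every constant $c\in\mathbb{R}$. From the pointwise inequality $u(s,\cdot) \geq w(s,\cdot) + a(s)$, applying $\mathcal{T}^{s,t}$ yields $u(t,\cdot) \geq w(t,\cdot) + a(s)$, whence $a(t) \geq a(s)$; the argument for $b$ is symmetric.

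Combining these facts, for $t \leq 0$ we have $a(t) \leq a(0)$ and $b(t) \geq b(0)$, and the oscillation bound gives $a(t) \geq b(t) - D \geq b(0) - D$ and $b(t) \leq a(t) + D \leq a(0) + D$; for $t \geq 0$, the chain $a(0) \leq a(t) \leq b(t) \leq b(0)$ confines both functions. Hence $a$ and $b$ are uniformly bounded on $\mathbb{R}$, so $u-w$ is bounded on $\mathbb{R}\times M$, and since $w$ is bounded so is $u$.

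I do not anticipate any serious obstacle. The only non-trivial input is the universal $K$-Lipschitz bound on the image of $\mathcal{T}^{s,t}$ for $t-s \geq 1$, a classical output of Tonelli theory; beyond that, the argument is the elementary observation that bounded spatial oscillation together with monotonicity of the running min and max in opposite time directions forces boundedness globally in time.
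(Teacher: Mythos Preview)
Your proof is correct and takes a genuinely different route from the paper's for the backward-time bound. Both arguments begin from the existence of a weak-KAM solution and the uniform Lipschitz regularization of $\mathcal{T}^{s,t}$ for $t-s\geq 1$. For negative times, however, the paper picks a realizing point $y$ in the identity $u(0,y') = u(t,y) + h^{t,0}(y,y')$ and bounds $|u(t,x)|$ directly via the uniform estimate $\sup_{n\geq 1}\|h^n\|_\infty \leq 2\kappa_1\diam(M)$ from Proposition~\ref{hfinite}. You instead compare $u$ with the periodic extension $w$ of the weak-KAM solution, observe that order-preservation and constant-commutation of $\mathcal{T}^{s,t}$ force $t\mapsto \min_x(u-w)(t,x)$ to be non-decreasing and $t\mapsto\max_x(u-w)(t,x)$ to be non-increasing, and close with the $2K\diam(M)$ oscillation bound. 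Your argument is arguably cleaner: it replaces the quantitative potential estimate by a soft monotonicity observation, and it treats both time directions symmetrically. One minor phrasing issue: the existence of a fixed point of $\mathcal{T}$ is not literally ``by definition of the Ma\~n\'e critical value'' but is a theorem (Corollary~\ref{ExistenceKAMF} here), whose proof in this paper passes through the finiteness of the potential in Proposition~\ref{hfinite}; so the dependence on that ingredient is not entirely eliminated, only repackaged.
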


This tells us that the study of non-wandering viscosity solutions is equivalent to the study of global viscosity solutions.\\

In the second part, we will focus on establishing a generalized representation formula on $\Omega(\mathcal{T})$. To achieve this, we first introduce the concept of domination, which is essential for developing representation formulas.
\begin{defi} \label{DominationDef}
	Let $X$ be a set and let $f: X \times X \to \mathbb{R}$ be a map. A map $\psi : X \to \mathbb{R}$ is said to be \textit{$f$-dominated on $X$} if for all $x$ and $y$ in $X$, we have
	\begin{equation}
		\psi(y) - \psi(x) \leq f(x, y)
	\end{equation}
	We denote by $\dom(X, f)$ the set of $f$-dominated maps on $X$.
\end{defi}

The goal is to identify suitable tame maps $f$ and sets $X$ where domination holds. Additionally, a uniqueness theorem must be valid on the set $X$, which means that if $u$ and $v$ are two elements of $\Omega(\mathcal{T})$ whose restrictions to $X$ are equal, then $u$ and $v$ themselves must be equal.\\

In the representation of weak-KAM solutions by Contreras, Iturriaga, and S\'anchez-Morgado \cite{CIS}, the set $X$ is chosen to be $\mathbb{M}$, which consists of the static classes mentioned in \eqref{IntroRepresentation}, and the map $f$ corresponds to the Peierls barrier $h^\infty$.\\

We will introduce a generalized Peierls barrier $\underline{k} : \mathcal{M}_0 \times M \to \mathbb{R}$, where $\mathcal{M}_0$ refers to the restriction of the Mather set at time $t=0$ (see Definition \ref{MatherDefi} for $\mathcal{M}_0$ and Definition \ref{kDefi} for the barrier $\underline{k}$).\\

Besides, we define 
\begin{itemize}[label= -]
	\item A pseudometric $\underline{d} : \mathcal{M}_0 \times \mathcal{M}_0 \to \mathbb{R}$ by
	\begin{equation}
		\underline{d}(x,y) = \underline{k}(x,y) +  \underline{k}(y,x)
	\end{equation}
	\item An equivalence relation $\sim$ on $\mathcal{M}_0$ by
	\begin{equation}
		x \sim y \Longleftrightarrow \underline{d}(x,y) = 0
	\end{equation}
	\item And the \textit{generalized static classes} as the equivalence classes of the equivalence relation $\sim$. 
\end{itemize}
We denote by $\underline{\mathbb{M}}$ the set of generalized static classes and assume that every element of $\underline{\mathbb{M}}$ is represented by an element of $\mathcal{M}_0$ so that we have the inclusion $\underline{\mathbb{M}} \subset \mathcal{M}_0$. Then, we obtain the following result.

\begin{theo} \label{kRepresentationTheorem}
	We have the following bijection
	\begin{equation} \label{kRepresentationBijection}
		\begin{matrix}
			\Psi_{\underline{k}}: \dom(\underline{\mathbb{M}}, \underline{k}) & \longrightarrow & \Omega(\mathcal{T}) \\
			\psi & \longmapsto & \inf\limits_{y \in \underline{\mathbb{M}}} \{ \psi(y) + \underline{k}(y, \cdot ) \}
		\end{matrix}
	\end{equation}
	with its inverse being the restriction map
	\begin{equation} \label{kInverseMap}
		\begin{matrix}
			\Phi_{\underline{k}} : \Omega(\mathcal{T}) & \longrightarrow & \dom(\underline{\mathbb{M}}, \underline{k}) \\
			v & \longmapsto & v_{| \underline{\mathbb{M}}} 
		\end{matrix}
	\end{equation}     
\end{theo}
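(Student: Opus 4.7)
The plan is to verify the bijection by proving four things: both maps are well defined and they are mutual inverses. The routine items are the well-definedness of $\Phi_{\underline{k}}$ and the identity $\Phi_{\underline{k}}\circ\Psi_{\underline{k}}=\mathrm{id}$, while the representation formula $\Psi_{\underline{k}}\circ\Phi_{\underline{k}}=\mathrm{id}$ is the crux and also yields the well-definedness of $\Psi_{\underline{k}}$ a posteriori.

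First, to show that $\Phi_{\underline{k}}$ takes values in $\dom(\underline{\mathbb{M}},\underline{k})$, I would extend the Bernard--Roquejoffre calibration (Theorem \ref{CalibNW}) to the generalized Peierls barrier: for any $v\in\Omega(\mathcal{T})$ and $x,y\in\mathcal{M}_0$, the inequality $v(y)-v(x)\leq\underline{k}(x,y)$ follows by testing $v$ against curves approximating the infimum that defines $\underline{k}(x,y)$. Once this is established, the identity $\Phi_{\underline{k}}\circ\Psi_{\underline{k}}=\mathrm{id}$ is immediate: for $\psi\in\dom(\underline{\mathbb{M}},\underline{k})$ and $x\in\underline{\mathbb{M}}$, taking $y=x$ in the infimum and using that $\underline{k}(x,x)=0$ (forced by the reflexivity $x\sim x$, which gives $2\underline{k}(x,x)=\underline{d}(x,x)=0$) yields $\Psi_{\underline{k}}(\psi)(x)\leq\psi(x)$, while the reverse inequality is just the defining inequality of $\underline{k}$-domination applied to each $y\in\underline{\mathbb{M}}$.

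The heart of the theorem is the representation formula: for every $v\in\Omega(\mathcal{T})$ and every $x\in M$,
\[
v(x)=\inf_{y\in\underline{\mathbb{M}}}\{v(y)+\underline{k}(y,x)\}.
\]
The inequality $\leq$ follows from the domination established above. The inequality $\geq$ is obtained by invoking Theorem \ref{Glob=NW} to view $v$ as the value at time $0$ of a global viscosity solution, which provides backward calibrated curves $\gamma:(-\infty,0]\to M$ ending at $x$. Their $\alpha$-limit set sits inside $\mathcal{M}_0$, and by a careful recurrence argument on the Lagrangian flow restricted to the Mather set, combined with the very definition of the equivalence relation $\sim$, I would extract a sequence $(y_n)\subset\underline{\mathbb{M}}$ along which $v(y_n)+\underline{k}(y_n,x)\to v(x)$. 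The main obstacle is precisely this last point: making sure the accumulation can be concentrated in a single generalized static class, rather than being spread across $\mathcal{M}_0$, so that the infimum over $\underline{\mathbb{M}}$ (and not just over $\mathcal{M}_0$) really equals $v(x)$.

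Finally, $\Psi_{\underline{k}}$ indeed lands in $\Omega(\mathcal{T})$. The map $\Psi_{\underline{k}}(\psi)$ is a viscosity subsolution because each $x\mapsto\psi(y)+\underline{k}(y,x)$ is, and an infimum of subsolutions remains a subsolution. It is actually a viscosity solution because the representation formula forces calibration on $\underline{\mathbb{M}}$ and hence the supersolution property; boundedness via Proposition \ref{Global=Bdd} together with the isometric action of $\mathcal{T}$ on $\Omega(\mathcal{T})$ from Proposition \ref{Isometry} then guarantees that $\Psi_{\underline{k}}(\psi)$ is non-wandering.
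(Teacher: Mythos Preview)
Your overall plan has the right skeleton, but the approach you propose for the key identity $\Psi_{\underline{k}}\circ\Phi_{\underline{k}}=\mathrm{id}$ diverges from the paper's and contains a gap that you yourself flag without resolving. You try to prove directly that for every $x\in M$ the infimum over $\underline{\mathbb{M}}$ is attained in the limit along an $\alpha$-limit of a backward calibrated curve. The difficulty you identify---passing from accumulation in $\mathcal{M}_0$ to a single generalized static class in $\underline{\mathbb{M}}$, and relating the action $h^{k_n}(\gamma(-k_n),x)$ along the calibrated curve to the barrier $\underline{k}$---is genuine, and nothing in your sketch overcomes it. The definition of $\underline{k}$ involves chains of $\underline{h}$-values, each of which is itself a liminf along recurrence times specific to the chain points; there is no direct way to match this against a single calibrated curve ending at an arbitrary $x\in M$.

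The paper avoids this obstacle entirely by a different reduction. Rather than proving the representation formula pointwise for all $x\in M$, it sets $w=\Psi_{\underline{k}}\circ\Phi_{\underline{k}}(v)$, notes that $w\in\Omega(\mathcal{T})$, and invokes the Uniqueness Theorem (Theorem~\ref{Uniqueness}): it suffices to check $v=w$ on $\mathcal{M}_0$. On $\underline{\mathbb{M}}$ this is immediate from $\underline{k}(x,x)=0$ and domination. For a general $x\in\mathcal{M}_0$, pick the representative $y\in\underline{\mathbb{M}}$ with $x\sim y$; the two domination inequalities $v(x)-v(y)\le\underline{k}(y,x)$ and $v(y)-v(x)\le\underline{k}(x,y)$ sum to $0\le\underline{d}(x,y)=0$, forcing both to be equalities, whence $w(x)=v(y)+\underline{k}(y,x)=v(x)$. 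This is the step that replaces your $\alpha$-limit argument, and it is where the equivalence relation does its work.

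Two smaller points. First, your justification of $\underline{k}(x,x)=0$ via ``reflexivity $x\sim x$'' is circular: reflexivity of $\sim$ is defined by $\underline{d}(x,x)=0$, which itself requires $\underline{k}(x,x)=0$. The paper proves $\underline{k}(x,x)=0$ directly from the sandwich $h^\infty(x,x)\le\underline{k}(x,x)\le\underline{h}(x,x)$ (Proposition~\ref{kProp}). Second, the well-definedness of $\Psi_{\underline{k}}$ is simpler than you indicate: each $\underline{k}(y,\cdot)$ is already a \emph{global} viscosity solution (Proposition~\ref{kProp}), an infimum of viscosity solutions is again one (Proposition~\ref{ViscosityInf}), and global solutions are non-wandering by Theorem~\ref{Glob=NW}. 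There is no need to argue sub/supersolution separately or to invoke Proposition~\ref{Isometry}.
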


Finally, we will explore various applications of this representation formula. It is important to note that these applications may not be direct implications of the theorem but rather adaptations of its proofs and underlying ideas.\\

First, we will consider the autonomous framework and demonstrate Fathi's convergence theorem.

\begin{cor} \label{FathiTh} (Fathi's Convergence Theorem \cite{MR1650261})
	Let $H : T^*M \to \mathbb{R}$ be an autonomous Tonelli Hamiltonian. For any initial data $u \in \mathcal{C}(M,\mathbb{R})$, the viscosity solution $u(t,x) = \mathcal{T}^tu(x)$ converges at $+\infty$ to a weak-KAM solution $v : M \to \mathbb{R}$ of 
	\begin{equation} \label{HJAutonomous}
		H(x,d_xu) = \alpha_0
	\end{equation} 
\end{cor}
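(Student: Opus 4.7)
The plan is to establish three points in sequence: relative compactness of the forward orbit $\{\mathcal{T}^tu\}_{t\ge 0}$ in $(\mathcal{C}(M,\mathbb{R}),\Vert\cdot\Vert_\infty)$, identification of every element of the $\omega$-limit $\omega(u)$ with a weak-KAM solution of \eqref{HJAutonomous}, and upgrading a subsequential limit to full uniform convergence. For the first point, standard Lax-Oleinik a priori estimates yield a uniform Lipschitz bound on $\mathcal{T}^tu$ for $t\ge t_0>0$. Uniform boundedness comes from Proposition \ref{Global=Bdd} applied to accumulation points: any such limit lies in $\omega(u)\subset\Omega(\mathcal{T})=\mathcal{V}(\mathcal{T})$ by Theorem \ref{Glob=NW}, hence is bounded, and the non-expansion of $\mathcal{T}$ then propagates this bound to the whole forward orbit. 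Ascoli-Arzel\`a produces a non-empty compact $\omega$-limit $\omega(u)\subset\Omega(\mathcal{T})$.

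The heart of the argument is the identification of elements of $\omega(u)$ with weak-KAM solutions of \eqref{HJAutonomous}; here Theorem \ref{kRepresentationTheorem}, and more precisely its proof, is exploited. In the autonomous framework, the generalized static classes $\underline{\mathbb{M}}$ and the generalized Peierls barrier $\underline{k}$ should reduce to Fathi's classical static classes $\mathbb{M}$ and Peierls barrier $h^\infty$. Under this identification, the representation formula \eqref{kRepresentationBijection} specializes to the classical formula \eqref{IntroRepresentation}, whose image consists precisely of the weak-KAM solutions of \eqref{HJAutonomous}. Consequently every $w\in\omega(u)$ is a weak-KAM solution of \eqref{HJAutonomous}.

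The third step is then the classical monotonicity argument. Pick any $v\in\omega(u)$; by the preceding step $\mathcal{T}^\tau v=v$ for every $\tau\ge 0$ of the autonomous continuous semigroup, so the non-expansion of $\mathcal{T}$ yields that $t\mapsto\Vert\mathcal{T}^tu-v\Vert_\infty$ is non-increasing. Since $v\in\omega(u)$ there exists $t_n\to+\infty$ along which $\Vert\mathcal{T}^{t_n}u-v\Vert_\infty\to 0$, and the monotonicity upgrades this subsequential statement to $\Vert\mathcal{T}^tu-v\Vert_\infty\to 0$, which is the claimed uniform convergence. The main obstacle is the reduction in step two: verifying that in the autonomous case $\underline{k}$ coincides with $h^\infty$ on $\mathcal{M}_0\times\mathcal{M}_0$ and that the equivalence relation $\sim$ matches Fathi's notion of static classes. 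This identification is the only step where the autonomy of $H$ is genuinely used, and it is where parts of the construction behind Theorem \ref{kRepresentationTheorem} must be adapted rather than simply invoked.
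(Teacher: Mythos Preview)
Your outline is structurally sound, but the second step contains two genuine gaps. First, you correctly flag the reduction $\underline{k}=h^\infty$ as the main obstacle, yet give no mechanism for it. The paper's argument (Theorem~\ref{AutonomousRepresentationTheorem}) does not proceed by matching definitions of static classes: for $v\in\Omega(\mathcal{T})$ and $x_0\in\mathcal{M}_0$ it sets $f=v-h^\infty(x_0,\cdot)$, uses calibration together with the regularity Theorem~\ref{CalibRegularity} to get $d_xf=0$ along the Mather orbit through $x_0$ (this is where autonomy enters, since $\partial_vL$ is time-independent), and deduces $v(-n,x_0)=v(x_0)$ for all $n$. This yields $v(y)-v(x)\le h^\infty(x,y)$ on $\mathcal{M}_0\times M$, and Corollary~\ref{kMaxMinCorollary} combined with \eqref{k>pFormula} forces $\underline{k}=h^\infty$. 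Second, your step~3 requires $\mathcal{T}^\tau v=v$ for \emph{all} $\tau\ge 0$, but knowing $\underline{k}=h^\infty$ with the integer-time barrier of Definition~\ref{pPeierlsDefi} only gives $\Omega(\mathcal{T})=\fix(\mathcal{T})$ for the time-one map. The paper closes this by a time-shift: applying the same machinery to $\tilde H(t,x,p)=H(t+\tau,x,p)$ and comparing via Corollary~\ref{kMaxMinCorollary} shows $h^{\infty+\tau}=h^\infty$, hence every $v\in\Omega(\mathcal{T})$ is time-independent. Without this you would be importing the classical autonomous fact that $h^\infty(y,\cdot)$ solves the stationary equation, which is essentially what you are trying to prove.

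Two minor remarks. In step~1, forward boundedness follows directly from Corollary~\ref{ViscBounded}; your detour through accumulation points and Proposition~\ref{Global=Bdd} is circular. In step~3, once $v$ is fixed by the full semigroup, your monotonicity argument ($t\mapsto\Vert\mathcal{T}^tu-v\Vert_\infty$ non-increasing) is a clean alternative to the paper's route, which instead invokes the minimality property of Proposition~\ref{LONW} to conclude that $\omega(u)$ is a singleton.
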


Another application involves representing the set $\per_n(\mathcal{T})$ of $n$-periodic viscosity solutions for a fixed integer period $n$, where $n$ does not necessarily have to be the minimal period. In this context, we will define the $n$-Peierls barrier $h^{n\infty}$, as first introduced by A. Fathi and J. N. Mather in \cite{MR1792479} (see Definition \ref{StaticClassesDefi}).

Similarly to the general case, we define

\begin{itemize}[label=-]
	\item A pseudometrice $d_n : \mathcal{M}_0 \times \mathcal{M}_0 \to \mathbb{R}$ by
	\begin{equation}
		d_n(x,y) = h^{n\infty}(x,y) +  h^{n\infty}(y,x)
	\end{equation}
	\item An equivalence relation $\sim_n$ on $\mathcal{M}_0$ by
	\begin{equation}
		x \sim y \Longleftrightarrow d_n(x,y) = 0
	\end{equation} 
	which equivalence classes are called $n$-static classes and are represented by a subset $\mathbb{M}_n$ of the Mather set $\mathcal{M}_0$.
\end{itemize}

In this context, the set $\mathbb{M}_n$ serves as a uniqueness set for $n$-periodic viscosity solutions. Specifically, if two $n$-periodic viscosity solutions $u$ and $v$ in $\per_n(\mathcal{T})$ coincide on $\mathbb{M}_n$, then they coincide everywhere. Furthermore, we establish that every $n$-periodic element of $\per_n(\mathcal{T})$ is $h^{n\infty}$-dominated on $\mathbb{M}_n$. Consequently, we obtain the following result:

\begin{theo} \label{PeriodicRepresentationTheorem}
	We have the following bijection
	\begin{equation} \label{PeriodicRepresentationBijection}
		\begin{matrix}
			\Psi_n: \dom(\mathbb{M}_n, h^{n\infty}) & \longrightarrow & \per_n(\mathcal{T}) \\
			\psi & \longmapsto & \inf\limits_{y \in \mathbb{M}_n} \{ \psi(y) + h^{n\infty}(y, \cdot ) \}
		\end{matrix}
	\end{equation}
	with its inverse being the restriction map
	\begin{equation} \label{PeriodicInverseMap}
		\begin{matrix}
			\Phi_n : \per_n(\mathcal{T}) & \longrightarrow & \dom(\mathbb{M}_n, h^{n\infty}) \\
			v & \longmapsto & v_{| \mathbb{M}_n} 
		\end{matrix}
	\end{equation}     
\end{theo}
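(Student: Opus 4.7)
The plan is to mirror the proof of Theorem \ref{kRepresentationTheorem} closely, specialized to the $n$-periodic subset $\per_n(\mathcal{T}) = \fix(\mathcal{T}^n) \subset \Omega(\mathcal{T})$, with the generalized Peierls barrier $\underline{k}$ replaced by the $n$-Peierls barrier $h^{n\infty}$. Since $H$ is $1$-periodic in time, iterating the Lax-Oleinik operator $n$ times amounts to a single Lax-Oleinik step for a rescaled $1$-periodic system, so morally the statement is the Contreras-Iturriaga-S\'anchez-Morgado representation \cite{CIS} applied to $\mathcal{T}^n$; what must be verified is that the specific objects $\mathbb{M}_n$, $\sim_n$ and $h^{n\infty}$ appearing in the statement play the roles of static classes and Peierls barrier in that formula.

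I would first check that $\Phi_n$ is well-defined. For $v \in \per_n(\mathcal{T})$, iterating the fixed point identity $v = \mathcal{T}^n v$ and unfolding the Lax-Oleinik formula gives, for every $k \geq 1$ and $x,y \in M$,
\begin{equation*}
    v(y) - v(x) \leq \inf_\gamma \int_0^{nk} \bigl( L(t,\gamma,\dot\gamma) - \alpha_0 \bigr)\,dt,
\end{equation*}
where $\gamma$ ranges over absolutely continuous curves from $x$ to $y$. Passing to the liminf as $k \to \infty$ yields $v(y) - v(x) \leq h^{n\infty}(x,y)$ on all of $M \times M$, hence in particular $v|_{\mathbb{M}_n} \in \dom(\mathbb{M}_n, h^{n\infty})$. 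The identity $\Phi_n \circ \Psi_n = \mathrm{id}$ then follows at once: for $\psi \in \dom(\mathbb{M}_n, h^{n\infty})$ and $x \in \mathbb{M}_n$, domination gives $\psi(y) + h^{n\infty}(y,x) \geq \psi(x)$ for all $y \in \mathbb{M}_n$, and equality is attained at $y = x$ thanks to $h^{n\infty}(x,x) = 0$ for $x \in \mathcal{M}_0$.

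To see that $\Psi_n$ takes values in $\per_n(\mathcal{T})$, I would invoke the classical Fathi-Mather fact that each $h^{n\infty}(y,\cdot)$ with $y \in \mathcal{M}_0$ is itself a fixed point of $\mathcal{T}^n$, combined with the compatibility of $\mathcal{T}^n$ with pointwise infima of equi-Lipschitz families; this gives $\Psi_n(\psi) \in \fix(\mathcal{T}^n) = \per_n(\mathcal{T})$ and, a fortiori, $\Psi_n(\psi)$ is a viscosity solution.

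The main obstacle is the identity $\Psi_n \circ \Phi_n = \mathrm{id}$, i.e.\ $v(x) = \inf_{y \in \mathbb{M}_n}\{v(y) + h^{n\infty}(y,x)\}$ for every $v \in \per_n(\mathcal{T})$ and $x \in M$. The inequality $\leq$ is the global $h^{n\infty}$-domination established above. For $\geq$, I would use that $v \in \per_n(\mathcal{T}) \subset \Omega(\mathcal{T})$ is calibrated on the Mather set by Theorem \ref{CalibNW}, so there exists a backward calibrated curve $\gamma : (-\infty,0] \to M$ ending at $x$. At times $-nk$, $n$-periodicity of $v$ gives
\begin{equation*}
    v(x) = v(\gamma(-nk)) + \int_{-nk}^{0} \bigl( L(t,\gamma,\dot\gamma) - \alpha_0 \bigr)\,dt.
\end{equation*}
Extracting a subsequence $k_j \to \infty$ with $\gamma(-nk_j) \to y_\infty \in \mathcal{M}_0$ and passing to the limit yields $v(x) \geq v(y_\infty) + h^{n\infty}(y_\infty,x)$. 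The remaining delicate step is to replace $y_\infty$ by its $\sim_n$-class representative $y' \in \mathbb{M}_n$: an auxiliary lemma showing that $v(y_\infty) = v(y')$ and $h^{n\infty}(y_\infty,x) = h^{n\infty}(y',x)$ whenever $d_n(y_\infty,y') = 0$ should follow from the triangle-type inequality for $h^{n\infty}$ combined with the $h^{n\infty}$-domination of $v$.
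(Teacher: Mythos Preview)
Your outline is broadly correct and parallels the paper's proof closely: well-definition of $\Phi_n$ via $n$-periodicity and passage to the $\liminf$, well-definition of $\Psi_n$ via $\mathcal{T}^n h^{n\infty}(y,\cdot)=h^{n\infty+n}(y,\cdot)=h^{n\infty}(y,\cdot)$ together with Proposition~\ref{ViscosityInf}, and the identity $\Phi_n\circ\Psi_n=\mathrm{id}$ by domination plus $h^{n\infty}(x,x)=0$ on $\mathbb{M}_n$. (A small slip: you write ``$h^{n\infty}(x,x)=0$ for $x\in\mathcal{M}_0$''; the paper only defines this to hold on $\mathcal{M}_n$, which is all you need since $\mathbb{M}_n\subset\mathcal{M}_n$.)

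There is, however, a genuine gap in your treatment of $\Psi_n\circ\Phi_n=\mathrm{id}$. You correctly locate the delicate point at the accumulation value $y_\infty$, but you misidentify what is delicate. The ``auxiliary lemma'' you sketch---that $v(y_\infty)=v(y')$ and $h^{n\infty}(y_\infty,x)=h^{n\infty}(y',x)$ whenever $d_n(y_\infty,y')=0$---is indeed routine from domination and the triangle inequality. What is \emph{not} routine, and what you skip, is showing that $y_\infty\in\mathcal{M}_n$ in the first place, so that a $\sim_n$-representative $y'\in\mathbb{M}_n$ even exists. The paper handles this differently: rather than proving the formula directly for arbitrary $x\in M$, it sets $w=\Psi_n(\Phi_n(v))$, restricts to $x\in\mathcal{M}_0$ with its Mather curve $x(t)=\pi\circ\phi_L^t(\tilde{x})$, extracts $x(-nk_i)\to x_\alpha\in\mathcal{M}_0$ with $k_{i+1}-k_i\to\infty$, and then uses calibration and $n$-periodicity of $v$ to compute
\[
0\leq h^{n\infty}(x_\alpha,x_\alpha)\leq\lim_i h^{n(k_{i+1}-k_i)}(x(-nk_{i+1}),x(-nk_i))=\lim_i\bigl[v(x(-nk_i))-v(x(-nk_{i+1}))\bigr]=0,
\]
so $x_\alpha\in\mathcal{M}_n$. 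This gives $v=w$ on $\mathcal{M}_0$, and the Uniqueness Theorem~\ref{Uniqueness} finishes. Your direct approach on all of $M$ can be made to work, but you must insert this same computation (after also justifying, via a variant of Lemma~\ref{AlphaMather}, that extraction along multiples of $n$ lands in $\mathcal{M}_0$).
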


Another implication of the representation Theorem \ref{kRepresentationTheorem} is that the dynamics on the non-wandering set $\Omega(\mathcal{T})$ can be controlled by the dynamics of the Lagrangian flow $\phi_L$ (see Section \ref{SubsectionTonelli}) on the Mather set. Based on this, we can establish the following two results:

\begin{cor} \label{PeriodicMatherOmega} 
	If there exists a positive integer $N \geq 1$ such that $\phi^N_{L|\mathcal{M}_0} = Id_{\mathcal{M}_0}$, then $\Omega(\mathcal{T}) = \per_N(\mathcal{T})$.
\end{cor}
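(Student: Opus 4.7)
The plan is to combine the representation formula of Theorem~\ref{kRepresentationTheorem} with the Bernard--Roquejoffre calibration of non-wandering viscosity solutions on the Mather set. The inclusion $\per_N(\mathcal{T}) \subset \Omega(\mathcal{T})$ is automatic, since any periodic point of a semigroup is non-wandering; so I would focus on the reverse inclusion, namely proving that every $u \in \Omega(\mathcal{T})$ satisfies $\mathcal{T}^N u = u$.

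First I would extend $u$ to a global viscosity solution $u(t,x) = \mathcal{T}^{0,t} u(x)$ via Theorem~\ref{Glob=NW}. The hypothesis $\phi_L^N|_{\mathcal{M}_0} = Id$ ensures that the Lagrangian orbit through each Mather point $y \in \mathcal{M}_0$ is $N$-periodic. The calibration of non-wandering viscosity solutions on the Mather set (Theorem~\ref{CalibNW}) expresses the increment $u(N, y) - u(0, y)$ as the $L$-action of this orbit over one period corrected by $N\alpha_0$; since the normalized arc-length measure on a periodic Mather orbit is a Mather measure, this $L$-action realizes the \Mane critical value and cancels the correction term exactly. Hence
\begin{equation*}
\mathcal{T}^N u(y) \;=\; u(N, y) \;=\; u(0, y) \;=\; u(y) \qquad \text{for every } y \in \mathcal{M}_0,
\end{equation*}
and in particular for every $y \in \underline{\mathbb{M}} \subset \mathcal{M}_0$.

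By Proposition~\ref{Isometry}, $\mathcal{T}^N u$ again lies in $\Omega(\mathcal{T})$, so $u$ and $\mathcal{T}^N u$ are two non-wandering viscosity solutions whose restrictions to $\underline{\mathbb{M}}$ coincide. The injectivity of the restriction map $\Phi_{\underline{k}}$ from Theorem~\ref{kRepresentationTheorem} then forces $u = \mathcal{T}^N u$, that is, $u \in \per_N(\mathcal{T})$. I do not expect a serious obstacle: the heavy lifting has already been done by the representation theorem and by the Bernard--Roquejoffre calibration, and the argument collapses to a one-line closure as soon as the calibrating orbit returns to its starting point. The only point requiring a sentence of care is the exact cancellation between the action over one period and the $N\alpha_0$ Lax-Oleinik correction, which is a standard consequence of the definition of $\alpha_0$.
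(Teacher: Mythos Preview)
Your proof is correct, and the overall strategy is sound. The approaches differ in how they finish after the key observation that $u(N,y)=u(y)$ for every $y\in\mathcal{M}_0$: you invoke the injectivity of $\Phi_{\underline{k}}$ (equivalently, the Uniqueness Theorem~\ref{Uniqueness}) directly on the pair $u$ and $\mathcal{T}^N u$, whereas the paper instead shows that $v_{|\mathcal{M}_0}$ is $h^{N\infty}$-dominated and then applies the periodic representation $\Psi_N$ of Theorem~\ref{PeriodicRepresentationTheorem} to conclude $v\in\per_N(\mathcal{T})$. Your route is slightly more economical in that it avoids the intermediate $N$-domination step and the separate periodic representation formula; the paper's route, on the other hand, illustrates how Corollary~\ref{PeriodicMatherOmega} fits into the periodic representation machinery it has just built.

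One small point worth tightening: your claim that ``the normalized arc-length measure on a periodic Mather orbit is a Mather measure'' is true but not immediate from the definition, since $\tilde{\mathcal{M}}_0$ is only the \emph{closure} of the union of supports of minimizing measures. The cleanest justification is exactly what the paper does: apply Proposition~\ref{CalibNW} to a weak-KAM solution $w$, which gives $h^{N}(y,y)=w(y)-w(y)=0$ and hence the action over one period equals $-N\alpha_0$. You allude to this as a ``standard consequence,'' which is fair, but spelling it out via the weak-KAM calibration would make the argument self-contained.
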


\begin{cor} \label{UniformlyRecurrentCor}
	If there exists an increasing sequence of positive integers $\underline{p}=(p_n)_{n \geq 0}$ such that $\phi^{-p_n}_{L|\tilde{\mathcal{M}}_0}$ uniformly converges to the identity, then the elements $v$ of $\Omega(\mathcal{T})$ are $\underline{p}$-recurrent i.e $\lim_n \mathcal{T}^{p_n}v = v$ with a uniform convergence on $v$.
\end{cor}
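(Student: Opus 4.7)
The plan is to use the representation formula (Theorem~\ref{kRepresentationTheorem}) to reduce the sup-norm estimate to the set $\underline{\mathbb{M}}$, and then to exploit the calibration of non-wandering viscosity solutions on the Mather set (Theorem~\ref{CalibNW}) together with a fixed weak-KAM reference solution to cancel the (a priori unbounded) action term.

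Since $\mathcal{T}^{p_n}v$ still lies in $\Omega(\mathcal{T})$ by Proposition~\ref{Isometry}, both $v$ and $\mathcal{T}^{p_n}v$ admit the representation of Theorem~\ref{kRepresentationTheorem}; comparing two infima over the same index set with the same cost $\underline{k}(y,\cdot)$ gives
\[
\Vert\mathcal{T}^{p_n}v - v\Vert_\infty \leq \sup_{y \in \underline{\mathbb{M}}} |\mathcal{T}^{p_n}v(y) - v(y)|.
\]
Thus it suffices to bound this right-hand side uniformly in $v$. Fix $y \in \underline{\mathbb{M}} \subset \mathcal{M}_0$ with a lift $(y,v_y) \in \tilde{\mathcal{M}}_0$, set $(y_n,v_{y_n}) = \phi_L^{-p_n}(y,v_y)$, and let $\gamma_n(\tau) = \pi(\phi_L^\tau(y_n,v_{y_n}))$ on $[0,p_n]$, so $\gamma_n(0) = y_n$ and $\gamma_n(p_n) = y$. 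Calibration of the global extension $u(t,x) = \mathcal{T}^{0,t}v(x)$ along this Mather orbit gives
\[
\mathcal{T}^{p_n}v(y) - v(y_n) = \int_0^{p_n} L(\sigma,\gamma_n,\dot\gamma_n)\,d\sigma - \alpha_0 p_n.
\]
Applying the same space-time calibration to a fixed weak-KAM solution $u^-$ (whose existence is classical for Tonelli Hamiltonians), and using that $u^-$ is $1$-time periodic while $p_n$ is an integer, one obtains
\[
u^-(y) - u^-(y_n) = \int_0^{p_n} L(\sigma,\gamma_n,\dot\gamma_n)\,d\sigma - \alpha_0 p_n.
\]
Subtracting these two identities eliminates the action integral and yields
\[
\mathcal{T}^{p_n}v(y) - v(y) = \bigl(v(y_n) - v(y)\bigr) + \bigl(u^-(y) - u^-(y_n)\bigr).
\]

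Finally, both $v \in \Omega(\mathcal{T})$ and $u^-$ are $\kappa$-Lipschitz for a constant $\kappa$ depending only on $H$ (standard Tonelli semi-concavity, using Proposition~\ref{Global=Bdd}), so $|\mathcal{T}^{p_n}v(y) - v(y)| \leq 2\kappa\, d(y_n,y)$. By hypothesis $\phi_L^{-p_n} \to \mathrm{Id}$ uniformly on $\tilde{\mathcal{M}}_0$, hence $d(y_n,y) \to 0$ uniformly in the lift $(y,v_y)$, and so uniformly in $y \in \underline{\mathbb{M}}$; together with the first step, this yields $\Vert\mathcal{T}^{p_n}v - v\Vert_\infty \to 0$ uniformly in $v \in \Omega(\mathcal{T})$. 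I expect the main delicate point to be the cancellation step: the two calibration identities share the \emph{same} action integral because they are taken along the same Lagrangian orbit over the same time interval, with the $1$-time periodicity of $L$ (combined with $p_n \in \mathbb{Z}$) needed only to collapse the space-time value of $u^-$ at the endpoints to a single function on $M$. Without this weak-KAM reference to absorb the unbounded term $\int_0^{p_n}L - \alpha_0 p_n$, the hypothesis alone could not control the action excess.
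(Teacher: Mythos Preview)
Your proof is correct and takes a genuinely different route from the paper. The paper first specializes the representation formula to this case by showing $\underline{k}=h^{\underline{p}}$ on $\mathcal{M}_0\times M$ (Proposition~\ref{RecurrenthpProp} and Theorem~\ref{RecurrentRepresentationTheorem}), then invokes the uniform $\underline{p}$-recurrence of the barriers $h^{\underline{p}}(y,\cdot)$ established in identity~\eqref{hpRecurrenceFormula}; comparing the two infima $\inf_y\{\psi(y)+h^{\underline{p}+p_n}(y,\cdot)\}$ and $\inf_y\{\psi(y)+h^{\underline{p}}(y,\cdot)\}$ yields the bound. Your argument instead applies Theorem~\ref{kRepresentationTheorem} directly to both $v$ and $\mathcal{T}^{p_n}v$ with the \emph{same} cost $\underline{k}(y,\cdot)$, reducing to a pointwise estimate on $\underline{\mathbb{M}}$, and then uses calibration (Proposition~\ref{CalibNW}) of $v$ and of a fixed weak-KAM solution along the same Mather orbit to cancel the action. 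This bypasses entirely the barrier-recurrence machinery (the rather technical proof of~\eqref{hpRecurrenceFormula}) and the identification $\underline{k}=h^{\underline{p}}$; in exchange, the paper's approach yields the same explicit quantitative bound $2\kappa_1\sup_{z\in\mathcal{M}_0}d(z(-p_n),z)$, which you recover as well. One cosmetic point: with the paper's conventions the calibration identity reads $\mathcal{T}^{p_n}v(y)-v(y_n)=\int_0^{p_n}L\,d\sigma+\alpha_0 p_n$ (plus, not minus, cf.\ Definition~\ref{CalibDefi}); this is harmless since the same term appears in both identities and cancels upon subtraction.
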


\subsection{Structure of the Article}

In Section \ref{SectionTools}, we introduce the preliminary tools from weak-KAM theory and Aubry-Mather theory. To ensure thoroughness, we provide proofs for all key results, though we omit extensive details, thereby offering an introduction to weak-KAM theory in the non-autonomous setting. Section \ref{SectionLONW} examines the action of the Lax-Oleinik operator $\mathcal{T}$ on its non-wandering set $\Omega(\mathcal{T})$ and presents a proof of the characterization of non-wandering viscosity solutions stated in Theorem \ref{Glob=NW}. Section \ref{SectionUniqueness} provides a proof of the uniqueness theorem for non-wandering viscosity solutions on the Mather set $\mathcal{M}_0$, following \cite{MR2041603}. Here, the calibration on the Mather set is demonstrated using a classical approach, inspired by A. Fathi’s method in the autonomous case. In Section \ref{SectionRepresentation}, we define generalized Peierls barriers, discuss their properties, and then present the proof of the main result, Theorem \ref{kRepresentationTheorem}. Finally, Section \ref{SectionApplication} is dedicated to exploring various examples and proving the corollaries stated in the introduction.

\section{Preliminary Tools and Properties} \label{SectionTools}

This section introduces the primary tools utilized in the construction process, drawn from the Weak-KAM and the Aubry-Mather theories. To ensure comprehensiveness, we will demonstrate the majority of the properties while omitting lengthy and intricate proofs.\\

These properties were initially established in the autonomous setting. And some were subsequently studied in the discrete case, which encompasses the non-autonomous setting (See \cite{MR2387248}, \cite{MR2753949}, \cite{MR2874895}. A more specific exposition of the non-autonomous weak-KAM theory has been conducted in \cite{MR2393423} within the the pseudographs point of view. In this section, we provide a more classical presentation, similar to what has been written by A.Fathi in \cite{fathi2008weak}.

\subsection{Tonelli Hamiltonians and the Lax-Oleinik Operator} \label{SubsectionTonelli}

We define the notion of Tonelli Hamiltonians, initially introduced in \cite{MR1109661}. These are Hamiltonians that are convex and superlinear in the fibres, serving as the tame framework for the weak-KAM theory.

\begin{defi} \label{TonelliDef}
	A $1$-time-periodic Hamiltonian $H(t,x,p) : \mathbb{T}^1 \times T^*M \to \mathbb{R}$ is said \textit{Tonelli} if it satisfies the following classical hypotheses :
	\begin{itemize}
		\item Regularity: $H$ is $\mathcal{C}^2$.
		\item Strict convexity: $\partial_{pp}H(t,x,p)>0$ for all $(t,x,p) \in \mathbb{T}^1 \times T^*M$.
		\item Superlinearity: $H(t,x,p)/|p| \to \infty$ as $|p| \to \infty$ for each $(t,x) \in \mathbb{T}^1 \times M$.
		\item Completeness: The Hamiltonian vector field $X_H(t,x,p) = (\partial_pH(t,x,p),-\partial_xH(t,x,p))$ and hence its flow $\phi_H^{t,s}$ is complete in the sense that the flow curves are defined for all times $t \in \mathbb{R}$.
	\end{itemize}
\end{defi}
 
Under these assumptions, one can associate to $H(t,x,p)$ a time-periodic Tonelli \textit{Lagrangian} $L(t,x,v): \mathbb{T}^1 \times TM \to \mathbb{R}$ given by
\begin{equation} \label{Lag}
	L(t,x,v) = \max_{p \in T_x^*M} \{p(v) - H(t,x,p)\}
\end{equation}
which symmetrically gives 
\begin{equation}\label{Ham} 
	H(t,x,p) = \max_{v \in T_xM} \{p(v) - L(t,x,v)\}
\end{equation}

\par The \textit{Euler-Lagrange flow} also named \textit{Lagrangian flow} $\phi_L^{s,t}$ is the conjugate to the Hamiltonian flow $\phi_H^{s,t}$ by the \textit{Legendre map}\footnote{The Tonelli assumptions imply that for all time $t \in \mathbb{R}$, the Legendre map $\mathcal{L}$ is a diffeomorphism between $\{t\} \times TM$ and $\{t\} \times T^*M$ (see \cite{fathi2008weak} for details).} $\mathcal{L}(t,x,v) = \big(t,x,\partial_vL(t,x,v)\big)$.  We adopt the notation $\phi_L^t$ for $\phi_L^{0,t}$.\\

If $0\leq s \leq t$ are two real times and $x$ and $y$ are two points of $M$, we define the following quantities
\begin{itemize}
	\item For all absolutely continuous curve $\gamma :[s,t] \to M$, the \textit{action} of $\gamma$ is
	\begin{equation}
		A_L(\gamma) = \int_s^t L(\tau,\gamma(\tau), \dot{\gamma}(\tau)) \; d\tau
	\end{equation}
	\item the \textit{potential} between $(s,x)$ and $(t,y)$ is
	\begin{equation} \label{Potential0}
		h_0^{s,t} (x,y) = \inf \left\{ A_L(\gamma) \; \left| \;
		\begin{matrix}
			\gamma : & [s,t] \to M \\
			& s \mapsto x \\
			& t \mapsto y
		\end{matrix} \right.	\right\}
	\end{equation}
	where the infimum is taken over such absolutely continuous curves $\gamma$.
\end{itemize}

\begin{rem}
	The demanded absolute continuity of the curves enables to define this integral. All the curves and all the infimum over the curves that may be considered in this paper will be in the family of absolutely continuous curves. We will refrain from recalling it every time.
\end{rem}

We now introduce the Lax-Oleinik operator mentioned in the introduction and used to generate the viscosity solutions of the Hamilton-Jacobi equation (\ref{HJalpha}).

\begin{defi} \label{LODef}
	Fix two times $s<t$.
	\begin{enumerate}
		\item The \textit{Lax-Oleinik operator} $\mathcal{T}_0^{s,t}: \mathcal{C}(M,\mathbb{R}) \to \mathcal{C}(M,\mathbb{R})$ is defined as
		\begin{equation} \label{LO}
			\begin{split}
				\mathcal{T}_0^{s,t} u_0(x) & 
				= \inf_{
				\begin{matrix}
					\gamma : [s,t] \rightarrow M \\
					t \mapsto x 
				\end{matrix}
				} \left\{  u_0(\gamma(s)) + \int_s^t L(\tau,\gamma(\tau), \dot{\gamma}(\tau)) \;d\tau \right\} \\
				& = \inf_{y \in M} \left\{  u_0(y) + h_0^{s,t}(y,x) \right\}\\
			\end{split}
		\end{equation}
		We adopt the notations $\mathcal{T}_0^t$ for $\mathcal{T}_0^{0,t}$ and $\mathcal{T}_0$ for $\mathcal{T}_0^{0,1}$.
		\item The \textit{Mañé critical value} $\alpha_0$ is defined as 
		\begin{equation}\label{ManeCritValue}
			\alpha_0 = - \inf_\mu \left\{\int_{\mathbb{T}^1 \times TM}L \;d\mu \right\}
		\end{equation}
		where the infimum is taken over compact supported Borel probability measures $\mu$ invariant by the Euler-Lagrangian flow corresponding to $L$.\\
		\item The \textit{full Lax-Oleinik operator} $\mathcal{T}^{s,t}: \mathcal{C}(M,\mathbb{R}) \to \mathcal{C}(M,\mathbb{R})$ is defined as
		\begin{equation}
			\mathcal{T}^{s,t} u_0(x) = \mathcal{T}^{s,t} u_0(x) + \alpha_0.(t-s)
		\end{equation}
		We adopt the notations $\mathcal{T}^t$ for $\mathcal{T}^{0,t}$ and $\mathcal{T}$ for $\mathcal{T}^{0,1}$.
	\end{enumerate}
\end{defi}

One can verify that for all $s<t<\tau$, $\mathcal{T}^{t,\tau} \circ \mathcal{T}^{s,t} = \mathcal{T}^{s, \tau} $. Additionally, Since $L$ is time-periodic, $\mathcal{T}^{t+1} = \mathcal{T}^t \circ \mathcal{T}$. Hence $\{\mathcal{T}^n \}_{n\in \mathbb{N}}$ is a discrete semi-group acting on $\mathcal{C}(M,\mathbb{R})$, called the Lax-Oleinik semi-group. These properties are also verified by $\mathcal{T}_0$. The main focus of this paper is the asymptotic behaviour of $\mathcal{T}$.\\

\begin{defi}
	\begin{enumerate}
		\item To all scalar map $u \in \mathcal{C}(M,\mathbb{R})$ and all time $s \in \mathbb{R}$, we associate the \textit{viscosity solution of the Hamilton-Jacobi equation \eqref{HJalpha}} $u : [s,+\infty) \times M \to \mathbb{R}$ defined by $u(t,x) = \mathcal{T}^{s,t}u(x)$.
		\item If $\mathcal{T}u = u$, then $u$ is called a \textit{weak-KAM solution} of the Hamilton-Jacobi equation (\ref{HJalpha}). In other words, weak-KAM solutions are the initial data of one-time periodic viscosity solutions.
		\item We denote by 
		\begin{enumerate}
			\item $\fix (\mathcal{T})$ the set of fixed points of the operator $\mathcal{T}$, namely the set of weak-KAM solutions.
			\item $\per (\mathcal{T})$ the set of periodic points of the operator $\mathcal{T}$.
			\item For all $n \geq 1$, $\per_n (\mathcal{T})$ the set of $n$-periodic points of the operator $\mathcal{T}$. The integer $n$ need not to be the minimal period of elements of $\per_n (\mathcal{T})$.
		\end{enumerate}
		
	\end{enumerate}  
\end{defi}

\begin{rem} \label{RemarqueIntro}
	\begin{enumerate}
		\item By abuse of language, we will often refer to the initial condition $u(x)$ as a viscosity solution while referring in reality to the corresponding viscosity solution $u(t,x) = \mathcal{T}^{t}u(x)$.
		
		\item The Lax-Oleinik operator $\mathcal{T}_0^{s,t}$ is the generator of the viscosity solutions of the Hamilton-Jacobi equation
		\begin{equation} \label{HJ}
			\partial_t u + H(t,x,d_xu) =0
		\end{equation} 
		It is easy to verify that a map $u \in \mathcal{C}(\mathbb{R} \times M, \mathbb{R})$ is a viscosity solution of (\ref{HJalpha}) if and only if $u+ \alpha_0 t$ is a viscosity solution of (\ref{HJ}).  
		\item Replacing $H$ by $H- \alpha_0$ and $L$ by $L+ \alpha_0$, it is always possible to assume that the critical \Mane value $\alpha_0$ is null. In this case, $\mathcal{T} = \mathcal{T}_0$.
	\end{enumerate}
\end{rem}

\subsection{Existence of Minimizing Curves}

We give classical results on minimizing curves that stem directly from the theory of variational calculus. For certain standard statements, we will avoid providing tedious proofs that can be found in the autonomous framework in \cite{fathi2008weak} and \cite{MR1720372}.\\

\begin{defi} \label{MinimizingDef}
	A \textit{minimizing curve} $\gamma : I \to M$ defined on an interval $I$ is a curve such that for all $s<t$ in $I$,
	\begin{equation} \label{MinimizingFormula}
		\int_s^t L(\tau,\gamma(\tau), \dot{\gamma}(\tau)) \; d\tau = h_0^{s,t}( \gamma(s), \gamma(t))
	\end{equation}
\end{defi}

\begin{prop}\label{MinimizingProp}
	\begin{enumerate}
		\item If for some times $s<t$, the curve $\gamma$ verifies (\ref{MinimizingFormula}), then it is minimizing on $[s,t]$.
		\item A minimizing curve $\gamma$ is as regular as the Lagrangian $L$ and it follows the Lagrangian flow $\phi_L$ i.e for all time $\tau \in [s,t]$, $(\gamma(\tau), \dot{\gamma}(\tau)) = \phi_L^{s,\tau}(\gamma(0), \dot{\gamma}(0))$. These curves verify the Euler-Lagrange equation
		\begin{equation}
			\partial_x L(\tau,\gamma(\tau), \dot{\gamma}(\tau)) = \frac{d}{d\tau} \big(\partial_v L(\tau,\gamma(\tau), \dot{\gamma}(\tau))\big)
		\end{equation}
	\end{enumerate}
\end{prop}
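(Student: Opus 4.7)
For part (1), the plan is a textbook gluing argument. Suppose $\gamma:[s,t]\to M$ satisfies (\ref{MinimizingFormula}) but fails to be minimizing on some subinterval $[\sigma_1,\sigma_2]\subset[s,t]$. Then by definition of the potential there is an absolutely continuous curve $\eta:[\sigma_1,\sigma_2]\to M$ with $\eta(\sigma_1)=\gamma(\sigma_1)$, $\eta(\sigma_2)=\gamma(\sigma_2)$ and $A_L(\eta)<A_L(\gamma|_{[\sigma_1,\sigma_2]})$. Concatenate: define $\tilde\gamma$ to agree with $\gamma$ on $[s,\sigma_1]\cup[\sigma_2,t]$ and with $\eta$ on $[\sigma_1,\sigma_2]$. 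This is absolutely continuous (as a concatenation of absolutely continuous curves matching at the endpoints), shares its endpoints with $\gamma$, and has strictly smaller action; this contradicts the identity $A_L(\gamma)=h_0^{s,t}(\gamma(s),\gamma(t))$.

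For part (2), the plan is to recover the classical Tonelli regularity theorem. Working in local coordinates, I would first invoke superlinearity to obtain an a priori $L^\infty$ bound on $\dot\gamma$ (Tonelli's estimate: because minimizers with prescribed endpoints have uniformly bounded average velocity and $L(t,x,v)/|v|\to\infty$, the essential supremum of $|\dot\gamma|$ is controlled). Next, for any compactly supported test function $\eta\in C_c^\infty((s,t),\mathbb{R}^{\dim M})$, the minimality of $\gamma$ against the competitors $\gamma+\varepsilon\eta$ yields the weak Euler–Lagrange identity
\begin{equation*}
\int_s^t \bigl(\partial_v L(\tau,\gamma,\dot\gamma)\cdot\dot\eta+\partial_x L(\tau,\gamma,\dot\gamma)\cdot\eta\bigr)\,d\tau=0.
\end{equation*}
The Du Bois–Reymond lemma then gives that $\tau\mapsto p(\tau):=\partial_v L(\tau,\gamma(\tau),\dot\gamma(\tau))$ is absolutely continuous with derivative $\partial_x L(\tau,\gamma,\dot\gamma)$ almost everywhere.

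The regularity bootstrap now uses strict convexity. Since $\partial_{vv}L>0$, the Legendre map $v\mapsto\partial_v L(\tau,x,v)$ is a $C^1$-diffeomorphism onto its image, so we can solve $\dot\gamma(\tau)=\Psi(\tau,\gamma(\tau),p(\tau))$ with $\Psi$ as regular as $L$ allows. Feeding the continuity of $p$ into this relation upgrades $\dot\gamma$ from $L^\infty$ to continuous, whence $\gamma\in C^1$; iterating (or invoking the implicit function theorem on the Hamiltonian side, where $\phi_H$ is $C^1$ and conjugate to $\phi_L$ via the Legendre diffeomorphism $\mathcal{L}$) upgrades $\gamma$ to the same regularity class as $L$, i.e.\ $C^2$ under the Tonelli hypotheses. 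Once $\gamma$ is $C^2$, the Euler–Lagrange equation holds in the classical sense, and the pair $(\gamma,\dot\gamma)$ is precisely an integral curve of the Euler–Lagrange vector field, giving $(\gamma(\tau),\dot\gamma(\tau))=\phi_L^{s,\tau}(\gamma(s),\dot\gamma(s))$.

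The main obstacle is the regularity bootstrap: extracting $C^1$ (and then $C^2$) regularity from mere absolute continuity is the technical heart of the Tonelli theorem, and the cleanest route goes through the absolute continuity of the momentum $p$ rather than of $\dot\gamma$ itself. Since the argument is entirely parallel to the autonomous case, my write-up would give the gluing proof of (1) and the variational derivation of the Euler–Lagrange identity in detail, and refer to \cite{fathi2008weak} or \cite{MR1720372} for the remaining bootstrap, in line with the paper's stated policy of omitting lengthy classical proofs.
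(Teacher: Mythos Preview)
Your proposal is correct. For comparison: the paper does not give a full proof of this proposition. It offers only Remark~\ref{MinimizingRem}, which explains part~(1) by observing that the defect
\[
\int_{s'}^{t'} L(\tau,\gamma(\tau),\dot\gamma(\tau))\,d\tau - h_0^{s',t'}(\gamma(s'),\gamma(t')) \geq 0
\]
is non-negative and monotone in the interval $[s',t']$; hence if it vanishes on $[s,t]$ it vanishes on every subinterval. Your gluing argument is precisely the mechanism that establishes this monotonicity (replacing $\gamma$ on a subinterval by a better competitor can only decrease the total action), so the two presentations are equivalent. For part~(2) the paper gives no argument at all and defers entirely to \cite{fathi2008weak} and \cite{MR1720372}; your sketch of the Tonelli regularity bootstrap via the weak Euler--Lagrange identity, Du Bois--Reymond, and the Legendre inversion is accurate and substantially more detailed than anything in the paper itself.
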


\begin{rem} \label{MinimizingRem}
	The first property is due to the fact that the quantity
	\begin{equation*}
		\int_{s'}^{t'} L(\tau,\gamma(\tau), \dot{\gamma}(\tau)) - h_0^{s',t'}( \gamma(s'), \gamma(t'))  \; d\tau \geq 0
	\end{equation*}
	is non-negative and is increasing with the size of the interval $[s',t']$.
\end{rem}

A main brick in the study of Tonelli Lagrangians is the existence of minimizing curves.

\begin{theo}(Tonelli's Theorem) \label{TonelliTheorem}
	Let $L : \mathbb{T}^1 \times TM \to \mathbb{R}$ be a Tonelli Lagrangian. Let $s<t$ be two real times and $x$ and $y$ be two points of $M$. Then, there exists a minimizing curve $\gamma : [s,t] \to M$ linking $x$ to $y$.
\end{theo}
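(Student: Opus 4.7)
The plan is to apply the direct method of the calculus of variations, using superlinearity to extract compactness and convexity in the fibres to pass to the limit.

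First I would observe that the infimum defining $h_0^{s,t}(x,y)$ is finite: since $M$ is connected, there exists at least one smooth curve from $x$ to $y$ on $[s,t]$, and the continuity of $L$ on the compact set over which such a curve lives gives a finite action. Let $(\gamma_n)$ be a minimizing sequence of absolutely continuous curves from $x$ to $y$ on $[s,t]$, so that $A_L(\gamma_n) \to h_0^{s,t}(x,y)$. Fix a Riemannian metric on $M$ to make sense of $|\dot{\gamma}_n|$ and of equicontinuity.

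Next, I would use the superlinearity of $L$ to extract a compactness statement. Superlinearity provides, for each $C>0$, a constant $K_C$ with $L(t,x,v) \geq C|v| - K_C$; more strongly, it provides a superlinear function $\Theta:[0,\infty)\to[0,\infty)$ with $\Theta(r)/r\to\infty$ and $L(t,x,v) \geq \Theta(|v|) - K$ uniformly in $(t,x)$ (using compactness of $\mathbb{T}^1\times M$). From the bound on $A_L(\gamma_n)$, this yields a uniform bound $\sup_n \int_s^t \Theta(|\dot{\gamma}_n(\tau)|)\,d\tau < \infty$. The de la Vallée Poussin / Dunford-Pettis criterion then implies that $(\dot{\gamma}_n)$ is uniformly integrable, hence relatively weakly compact in $L^1([s,t])$. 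Uniform integrability also gives equicontinuity of $(\gamma_n)$ since $d(\gamma_n(t_2),\gamma_n(t_1)) \leq \int_{t_1}^{t_2}|\dot{\gamma}_n|\,d\tau$. By Arzelà-Ascoli, a subsequence $\gamma_n$ converges uniformly to some continuous $\gamma:[s,t]\to M$ with $\gamma(s)=x$ and $\gamma(t)=y$; by the weak $L^1$ compactness, we may further assume $\dot{\gamma}_n \rightharpoonup g$ in $L^1$, and passing to the limit in $\gamma_n(\tau) = \gamma_n(s) + \int_s^\tau \dot{\gamma}_n$ (working in local charts near $\gamma$) shows that $\gamma$ is absolutely continuous with $\dot{\gamma}=g$ almost everywhere.

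The final step is lower semicontinuity of the action. Since $L$ is strictly convex in $v$ and continuous in $(t,x)$, the classical result of Tonelli (or Ioffe's lower semicontinuity theorem for convex integrands) applies: $A_L$ is sequentially lower semicontinuous with respect to uniform convergence of curves together with weak $L^1$ convergence of derivatives. Therefore
\begin{equation*}
  A_L(\gamma) \;\leq\; \liminf_{n\to\infty} A_L(\gamma_n) \;=\; h_0^{s,t}(x,y).
\end{equation*}
The reverse inequality holds by definition of $h_0^{s,t}$, so $\gamma$ achieves the infimum and is the sought minimizer.

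The main obstacle is the compactness/lower semicontinuity interplay: one needs that superlinearity is strong enough to guarantee not just an $L^1$ bound on $\dot{\gamma}_n$ but its uniform integrability, and one needs that the convexity in $v$ is compatible with weak $L^1$ convergence of derivatives. Both are standard but delicate points; once they are in hand, the argument is routine. The rest of the claims of Proposition \ref{MinimizingProp} (regularity and Euler-Lagrange) would then follow from classical arguments applied to this $\gamma$, but here only existence is required.
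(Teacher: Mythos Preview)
Your argument is correct and is precisely the classical direct method of the calculus of variations. The paper does not actually give a proof of this theorem: it explicitly states that ``for certain standard statements, we will avoid providing tedious proofs that can be found in the autonomous framework in \cite{fathi2008weak} and \cite{MR1720372}'', and Tonelli's Theorem is one of those statements. What you wrote is exactly the argument one finds in those references (superlinearity $\Rightarrow$ uniform integrability via de la Vall\'ee Poussin, Arzel\`a--Ascoli plus weak $L^1$ compactness for the extraction, convexity in $v$ $\Rightarrow$ lower semicontinuity of the action), so there is nothing to compare.
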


A first application of this theorem to the Lax-Oleinik operator $\mathcal{T}$ gives the following 
\begin{cor} \label{TonelliLaxOleinik}
	Let $u$ be a scalar map in $\mathcal{C}(M,\mathbb{R})$. For all times $s<t$ and for all point $x$ of $M$, there exists a minimizing curve $\gamma : [s,t] \to M$ such that 
	\begin{equation}
		\gamma(t) = x \quad \text{and} \quad \mathcal{T}_0^{s,t}u(x) = u(\gamma(0)) + A_L(\gamma)
	\end{equation}
\end{cor}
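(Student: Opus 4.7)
The plan is to reduce the statement directly to Tonelli's Theorem \ref{TonelliTheorem}. Using the second form of the definition in \eqref{LO},
\[
\mathcal{T}_0^{s,t}u(x) = \inf_{y \in M}\{u(y) + h_0^{s,t}(y,x)\},
\]
so if we can show that this infimum is attained at some point $y_0 \in M$, then Tonelli's Theorem, applied to the endpoints $(y_0, x)$, yields a minimizing curve $\gamma : [s,t] \to M$ with $\gamma(s)=y_0$, $\gamma(t)=x$ and $A_L(\gamma) = h_0^{s,t}(y_0,x)$. Substituting back, we immediately obtain $\mathcal{T}_0^{s,t}u(x) = u(\gamma(s)) + A_L(\gamma)$, which is the desired identity.

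To justify that the infimum is attained, since $u$ is continuous and $M$ is compact, it suffices to check that the map $y \mapsto h_0^{s,t}(y,x)$ is lower semicontinuous on $M$. Given $y_n \to y$, I would invoke Tonelli's Theorem \ref{TonelliTheorem} to choose, for each $n$, a minimizing curve $\gamma_n : [s,t] \to M$ with $\gamma_n(s)=y_n$ and $\gamma_n(t)=x$. A comparison curve (for instance a reparametrised short geodesic joining $y_n$ to $x$) gives a uniform upper bound on the actions $A_L(\gamma_n) = h_0^{s,t}(y_n,x)$, and the superlinearity of $L$ then forces equi-absolute-continuity of the family $\{\gamma_n\}$. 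An Arzelà--Ascoli argument extracts a uniformly convergent subsequence with limit $\gamma : [s,t] \to M$ satisfying $\gamma(s)=y$ and $\gamma(t)=x$, and lower semicontinuity of the action along this limit gives
\[
h_0^{s,t}(y,x) \le A_L(\gamma) \le \liminf_n A_L(\gamma_n) = \liminf_n h_0^{s,t}(y_n,x),
\]
which is the claimed semicontinuity.

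The main technical point is really the lower semicontinuity of $A_L$ under uniform convergence of curves combined with the a priori compactness of minimizing sequences, both of which are classical ingredients in the Tonelli variational calculus and used in the proof of Theorem \ref{TonelliTheorem} itself; I would cite them rather than reprove them, referring to \cite{fathi2008weak,MR1720372}. An alternative, slightly more streamlined route bypasses the potential entirely: one works with the first form of \eqref{LO}, takes a minimizing sequence of curves $\gamma_n$ with $\gamma_n(t)=x$ and $u(\gamma_n(s)) + A_L(\gamma_n) \to \mathcal{T}_0^{s,t}u(x)$, uses compactness of $M$ together with continuity of $u$ and the same superlinearity estimate to extract a uniform limit, and checks that the limit curve realises the infimum. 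This packages both steps into a single compactness argument and is perhaps the most economical implementation of the proof.
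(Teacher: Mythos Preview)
Your proof is correct and follows the same two-step plan as the paper: show that the infimum $\inf_{y\in M}\{u(y)+h_0^{s,t}(y,x)\}$ is attained at some $y_0$, then apply Tonelli's Theorem \ref{TonelliTheorem} to the pair $(y_0,x)$.

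The only genuine difference is in how the first step is justified. The paper simply invokes (by forward reference) Proposition \ref{Regularity}, which gives Lipschitz continuity of $y\mapsto h_0^{s,t}(y,x)$; continuity on the compact manifold $M$ then immediately yields a minimiser. You instead prove lower semicontinuity of the potential from scratch, by taking minimising curves $\gamma_n$ joining $y_n$ to $x$, using superlinearity to get compactness, and passing to the limit via lower semicontinuity of the action. Your route is more self-contained and avoids the forward reference, at the cost of reproving part of the Tonelli machinery; the paper's route is a one-liner once the regularity result is in place. Your alternative ``streamlined'' version, working directly with a minimising sequence of curves in the first form of \eqref{LO}, is also valid and is indeed the most economical implementation if one does not want to rely on Proposition \ref{Regularity}.
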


\begin{proof}
	We use the potential $h^{s,t}_0(\cdot,x)$ introduced in (\ref{Potential0}). Recall that
	\begin{equation*}
		\mathcal{T}_0^{s,t}u(x) = \inf_{y \in M} \{u(y) + h_0^{s,t}(y,x) \}
	\end{equation*}
	We will see in Proposition \ref{Regularity} that the map $y \mapsto h_0^{s,t}(y,x)$ is continuous, and so is $y \mapsto u(y)+ h_0^{s,t}(y,x)$ on the compact manifold $M$. Thus, it admits a minimizing point $y$. We get
	\begin{equation*}
		\mathcal{T}_0^{s,t}u(x) = u(y) + h_0^{s,t}(y,x)
	\end{equation*}
	An application of Tonelli's theorem to $h_0^{s,t}(y,x)$ completes the proof.
\end{proof}

\subsection{A Priori Compactness and Regularity}

Another fundamental theorem is the A priori compactness property of minimizing curves for Tonelli Lagrangians. It has been first proven by John N. Mather in \cite{MR1109661}.

\begin{theo}(A Priori Compactness) \label{APrioriCompactness}
	Let $L : \mathbb{T}^1 \times TM \to \mathbb{R}$ be a Tonelli Lagrangian and fix a small positive $\varepsilon >0$. Then, there exists a compact subset $K_\varepsilon$ of $TM$ such that every minimizing curve $\gamma : [s,t] \to M$ with $t-s \geq \varepsilon$ verifies $(\gamma(\tau) , \dot{\gamma}(\tau)) \in K_\varepsilon$.
\end{theo}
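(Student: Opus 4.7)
The plan is to establish the compactness in three stages: first, bound the action of any minimizing segment of length at least $\varepsilon$; second, use superlinearity to translate this into an $L^1$-bound on $|\dot\gamma|$, yielding at least one instant of bounded speed in every such segment; third, propagate this pointwise bound to every time $\tau \in [s,t]$ using completeness and continuity of the Lagrangian flow. The first two steps are essentially variational calculus, while the third relies on the time-periodicity of $L$ in an essential way.

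For the action bound, fix any subinterval $[s',t'] \subset [s,t]$ with $t' - s' \geq \varepsilon$. Since $M$ is compact, the points $\gamma(s')$ and $\gamma(t')$ are joined by some minimizing geodesic; parametrizing it on $[s',t']$ at constant speed produces a competitor $\eta$ with $|\dot\eta| \leq \diam(M)/\varepsilon$. As $L$ is continuous on the compact set $\mathbb{T}^1 \times \{(x,v) \in TM : |v| \leq \diam(M)/\varepsilon\}$, its supremum $C_1$ there depends only on $\varepsilon$. By Proposition \ref{MinimizingProp}, $\gamma_{|[s',t']}$ is itself minimizing, so comparing with $\eta$ yields $A_L(\gamma_{|[s',t']}) \leq C_1 (t' - s')$. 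Next, superlinearity with slope $A = 1$ furnishes $B > 0$ such that $L(\tau, x, v) \geq |v| - B$ everywhere, whence
\begin{equation*}
	\int_{s'}^{t'} |\dot\gamma(\tau)| \, d\tau \leq (C_1 + B)(t' - s').
\end{equation*}
Setting $V := C_1 + B$, the mean value theorem shows that every subinterval of length $\varepsilon$ inside $[s,t]$ contains at least one instant $\tau_0$ with $|\dot\gamma(\tau_0)| \leq V$.

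For the final step, given any $\tau \in [s,t]$ select a length-$\varepsilon$ window $[s',t'] \subset [s,t]$ containing $\tau$ and choose the corresponding $\tau_0$ provided by the previous step; then $|\tau - \tau_0| \leq \varepsilon$ and $(\gamma(\tau_0), \dot\gamma(\tau_0))$ lies in the compact set $\tilde{K}_0 := \{(x,v) \in TM : |v| \leq V\}$. Since minimizing curves follow the Lagrangian flow (Proposition \ref{MinimizingProp}), $(\gamma(\tau), \dot\gamma(\tau)) = \phi_L^{\tau_0,\tau}(\gamma(\tau_0), \dot\gamma(\tau_0))$. By the $1$-time-periodicity of $L$ and the completeness of $\phi_L$ postulated in the Tonelli hypotheses, the set
\begin{equation*}
	K_\varepsilon := \bigl\{ \phi_L^{\sigma, \sigma + r}(z) : \sigma \in \mathbb{T}^1,\; |r| \leq \varepsilon,\; z \in \tilde{K}_0 \bigr\}
\end{equation*}
is the continuous image of the compact set $\mathbb{T}^1 \times [-\varepsilon, \varepsilon] \times \tilde{K}_0$ under the flow map, hence compact, and it contains $(\gamma(\tau), \dot\gamma(\tau))$.

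The main obstacle, by contrast with the autonomous case where energy conservation instantly upgrades a single pointwise bound into a global one, is the propagation step: one must invoke the time-periodicity of $L$ to pass to the compact extended base $\mathbb{T}^1$ and obtain a single compact set $K_\varepsilon$ that works uniformly over all translates of the starting time $\tau_0$. Aside from this subtlety, the argument reduces to two transparent estimates — a geodesic competitor on the upper side, superlinearity on the lower side — combined in the classical style of variational calculus.
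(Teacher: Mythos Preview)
The paper does not supply its own proof of this theorem: it is stated with attribution to Mather \cite{MR1109661} and then used as a black box. Your argument is correct and is essentially the classical proof one finds in standard expositions (e.g.\ Fathi's book \cite{fathi2008weak}): bound the action of short minimizing segments by a geodesic competitor, convert this into an $L^1$-speed bound via uniform superlinearity, extract one instant of controlled velocity per $\varepsilon$-window, and then propagate by the Euler--Lagrange flow using the compactness of the time circle $\mathbb{T}^1$.

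One minor point worth making explicit: you invoke superlinearity of $L$ in the form $L(\tau,x,v) \geq |v| - B$ with a \emph{global} constant $B$, whereas the paper's Definition~\ref{TonelliDef} states superlinearity pointwise in $(t,x)$ for $H$. The passage to a uniform lower bound for $L$ is standard---it follows from the compactness of $\mathbb{T}^1 \times M$ together with convexity and $C^2$-regularity---but since the paper does not spell this out, a one-line justification would tighten the argument.
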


\begin{cor} \label{MinimizingC1Compactness}
	For fixed times $s<t$, if $\gamma_n : [s,t] \to M$ is a sequence of minimizing curves, then it admits a subsequence that $C^1$-converges to a minimizing curve $\gamma:[s,t] \to M$.
\end{cor}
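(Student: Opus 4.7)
The plan is to combine the a priori compactness theorem with the fact, from Proposition \ref{MinimizingProp}, that minimizing curves are orbits of the Lagrangian flow $\phi_L$, so the whole curve is determined by its initial condition in $TM$.

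First, I would apply Theorem \ref{APrioriCompactness} with $\varepsilon = t-s > 0$ to obtain a compact set $K \subset TM$ such that
\begin{equation*}
	(\gamma_n(\tau), \dot{\gamma}_n(\tau)) \in K \qquad \text{for all } n \text{ and all } \tau \in [s,t].
\end{equation*}
In particular the initial lifts $(\gamma_n(s), \dot{\gamma}_n(s))$ all lie in $K$, so after passing to a subsequence (still denoted $\gamma_n$), they converge to some $(x_0, v_0) \in K$.

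Next, by Proposition \ref{MinimizingProp}, each $\gamma_n$ satisfies $(\gamma_n(\tau), \dot{\gamma}_n(\tau)) = \phi_L^{s,\tau}(\gamma_n(s), \dot{\gamma}_n(s))$, and similarly I define $\gamma(\tau) = \pi \circ \phi_L^{s,\tau}(x_0, v_0)$. By completeness of the Lagrangian flow and continuous dependence of $\phi_L^{s,\tau}$ on initial conditions (uniform on the compact $[s,t] \times K$), the maps $\tau \mapsto (\gamma_n(\tau), \dot{\gamma}_n(\tau))$ converge uniformly on $[s,t]$ to $\tau \mapsto (\gamma(\tau), \dot{\gamma}(\tau))$. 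This is precisely $C^1$-convergence $\gamma_n \to \gamma$.

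It remains to show $\gamma$ is itself minimizing. Each $\gamma_n$ satisfies
\begin{equation*}
	\int_s^t L(\tau, \gamma_n(\tau), \dot{\gamma}_n(\tau)) \, d\tau = h_0^{s,t}(\gamma_n(s), \gamma_n(t)).
\end{equation*}
The left-hand side converges to $\int_s^t L(\tau, \gamma(\tau), \dot{\gamma}(\tau)) \, d\tau$ by the $C^1$-convergence and the continuity of $L$ on the compact set $[s,t] \times K$. The right-hand side converges to $h_0^{s,t}(\gamma(s), \gamma(t))$ by the continuity of the potential in its endpoints (Proposition \ref{Regularity}, invoked forward as in Corollary \ref{TonelliLaxOleinik}). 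Passing to the limit gives $A_L(\gamma) = h_0^{s,t}(\gamma(s), \gamma(t))$, so $\gamma$ is minimizing on $[s,t]$, and hence minimizing as a curve by Proposition \ref{MinimizingProp}(1). The main obstacle, which is minor here, is the need for the continuity of the potential $h_0^{s,t}$ to close the argument; everything else is a direct application of a priori compactness plus continuous dependence for the ODE defining $\phi_L$.
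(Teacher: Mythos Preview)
Your proof is correct and follows essentially the same approach as the paper: extract a convergent subsequence of initial data via a priori compactness, use that minimizing curves are orbits of $\phi_L$ to upgrade to $C^1$-convergence, and then pass to the limit in the identity $A_L(\gamma_n)=h_0^{s,t}(\gamma_n(s),\gamma_n(t))$ using continuity of $L$ and of the potential (Proposition~\ref{Regularity}). The paper even makes the same forward reference to Proposition~\ref{Regularity} that you flagged.
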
  

\begin{proof}
	By A priori compactness, we can extract a subsequence $(\gamma_{k_n}(s), \dot{\gamma}_{k_n}(s))$ that converges to $(x,v) \in TM$. Since the curves $\gamma_n$ are minimizing, we have for all $\tau \in [s,t]$, $(\gamma_n(\tau), \dot{\gamma}_n(\tau)) = \phi_L^{s,\tau}(\gamma_n(s), \dot{\gamma}_n(s))$. Set $\gamma(\tau) = \pi \circ \phi_L^{s,\tau}(x,v)$ so that $(\gamma(\tau), \dot{\gamma}(\tau)) = \phi_L^{s,\tau}(x,v)$. By continuity of the Lagrangian flow $\phi_L$, we deduce the $C^1$-convergence of the curves $\gamma_{k_n}$ to $\gamma$ on the time interval $[s,t]$. By continuity of $h_0$ obtained from proposition \ref{Regularity}, we obtain
	\begin{align*}
		h_0^{s,t}(\gamma(s),\gamma(t)) = \lim_n h_0^{s,t}(\gamma_{k_n}(s),\gamma_{k_n}(t)) = \lim_n \int_s^t L(\tau, \gamma_{k_n}(\tau), \dot{\gamma}_{k_n}(\tau)) \; d\tau = \int_s^t L(\tau, \gamma(\tau), \dot{\gamma}(\tau)) \; d\tau
	\end{align*}
	where the last limit is due to the $C^1$-convergence. We conclude the the curve $\gamma$ is minimizing.
\end{proof}

A consequence of the A Priori Compactness is the regularization property of the Lax-Oleinik operator $\mathcal{T}$. 

\begin{prop} \label{Regularity}
	For all positive $\varepsilon >0$, there exists a positive constant $\kappa_\varepsilon>0$ such that for all times $s < t$ with $t-s \geq \varepsilon$, we have
	\begin{enumerate}
		\item The potential $h_0^{s,t} : M \times M \to \mathbb{R}$ is $\kappa_\varepsilon$-Lipschitz. Moreover, we can take  $\kappa_\varepsilon$ so that the time dependent potential $h_0 : \{0 \leq s \leq t- \varepsilon\}\times M \times M \to \mathbb{R}$ is still $\kappa_\varepsilon$-Lipschitz. 
		\item For all initial data $u \in \mathcal{C}(M,\mathbb{R})$, the maps $\mathcal{T}_0^{s,t}u$  and $\mathcal{T}^{s,t}u : M \to \mathbb{R}$ is $\kappa_\varepsilon$-Lipschitz on the set $\{0 \leq s \leq t- \varepsilon\}\times M$.
	\end{enumerate}
\end{prop}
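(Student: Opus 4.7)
My plan is to apply A Priori Compactness (Theorem~\ref{APrioriCompactness}) to produce a compact $K_\varepsilon \subset TM$ containing the graph of every minimizer on an interval of length $\geq \varepsilon$, and to extract from it a uniform velocity bound $R_\varepsilon$; I will then enlarge $K_\varepsilon$ to a compact $\tilde K_\varepsilon$ admitting speeds up to $R_\varepsilon + 1$ and set $M_\varepsilon := \sup_{\mathbb{T}^1 \times \tilde K_\varepsilon} |L|$, which is finite since $L$ is $C^2$ on this compact set. All bounds will be obtained by comparing a minimizer with a surgery-modified competitor whose graph stays in $\tilde K_\varepsilon$.

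\textbf{Spatial Lipschitz of $h_0^{s, t}$.} First I would fix $s < t$ with $t - s \geq \varepsilon$, points $x, y, y' \in M$ with $\rho := d(y, y') \leq \varepsilon / 2$, and a minimizer $\gamma : [s, t] \to M$ from $x$ to $y$ provided by Theorem~\ref{TonelliTheorem}. I would build a competitor $\gamma'$ that agrees with $\gamma$ on $[s, t - \rho]$ and, on $[t - \rho, t]$, travels at constant speed along a minimizing geodesic from $\gamma(t - \rho)$ to $y'$. Since $|\dot\gamma| \leq R_\varepsilon$ implies $d(\gamma(t - \rho), y) \leq R_\varepsilon \rho$, the new piece has speed $\leq R_\varepsilon + 1$ and stays in $\tilde K_\varepsilon$, hence
\[
	h_0^{s, t}(x, y') - h_0^{s, t}(x, y) \leq A_L(\gamma'|_{[t - \rho, t]}) - A_L(\gamma|_{[t - \rho, t]}) \leq 2 M_\varepsilon \rho.
\]
Swapping $(y, y')$, performing the symmetric surgery at the left endpoint for $x$, and chaining via a finite cover of $M$ by $\varepsilon/2$-balls will yield a global Lipschitz constant $\kappa_\varepsilon$ for $(x, y) \mapsto h_0^{s, t}(x, y)$ depending only on $\varepsilon$.

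\textbf{Joint Lipschitz in $(s, t)$.} For $\delta \in (0, \varepsilon / 2]$, I would obtain the upper bound $h_0^{s, t + \delta}(x, y) \leq h_0^{s, t}(x, y) + B \delta$ by appending the constant curve $\tau \mapsto y$ on $[t, t + \delta]$ to a minimizer on $[s, t]$, where $B := \sup_{\mathbb{T}^1 \times M} |L(\cdot, \cdot, 0)|$; a prepended constant piece at $x$ handles the $s$-variable. For the reverse inequality I would take a minimizer $\tilde \gamma : [s, t + \delta] \to M$ from $x$ to $y$, set $z := \tilde \gamma(t)$, and invoke Proposition~\ref{MinimizingProp} to write $h_0^{s, t + \delta}(x, y) = h_0^{s, t}(x, z) + h_0^{t, t + \delta}(z, y)$. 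A Priori Compactness applied to $\tilde \gamma$ (on an interval of length $> \varepsilon$) gives $d(y, z) \leq R_\varepsilon \delta$ and $|h_0^{t, t + \delta}(z, y)| \leq M_\varepsilon \delta$, and combining with the spatial Lipschitz estimate applied to $h_0^{s, t}(x, \cdot)$ yields $h_0^{s, t + \delta}(x, y) \geq h_0^{s, t}(x, y) - (\kappa_\varepsilon R_\varepsilon + M_\varepsilon) \delta$. After enlarging $\kappa_\varepsilon$, this produces the Lipschitz estimate on $h_0$ in all four variables on $\{0 \leq s \leq t - \varepsilon\} \times M \times M$.

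\textbf{The Lax-Oleinik operator and the main obstacle.} Since $\mathcal{T}_0^{s, t} u(x) = \inf_{y \in M}\{u(y) + h_0^{s, t}(y, x)\}$ is an infimum of functions of $x$ sharing the common Lipschitz constant $\kappa_\varepsilon$ (independently of $u$, $y$, $s$, and $t$), the map $\mathcal{T}_0^{s, t} u$ is itself $\kappa_\varepsilon$-Lipschitz; the same holds for $\mathcal{T}^{s, t} u = \mathcal{T}_0^{s, t} u + \alpha_0 (t - s)$, which differs by a constant in $x$. The main technical obstacle will be the lower bound in the time estimate: without A Priori Compactness there is no control on where the intermediate point $z = \tilde \gamma(t)$ sits relative to $y$, and the two-step surgery would give no uniform constant; it is precisely the velocity bound $R_\varepsilon$ together with the spatial Lipschitz bound already obtained that closes the loop.
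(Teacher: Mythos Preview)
The paper does not supply a proof of this proposition: it is presented as a classical consequence of the A~Priori Compactness Theorem~\ref{APrioriCompactness}, with the reader referred to \cite{fathi2008weak} and \cite{MR1720372} for details in the autonomous case. Your argument is precisely the standard one found in those references and is correct. The endpoint-surgery for the spatial estimate, the constant-extension for the upper time bound, and the restriction-of-a-minimizer combined with the already-established spatial Lipschitz bound for the lower time bound are all sound, and your identification of A~Priori Compactness as the key input (both for the velocity bound $R_\varepsilon$ and for controlling the intermediate point $z=\tilde\gamma(t)$) is exactly right.

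One small omission worth recording: in part~2 you argue only the Lipschitz estimate in $x$, whereas the statement (and its later uses, e.g.\ in the proof of Proposition~\ref{Global=Bdd}) also requires Lipschitz dependence on $(s,t)$. This is immediate from what you have already done: $\mathcal{T}_0^{s,t}u(x)=\inf_{y\in M}\{u(y)+h_0^{s,t}(y,x)\}$ is an infimum over $y$ of functions of $(s,t,x)$ that share the common Lipschitz constant $\kappa_\varepsilon$ you obtained for $h_0$ in part~1, and such an infimum inherits that constant.
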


Consequently, we get a regularity result on viscosity solutions.
 
\begin{cor} \label{Equicontinuity}
	For all viscosity solution $u: [s, + \infty) \times M \to \mathbb{R}$ and $v : \mathbb{R} \times M \to \mathbb{R}$ of the Hamilton-Jacobi equation (\ref{HJalpha}), the families $(u(t,\cdot))_{t \geq s+1}$ and $(v(t,\cdot))_{t \in \mathbb{R}}$ are $\kappa_1$-equilipschitz.
\end{cor}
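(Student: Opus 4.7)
The plan is to reduce the corollary directly to the regularization property in Proposition~\ref{Regularity}. The key identity is that for any initial time $\tau$ at which the solution is continuous, the semigroup property $\mathcal{T}^{s,t} = \mathcal{T}^{\tau, t} \circ \mathcal{T}^{s, \tau}$ lets us rewrite the value of the viscosity solution at time $t$ as the Lax-Oleinik image of its value one unit of time earlier.

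For the Cauchy solution $u : [s, +\infty) \times M \to \mathbb{R}$, I would fix any $t \geq s+1$ and use the semigroup property to write $u(t, \cdot) = \mathcal{T}^{s,t} u(s, \cdot)$. Since $t - s \geq 1$, applying part (2) of Proposition~\ref{Regularity} with $\varepsilon = 1$ gives that $u(t, \cdot)$ is $\kappa_1$-Lipschitz. The critical point is that the constant $\kappa_1$ supplied by Proposition~\ref{Regularity} depends only on $\varepsilon$ and not on the initial data or on the specific time $t$; this is precisely the equilipschitz statement for the family $(u(t, \cdot))_{t \geq s+1}$.

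For a global solution $v : \mathbb{R} \times M \to \mathbb{R}$, I would run the same argument starting one unit earlier: for any $t \in \mathbb{R}$, write $v(t, \cdot) = \mathcal{T}^{t-1, t} v(t-1, \cdot)$ and invoke Proposition~\ref{Regularity} again with $\varepsilon = 1$. One mild point requires care: as stated, Proposition~\ref{Regularity} phrases the Lipschitz constant on the set $\{0 \le s \le t-\varepsilon\}$. However, since $H$ (and hence $L$) is $1$-time-periodic, the operator $\mathcal{T}^{s,t}$ only depends on $s \bmod 1$ and on the time gap $t-s$, so the constant $\kappa_1$ transfers unchanged to any starting time $s \in \mathbb{R}$.

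I do not expect a genuine obstacle: the whole statement is essentially a packaging of Proposition~\ref{Regularity} once one uses the semigroup law to shift the starting time so as to guarantee a time gap of at least one, plus the harmless observation that time-periodicity of $H$ makes the regularization constant translation-invariant in the starting time.
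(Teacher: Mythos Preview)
Your argument is correct and is precisely the intended one: the paper states the corollary without proof as an immediate consequence of Proposition~\ref{Regularity}, and you have simply written out the details. Your remark that time-periodicity of $H$ removes the apparent restriction $s \geq 0$ in the statement of Proposition~\ref{Regularity} is a valid clarification that the paper leaves implicit.
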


\subsection{Calibrated Curves}

Calibrated curves represent a type of minimizing curves that are well adapted to a given viscosity solution in the following sense.

\begin{defi} \label{CalibDefi}
	Let $u(t,x)$ be a viscosity solution of (\ref{HJalpha}). A curve $\gamma : I \subset \mathbb{R} \to M$ defined on a real interval $I$ is said \textit{calibrated by $u$} or \textit{$u$-calibrated} if for all times $s<t$ of $I$, we have
	\begin{equation} \label{CalibrationEquation}
		\begin{split}
			u(t, \gamma(t)) &= u(s, \gamma(s)) + \int_s^t L(\tau, \gamma(\tau), \dot{\gamma}(\tau)) \; d\tau + \alpha_0.(t-s) \\
			&= u(s, \gamma(s)) + h_0^{s,t}(\gamma(s), \gamma(t)) + \alpha_0.(t-s) \\
			&= u(s, \gamma(s)) + h^{s,t}(\gamma(s), \gamma(t)) 
		\end{split}
	\end{equation}
	where the potential $h$ will be defined in Subsection \ref{SectionPotential}.
\end{defi}

\begin{rem}\label{CalibEL}
	\begin{enumerate}
		\item One observes from (\ref{CalibrationEquation}) that calibrated curves $\gamma$ realize the infimum in the definition (\ref{LO}) of the Lax-Oleinik operator. This means that all calibrated curves are minimizing and do follow the Lagrangian flow $\phi_L$.
		\item Same as for minimizing curves in Remark \ref{MinimizingRem}, if a curve $\gamma$ verifies (\ref{CalibrationEquation}) for some times $s<t$, then it verifies it for all $s \leq s'<t' \leq t$ and $\gamma$ is calibrated by $u$ on the interval $[s,t]$.
		\item For viscosity solutions of the translated Hamilton-Jacobi equation (\ref{HJ}), the good equation of calibration is the following
		\begin{equation}
			\begin{split}
				u(t, \gamma(t)) &= u(s, \gamma(s)) + \int_s^t L(\tau, \gamma(\tau), \dot{\gamma}(\tau)) \; d\tau \\
				&= u(s, \gamma(s)) + h_0^{s,t}(\gamma(s), \gamma(t))
			\end{split}
		\end{equation}
	\end{enumerate}
\end{rem}

\begin{prop} \label{CalibExist}
	Let $u(t,x): \mathbb{R} \times M \to \mathbb{R}$ be a viscosity solution of (\ref{HJalpha}). For all points $x$ of $M$ and for all times $t \in \mathbb{R}$, $u$ admits a calibrated curve $\gamma_x : (- \infty , t] \to M$ with $\gamma(t) = x$.
\end{prop}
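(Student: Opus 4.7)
The plan is to construct $\gamma_x$ by a classical exhaustion argument: for each integer $n \geq 1$, build a curve $\gamma_n : [t-n, t] \to M$ that is calibrated by $u$ on $[t-n,t]$ and then pass to the limit.

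\medskip

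First, I would fix $n \geq 1$ and apply Corollary \ref{TonelliLaxOleinik} to the initial data $u(t-n, \cdot)$ and the Lax-Oleinik operator $\mathcal{T}^{t-n,t}$. This produces a minimizing curve $\gamma_n:[t-n,t] \to M$ with $\gamma_n(t) = x$ such that
\begin{equation*}
u(t,x) = u(t-n,\gamma_n(t-n)) + h_0^{t-n,t}(\gamma_n(t-n),x) + \alpha_0 \cdot n.
\end{equation*}
By Remark \ref{CalibEL}, this equality propagates to all subintervals: for every $t-n \leq s' < t' \leq t$, the curve $\gamma_n$ satisfies the calibration equation \eqref{CalibrationEquation} on $[s',t']$.

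\medskip

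Next, I would use the A Priori Compactness Theorem \ref{APrioriCompactness}: since each $\gamma_n$ is minimizing on an interval of length $n \geq 1$, the terminal tangent vectors $(\gamma_n(t),\dot{\gamma}_n(t)) = (x,\dot{\gamma}_n(t))$ all lie in a fixed compact set $K_1 \subset TM$. I extract a subsequence along which $\dot{\gamma}_n(t) \to v$ for some $v \in T_xM$. Setting $\gamma(\tau) = \pi \circ \phi_L^{t,\tau}(x,v)$ defines a curve $\gamma : (-\infty,t] \to M$ that follows the Lagrangian flow, with $\gamma(t) = x$. Since the $\gamma_n$ also follow the Lagrangian flow from $(x,\dot{\gamma}_n(t))$, continuity of $\phi_L^{t,\cdot}$ gives $C^1$-convergence of $\gamma_n$ to $\gamma$ on every compact subinterval $[s,t]$ with $n$ large enough that $t-n \leq s$ — essentially the same argument as in Corollary \ref{MinimizingC1Compactness}, but with the convergence anchored at the right endpoint.

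\medskip

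Finally, I would verify that $\gamma$ is $u$-calibrated on $(-\infty,t]$. Fix arbitrary $s < t' \leq t$ and choose $n$ large so that $t - n \leq s$. The calibration equation for $\gamma_n$ on $[s,t']$ reads
\begin{equation*}
u(t',\gamma_n(t')) = u(s,\gamma_n(s)) + h_0^{s,t'}(\gamma_n(s),\gamma_n(t')) + \alpha_0(t'-s).
\end{equation*}
The left and right endpoints $\gamma_n(s), \gamma_n(t')$ converge to $\gamma(s), \gamma(t')$, so passing to the limit using continuity of $u$ and of $h_0^{s,t'}$ (Proposition \ref{Regularity}) yields the calibration identity for $\gamma$ on $[s,t']$. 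Since $s$ and $t'$ were arbitrary, $\gamma$ is calibrated by $u$ on all of $(-\infty,t]$.

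\medskip

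The only subtle point is the passage to the limit on an unbounded time interval, and it is handled by the fact that each $\gamma_n$ is entirely determined by its terminal tangent vector via the Lagrangian flow, so the single convergence $\dot{\gamma}_n(t) \to v$ delivers $C^1$-convergence on every compact subinterval without needing a diagonal extraction.
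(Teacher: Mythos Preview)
Your proof is correct and follows essentially the same strategy as the paper: build finite-time calibrated curves $\gamma_n$ via Corollary~\ref{TonelliLaxOleinik}, use A~Priori Compactness to extract a $C^1$-convergent subsequence, and pass to the limit in the calibration identity. The one difference is that the paper invokes a diagonal extraction over compact subintervals, whereas you extract only at the terminal tangent vector $(x,\dot\gamma_n(t))$ and let the Lagrangian flow carry the convergence to every compact subinterval---exactly the trick already used in Corollary~\ref{MinimizingC1Compactness}. This is a clean simplification, not a genuinely different route. One cosmetic remark: since you already have $C^1$-convergence on $[s,t']$, you could pass to the limit directly in the integral form $\int_s^{t'} L(\tau,\gamma_n,\dot\gamma_n)\,d\tau$ rather than going through the continuity of $h_0^{s,t'}$; this makes it immediate that the limiting $\gamma$ satisfies the calibration equation in its primary form~\eqref{CalibrationEquation}, without a separate check that $\gamma$ is minimizing.
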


\begin{proof}
	Fix a point $x$ in $M$ and a time $t$ in $\mathbb{R}$. We know that for all $s<t$, $u(t, \cdot) = \mathcal{T}^{s,t}u(s,\cdot)$. Then, applying Corollary \ref{TonelliLaxOleinik}, we obtain curves $\gamma_n : [-n,t] \to M$ with $\gamma_n(t) = x$, for large $n$, such that 
	\begin{equation*} 
		u(t, \gamma_n(t)) = u(s, \gamma_n(-n)) + \int_{-n}^t \Big( L(\tau, \gamma_n(\tau), \dot{\gamma_n}(\tau)) + \alpha_0 \Big) \; d\tau 
	\end{equation*}
The Remark \ref{CalibEL} points out that these curves $\gamma_n$ are $u$-calibrated .\\
	The A priori compactness Theorem \ref{APrioriCompactness} says that the curves $(\gamma_n, \dot{\gamma}_n)$ have their images in a same compact set $K$ of $TM$. Thus, for all compact interval $C$ of $(-\infty, t]$ and for large enough integer $n_0$, the family $(\gamma_{n|C})_{n \geq n_0}$ is relatively compact in the $C^1$-topology. Therefore, a diagonal arguments provides us with a curve $\gamma : (-\infty, t] \to M$ with $\gamma(t) =x$ such that the sequence $\gamma_n$ $C^1$-converges, up to extraction, to the curve $\gamma$ on all compact subsets of $(-\infty, t]$.\\
	Let $s \leq t$ be a real time. For $n$ large enough, we have from the calibration of $\gamma_n$ that
	\begin{equation*} 
		u(t, \gamma_n(t)) = u(s, \gamma_n(s)) + \int_s^t \Big( L(\tau, \gamma_n(\tau), \dot{\gamma_n}(\tau))+\alpha_0 \Big) \; d\tau
	\end{equation*}
	And taking the limit on $n$, we get the calibration equation for $\gamma$.
	\begin{equation*}
		u(t, \gamma(t)) = u(s, \gamma(s)) + \int_s^t \Big( L(\tau, \gamma(\tau), \dot{\gamma}(\tau)) +\alpha_0 \Big) \;d\tau
	\end{equation*}
\end{proof}

One of the powerful results of the weak-KAM theory is the theorem of regularity on calibrated curves proved by A.Fathi in the autonomous case (see \cite{fathi2008weak}). For the sake of completeness, we provide a proof in the non-autonomous case following \cite{MR3674224}. But first, we introduce a tool derived from convex analysis.

\begin{prop}(Fenchel's inequality)
	For all $x$ in $M$ and all $(v,p) \in T_xM \times T^*_xM$
	\begin{equation} \label{Fenchel}
		p(v) \leq H(t,x,p) + L(t,x,v)
	\end{equation}
	with equality if and only if $p = \partial_v L (x,v)$ if and only if $v= \partial_p H(t,x,p)$.
\end{prop}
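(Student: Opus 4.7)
The plan is to derive Fenchel's inequality directly from the definition of $L$ as the Legendre transform of $H$, and to characterize the equality case via strict convexity. First, I would observe that by (\ref{Lag}),
\begin{equation*}
L(t,x,v) \;=\; \max_{p' \in T_x^*M} \bigl\{p'(v) - H(t,x,p')\bigr\} \;\geq\; p(v) - H(t,x,p),
\end{equation*}
since any $p \in T_x^*M$ is an admissible competitor in the supremum. Rearranging yields the inequality (\ref{Fenchel}).

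Next, I would pin down the equality case. By the Tonelli hypotheses, the map $p' \mapsto p'(v) - H(t,x,p')$ is strictly concave (since $\partial_{pp}H > 0$) and tends to $-\infty$ as $|p'| \to \infty$ (by superlinearity of $H$), so it admits a unique maximizer $p^\star \in T_x^*M$. Therefore equality in (\ref{Fenchel}) is equivalent to $p = p^\star$. Because the maximization is unconstrained on a vector space and $H$ is $C^2$, the maximizer is characterized by the first-order condition $v - \partial_p H(t,x,p^\star) = 0$. This identifies the equality case with the condition $v = \partial_p H(t,x,p)$.

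Finally, to obtain the companion characterization $p = \partial_v L(t,x,v)$, I would invoke the symmetric Legendre relation (\ref{Ham}). The same argument applied to $v' \mapsto p(v') - L(t,x,v')$, which is strictly concave and superlinear by the Tonelli properties of $L$ inherited from $H$, shows that equality in (\ref{Fenchel}) is likewise equivalent to $p = \partial_v L(t,x,v)$. As both conditions pin down the same unique equality case, they are equivalent to each other. The only non-routine ingredient is the involutivity of the Legendre transform between $H$ and $L$ under the Tonelli assumptions, a classical fact recorded for instance in \cite{fathi2008weak}; everything else is elementary convex analysis, so I do not anticipate any serious obstacle.
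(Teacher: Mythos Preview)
Your argument is correct and is the standard derivation of Fenchel's inequality from the Legendre duality (\ref{Lag})--(\ref{Ham}) together with strict convexity. The paper itself does not supply a proof of this proposition, treating it as a classical fact from convex analysis, so there is nothing to compare against; your write-up would serve perfectly well as the omitted proof.
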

 
\begin{rem}
	Note that if $\phi^t_H(x)=(x(t),p(t)) $ is a curve that follows the Hamiltonian flow, then the Hamiltonian equations results in the equalities
	\begin{equation}
		\dot{x}(t) = \partial_p H\big(t,x(t),p(t)\big) \quad \text{and} \quad p(t) = \partial_v L\big(t,x(t),\dot{x}(t) \big)
	\end{equation}
\end{rem}

\begin{theo} \label{CalibRegularity}
	Let $u$ be a viscosity solution of (\ref{HJ}). If the curve $\gamma : I \to M$ is calibrated by $u$, then for all time $t$ in the interior of $I$, $u$ is differentiable at $(t, \gamma(t))$ with differential
	\begin{equation}
		\partial_tu(t,\gamma(t))= \alpha_0-H(t,\gamma(t),d_xu(t,\gamma(t))) \quad \text{and} \quad  d_xu(t,\gamma(t)) = \partial_vL \big(t,\gamma(t), \dot{\gamma}(t) \big)
	\end{equation}
\end{theo}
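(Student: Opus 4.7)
\medskip

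\textbf{Proof plan.} The strategy is to pinch $u$ between two affine functions at the point $(t_0,\gamma(t_0))$, with the same linear part and error $O(|h|^2+|\xi|^2)$, using two variational arguments based on calibration. One argument uses the past portion of $\gamma$ (yielding an upper bound), the other uses the future portion (yielding a lower bound). Set $x_0=\gamma(t_0)$, $v_0=\dot\gamma(t_0)$ and $p_0=\partial_vL(t_0,x_0,v_0)$. Pick $s<t_0<t_1$ with $s,t_1\in I$, and work in a local chart identifying a neighborhood of $x_0$ with a ball in $\mathbb{R}^{\dim M}$ so that additive perturbations make sense.

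\textbf{Past variation (upper bound).} For small $(h,\xi)$ build a curve $\beta_{h,\xi}:[s,t_0+h]\to M$ with $\beta_{h,\xi}(s)=\gamma(s)$ and $\beta_{h,\xi}(t_0+h)=x_0+\xi$, obtained by reparametrising $\gamma$ onto the interval $[s,t_0+h]$ and adding a small linear drift proportional to $\xi$. The viscosity subsolution inequality $u(t_0+h,x_0+\xi)\le u(s,\gamma(s))+A_L(\beta_{h,\xi})+\alpha_0(t_0+h-s)$ together with the calibration identity for $\gamma$ on $[s,t_0]$ gives, after subtraction,
\begin{equation*}
u(t_0+h,x_0+\xi)-u(t_0,x_0)\le A_L(\beta_{h,\xi})-A_L(\gamma_{|[s,t_0]})+\alpha_0 h.
\end{equation*}
A second-order Taylor expansion of $L$ about $(\gamma,\dot\gamma)$ together with integration by parts and the Euler--Lagrange equation satisfied by $\gamma$ (Proposition~\ref{MinimizingProp}) makes all interior terms cancel; only boundary contributions at $\tau=t_0$ remain. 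The boundary term in space produces $p_0\cdot\xi$, and the boundary term coming from the endpoint displacement in time produces $(L(t_0,x_0,v_0)-p_0\cdot v_0)h=-H(t_0,x_0,p_0)h$ via Fenchel's equality (\ref{Fenchel}). Together with the explicit $\alpha_0 h$, this yields
\begin{equation*}
u(t_0+h,x_0+\xi)-u(t_0,x_0)\le p_0\cdot\xi+\bigl(\alpha_0-H(t_0,x_0,p_0)\bigr)h+O(|h|^2+|\xi|^2).
\end{equation*}

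\textbf{Future variation (lower bound).} The symmetric argument uses a curve $\tilde\beta_{h,\xi}:[t_0+h,t_1]\to M$ with $\tilde\beta_{h,\xi}(t_0+h)=x_0+\xi$ and $\tilde\beta_{h,\xi}(t_1)=\gamma(t_1)$. Applying the viscosity subsolution inequality at $(t_1,\gamma(t_1))$ and subtracting the calibration identity for $\gamma$ on $[t_0,t_1]$ gives the reverse direction; the same Taylor/integration-by-parts computation yields
\begin{equation*}
u(t_0+h,x_0+\xi)-u(t_0,x_0)\ge p_0\cdot\xi+\bigl(\alpha_0-H(t_0,x_0,p_0)\bigr)h+O(|h|^2+|\xi|^2).
\end{equation*}
The two inequalities combined give joint differentiability of $u$ at $(t_0,x_0)$ with the announced partial derivatives.

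\textbf{Expected obstacle.} The technical heart is the action expansion in the past/future variations. Parametrising the auxiliary curves so that the computation produces cleanly the spatial boundary term \emph{and} the time boundary term simultaneously is delicate: the temptation is to handle $h$ and $\xi$ separately, but in order for the Fenchel equality to convert the time contribution into $-H(t_0,x_0,p_0)h$ one must vary the endpoint in time and space at once, which requires a careful reparametrisation (so that the new curve remains admissible and its velocity is uniformly close to $\dot\gamma$) and a justification that the Taylor remainder is indeed $O(|h|^2+|\xi|^2)$ uniformly. The Euler--Lagrange equation for $\gamma$ and the a priori compactness from Theorem~\ref{APrioriCompactness} ensure that all derivatives appearing are bounded, which is what makes the remainder estimate go through.
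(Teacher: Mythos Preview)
Your proposal is correct and shares the paper's core idea: pinch $u$ from above and below using the calibration on the past segment (upper bound) and the future segment (lower bound), with comparison curves ending (resp.\ starting) at the perturbed point $(t_0+h,x_0+\xi)$.

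The execution, however, differs. You go for a quantitative second-order expansion of the action: Taylor-expand $L$ along the perturbed curve, integrate by parts, and use the Euler--Lagrange equation for $\gamma$ so that the first-order interior terms vanish and only the boundary contributions $p_0\cdot\xi$ and $-H(t_0,x_0,p_0)h$ survive, with a quadratic remainder. This gives differentiability and the values of $\partial_t u$, $d_xu$ in a single stroke (and in fact gives more, namely local semiconcavity/semiconvexity). The paper instead takes a softer route: it merely observes that the upper and lower barriers $\psi^\pm$ (built from the same families of perturbed curves) are $C^1$ functions of $(s,y)$ that agree with $u$ at $(t,x)$, which immediately forces differentiability of $u$ there without any Taylor bookkeeping. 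The identification of the differential is then done separately and more directly: differentiate the calibration identity along $t\mapsto(t,\gamma(t))$, combine with the sub-solution inequality $\partial_tu+H(t,x,d_xu)\le\alpha_0$ obtained from arbitrary test curves, and invoke the equality case in Fenchel's inequality (\ref{Fenchel}) to conclude $d_xu=\partial_vL(t,\gamma,\dot\gamma)$. Your approach is more laborious but self-contained; the paper's approach is shorter because it decouples the qualitative differentiability from the computation of the derivative and avoids the delicate reparametrisation/remainder estimate you flag as the expected obstacle.
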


\begin{proof}
	\textit{Differentiability.} Let $\gamma : I \to M$ be a curve calibrated by $u$ and fix $(t,x) = (t,\gamma(t))\in I \times M$. We will bound $u$ in a neighbourhood of $(t,x)$ by two $C^1$ maps that coincide with it at this point. Consider a chart $B_0 \subset M$ around $x$ and let $(s,y) \in \mathbb{R} \times B_0$ be close enough to $(t,x)$. Fix two reference times $t^+<t<t^-$ in $I$ and $x^\pm = \gamma(t^\pm)$. From the calibration
	\begin{equation} \label{Calibdiff}
		u(t,x) = u(t^\pm,x^\pm) + \int_{t^\pm}^t \Big( L\big(\tau,\gamma(\tau),\dot{\gamma}(\tau) \big) + \alpha_0 \Big) \; d\tau =: \psi^\pm(t,x)
	\end{equation}
	and from the definition of viscosity solutions, we have
	\begin{equation}
		u(s,y) \leq u(t^+,x^+) + \int_{t^+}^s \Big( L\big(\tau,\gamma^+_{(s,y)}(\tau),\dot{\gamma}^+_{(s,y)}(\tau) \big) + \alpha_0 \Big) \; d\tau =: \psi^+(s,y)
	\end{equation}
	and 
	\begin{equation}
		u(s,y) \geq u(t^-,x^-) - \int_s^{t^-} \Big( L\big(\tau,\gamma^-_{(s,y)}(\tau),\dot{\gamma}^-_{(s,y)}(\tau) \big) + \alpha_0 \Big) \; d\tau =: \psi^-(s,y)
	\end{equation}
	where, in the chart $\mathbb{R} \times B_0$,
	\begin{equation}
		\gamma^\pm_{(s,y)}(\tau) = \gamma(\tau) + \frac{\tau - t^\pm}{s - t^\pm} (y-\gamma(s))
	\end{equation}		
	are smooth families of curves linking $(t^\pm,x^\pm)$ to $(s,y)$ and such that $\gamma^\pm_{(t,x)}= \gamma$.\\
	It is easy to see that $\psi^\pm$ are $C^1$. Moreover, $\psi^- \leq u \leq \psi^+$ with equalities at $(t,x)$. Then $u$ is differentiable at $(t,x)$.\\
	
	\textit{Evaluation of the Differential.} We differentiate (\ref{Calibdiff}) with respect to time $t$ without forgetting that $x = \gamma(t)$, to get
	\begin{equation}
		\partial_tu(t,\gamma(t)) + d_x u(t,\gamma(t)) = L\big(t,\gamma(t), \dot{\gamma}(t) \big) + \alpha_0
	\end{equation}
	And by Fenchel inequality (\ref{Fenchel}) for $x = \gamma(t)$, $v = \dot{\gamma}(t)$ and $p = d_x u(t,\gamma(t))$, we have
	\begin{equation} \label{Calibdiff2}
		0 = \partial_tu(t,\gamma(t)) + d_x u(t,\gamma(t)). \dot{\gamma}(t) - L\big(t,\gamma(t), \dot{\gamma}(t) \big) - \alpha_0 \leq \partial_tu(t,x) + H(t,x,d_x u(t,x)) - \alpha_0
	\end{equation}
	We show the inverse inequality $\partial_tu(t,x) + H(t,x,d_x u(t,x)) - \alpha_0 \leq 0$. Let $v$ be any element of $T_xM$ and let $\sigma : [t,t+1] : \to M$ be a curve such that $\sigma(0) = x$ and $\dot{\sigma}(0) = v$. Since $u$ is a viscosity solution, we have that for all $s \in [t,t+1]$,
	\begin{align*}
		\frac{u(s,\sigma(s)) - u(t,x)}{s-t} \leq \frac{1}{s-t}\int_t^s \Big( L(\tau, \sigma(\tau), \dot{\sigma}(\tau) + \alpha_0 \Big) \; d\tau
	\end{align*}
	Letting $s$ tend to $t$, we deduce that
	\begin{align*}
		\partial_tu(t,x) + d_x u(t,\sigma(t)). v \leq L(t,x,v) + \alpha_0
	\end{align*}
	Taking the supremum on $v \in T_xM$, we infer from the relation between $L$ and $H$ expressed in (\ref{Ham}) the desired inequality $\partial_tu(t,x) + H(t,x,d_x u(t,x)) - \alpha_0\leq 0$. Therefore, there is equality everywhere in (\ref{Calibdiff2}), and in particular in the Fenchel inequality. Hence 
	\begin{equation*}
		\partial_tu(t,x) + H(t,x,d_x u(t,x)) = \alpha_0 \quad \text{and} \quad d_x u(t,\gamma(t)) = \partial_v L \big(t,\gamma(t), \dot{\gamma}(t) \big)
	\end{equation*}
\end{proof}

\subsection{The Mather Set} \label{MatherSection}

In this section, we present a concept derived from the Aubry-Mather theory. The Mather set, introduced by John N. Mather, is associated with the study of action-minimizing measures in Lagrangian and Hamiltonian systems. Some particular subsets specific to the non-autonomous case are defined as follows.

\begin{defi} \label{MatherDefi}
	\begin{enumerate}
		\item A measure $\mu$ on $\mathbb{T}^1 \times TM$ is \textit{a minimizing measure} if it is a Borel probability measure, invariant by the Euler-Lagrange flow $\phi_L$ and it satisfies
		\begin{equation}
			\int_{\mathbb{T}^1 \times TM} L \; d\mu = - \alpha_0
		\end{equation} 
		where $\alpha_0$ is the Mañé critical value defined in (\ref{ManeCritValue}) (and assumed to be null).
		\item The \textit{Mather set} $\tilde{\mathcal{M}}$ is defined by
		\begin{equation} \label{MatherDefFormula}
			\tilde{\mathcal{M}} = \bigcup_\mu \supp(\mu) \subset \mathbb{T}^1 \times TM
		\end{equation} 
		where the union is on minimizing measures $\mu$.
		\item The \textit{projected Mather set} $\mathcal{M}$ is the projection of $\tilde{\mathcal{M}}$ to $\mathbb{T}^1 \times M$.
		\item The \textit{time-zero Mather set} $\tilde{\mathcal{M}}_0$ and its projected counterpart $\mathcal{M}_0$ are the intersections
		\begin{equation}
			\tilde{\mathcal{M}}_0 := \tilde{\mathcal{M}} \cap \big( \{0\} \times TM \big) \quad \text{and} \quad \mathcal{M}_0 := \mathcal{M} \cap \big( \{0\} \times M \big)
		\end{equation}
		seen respectively as subsets of $TM$ and $M$.
		\item We define the \textit{recurrent Mather set} $\tilde{\mathcal{M}}_0^R \subset TM$ by 
		\begin{equation}
			\tilde{\mathcal{M}}_0^R = \{\tilde{x} \in \tilde{\mathcal{M}}_0 \; | \; \tilde{x} \text{ is recurrent under the map } \phi_L^{-1} \}
		\end{equation}
		and we denote its projection on $M$ by $\mathcal{M}_0^R$.
	\end{enumerate}
\end{defi}

\begin{rem} \label{MatherInv}
	\begin{enumerate}
		\item More explicitly, the invariant measures $\mu$ featured in the definition are invariant by the maps $\Phi_L^\tau$, for all time $\tau>0$, given by
		\begin{equation} \label{InvarianceFlow}
			\begin{split}
				\Phi_L^\tau : \mathbb{T}^1 \times TM &\longrightarrow \mathbb{T}^1 \times TM \\
				(t,x,v) & \longmapsto (t + \tau, \phi_L^{t,t+\tau}(x,v))
			\end{split}
		\end{equation}   
		\item We easily see from these definitions that the different Mather sets are invariant by either the Euler-Lagrange flow or its time-one map. 
	\end{enumerate}
\end{rem}

The next proposition has been proved by John N.Mather in the non-autonomous case. See proposition 4 of \cite{MR1109661}
\begin{prop} \label{MatherNonempty}
	The Mather set $\tilde{\mathcal{M}}$ is compact and non-empty and the recurrent Mather set $\tilde{\mathcal{M}}_0^R$ is dense in $\tilde{\mathcal{M}}_0$
\end{prop}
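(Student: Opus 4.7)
The strategy is to treat the three assertions separately: non-emptiness of $\tilde{\mathcal{M}}$ via a weak-$*$ compactness argument, compactness of $\tilde{\mathcal{M}}$ via a priori compactness plus a \emph{universal minimizer} trick, and density of $\tilde{\mathcal{M}}_0^R$ via Poincaré recurrence applied fiberwise.

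For non-emptiness, I would take a sequence $(\mu_n)_n$ of $\Phi_L^\tau$-invariant compactly supported Borel probabilities with $\int L \, d\mu_n \to -\alpha_0$. Superlinearity produces constants $c,C>0$ with $L(t,x,v) \geq c|v| - C$, so $\int |v| \, d\mu_n$ is eventually uniformly bounded; Markov's inequality then yields tightness on $\mathbb{T}^1 \times TM$. Any weak-$*$ accumulation point $\mu$ remains $\Phi_L^\tau$-invariant, and since $L$ is continuous and bounded below, $\int L \, d\mu \leq \liminf_n \int L \, d\mu_n = -\alpha_0$, while the reverse inequality is built into the definition of $\alpha_0$. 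Hence $\mu$ is minimizing and $\tilde{\mathcal{M}} \supset \supp \mu \neq \emptyset$.

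For compactness, I first show all minimizing measures share a common compact support. A classical fact from weak-KAM theory is that a minimizing measure $\mu$ is carried by Euler-Lagrange orbits which are themselves globally minimizing for the Lagrangian action; combining this with Theorem \ref{APrioriCompactness} confines those lifts to a fixed compact $K_1 \subset TM$, so every minimizing measure satisfies $\supp \mu \subset \mathbb{T}^1 \times K_1 =: K$. Inside the compact metric space $\mathcal{P}(K)$, the set $\mathcal{M}_{\min}$ of minimizing measures is weak-$*$ closed, hence compact and separable. Fixing a countable dense family $(\mu_i)_i \subset \mathcal{M}_{\min}$ and setting $\mu^\ast = \sum_i 2^{-i} \mu_i$, one verifies $\mu^\ast \in \mathcal{M}_{\min}$ and $\supp \mu^\ast = \overline{\bigcup_i \supp \mu_i}$. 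The Portmanteau theorem implies that whenever $\mu_{i_k} \to \mu$ weakly-$*$ with $\mu \in \mathcal{M}_{\min}$, one has $\supp \mu \subset \overline{\bigcup_k \supp \mu_{i_k}} \subset \supp \mu^\ast$; taking the union over $\mu \in \mathcal{M}_{\min}$ gives $\tilde{\mathcal{M}} \subset \supp \mu^\ast \subset \tilde{\mathcal{M}}$. Thus $\tilde{\mathcal{M}} = \supp \mu^\ast$ is a closed subset of $K$, hence compact.

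For the density statement, I disintegrate any minimizing $\mu$ along the projection $\mathbb{T}^1 \times TM \to \mathbb{T}^1$: invariance under each $\Phi_L^\tau$ forces the $\mathbb{T}^1$-marginal to be Lebesgue, so $\mu = dt \otimes \mu_t$ for a measurable family $(\mu_t)$ of probabilities on $TM$. The slice $\mu_0$ is supported in $\tilde{\mathcal{M}}_0$ and is $\phi_L^1$-invariant, since $\Phi_L^1$ preserves the fiber $\{0\} \times TM$ and acts there as $\phi_L^{0,1}$. Poincaré recurrence applied to the homeomorphism $\phi_L^{-1}$ then yields $\mu_0$-a.e.\ recurrence under $\phi_L^{-1}$, so $\mu_0(\tilde{\mathcal{M}}_0^R) = 1$ and $\tilde{\mathcal{M}}_0^R$ is dense in $\supp \mu_0$. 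Ranging over all minimizing measures gives density in $\tilde{\mathcal{M}}_0$.

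The main difficulty is the closedness half of the compactness statement: the support map $\mu \mapsto \supp\mu$ is only lower semicontinuous with respect to weak-$*$ convergence, so weak-$*$ compactness of $\mathcal{M}_{\min}$ does not by itself close the union of supports. The separability-plus-averaging trick producing the \emph{universal} minimizing measure $\mu^\ast$ is exactly what converts this lower semicontinuity into the usable equality $\tilde{\mathcal{M}} = \supp \mu^\ast$.
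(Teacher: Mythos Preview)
The paper does not prove this proposition at all; it simply attributes it to Mather and cites \cite{MR1109661}. Your write-up is therefore a reconstruction of the standard argument, and the compactness step via a universal minimizing measure $\mu^\ast$ as well as the Poincar\'e-recurrence argument for the density of $\tilde{\mathcal{M}}_0^R$ are both correct and cleanly executed.

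There is one soft spot in the non-emptiness paragraph. After extracting a weak-$*$ accumulation point $\mu$ of the tight sequence $(\mu_n)$, you write that ``the reverse inequality is built into the definition of $\alpha_0$''. But the paper's $\alpha_0$ in \eqref{ManeCritValue} is an infimum only over \emph{compactly supported} invariant probabilities, and tightness alone does not force the limit $\mu$ to have compact support. One extra line is needed: either show directly that the infimum over compactly supported invariant measures equals the infimum over all invariant measures with $\int L\,d\mu$ finite, or---closer to your own compactness paragraph---argue that any invariant measure with $\int L\,d\mu \le -\alpha_0$ is carried by globally minimizing orbits and hence, via Theorem~\ref{APrioriCompactness}, by a fixed compact set. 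Either route closes the gap.

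A smaller remark on logical order: the ``classical fact from weak-KAM theory'' you invoke for compactness---that orbits in the support of a minimizing measure are globally action-minimizing---is itself a theorem of Mather and is non-trivial. Within this paper it is only obtained later, as a by-product of calibration (Proposition~\ref{CalibNW} and Remark~\ref{MatherMinimizing}). Since the paper already delegates the entire proposition to \cite{MR1109661} this is consistent, but in a self-contained presentation you would need to supply that argument before using it here.
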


\subsection{The Potential $h$}  \label{SectionPotential}

For all times $s<t$ we define the potential $h^{s,t}: M \times M \to \mathbb{R}$ by
\begin{equation} \label{Potential}
	\begin{split}
		h^{s,t}(x,y)& = (t-s).\alpha_0 + h_0^{s,t}(x,y)\\  
		&= (t-s).\alpha_0 + \inf \left\{ A_L(\gamma) \; \left| \;
			\begin{matrix}
				\gamma : & [s,t] \to M \\
				& s \mapsto x \\
				& t \mapsto y
			\end{matrix} \right.	\right\}
	\end{split}
\end{equation}
And we adopt the notation $h^t$ for $h^{0,t}$.

\begin{rem} \label{PeierlsRemark}
	\begin{enumerate}
		\item The Tonelli Theorem \ref{TonelliTheorem} states that the infimum in the definition above is always achieved by a minimizing curve.
		\item When the \Mane critical value $\alpha_0$ is null, we get $h^{s,t} = h_0^{s,t}$.
		\item We deduce from Proposition \ref{Regularity} the Lipschitz regularity of the potential $h^{s,t}$.
	\end{enumerate}
\end{rem}

\begin{prop} \label{hprop}
	\begin{enumerate}
		\item (Triangular Inequality) For all real time $s<\tau<t$, and for all points $x$, $y$ and $z$ in $M$, we have the triangular inequality
			\begin{equation}\label{TriangIneg}
				h^{s,t}(x,z) \leq h^{s,\tau}(x,y)+h^{\tau,t}(y,z)
			\end{equation}
		\item \label{hvisc} For all $x \in M$, $h^t(x,\cdot)$ is a viscosity solution of the Hamilton-Jacobi equation (\ref{HJalpha}) i.e. for all times $0<s<t$, $\mathcal{T}^{s,t} h^s(x,\cdot) = h^t(x,\cdot)$.\\
	\end{enumerate}
\end{prop}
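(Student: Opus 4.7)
The plan is to reduce both parts to the basic concatenation principle for absolutely continuous curves, combined with Tonelli's existence Theorem \ref{TonelliTheorem} and the observation (Proposition \ref{MinimizingProp} together with Remark \ref{MinimizingRem}) that restrictions of minimizing curves remain minimizing. The whole argument is essentially bookkeeping of actions.

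For the triangular inequality, I would start with arbitrary absolutely continuous curves $\gamma_1:[s,\tau]\to M$ from $x$ to $y$ and $\gamma_2:[\tau,t]\to M$ from $y$ to $z$. Their concatenation $\gamma:[s,t]\to M$ is again absolutely continuous, joins $x$ to $z$, and satisfies $A_L(\gamma)=A_L(\gamma_1)+A_L(\gamma_2)$. Passing to the infimum over all such pairs in the definition \eqref{Potential0} gives
\[
h_0^{s,t}(x,z)\ \leq\ h_0^{s,\tau}(x,y)+h_0^{\tau,t}(y,z),
\]
and adding $\alpha_0(t-s)=\alpha_0(\tau-s)+\alpha_0(t-\tau)$ yields \eqref{TriangIneg}.

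For the Lax-Oleinik identity, unfolding the definition of the full Lax-Oleinik operator gives
\[
\mathcal{T}^{s,t}h^s(x,\cdot)(z)\ =\ \inf_{y\in M}\bigl\{h^s(x,y)+h^{s,t}(y,z)\bigr\}.
\]
The inequality $h^t(x,z)\leq \mathcal{T}^{s,t}h^s(x,\cdot)(z)$ follows from part 1 applied with the intermediate time $s$. For the reverse bound, Tonelli's Theorem \ref{TonelliTheorem} supplies a minimizing curve $\gamma:[0,t]\to M$ with $\gamma(0)=x$, $\gamma(t)=z$, and $A_L(\gamma)=h_0^{0,t}(x,z)$. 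Setting $y_0=\gamma(s)$, the restrictions $\gamma_{|[0,s]}$ and $\gamma_{|[s,t]}$ are themselves minimizing by Proposition \ref{MinimizingProp}, hence
\[
h_0^{0,s}(x,y_0)+h_0^{s,t}(y_0,z)\ =\ A_L(\gamma_{|[0,s]})+A_L(\gamma_{|[s,t]})\ =\ h_0^{0,t}(x,z).
\]
Adding $t\alpha_0=s\alpha_0+(t-s)\alpha_0$ converts this into $h^s(x,y_0)+h^{s,t}(y_0,z)=h^t(x,z)$, so the infimum is bounded above by $h^t(x,z)$ and equality holds.

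No genuine obstacle is expected. The only subtlety is that the $\geq$ direction in the second assertion requires the infimum defining $h^{0,t}(x,z)$ to actually be attained, so that a genuine minimizer can be split at time $s$; this is exactly what Tonelli's theorem guarantees, and the rest is a matter of distributing the $\alpha_0$ factors correctly across the splitting of the time interval.
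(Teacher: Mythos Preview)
Your proof is correct. Part 1 is handled exactly as in the paper. For part 2 there is a minor difference worth noting: the paper does not invoke Tonelli's theorem at all, but instead unfolds $\mathcal{T}^{s,t}h^s(x,\cdot)$ directly as a nested infimum over pairs of curves $(\gamma_2,\gamma_1)$ with $\gamma_2:[0,s]\to M$, $\gamma_1:[s,t]\to M$ and $\gamma_2(s)=\gamma_1(s)$, and then observes that such pairs are in bijection with curves $\gamma:[0,t]\to M$ via concatenation and restriction, so the double infimum collapses to $h^t(x,z)$ in one step. Your argument splits the identity into two inequalities and uses Tonelli to produce an actual minimizer for the $\geq$ direction; this is perfectly valid but slightly less economical, since the existence of minimizers is not needed here.
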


\begin{proof}
	\textit{1.} Let $\gamma_1: [s,\tau] \to M$ be a curve linking $x$ to $y$ and $\gamma_2: [\tau,t] \to M$ be a curve linking $y$ to $z$. And let $\gamma : [s,t] \to M$ be their concatenated curve linking $x$ to $z$. Then, we have
	\begin{align*}
		h^{s,t}(x,y) \leq (t-s).\alpha_0 + A_L(\gamma) = (\tau-s).\alpha_0 + A_L(\gamma_1) + (t-\tau).\alpha_0 + A_L(\gamma_2)
	\end{align*}
	and taking the infinimum on the curves $\gamma_1$ and $\gamma_2$, we get the wanted inequality.\\
	
	\textit{2.} For $s<t \in \mathbb{R}$ and $x,y \in M$,
	\begin{align*}
		\mathcal{T}^{s,t}h^s(x,y) & = \inf_{
	\begin{matrix}
		\gamma_1 : [s,t] \rightarrow M \\
		t \mapsto y 
	\end{matrix}
	} \left\{  h^s\big(x,\gamma_1(s)\big) + A_L(\gamma_1) + (t-s).\alpha_0 \right\} \\
	& = \inf_{
	\begin{matrix}
		\gamma_1 : [s,t] \rightarrow M \\
		t \mapsto y 
	\end{matrix}
	}   \inf_{
	\begin{matrix}
		\gamma_2 : [0,s] \rightarrow M \\
		0 \mapsto x \\
		s \mapsto \gamma_1(s)
	\end{matrix}
	} \left\{  s.\alpha_0 +  A_L(\gamma_2) +  A_L(\gamma_1) + (t-s).\alpha_0 \right\} \\
		& = \inf_{
	\begin{matrix}
		\gamma : [0,t] \rightarrow M \\
		0 \mapsto x\\
		t \mapsto y 
	\end{matrix}
	} \left\{ t.\alpha_0+  A_L(\gamma) \right\} = h^t(x,y)
	\end{align*}
\end{proof}

We will show that this potential $h$ is bounded for $t-s > \varepsilon$ for some fixed $\varepsilon$. To achieve this, we need to consider its lower and upper bounds.

\begin{defi}
	We define the maps $m$ and $M : M \times M \to \mathbb{R}$ as 
	\begin{equation} \label{PotentialMinMaxDef} 
		m(x,y) = \inf_{n \geq 1} h^n(x,y) \quad \text{and} \quad M(x,y) = \sup_{n\geq 1} h^{n}(x,y)
	\end{equation}

	The associated time-dependant maps are defined as 
	\begin{equation}
		m^{t}(x,y)= m(t,x,y) = \inf_{n \geq 1} h^{n+t}(x,y) \quad \text{and} \quad M^t(x,y)= M(t,x,y) = \sup_{n\geq 1} h^{t+n}(x,y)
	\end{equation}
	
	And more generally, for every two times $s<t$, we define
	\begin{equation} \label{PeierlsGeneral}
		m^{s,t}(x,y) = \inf_{n \geq 1} h^{s,n+t}(x,y) \quad \text{and} \quad M^{s,t}(x,y) = \sup_{n \geq 1} h^{s,t+n}(x,y)
	\end{equation}
\end{defi}

\begin{rem}
	There are some subtleties that justify taking the infimum over $n \geq 1$ while excluding $n=0$. However, for the purpose of the following proposition, the chosen definition is sufficient.
\end{rem}

\begin{prop} \label{hfinite}
	For all $(t,x,y) \in [0, +\infty) \times M \times M$, the maps $m(t,x,y)$ and $M(t,x,y)$ are finite, and at $t=0$ we have the uniform bound
	\begin{equation}
		\max ( \Vert m \Vert_\infty , \Vert M \Vert_\infty ) \leq 2\kappa_1.\diam (M)
	\end{equation}
	where $\kappa_1$ is the Lipschitz constant introduced in Proposition \ref{Regularity}.
\end{prop}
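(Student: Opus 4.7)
The plan is to reduce the statement to the uniform bound $|h^n(x,y)| \leq 2\kappa_1\diam(M)$ for all integers $n \geq 1$ and all $(x,y) \in M \times M$. Such a bound immediately gives $\|m\|_\infty, \|M\|_\infty \leq 2\kappa_1\diam(M)$ at $t=0$, and the finiteness of $m(t,x,y)$ and $M(t,x,y)$ for general $t \geq 0$ follows by repeating the same argument with $n$ replaced by $n+t$.

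The strategy combines the Lipschitz regularity of $h^n$ with two ``anchoring'' inequalities that force $0$ inside the range of $h^n$. By Proposition \ref{Regularity}, $h^n$ is $\kappa_1$-Lipschitz on $M \times M$ uniformly in $n \geq 1$, so
\begin{equation*}
	|h^n(x,y) - h^n(x',y')| \leq \kappa_1\bigl(d(x,x') + d(y,y')\bigr) \leq 2\kappa_1 \diam(M),
\end{equation*}
i.e.\ the total oscillation of $h^n$ on $M\times M$ is at most $2\kappa_1\diam(M)$. It therefore suffices to show that, for every $n\geq 1$, $h^n$ attains both a non-positive and a non-negative value.

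For the non-negative anchor, I would apply the variational characterization \eqref{ManeCritValue} of $\alpha_0$: given $x\in M$ and a Tonelli minimizer $\gamma:[0,n]\to M$ with $\gamma(0)=\gamma(n)=x$, the uniform measure on its closed orbit is a $\phi_L$-invariant probability measure, hence $\int L\,d\mu_\gamma \geq -\alpha_0$, giving $h^n(x,x) = A_L(\gamma) + n\alpha_0 \geq 0$. For the non-positive anchor, I would invoke a weak-KAM solution $u$, i.e.\ a fixed point $u = \mathcal{T} u$, whose existence in the Tonelli framework is classical. Iterating yields $u = \mathcal{T}^n u$, equivalently $u(y) = \inf_{x\in M}\{u(x) + h^n(x,y)\}$; the infimum is attained at some $x^*(y)\in M$ by Corollary \ref{TonelliLaxOleinik}, so choosing $y_0\in M$ with $u(y_0)=\min u$ yields $h^n(x^*(y_0),y_0) = \min u - u(x^*(y_0)) \leq 0$. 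Combining $\min h^n \leq 0 \leq \max h^n$ with the oscillation bound forces $|h^n(x,y)|\leq 2\kappa_1\diam(M)$, as required.

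The main obstacle I expect is the appeal to the existence of a weak-KAM solution, which is not explicitly established in the excerpt. I would prove it in parallel by a classical fixed-point argument: iterating $\mathcal{T}$ on any $f \in C(M,\mathbb{R})$ and normalizing by subtracting $\min \mathcal{T}^n f$, the resulting sequence is $\kappa_1$-equilipschitz and uniformly bounded by Corollary \ref{Equicontinuity}, hence relatively compact, and by non-expansion of $\mathcal{T}$ any cluster point is a weak-KAM solution. A conceptual alternative, avoiding weak-KAM solutions altogether, is to produce the non-positive anchor directly along a recurrent Mather orbit given by Proposition \ref{MatherNonempty}, closing an approximate return with a short connecting arc and appealing again to the critical-value inequality to sandwich the action.
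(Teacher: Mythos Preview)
Your non-negative anchor has a genuine gap. You claim that for a Tonelli minimizer $\gamma:[0,n]\to M$ with $\gamma(0)=\gamma(n)=x$, the associated measure is $\phi_L$-invariant; it is not. The loop is closed in $M$, but there is no reason for $\dot\gamma(0)=\dot\gamma(n)$, so the measure $\theta\mapsto \frac{1}{n}\int_0^n \theta(\tau,\gamma,\dot\gamma)\,d\tau$ is only \emph{closed} (holonomic), not invariant under the Euler--Lagrange flow. Since the paper's definition \eqref{ManeCritValue} of $\alpha_0$ is stated over invariant measures, your inequality $\int L\,d\mu_\gamma \geq -\alpha_0$ does not follow from it. Fixing this requires either the nontrivial Ma\~n\'e--Mather theorem that closed and invariant measures give the same critical value, or a weak-KAM solution (which would give $h^n(x,x)\geq 0$ directly), compounding the circularity you already flagged. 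Your proposed alternative via a recurrent Mather orbit addresses only the non-positive anchor, so the gap remains.

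The paper takes a different, self-contained route that sidesteps both issues. It never asserts $h^n(x,x)\geq 0$ at this stage. Instead it observes that $M_n=\max h_0^n$ is subadditive and $m_n=\min h_0^n$ is superadditive, so $M_n/n\to\inf_n M_n/n=\beta$ and $m_n/n\to\sup_n m_n/n=\beta'$; your oscillation bound forces $\beta=\beta'$. To identify $\beta=-\alpha_0$, the paper uses the ergodic decomposition of a minimizing measure together with a priori compactness to produce a genuine orbit along which $\frac{1}{n}h_0^n(x,x_n)\to -\alpha_0$. The subadditive limits being an infimum and a supremum then give $m_n\leq -\alpha_0 n\leq M_n$ for \emph{every} $n$, which combined with the oscillation bound yields $|h^n|\leq 2\kappa_1\diam(M)$. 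This argument precedes, and is used to prove, the existence of weak-KAM solutions (Corollary \ref{ExistenceKAMF}), so your appeal to such a solution would indeed be circular within the paper's logical order.
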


\begin{proof}
	The proof of this proposition is divided into two main steps. The first step is to find points $x_n$ and $y_n$ such that $\frac{1}{n}h^n(x_n,y_n)$ converges to a finite limit using the ergodic decomposition and the A Priori Compactness Theorem \ref{APrioriCompactness}. The second step is to estimate the oscillation of $h$ with respect to this established finite limit using the Lipschitz regularity of the potential \ref{Regularity}.\\

	\textit{Step 1.} Let $\mu$ be a minimizing measure. We infer from proposition \ref{MatherNonempty} that $supp(\mu)$ is compact. We will need the ergodic decomposition of invariant measures. For that purpose, we introduce some definitions following \cite{MR0889254}.
	\begin{itemize}[label= -]
		\item Let $\Sigma^0$ be the set of points $(t,x,v) \in \supp(\mu)$ such that for every continuous map $\theta : \mathbb{T}^1 \times TM \to \mathbb{R}$, the limit 
		\begin{align*}
			\lim_{n\to +\infty} \frac{1}{n} \int_0^n \theta(t + \tau, \phi_L^{t,t+\tau}(x,v)) \;d\tau
		\end{align*}
		exists and is finite. 
		\item For all points $(t,x,v)$ of $\Sigma^0$, we define the invariant Borel measure $\mu_{(t,x,v)}$ defined with the Riesz's representation theorem as
		\begin{equation}
			\begin{matrix}
				\mu_{(t,x,v)} : &\mathcal{C}( \mathbb{T}^1 \times TM, \mathbb{R}) & \longrightarrow & \mathbb{R} \\
				& \theta & \longmapsto & \lim\limits_{n\to +\infty} \frac{1}{n} \int_0^n \theta(t + \tau, \phi_L^{t,t+\tau}(x,v)) \;d\tau
			\end{matrix}
		\end{equation}
		
		\item Let $\Sigma$ be the set of points $(t,x,v)$ of $\Sigma^0$ such that $\mu_{(t,x,v)}$ is ergodic and $(t,x,v) \in \supp (\mu_{(t,x,v)})$. 
	\end{itemize}
	Then, we have the following theorem
	\begin{theo}[Ergodic Decomposition of Invariant Measures, \cite{MR0889254}]
		The set $\Sigma$ is of full measure i.e $\mu(\Sigma) =1$, and for all map $\theta \in \mathcal{C}( \mathbb{T}^1 \times TM, \mathbb{R})$ we have
		\begin{equation}
			\int_{\mathbb{T}^1 \times TM} \left( \int_{\mathbb{T}^1 \times TM} \theta \; d\mu_{(t,x,v)} \right) d\mu = \int_{\mathbb{T}^1 \times TM} \theta \;  d\mu 
		\end{equation}
	\end{theo}
	We apply the theorem with $\theta$ being the Lagrangian $L$. Recall that the measure $\mu$ is chosen to be minimizing, hence we get
	\begin{equation} \label{ErgDecomMinimizing}
		-\alpha_0 = \int_{\mathbb{T}^1 \times TM} L\;  d\mu = \int_{\mathbb{T}^1 \times TM} \left( \int_{\mathbb{T}^1 \times TM} L \; d\mu_{(t,x,v)} \right) d\mu 
	\end{equation}
	Moreover, we know from the definition \eqref{ManeCritValue} of the \Mane critical value $\alpha_0$ that for all $(t,x,v) \in \Sigma^0$,
	\begin{equation} \label{ErgDecomMinimizing2}
		-\alpha_0 \leq  \int_{\mathbb{T}^1 \times TM} L \; d\mu_{(t,x,v)} 
	\end{equation}
	Hence, we deduce from \eqref{ErgDecomMinimizing} that for $\mu$-almost all $(t,x,v) \in \Sigma^0$, the measure $\mu_{(t,x,v)}$ is minimizing and we have equality in \eqref{ErgDecomMinimizing2}.
	
	Let $(t,x',v')$ be such a point and consider $(0,x,v) = (\Phi_L^t)^{-1}(t,x',v')$ where the map $\Phi_L$ has been defined in \eqref{InvarianceFlow}. Let us show that $\mu_{(t,x',v')} = \mu_{(0,x,v)}$. For all integer $n \geq 0$ and scalar map $\theta \in \mathcal{C}( \mathbb{T}^1 \times TM, \mathbb{R})$, we have
	\begin{align*}
		\frac{1}{n} \int_0^n \theta(\tau, \phi_L^\tau(x,v)) \; d\tau &= \frac{1}{n} \int_{-t}^{n-t} \theta(t + \tau, \phi_L^{t+\tau}(x,v)) \; d\tau \\
		&= \frac{1}{n} \int_{-t}^{n-t} \theta(t + \tau, \phi_L^{t,t+\tau}(x',v')) \; d\tau \\
		&= \frac{1}{n} \Big[ \int_{-t}^{0} \theta(t + \tau, \phi_L^{t,t+\tau}(x',v')) \; d\tau + \int_{0}^{\lfloor n-t \rfloor} \theta(t + \tau, \phi_L^{t,t+\tau}(x',v')) \; d\tau \\& + \int_{\lfloor n-t \rfloor}^{n-t} \theta(t + \tau, \phi_L^{t,t+\tau}(x',v')) \; d\tau \Big] \\
	\end{align*}
	where $\lfloor \cdot \rfloor$ stands for the floor map, and with 
	\begin{align*}
		 \left| \frac{1}{n} \int_{-t}^{0} \theta(t + \tau, \phi_L^{t,t+\tau}(x',v')) \; d\tau \right| &\leq \frac{|t|}{n} \left\Vert \theta_{|\supp(\mu)} \right\Vert_\infty \longrightarrow 0 \quad \text{as } n \to +\infty \\
		 \left| \frac{1}{n} \int_{\lfloor n-t \rfloor}^{n-t} \theta(t + \tau, \phi_L^{t,t+\tau}(x',v')) \; d\tau \right| &\leq \frac{1}{n} \left\Vert \theta_{|\supp(\mu)} \right\Vert_\infty \longrightarrow 0 \quad \text{as } n \to +\infty 
	\end{align*}
	Thus, we deduce that
	\begin{align*}
		\int_{\mathbb{T}^1 \times TM} \theta \; d\mu_{(0,x,v)} &= \lim_{n\to +\infty} \frac{1}{n} \int_0^n \theta(\tau, \phi_L^\tau(x,v)) \; d\tau \\
		&= \lim_{n\to +\infty} \frac{1}{n} \int_{0}^{\lfloor n-t \rfloor} \theta(t + \tau, \phi_L^{t,t+\tau}(x',v')) \; d\tau \\
		&= \lim_{n\to +\infty} \frac{1}{\lfloor n-t \rfloor} \int_{0}^{\lfloor n-t \rfloor} \theta(t + \tau, \phi_L^{t,t+\tau}(x',v')) \; d\tau \\
		&= \int_{\mathbb{T}^1 \times TM} \theta \; d\mu_{(t,x',v')}
	\end{align*}
	Therefore, the measure $\mu_{(0,x,v)}$ is also minimizing and we obtain
	\begin{align} \label{PeierlsFiniteDem1}
		\lim_{n\to +\infty} \frac{1}{n} \int_0^n L(\tau, \phi_L^\tau(x,v)) \; d\tau =\int_{\mathbb{T}^1 \times TM} L \; d\mu_{(0,x,v)} = - \alpha_0
	\end{align}

	Let $x_n = \pi \circ \phi_L^{n}(x,v)$. We claim that the limit above implies 
	\begin{equation} \label{PeierlsFiniteDem2}
		\lim_n \frac{1}{n} h_0^{n}(x,x_n) = - \alpha_0
	\end{equation}
	In fact, if $\gamma_n : [0,n] \to M$ is a curve realizing $h_0^{n}(x,x_n)$, we consider the measure $\nu_n$ represented by $\theta \mapsto \frac{1}{n} \int_0^n \theta(\tau,\gamma(\tau),\dot{\gamma}(\tau)) d\tau$ which is supported on $(\gamma_n, \dot{\gamma}_n)$. Let $\nu_{k_n}$ be any subsequence of $\nu_n$. By the A priori compactness Theorem \ref{APrioriCompactness}, a subsequence of $\nu_{k_n}$ weakly converges to an invariant, compactly supported Borel measure $\nu$. We obtain a subsequence $k'_n$ of $k_n$ such that 
	\begin{align*}
		\lim_n \frac{1}{k'_n} h_0^{k'_n}(x,x_{k'_n}) =  \int_{\mathbb{T}^1 \times TM} L \; d\nu \geq -\alpha_0
	\end{align*}
	where the final inequality comes from definition of the critical \Mane value $\alpha_0$. However, we know from (\ref{PeierlsFiniteDem1}) that
	\begin{align*}
		\lim_n \frac{1}{k'_n} h_0^{k'_n}(x,x_{k'_n}) \leq \lim_n \frac{1}{k'_n} \int_0^{k'_n} L(\tau, \phi_L^\tau(x,v)) \; d\tau = - \alpha_0
	\end{align*}
	We get a double inequality, and thus equality of the limits. Moreover, this procedure shows that $-\alpha_0$ is the only limit value of the sequence $\frac{1}{n} h_0^{n}(x,x_n)$, which proves (\ref{PeierlsFiniteDem2}).\\

	\textit{Step 2.} Now let $M_n = \max\limits_{M \times M} h_0^{n}$ and $m_n = \min\limits_{M \times M} h_0^{n}$. We have for all integers $n,m \geq 1$ and points $y$ and $z$ in $M$, 
	\begin{align*}
		h_0^{m+n}(y,z) \leq h_0^{m}(y,y) + h_0^{n}(y,z) \leq M_m + h_0^{n}(y,z)
	\end{align*}
	And taking the maximum on $M \times M$, we obtain the subadditive inequality
	\begin{align*}
		M_{n+m} \leq M_m + M_n
	\end{align*}
	and the sequence $M_n$ is subadditive. A classical consequence of this is that there exist $\beta \in \mathbb{R} \cup \{-\infty\}$ such that
	\begin{align} \label{PeierlsFiniteDem3}
		\lim_n \frac{M_n}{n} = \inf_{n \geq 0} \frac{M_n}{n} = \beta
	\end{align}
	Similarly, the sequence $-m_n$ is subadditive and there exist $\beta' \in \mathbb{R} \cup \{+\infty\}$ such that
	\begin{align} \label{PeierlsFiniteDem4}
		\lim_n \frac{m_n}{n} = \sup_{n \geq 0} \frac{m_n}{n} = \beta'
	\end{align}
	Additionally, we know from the regularity Proposition \ref{Regularity} on the potential $h_0$ that for all $n \geq 1$, 
	\begin{equation} \label{PeierlsFiniteDem5}
		0 \leq M_n - m_n \leq \kappa_1. 2\diam M
	\end{equation}
	where $\diam M := \max \{ d(x,y) ,  (x,y) \in M \times M \}$ is the diameter of $M$. This uniform bound immediately implies that $\beta = \beta' \in \mathbb{R}$. Considering the points $x$ and $x_n$ of the established limit (\ref{PeierlsFiniteDem2}), we get
	\begin{align*}
		\frac{m_n}{n} \leq \frac{1}{n} h^n(x,x_n) \leq \frac{M_n}{n}
	\end{align*}
	and taking the limit on $n$, we infer the equality $\beta = \beta' = - \alpha_0$.
	
	By gathering (\ref{PeierlsFiniteDem3}), (\ref{PeierlsFiniteDem4}) and (\ref{PeierlsFiniteDem5}), we deduce that
	\begin{equation*}
		- \kappa_1. 2\diam M - \alpha_0.n \leq m_n \leq -\alpha_0.n \leq M_n \leq   \kappa_1. 2\diam M - \alpha_0.n
	\end{equation*}
	leading to
	\begin{equation*}
		- \kappa_1. 2\diam M  \leq \min\limits_{M \times M} h^n = m_n + \alpha_0.n  \leq 0 \leq \max\limits_{M \times M} h^n = M_n + \alpha_0.n \leq   \kappa_1. 2\diam M 
	\end{equation*}
	Therefore, we obtain the desired bounding on the maps $m$ and $M$. The time continuity of $h$ extends the (uniform) finiteness to $m(t,x,y)$ and $M(t,x,y)$.
\end{proof}
 
\begin{rem}
	Note from the previous proof that the \Mane critical value $\alpha_0$ is finite.
\end{rem}

\section{Action of the Lax-Oleinik Operator $\mathcal{T}$ on its Non-Wandering Set $\Omega(\mathcal{T})$} \label{SectionLONW}

In this section, we prove the non-expansiveness of the Lax-Oleinik operator $\mathcal{T}$ and examine its implications for viscosity solutions and their stability. Additionally, we explore the consequences on the non-wandering set $\Omega(\mathcal{T})$ and provide a characterization of non-wandering viscosity solutions as global viscosity solutions.

\subsection{Non-Expansiveness and Boundedness of $\mathcal{T}$}

We begin with a proof of the well-known non-expansiveness of the Lax-Oleinik operator $\mathcal{T}$.

\begin{prop} \label{Contracting}
	For all time $t>0$, The time $t$ Lax-Oleinik operators $\mathcal{T}_0^t$ and $\mathcal{T}^t$ are non-expanding, \textit{i.e}
	\begin{equation} \label{NonExpansivenessFormula}
		\forall u,v \in \mathcal{C}(M,\mathbb{R}), \quad \Vert\mathcal{T}^tu - \mathcal{T}^tv\Vert_\infty = \Vert \mathcal{T}_0^tu - \mathcal{T}_0^tv\Vert_\infty \leq  \Vert u-v\Vert_\infty
	\end{equation} 
	In particular, the Lax-Oleinik operators $\mathcal{T}_0$ and $\mathcal{T}$ are non-expanding.
\end{prop}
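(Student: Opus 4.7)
The plan is to prove the equality and the inequality separately. The equality $\Vert \mathcal{T}^t u - \mathcal{T}^t v \Vert_\infty = \Vert \mathcal{T}_0^t u - \mathcal{T}_0^t v \Vert_\infty$ is immediate from the definition $\mathcal{T}^{s,t} = \mathcal{T}_0^{s,t} + \alpha_0 \cdot (t-s)$ in Definition \ref{LODef}, since the additive constant $\alpha_0 \cdot t$ cancels upon subtraction. So the real content is the inequality for $\mathcal{T}_0^t$.

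For the non-expansiveness inequality, I would use the variational formula
\begin{equation*}
\mathcal{T}_0^t u(x) = \inf_{y \in M} \{ u(y) + h_0^{0,t}(y, x) \}
\end{equation*}
and exploit the classical trick that subtracting two infima of expressions that differ only in one term is controlled by the uniform norm of that term. Fix $x \in M$. By Corollary \ref{TonelliLaxOleinik} (applied to $v$), there exists $y^* \in M$ achieving the infimum defining $\mathcal{T}_0^t v(x)$, so that $\mathcal{T}_0^t v(x) = v(y^*) + h_0^{0,t}(y^*, x)$. Then
\begin{equation*}
\mathcal{T}_0^t u(x) \leq u(y^*) + h_0^{0,t}(y^*, x) = \mathcal{T}_0^t v(x) + (u(y^*) - v(y^*)) \leq \mathcal{T}_0^t v(x) + \Vert u - v \Vert_\infty.
\end{equation*}
Exchanging the roles of $u$ and $v$ gives the reverse bound, and taking the supremum over $x \in M$ yields the claim.

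There is essentially no obstacle here: the existence of a minimizer provided by Tonelli's theorem makes the argument clean, but even without it one could take a near-minimizer $y^*$ with $v(y^*) + h_0^{0,t}(y^*, x) \leq \mathcal{T}_0^t v(x) + \varepsilon$ and let $\varepsilon \to 0$. The only thing worth noting is that the argument uses only the representation of $\mathcal{T}_0^t$ as an infimum of affine-in-$u$ functionals, so non-expansiveness is a structural property of Lax–Oleinik-type operators and does not rely on the specific Tonelli hypotheses beyond what is needed to make the infimum finite.
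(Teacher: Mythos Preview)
Your proof is correct and essentially identical to the paper's: both establish the equality by cancelling the additive constant $\alpha_0 t$, then fix $x$, take a minimizer for $\mathcal{T}_0^t v(x)$ via Corollary \ref{TonelliLaxOleinik}, and compare. The only cosmetic difference is that the paper phrases the minimizer as a curve $\sigma$ and works with the action integral, whereas you phrase it as a point $y^*$ and work with the potential $h_0^{0,t}$; these are the same argument.
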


\begin{proof}
	First, note that
	\begin{align*}
		\mathcal{T}^tu - \mathcal{T}^tv = \mathcal{T}_0^tu + \alpha_0.t- \mathcal{T}_0^tv - \alpha_0.t = \mathcal{T}_0^tu - \mathcal{T}_0^tv 
	\end{align*}
	We prove the non-expansiveness of $\mathcal{T}_0^t$. Let $x \in M$. Corollary \ref{TonelliLaxOleinik} provides with a minimizing curve $\sigma : [0,t] \to M$ with $\sigma(t) = x$ realizing the infimum in the definition (\ref{LO}) of $\mathcal{T}_0^t v(x)$. Hence we get
	\begin{align*}
		\mathcal{T}_0^tu(x) - \mathcal{T}_0^tv(x) &  = \inf_{
	\begin{matrix}
		\gamma : [0,t] \rightarrow M \\
		t \mapsto x 
	\end{matrix}
	} \left\{  u(\gamma(0)) + \int_0^t L(s,\gamma(s), \dot{\gamma}(s))\;ds \right\}
	- \left( v(\sigma(0)) +\int_0^t L(s,\sigma(s), \dot{\sigma}(s))\;ds \right) \\
	& \leq \left( u(\sigma(0)) +\int_0^t L(s,\sigma(s), \dot{\sigma}(s))\;ds \right) - \left( v(\sigma(0)) +\int_0^t L(s,\sigma(s), \dot{\sigma}(s))\;ds \right) \\
	& \leq u(\sigma(0)) - v(\sigma(0)) \leq \Vert u-v \Vert_\infty
	\end{align*}
	Since $u$ and $v$ play symmetric roles, we get the lacking inequality. 
\end{proof}

As a direct consequence, we obtain the forward-time boundedness of all viscosity solutions.

\begin{cor} \label{ViscBounded}
	For all $(s,u) \in \mathbb{R} \times \mathcal{C}(M, \mathbb{R})$, the family $\big( u(t,x) = \mathcal{T}^{s,t}u \big)_{t \geq s}$ is uniformly bounded in $\mathcal{C}(M, \mathbb{R})$.
\end{cor}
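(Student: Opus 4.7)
My plan is to read off the boundedness from the explicit Lax-Oleinik formula
\begin{equation*}
\mathcal{T}^{s,t}u(y) = \inf_{x \in M}\bigl\{u(x) + h^{s,t}(x,y)\bigr\},
\end{equation*}
which yields immediately $\Vert\mathcal{T}^{s,t}u\Vert_\infty \leq \Vert u\Vert_\infty + \sup_{x,y \in M}|h^{s,t}(x,y)|$. The task therefore reduces to uniformly bounding the potential $h^{s,t}$ for $t \geq s + 1$; the remaining range $t \in [s, s+1]$ can be handled separately, since the viscosity solution $(t,x) \mapsto \mathcal{T}^{s,t}u(x)$ is continuous on the compact set $[s,s+1]\times M$ and thus bounded.

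To obtain the uniform bound on $h^{s,t}$, I would reduce to integer times where Proposition \ref{hfinite} supplies $|h^{0,n}(x,y)| \leq 2\kappa_1 \diam(M)$ for every integer $n \geq 1$. First, the $1$-periodicity of $L$ in time yields $h^{s+k, t+k} = h^{s,t}$ for every $k \in \mathbb{Z}$, so one may assume $s \in [0,1)$. Setting $n = \lceil t - s \rceil \geq 1$, both pairs $(s,t)$ and $(0,n)$ lie in the region $\{t' - s' \geq 1\}$ on which Proposition \ref{Regularity} provides the joint Lipschitz regularity of $h_0$, and hence of $h = h_0 + \alpha_0(t-s)$, with some constant $C$. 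Applying it gives
\begin{equation*}
|h^{s,t}(x,y) - h^{0,n}(x,y)| \leq C\bigl(|s - 0| + |t - n|\bigr) \leq 2C,
\end{equation*}
since $s \in [0,1)$ and $|t - n| < 1$ by the choice of $n$. Combined with Proposition \ref{hfinite}, this yields a uniform constant $C'$ with $|h^{s,t}(x,y)| \leq C'$ for all $t \geq s + 1$ and all $x,y \in M$, whence $\Vert \mathcal{T}^{s,t}u\Vert_\infty \leq \Vert u\Vert_\infty + C'$.

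The main obstacle I anticipate is precisely this passage from integer times, where Proposition \ref{hfinite} delivers the key finiteness estimate, to arbitrary real times. It is the combined use of the time-periodicity of $L$ (to normalize $s$ into $[0,1)$) and the joint Lipschitz regularity of $h$ in $(s,t)$ (to absorb the fractional part of $t-s$) that makes this upgrade possible. Once those reductions are in place, the conclusion is immediate, and the uniform constant $C'$ depends only on $L$ (through $\kappa_1$, $\diam(M)$, and $\alpha_0$) and not on the choice of initial time $s$ or initial data $u$ beyond its sup-norm.
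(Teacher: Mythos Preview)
Your argument is correct, but it follows a different route from the paper's. The paper's proof is comparative rather than absolute: it takes a weak-KAM solution $v$ (whose existence is deferred to Corollary \ref{ExistenceKAMF}), uses non-expansiveness to get $\Vert \mathcal{T}^{s,t}u - \mathcal{T}^{s,t}\mathcal{T}^{s}v\Vert_\infty \leq \Vert u - \mathcal{T}^{s}v\Vert_\infty$, and then observes that $\mathcal{T}^{s,t}\mathcal{T}^{s}v = \mathcal{T}^{t-\lfloor t\rfloor}v$ ranges over the bounded set $\{\mathcal{T}^{\tau}v : \tau \in [0,1]\}$ by periodicity of $v$. Your approach instead bounds the kernel $h^{s,t}$ directly, which has the advantage of avoiding the forward reference to the existence of a weak-KAM solution; in effect you reach the same place the paper does, since the existence result in Corollary \ref{ExistenceKAMF} also rests on Proposition \ref{hfinite}. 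The paper's method is a bit cleaner conceptually (one inequality from non-expansiveness plus one compactness), while yours is more self-contained at this point in the exposition and makes the dependence of the bound on $\kappa_1$, $\diam(M)$, and $\alpha_0$ explicit. Either way, the substance is Proposition \ref{hfinite}; the two proofs differ only in how they package its consequence.
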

\begin{proof}
	Fix an element $(s,u) \in \mathbb{R} \times \mathcal{C}(M, \mathbb{R})$. Let $v$ be a weak-KAM solution which existence will be proved in Corollary \ref{ExistenceKAMF}. By the non-expansiveness Proposition \ref{Contracting}, we get for all $t \geq s$
	\begin{align*}
		|| \mathcal{T}^{s,t}u - \mathcal{T}^{s,t} \mathcal{T}^{s}v ||_\infty \leq || u-\mathcal{T}^{s}v ||_\infty
	\end{align*}
	And we know from the definition of weak-KAM solutions that
	\begin{align*}
		\mathcal{T}^{s,t} \mathcal{T}^{s}v = \mathcal{T}^{t} v = \mathcal{T}^{\lfloor t \rfloor, t} \mathcal{T}^{\lfloor t \rfloor} v = \mathcal{T}^{\lfloor t \rfloor, t} v = \mathcal{T}^{t- \lfloor t \rfloor} v
	\end{align*}
	Hence, we obtain from the continuity in time of $\mathcal{T}^{\tau}v$ that
	\begin{align*}
		|| \mathcal{T}^{s,t}u ||_\infty & \leq || u-\mathcal{T}^{s}v ||_\infty + ||\mathcal{T}^{s,t} \mathcal{T}^{s}v||_\infty = || u-\mathcal{T}^{s}v ||_\infty + ||\mathcal{T}^{t- \lfloor t \rfloor} v ||_\infty \\
		& \leq || u-\mathcal{T}^{s}v ||_\infty + \sup_{\tau \in [0,1]} ||\mathcal{T}^{\tau}v ||_\infty < +\infty
	\end{align*}
\end{proof}

\begin{rem} \label{ManeCharacterization}
	This boundedness result offers a characterization of the \Mane critical value $\alpha_0$. Indeed, $\alpha_0$ is the unique real constant $c$ such that there exists (or for all) scalar map $u \in \mathcal{C}(M, \mathbb{R})$, the family $\big( u_c(t,x) = \mathcal{T}_0^{s,t}u +c.(t-s) \big)_{t \geq s}$ is uniformly bounded.
\end{rem}

\subsection{Stability of Viscosity Solutions}

We present stability results for viscosity solutions of a Tonelli Hamiltonian. The stability of limits given by Proposition \ref{ViscosityLim} remains valid in more general frameworks. For a more detailed exposition on stability results for viscosity solutions, see \cite{MR1613876}. However, in the Tonelli framework, the Lax-Oleinik operator $\mathcal{T}$ allows for the stability of both the infimum and the liminf.

\begin{prop} \label{ViscosityInf}
	Let $(v_i : [s,t] \times M \to \mathbb{R})_{i \in I}$ be a family of viscosity solutions and let $u(t,x) = \inf_{i \in I} \{v_i(t,x) \}$. Then $u$ is a viscosity solution.
\end{prop}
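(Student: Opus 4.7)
The plan is to exploit the explicit semigroup characterization of viscosity solutions: any continuous $w : [s,t] \times M \to \mathbb{R}$ is a viscosity solution of \eqref{HJalpha} iff $w(\tau, \cdot) = \mathcal{T}^{\sigma,\tau} w(\sigma, \cdot)$ for every $s \le \sigma < \tau \le t$. So once the semigroup identity is checked for $u$ and $u$ is shown to be continuous, we are done.

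The key algebraic observation is that the Lax-Oleinik operator, being an infimum on its inner variable, commutes with any infimum taken on its argument. Precisely, for any family $(w_i)_{i \in I}$ of continuous maps on $M$ whose pointwise infimum is finite,
\[
\mathcal{T}^{\sigma,\tau}\Bigl(\inf_{i \in I} w_i\Bigr)(x) = \inf_{y \in M}\Bigl\{\inf_{i \in I} w_i(y) + h^{\sigma,\tau}(y,x)\Bigr\} = \inf_{i \in I}\, \mathcal{T}^{\sigma,\tau} w_i(x).
\]
Applying this identity with $w_i = v_i(\sigma, \cdot)$ and using the fact that each $v_i$ is itself a viscosity solution (so $v_i(\tau,\cdot) = \mathcal{T}^{\sigma,\tau} v_i(\sigma,\cdot)$), I immediately obtain
\[
\mathcal{T}^{\sigma,\tau} u(\sigma, \cdot)(x) = \inf_{i \in I} \mathcal{T}^{\sigma,\tau} v_i(\sigma,\cdot)(x) = \inf_{i \in I} v_i(\tau,x) = u(\tau,x),
\]
which is the required semigroup property.

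The remaining issue is continuity of $u$. For any $\varepsilon > 0$, the regularity Proposition \ref{Regularity} gives that each $v_i(\tau, \cdot) = \mathcal{T}^{\sigma,\tau} v_i(\sigma, \cdot)$ is $\kappa_\varepsilon$-Lipschitz as soon as $\tau - \sigma \ge \varepsilon$; hence the family $\bigl(v_i(\tau, \cdot)\bigr)_{i,\,\tau \ge s+\varepsilon}$ is uniformly $\kappa_\varepsilon$-equilipschitz in $x$, so the infimum $u(\tau, \cdot)$ is $\kappa_\varepsilon$-Lipschitz for all such $\tau$. Combined with the semigroup identity $u(\tau, \cdot) = \mathcal{T}^{s+\varepsilon,\tau} u(s+\varepsilon, \cdot)$ and the continuous dependence of $\mathcal{T}^{s+\varepsilon,\tau}$ on $\tau$, this yields joint continuity of $u$ on $(s,t] \times M$. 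The one subtle point—and the main potential obstacle—is continuity at the initial time $\tau = s$, since a pointwise infimum of continuous maps is only upper semicontinuous a priori; this is handled by observing that $u(s,\cdot)$ is bounded below (using Corollary \ref{ViscBounded} applied to all the $v_i$) and invoking the uniform convergence $\mathcal{T}^{s,\tau} w \to w$ as $\tau \to s^+$ for every $w \in \mathcal{C}(M,\mathbb{R})$, a standard strong-continuity property of the Lax-Oleinik semigroup that forces $u$ to be continuous down to $\tau = s$ whenever its initial trace is.
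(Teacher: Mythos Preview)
Your core computation---swapping the two infima in the Lax-Oleinik formula and using that each $v_i$ satisfies the semigroup relation---is exactly the paper's proof, line for line. The paper stops there: in this article a viscosity solution is \emph{defined} as a map satisfying $u(\tau,\cdot)=\mathcal{T}^{\sigma,\tau}u(\sigma,\cdot)$, so no separate continuity verification is needed.

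Your additional continuity discussion is extra material not present in the paper, and it has a small gap worth noting: your appeal to Corollary~\ref{ViscBounded} to bound $u(s,\cdot)$ from below does not work, since that corollary gives a bound depending on $\|v_i(s,\cdot)\|_\infty$ for each individual $v_i$, not a uniform bound over $i\in I$. Without a hypothesis that the family $(v_i(s,\cdot))_i$ is uniformly bounded below, $u(s,\cdot)$ could in principle take the value $-\infty$. In every use of this proposition in the paper such a lower bound is available from context, which is presumably why the author does not state it explicitly; but if you want a self-contained statement with continuity, you would need to add that as a hypothesis.
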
 
\begin{proof}
	Fix two real times $s \leq s'<t' \leq t$. Then, for all $x \in M$ the following computation holds
	\begin{align*}
		\mathcal{T}^{s',t'}u(s',x) & = \inf_{y \in M} \big\{ u(s', y ) + h^{s',t'}(y,x) \big\} \\
		&= \inf_{y \in M} \big\{ \inf_{i \in I} \{ v_n(s', y ) \} + h^{s',t'}(y,x) \big\} \\
		&= \inf_{i \in I} \; \inf_{y \in M} \Big\{ v_n(s', y ) + h^{s',t'}(y,x) \Big\} \\
		&= \inf_{i \in I}  \big\{ \mathcal{T}^{s',t'}v_n(s',x) \big\} = \inf_{n \geq 0}  \{ v_n(t',x) \} =  u(t',x) 
	\end{align*}
\end{proof}

\begin{prop} \label{ViscosityLim}
	Let $v_n$ be a sequence of scalar maps in $\mathcal{C}(M, \mathbb{R})$ that converges to $v$. Then, for all times $s \in \mathbb{R}$, the associated viscosity solutions $v_n : [s,+\infty) \times M \to \mathbb{R}$ defined by $v_n(t,x) = \mathcal{T}^{s,t}v_n(x)$ converge to the viscosity solution $v(t,x) = \mathcal{T}^{s,t}v(x)$ in the $C^0$ topology.
\end{prop}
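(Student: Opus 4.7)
The plan is to deduce this stability statement as an almost immediate consequence of the non-expansiveness of the Lax-Oleinik operator established in Proposition \ref{Contracting}. That proposition is stated for $\mathcal{T}^t = \mathcal{T}^{0,t}$, but the same argument — which selects, via Corollary \ref{TonelliLaxOleinik}, a minimizing curve realizing the infimum in the Lax-Oleinik formula for one of the two initial data and then uses it as a competitor for the other — applies verbatim to $\mathcal{T}^{s,t}$ for any real $s < t$, yielding
\begin{equation*}
    \Vert \mathcal{T}^{s,t} u - \mathcal{T}^{s,t} w \Vert_\infty \leq \Vert u - w \Vert_\infty \qquad \text{for all } u, w \in \mathcal{C}(M,\mathbb{R}).
\end{equation*}

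Applying this with $u = v_n$ and $w = v$, one obtains for every $t \geq s$ and every $x \in M$
\begin{equation*}
    |v_n(t,x) - v(t,x)| = |\mathcal{T}^{s,t} v_n(x) - \mathcal{T}^{s,t} v(x)| \leq \Vert v_n - v \Vert_\infty.
\end{equation*}
Since the right-hand side is independent of $(t,x)$, taking the supremum over the entire domain gives
\begin{equation*}
    \sup_{(t,x) \in [s,+\infty) \times M} |v_n(t,x) - v(t,x)| \leq \Vert v_n - v \Vert_\infty \xrightarrow[n\to+\infty]{} 0.
\end{equation*}
This establishes uniform convergence on the whole set $[s, +\infty) \times M$, which is strictly stronger than convergence in the $C^0$ topology (uniform convergence on compact subsets) and therefore implies it.

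There is no genuine obstacle here: the entire argument is contained in the non-expansiveness inequality. In particular, neither the equi-Lipschitz property of Corollary \ref{Equicontinuity} nor the stability of the infimum (Proposition \ref{ViscosityInf}) is needed, and no compactness argument on time intervals is invoked, since the bound obtained is already global and uniform in $t$.
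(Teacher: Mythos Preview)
Your proof is correct and follows essentially the same approach as the paper: both apply the non-expansiveness of $\mathcal{T}^{s,t}$ (Proposition \ref{Contracting}) to bound $\Vert v_n(t,\cdot) - v(t,\cdot)\Vert_\infty$ by $\Vert v_n - v\Vert_\infty$ uniformly in $t$. Your version is slightly more explicit in justifying that the non-expansiveness extends from $\mathcal{T}^{0,t}$ to $\mathcal{T}^{s,t}$ and in noting that the resulting convergence is uniform on all of $[s,+\infty)\times M$, but the core argument is identical.
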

\begin{proof}
	For all times $s<t$, we have by non-expansiveness of $\mathcal{T}^{s,t}$ that
	\begin{equation}
		\Vert v_n(t,\cdot) - v(t,\cdot) \Vert_\infty =  \Vert \mathcal{T}^{s,t}v_n - \mathcal{T}^{s,t}v \Vert_\infty \leq  \Vert v_n - v \Vert_\infty \longrightarrow 0 \quad \text{as } n \to +\infty
	\end{equation}
\end{proof}

\begin{prop} \label{ViscosityLiminf}
	Let $u \in \mathcal{C}(M, \mathbb{R})$ be a scalar map with associated viscosity solution $u(t,x) : [0, +\infty) \times M \to \mathbb{R}$ and let $(k_n)_n$ be an increasing sequence of integers. We define the map $v : \mathbb{R} \times M \to \overline{\mathbb{R}}$ defined by
	\begin{equation} \label{globalformula}
		v(t,x) = \liminf_n u(t+k_n,x)
	\end{equation}
	If $v$ is everywhere finite, then it is a viscosity solution of the Hamilton-Jacobi equation.
\end{prop}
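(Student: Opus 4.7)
The plan is to reduce everything to the semigroup identity $u(t+k_n,x) = \mathcal{T}^{s,t}\bigl[u(s+k_n,\cdot)\bigr](x)$, which holds for any real $s<t$ because the Lagrangian $L$ is $1$-periodic in time and the $k_n$ are integers (so that $\mathcal{T}^{s+k_n,t+k_n} = \mathcal{T}^{s,t}$). Once this is set, proving that $v$ is a viscosity solution reduces to showing the Lax-Oleinik relation $v(t,\cdot) = \mathcal{T}^{s,t} v(s,\cdot)$ for all $s<t$, plus continuity of $v$.

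First I would establish the regularity. By Corollary \ref{Equicontinuity}, the family $\bigl(u(t+k_n,\cdot)\bigr)_{n\geq 0}$ is $\kappa_1$-equilipschitz (as soon as $t+k_n \geq 1$). A standard fact about pointwise liminf of equi-Lipschitz families gives that $v(t,\cdot)$ is $\kappa_1$-Lipschitz, hence in particular continuous and, by the everywhere-finiteness hypothesis together with compactness of $M$, bounded on each horizontal slice. Continuity of $v$ in the pair $(t,x)$ will then follow a posteriori from the identity $v(t,\cdot)=\mathcal{T}^{s,t}v(s,\cdot)$ and the time continuity of the Lax-Oleinik semigroup.

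The core step is to prove, for arbitrary $s<t$ and $x \in M$, the identity
\begin{equation*}
v(t,x) = \inf_{y \in M}\bigl\{ v(s,y) + h^{s,t}(y,x)\bigr\}.
\end{equation*}
The inequality $\leq$ is immediate: for any $y_0 \in M$, $u(t+k_n,x) \leq u(s+k_n,y_0) + h^{s,t}(y_0,x)$, and taking $\liminf_n$ on both sides gives $v(t,x) \leq v(s,y_0) + h^{s,t}(y_0,x)$; now infimize over $y_0$. For the reverse inequality $\geq$, Corollary \ref{TonelliLaxOleinik} furnishes, for each $n$, a point $y_n \in M$ such that
\begin{equation*}
u(t+k_n,x) = u(s+k_n,y_n) + h^{s,t}(y_n,x).
\end{equation*}
I extract a subsequence (still denoted $n$) along which $u(t+k_n,x) \to v(t,x)$ and, by compactness of $M$, also $y_n \to y_\infty$. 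Using $\kappa_1$-equilipschitzness of $u(s+k_n,\cdot)$ to replace $y_n$ by $y_\infty$ at negligible cost, together with the fact that the liminf along a subsequence is at least the liminf along the full sequence, I obtain
\begin{equation*}
v(t,x) \geq \liminf_n u(s+k_n,y_\infty) + h^{s,t}(y_\infty,x) \geq v(s,y_\infty) + h^{s,t}(y_\infty,x),
\end{equation*}
which is $\geq$ the desired infimum.

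The main technical subtlety is precisely in this last step: the liminf does not in general commute with infimum, so one needs the compactness/selection argument above, relying crucially on the a priori regularity ($\kappa_1$-equilipschitzness from Corollary \ref{Equicontinuity}) and on the existence of Tonelli minimizers $y_n$ for the Lax-Oleinik operator. Everything else is a direct application of tools already established in the excerpt.
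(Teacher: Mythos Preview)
Your proof is correct and follows essentially the same strategy as the paper's: both directions of the Lax-Oleinik identity are obtained by the same mechanism---the easy inequality via the defining sub-optimality of $\mathcal{T}^{s,t}$, and the hard one by selecting Tonelli minimizers, extracting a convergent subsequence by compactness, and using the $\kappa_1$-equilipschitzness of Corollary~\ref{Equicontinuity} to pass to the limit. The only cosmetic difference is that the paper works with the full minimizing curves $\gamma_i:[s,t]\to M$ and invokes their $C^1$-compactness (Corollary~\ref{MinimizingC1Compactness}), whereas you work directly with the endpoints $y_n=\gamma_n(s)$ and the potential $h^{s,t}$, needing only the compactness of $M$; your formulation is marginally lighter for this reason.
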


\begin{proof}
	We start by proving that it is a sub-solution i.e. for all times $s \leq t$, $\mathcal{T}^{s,t}v(s, \cdot ) \leq v(t, \cdot)$. Fix $(t,x)$ in $\mathbb{R} \times M$ and  subsequence $k_{n_i}$ of $k_n$ such that $\mathcal{T}^{t+k_{n_i}} u(x)$ converges to $v(t,x)$. For every integer $i$, we get from Tonelli's Theorem \ref{TonelliTheorem} a minimizing curve $\gamma_i : [s,t] \to M$ which realizes $\mathcal{T}^{s,t} (\mathcal{T}^{s+k_{n_i}} u)(x)$ i.e. such that $\gamma_i(t) = x$ and
	\begin{equation}
		\mathcal{T}^{t+k_{n_i}} u(x) = \mathcal{T}^{s,t} (\mathcal{T}^{s+k_{n_i}} u)(x) = \mathcal{T}^{s+k_{n_i}} u( \gamma_i(s)) + \int_s^t \Big( L(\tau, \gamma_i(\tau), \dot{\gamma}_i(\tau)) +\alpha_0 \Big) \; d\tau
	\end{equation}
	These curves $\gamma_i$ are minimizing. Thus, by Corollary \ref{MinimizingC1Compactness}, we can assume up to extraction that the curves $\gamma_i$ converge to $\gamma : [s,t] \to M$ in the $C^1$-topology with $\gamma(t) = \gamma_i(t) = x$. Therefore, we get
	\begin{align*}
		v(t,x)  = \lim\limits_i \mathcal{T}^{t+k_{n_i}} u(x) &\geq \liminf_i  \mathcal{T}^{s+k_{n_i}} u( \gamma_i(s)) + \lim\limits_i \int_s^t \Big( L(\tau, \gamma_i(\tau), \dot{\gamma}_i(\tau)) +\alpha_0 \Big) \; d\tau \\
		&= \liminf_i  \mathcal{T}^{s+k_{n_i}} u( \gamma_i(s))  + \int_s^t \Big( L(\tau, \gamma(\tau), \dot{\gamma}(\tau)) +\alpha_0 \Big) \; d\tau 
	\end{align*}
	However, we know from Corollary \ref{Equicontinuity} that the family $(\mathcal{T}^{t} u)_t$ is equicontinuous, which yields
	\begin{align*}
		| \mathcal{T}^{s+k_{n_i}} u( \gamma_i(s)) -  \mathcal{T}^{s+k_{n_i}} u( \gamma(s))| \leq \kappa_1. | u( \gamma_i(s)) -  u( \gamma(s))| \longrightarrow 0 \quad \text{as } n \to \infty
	\end{align*}
	Hence,
	\begin{equation*}
		\liminf_i  \mathcal{T}^{s+k_{n_i}} u( \gamma_i(s)) = \liminf_i  \mathcal{T}^{s+k_{n_i}} u( \gamma(s)) \geq \liminf_n \mathcal{T}^{s+k_n} u( \gamma(s)) =  v(s,\gamma(s))
	\end{equation*}
	and
	\begin{align*}
		v(t,x) & \geq  v(s,\gamma(s)) + \int_s^t \Big( L(\tau, \gamma(\tau), \dot{\gamma}(\tau)) +\alpha_0 \Big) \; d\tau  \geq \mathcal{T}^{s,t}v(s, x )
	\end{align*}
	
	We now establish the inverse inequality. Let $\gamma : [s,t] \to M$ be any curve with $\gamma(t) = x$. We know from the definition of the Lax-Oleinik operator that
	\begin{align*}
		\mathcal{T}^{t+k_n} u(x) \leq \mathcal{T}^{s+k_n} u( \gamma(s)) + \int_s^t \Big( L(\tau, \gamma(\tau), \dot{\gamma}(\tau)) +\alpha_0 \Big) \; d\tau 
	\end{align*}
	and taking the liminf, we get that for each such curve $\gamma$,
	\begin{equation*}
		v(t,x) \leq v(s,\gamma(s)) +  \int_s^t \Big( L(\tau, \gamma(\tau), \dot{\gamma}(\tau)) +\alpha_0 \Big) \; d\tau 
	\end{equation*}
	which gives the desired inequality
	\begin{equation*}
		v(t,x) \leq \mathcal{T}^{s,t}v(s,x)
	\end{equation*}
\end{proof}

\begin{cor} \label{ExistenceKAMF}
	There exists a weak-KAM solution of the Hamilton-Jacobi equation \eqref{HJalpha}.
\end{cor}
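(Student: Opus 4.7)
The plan is to produce a weak-KAM solution as the time-zero slice of a $1$-periodic global viscosity solution obtained via a liminf procedure applied to the orbit of $u_0 \equiv 0$ under the Lax-Oleinik semigroup.

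First, I set $u_n := \mathcal{T}^n 0 \in \mathcal{C}(M, \mathbb{R})$. From the definition of the Lax-Oleinik operator, $u_n(x) = \inf_{y \in M} h^n(y, x)$, so Proposition \ref{hfinite} yields $\Vert u_n \Vert_\infty \leq 2\kappa_1 \diam(M)$ uniformly in $n$. For an arbitrary real time $t$, writing
\[
\mathcal{T}^{t+n} 0 = \mathcal{T}^{\{t+n\}} u_{\lfloor t+n \rfloor}
\]
(using the $1$-periodicity of $L$) and applying the non-expansiveness of $\mathcal{T}^{\tau}$ from Proposition \ref{Contracting}, the family $(\mathcal{T}^{t+n} 0)_n$ is uniformly bounded for each fixed $t \in \mathbb{R}$, since $\mathcal{T}^{\{t+n\}} 0$ stays bounded on the compact parameter range $\{t+n\} \in [0,1)$.

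Second, I define $v : \mathbb{R} \times M \to \mathbb{R}$ by
\[
v(t,x) = \liminf_{n \to +\infty} \mathcal{T}^{t+n} 0(x).
\]
By the previous step $v$ is everywhere finite, and Proposition \ref{ViscosityLiminf} applied to the initial datum $0$ and the integer sequence $k_n = n$ shows that $v$ is a global viscosity solution of \eqref{HJalpha}. A shift of index then yields
\[
v(t+1, x) = \liminf_n \mathcal{T}^{t+n+1} 0(x) = \liminf_m \mathcal{T}^{t+m} 0(x) = v(t,x),
\]
so $v$ is $1$-periodic in time. Consequently $\mathcal{T} v(0, \cdot) = v(1, \cdot) = v(0, \cdot)$, which is precisely the weak-KAM fixed-point equation, and $v(0, \cdot) \in \mathcal{C}(M, \mathbb{R})$ is the desired weak-KAM solution.

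The only non-trivial input is the a priori uniform boundedness of the orbit $(\mathcal{T}^n 0)_n$ that guarantees the finiteness of $v$; once this is in hand, the rest is a formal combination of the stability-by-liminf result (Proposition \ref{ViscosityLiminf}) with the shift-of-index identity $\liminf_n a_{n+1} = \liminf_n a_n$. That boundedness is exactly the content of Proposition \ref{hfinite}, which is the main technical preliminary of this section.
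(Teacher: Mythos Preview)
Your proof is correct and follows essentially the same approach as the paper's: take a liminf along the integer shifts of a viscosity solution, invoke Proposition~\ref{hfinite} for finiteness, Proposition~\ref{ViscosityLiminf} for the viscosity property, and the shift-of-index identity for $1$-periodicity. The only cosmetic difference is your choice of initial datum: you use the zero function, so that $u_n(x)=\inf_{y}h^n(y,x)$, whereas the paper uses the potential $h^t(x_0,\cdot)$ from a fixed base point, obtaining directly the Peierls barrier $h^{\infty+t}(x_0,\cdot)$ as the resulting weak-KAM solution; both choices work equally well for existence.
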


\begin{proof}
	Fix a point $x_0$ in $M$ and set $u(t,x) = h^t(x_0, x)$. We know from Proposition \ref{hprop} that $u: (0,+\infty) \times M \to \mathbb{R}$ is a viscosity solution of the Hamilton-Jacobi equation \eqref{HJalpha}. Moreover, we know due to Proposition \ref{hfinite} that the map $v(t,x) = \liminf_n u(t+n,x) = h^{\infty+t}(x_0,x)$ is everywhere finite. Hence, we infer from Proposition \ref{ViscosityLiminf} that map $v(t,x) = \liminf_n u(t+n,x) = h^{\infty+t}(x_0,x)$ is also a viscosity solution of \eqref{HJalpha}. Furthermore, we have
	\begin{equation}
		\mathcal{T}v = v(t+1, \cdot) = \liminf_n u(t+1+n, \cdot) = \liminf_n u(t,\cdot) = v
	\end{equation}
	Hence, $v$ belongs to $\fix (\mathcal{T})$ and it is a weak-KAM solution.
\end{proof}

\begin{rem}
	Note that in the statement of Proposition \ref{ViscosityLiminf} we didn't assume that $v$ is finite as this is a consequence of Proposition \ref{ViscBounded} which is itself a consequence of Corollary \ref{ExistenceKAMF}.
\end{rem}

\subsection{Restriction to the Non-Wandering Set $\Omega(\mathcal{T})$} \label{SectionIsometry}

We explore the implications of the non-expansiveness of the Lax-Oleinik operator $\mathcal{T}$ on its non-wandering and recurrent sets. We start by introducing the definitions of some asymptotic objects of $\mathcal{T}$.\\

To a scalar map $u$ of $\mathcal{C}(M , \mathbb{R})$, we associate its \textit{$\omega$-limit set} $\omega(u):=\omega_{\mathcal{T}}(u)$ under the operator $\mathcal{T}$ as the set of limit points in the $\mathcal{C}(M,\mathbb{R})$ of the sequence $(\mathcal{T}^n(u))_{n \in \mathbb{N}}$. More precisely
\begin{equation}
	\omega(u) = \{ v \in \mathcal{C}(M , \mathbb{R}) \; | \; \exists (k_n)_n \in \mathbb{N}^\mathbb{N} \text{ increasing sequence s.t } \Vert \mathcal{T}^{k_n}u - v \Vert_\infty \to 0 \text{ as } n \to \infty\}
\end{equation}

\begin{defi}
	\begin{enumerate}
		\item The \textit{recurrent set} $\mathcal{R}(\mathcal{T})$ of $\mathcal{T}$ is the set of $\mathcal{T}$-recurrent elements of $\mathcal{C}(M,\mathbb{R})$ i.e. the set of $u \in \mathcal{C}(M,\mathbb{R})$ such that $u$ belongs to $\omega(u)$.
		\item Let $\underline{p} = (p_n)_n$ be an increasing of $\mathbb{N}$. An element $u$ of $\mathcal{R}(\mathcal{T})$ is said \textit{$\underline{p}$-recurrent} if $\lim_n \mathcal{T}^{p_n}u = u$.
		\item The \textit{non-wandering set} $\Omega( \mathcal{T})$ of $\mathcal{T}$ is the set of elements $u$ of $\mathcal{C}(M,\mathbb{R})$ such that for every neighbourhood $U$ of $u$ in $\mathcal{C}(M,\mathbb{R})$, there are infinitely many positive integers $k$ such that $\mathcal{T}^k(U) \cap U \neq \emptyset$.
	\end{enumerate}
\end{defi}

\begin{rem}
	Every element $u$ of $\mathcal{R}(\mathcal{T})$ or $\Omega(\mathcal{T})$ corresponds to a recurrent or a non-wandering (for integer times) viscosity solution $u(t,x)$. By abuse of language, we will often refer to the sets $\mathcal{R}(\mathcal{T})$ and $\Omega(\mathcal{T})$ as the sets of recurrent and non-wandering viscosity solutions, respectively.
\end{rem}

As a consequence of the non-expansiveness of $\mathcal{T}$ shown in Proposition \ref{Contracting}, we get the following properties

\begin{prop} \label{LONW}
	\begin{enumerate}
		\item \label{Minimality} Let $u \in \mathcal{C}(M,\mathbb{R})$. Then, its $\omega$-limit set $\omega(u)$ is compact in $\mathcal{C}(M, \mathbb{R})$, and the restriction of $\mathcal{T}$ to $\omega(u)$ is minimal, i.e. for all $v \in \omega(u)$, $\omega(u)= \overline{\{\mathcal{T}^nv \; | \; n \in \mathbb{N} \}} = \omega(v)$.
		\item \label{NW=R} The non-wandering set $\Omega( \mathcal{T})$ is equal to the recurrent set $\mathcal{R}(\mathcal{T})$.
		\item \label{NW=R=omega} The relation $u \sim v \Leftrightarrow v \in \omega(u)$ is an equivalence relation. If we denote by $\Lambda$ the set of its equivalence classes, then we have
		\begin{equation} \label{NW=R=omegaFormula}
			\Omega( \mathcal{T}) = \mathcal{R}(\mathcal{T}) = \bigsqcup_{u \in \Lambda} \omega(u)
		\end{equation}
		where the union is disjoint.
	\end{enumerate}
\end{prop}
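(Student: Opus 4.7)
The plan is to bootstrap from the non-expansiveness of $\mathcal{T}$ (Proposition \ref{Contracting}) combined with the equicontinuity and boundedness of Lax-Oleinik iterates (Corollaries \ref{Equicontinuity} and \ref{ViscBounded}). First I would establish precompactness of every forward orbit $\{\mathcal{T}^n u\}_{n \geq 1}$ by Arzel\`a-Ascoli: the family is uniformly bounded by Corollary \ref{ViscBounded} and $\kappa_1$-equilipschitz by Corollary \ref{Equicontinuity}. This yields compactness of $\omega(u)$ at once. The forward invariance $\mathcal{T}(\omega(u)) \subset \omega(u)$ follows from the continuity of $\mathcal{T}$ (a consequence of non-expansiveness), and surjectivity of $\mathcal{T}_{|\omega(u)}$ comes from a compactness argument: if $w = \lim_n \mathcal{T}^{k_n}u$ with $k_n \to +\infty$, then $(\mathcal{T}^{k_n-1}u)_n$ admits a convergent subsequence whose limit $v$ lies in $\omega(u)$ and satisfies $\mathcal{T}v = w$.

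The core of the first assertion is minimality, which I expect to be the main obstacle, since it requires a careful diagonal extraction and is where non-expansiveness really bites. Fix $v \in \omega(u)$ and an arbitrary $w \in \omega(u)$; I need to produce integers $m_j \to +\infty$ with $\mathcal{T}^{m_j}v \to w$. Choose sequences $k_j, \ell_j \to +\infty$ with $\mathcal{T}^{k_j}u \to v$ and $\mathcal{T}^{\ell_j}u \to w$, and by a diagonal extraction pick indices $i_j$ so that $m_j := \ell_{i_j} - k_j$ is positive and tends to $+\infty$. Non-expansiveness then yields
\begin{equation*}
\Vert \mathcal{T}^{m_j}v - w \Vert_\infty \leq \Vert \mathcal{T}^{m_j}v - \mathcal{T}^{m_j + k_j}u \Vert_\infty + \Vert \mathcal{T}^{\ell_{i_j}}u - w \Vert_\infty \leq \Vert v - \mathcal{T}^{k_j}u \Vert_\infty + \Vert \mathcal{T}^{\ell_{i_j}}u - w \Vert_\infty,
\end{equation*}
which can be made arbitrarily small. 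Hence $w \in \omega(v)$, giving $\omega(u) \subset \omega(v)$; the reverse inclusion is immediate from forward invariance and closedness of $\omega(u)$.

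The second assertion then follows quickly. The inclusion $\mathcal{R}(\mathcal{T}) \subset \Omega(\mathcal{T})$ is tautological, and the reverse uses non-expansiveness one more time: if $u \in \Omega(\mathcal{T})$ and $\varepsilon > 0$, choose $v$ with $\Vert v - u \Vert_\infty < \varepsilon$ and arbitrarily large $k$ such that $\Vert \mathcal{T}^k v - u \Vert_\infty < \varepsilon$, and conclude $\Vert \mathcal{T}^k u - u \Vert_\infty < 2\varepsilon$ by the triangle inequality, so $u \in \omega(u)$. The third assertion is then a formality: reflexivity of $\sim$ uses the identity $\Omega(\mathcal{T}) = \mathcal{R}(\mathcal{T})$, while symmetry and transitivity follow from the equality $\omega(u) = \omega(v)$ whenever $v \in \omega(u)$ proved in the minimality step. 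Disjointness of the decomposition \eqref{NW=R=omegaFormula} follows from the same fact, since any $w \in \omega(u) \cap \omega(u')$ forces $\omega(u) = \omega(w) = \omega(u')$.
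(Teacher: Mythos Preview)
Your proposal is correct and follows essentially the same route as the paper's proof: Arzel\`a--Ascoli for compactness of $\omega(u)$, the non-expansiveness estimate $\Vert \mathcal{T}^{m_j}v - \mathcal{T}^{m_j+k_j}u\Vert_\infty \leq \Vert v - \mathcal{T}^{k_j}u\Vert_\infty$ for minimality, and the same $2\varepsilon$ triangle-inequality argument for $\Omega(\mathcal{T}) = \mathcal{R}(\mathcal{T})$. The only cosmetic difference is that the paper bounds $\{\mathcal{T}^n u\}$ directly via the potential estimate of Proposition~\ref{hfinite} rather than invoking Corollary~\ref{ViscBounded}, and it defers the surjectivity of $\mathcal{T}_{|\omega(u)}$ to the proof of Proposition~\ref{Isometry}.
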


\begin{proof}
	\ref{Minimality}. The set $\omega(u)$ is closed and $\mathcal{T}$-invariant. Let us show that it is bounded in $\mathcal{C}(M,\mathbb{R})$. For all positive integer $n \geq 0$ and for all $x$ in $M$, if we denote by $y$ the point that realizes the infimum in $\mathcal{T}^nu(x)$, then we have
	\begin{equation*}
		| \mathcal{T}^nu(x) | = | u(y) + h^n(y,x) | \leq \Vert v \Vert_\infty + \max ( \Vert m \Vert_\infty ,\Vert M \Vert_\infty ) \leq  \Vert u \Vert_\infty + \kappa_1. 2\diam(M)
	\end{equation*}
	where we used the bound on $m = \inf_n h^n$ and $M = \sup_n h^n$ found in Proposition \ref{hfinite}. Taking limits on $n$, we observe that this bound holds for every element of $\omega(u)$. Moreover, since we know from Corollary \ref{Equicontinuity} that the maps $(u(t,\cdot))_{t\geq 1}$ are equilipschitz, we infer that all the elements of $\omega(u)$ are $\kappa_1$-lipschitz. Therefore, the Arzéla-Ascoli theorem asserts that this set is compact in $\mathcal{C}(M,\mathbb{R})$.\\
	
	We show the minimality property. Let $v \in \omega(u)$ and $(n_k)_{k \in \mathbb{N}}$ be an increasing sequence of integers such that $\mathcal{T}^{n_k}u$ converges to $v \in \omega(u)$. The set $\omega(u)$ is closed and $\mathcal{T}$-invariant. Hence, we have $\overline{\{\mathcal{T}^nv \; | \; n \in \mathbb{N} \}} \subset \omega(u)$.
	
	To prove the inverse inclusion, fix $w \in \omega(u)$. There exists an increasing sequence $(p_k)_{k \in \mathbb{N}}$ such that $\mathcal{T}^{n_k+p_k}u \to w$. The non-expansiveness (\ref{NonExpansivenessFormula}) of the semi-group $\mathcal{T}$ results in
	\begin{align*}
		\Vert \mathcal{T}^{p_k}v - w\Vert_\infty & \leq \Vert \mathcal{T}^{p_k}v - \mathcal{T}^{n_k+p_k}u\Vert_\infty + \Vert \mathcal{T}^{n_k+p_k}u - w\Vert_\infty\\
		& \leq \Vert v - \mathcal{T}^{n_k}u\Vert_\infty + \Vert \mathcal{T}^{n_k+p_k}u - w\Vert_\infty \longrightarrow 0 \quad \text{as } k\to \infty
	\end{align*}
	Hence, $w \in \overline{\{\mathcal{T}^nv \; | \; n \in \mathbb{N} \}}$.
	
	By closedness and $\mathcal{T}$-invariance, we know that $\omega(v) \subset \omega(u)$. Hence, the minimality gives the inverse inclusion and the equality of sets. \\

	\ref{NW=R}. The inclusion $\mathcal{R}(\mathcal{T}) \subset \Omega( \mathcal{T})$ is immediate. We need to prove the inverse inclusion. Let $u \in \Omega( \mathcal{T})$ be a non-wandering element under $\mathcal{T}$. We aim to prove that it is recurrent. For all positive integer $n>0$, consider the set $U_n = \big\{v \in \mathcal{C}(M,\mathbb{R}) \; | \; ||v - u ||_\infty < \frac{1}{n} \big\}$. Using the non-wandering property of $u$, we inductively construct an increasing sequence of positive integers $k_n$ such that $\mathcal{T}^{k_n} U_n \cap U_n \neq \emptyset$. Let $v_n$ be an element of $U_n  \cap \mathcal{T}^{-k_n} U_n \neq \emptyset$. The properties on $U_n$ translate on $v_n$ as follows
	\begin{equation}
		||v_n - u||_\infty < \frac{1}{n} \quad \text{and} \quad ||\mathcal{T}^{k_n}v_n - u||_\infty < \frac{1}{n}
	\end{equation}
	Thus
	\begin{align*}
		||\mathcal{T}^{k_n}u - u||_\infty & \leq ||\mathcal{T}^{k_n}u - \mathcal{T}^{k_n}v_n||_\infty + ||\mathcal{T}^{k_n}v_n - u||_\infty \\
		& \leq ||u - v_n||_\infty + ||\mathcal{T}^{k_n}v_n - u||_\infty < \frac{2}{n} \longrightarrow 0 \quad \text{as } n \to \infty
	\end{align*}
	where we used the non-expansiveness of $\mathcal{T}$ and $\mathcal{T}^{k_n}$ in the second line. We deduce that the sequence $(\mathcal{T}^{k_n}u)_n$ converges uniformly to $u$, meaning that $u$ belongs to $\omega(u)$. This is the definition of recurrence. \\

	\ref{NW=R=omega}. The fact that $\sim$ is an equivalence relation is due to the minimality of $\mathcal{T}$ on $\omega(u)$. And the equalities follow immediately from the two first properties.
\end{proof}

\begin{rem} \label{rect}
	\begin{enumerate}
		\item It follows from \eqref{NW=R=omegaFormula} that the limit points of any viscosity solutions of \eqref{HJalpha} belong to a non-wandering element of $\Omega(\mathcal{T})$.  
		\item As an implication of the minimality on $\omega(u)$, we have that if $\omega(u)$ contains a periodic orbit, then it is equal to the orbit itself.
	\end{enumerate}
\end{rem}

On every $\omega$-limit set $\omega(u)$, the Lax-Oleinik operator $\mathcal{T}$ is non-expanding and surjective in a compact set. This implies that $\mathcal{T}$ is a bijective isometry on $\omega(u)$. The Proposition \ref{Isometry} generalizes this result to the entire non-wandering set $\Omega(\mathcal{T})$ and allows for the definition of its inverse operator within it. We obtain a group $(\mathcal{T}^n)_{n\in \mathbb{Z}}$ of isometries acting on $\Omega(\mathcal{T})$.

\begin{proof}[Proof of Proposition \ref{Isometry}]
	Let $v$ be an element of $\Omega(\mathcal{T})$. By recurrence, we know that there exists an increasing sequence of integers such that $\mathcal{T}^{k_n}(v)$ converges to $v$. Let $v'$ be a limit point in $\omega(v)$ of the sequence $\mathcal{T}^{k_n-1}(v)$. This exists due to the compactness of $\omega(v)$ stated in the Property \ref{Minimality} of Proposition \ref{LONW}. Then, by continuity of the Lax-Oleinik operator $\mathcal{T}$, we get $\mathcal{T}v' = v$. We showed that the restriction of $\mathcal{T}$ to $\Omega(\mathcal{T})$ is onto.
	
	Let us show that it is a bijective isometry on this set. Let $v$ and $w$ be two elements of $\Omega(\mathcal{T})$ and consider a sequences $(v_n,w_n)$ defined inductively as $(v_0,w_0) = (\mathcal{T}v, \mathcal{T}w)$, $(v_1,w_1) = (v, w)$ and for all integer $n \geq 1$, $(v_{n+1}, w_{n+1}) \in \mathcal{T}^{-1} (v_n, w_n)$. Let $k_n$ be an increasing sequence of integers such that $\lim_n k_{n+1}-k_n = + \infty$ and $(v_{k_n}, w_{k_n})$ converge to a point of the compact set $\omega(v) \times \omega(w)$. Then, we have
	\begin{align*}
		\Vert v_{k_{n+1}-k_n} - v_0 \Vert_\infty = \Vert \mathcal{T}^{k_n}v_{k_{n+1}} -\mathcal{T}^{k_n}v_{k_n} \Vert_\infty \leq \Vert v_{k_{n+1}} - v_{k_n} \Vert_\infty \longrightarrow 0 \quad \text{as } n \to +\infty 
	\end{align*}
	giving the limit $\lim_n v_{k_{n+1}-k_n} = v_0 = \mathcal{T}v$. By symmetry, we also have $\lim_n w_{k_{n+1}-k_n} = w_0 = \mathcal{T}w$. Therefore, we obtain
	\begin{align*}
		\Vert \mathcal{T}v - \mathcal{T}w \Vert_\infty &\leq \Vert v - w \Vert_\infty = \Vert v_1 - w_1 \Vert_\infty \\
		&= \Vert \mathcal{T}^{k_{n+1}-k_n-1}v_{k_{n+1}-k_n} - \mathcal{T}^{k_{n+1}-k_n-1}w_{k_{n+1}-k_n} \Vert_\infty \\
		&\leq \Vert v_{k_{n+1}-k_n} -  w_{k_{n+1}-k_n} \Vert_\infty \longrightarrow \Vert \mathcal{T}v - \mathcal{T}w \Vert_\infty \quad \text{as } n\to +\infty
	\end{align*}          
	The double inequality yields the isometric identity
	\begin{equation*}
		\Vert \mathcal{T}v - \mathcal{T}w \Vert_\infty = \Vert v - w \Vert_\infty
	\end{equation*}
	The injectivity, and hence the bijectivity of $\mathcal{T}$ follow immediately.\\
	
	For the general case, it suffices to see that for all times $s<t$, we have $\mathcal{T}^{\lfloor s \rfloor, \lceil t \rceil} = \mathcal{T}^{t, \lceil t \rceil} \circ \mathcal{T}^{s,t} \circ \mathcal{T}^{\lfloor s \rfloor,s}$ where $\lfloor \cdot \rfloor$ and $\lceil \cdot \rceil$ respectively stand for the floor and the ceil maps. Thus, for any $v$ and $w$ in $\Omega(\mathcal{T})$, we get
	\begin{align*}
		\Vert v - w \Vert_\infty &= \Vert \mathcal{T}^{\lfloor s \rfloor, \lceil t \rceil} v - \mathcal{T}^{\lfloor s \rfloor, \lceil t \rceil} w \Vert_\infty \\
		&= \Vert \mathcal{T}^{t, \lceil t \rceil} \circ \mathcal{T}^{s,t} \circ \mathcal{T}^{\lfloor s \rfloor,s}v - \mathcal{T}^{t, \lceil t \rceil} \circ \mathcal{T}^{s,t} \circ \mathcal{T}^{\lfloor s \rfloor,s}w \Vert_\infty \\
		&\leq  \Vert \mathcal{T}^{s,t} \circ \mathcal{T}^{\lfloor s \rfloor,s}v - \mathcal{T}^{s,t} \circ \mathcal{T}^{\lfloor s \rfloor,s}w \Vert_\infty \\
		&\leq  \Vert \mathcal{T}^{\lfloor s \rfloor,s}v -  \mathcal{T}^{\lfloor s \rfloor,s}w \Vert_\infty  \leq \Vert v -  w \Vert_\infty 
	\end{align*}
	We deduce equality everywhere, and the general result follows.\\
	
	If $s<\tau<t$, then we have immediately that $\mathcal{T}^{s,t} = \mathcal{T}^{\tau,t} \circ \mathcal{T}^{s,\tau}$. If for example, $s<t<\tau$, then $\mathcal{T}^{s,\tau} = \mathcal{T}^{t,\tau} \circ \mathcal{T}^{s,t}$ and $\mathcal{T}^{s,t} = (\mathcal{T}^{t,\tau})^{-1} \circ \mathcal{T}^{s,\tau} = \mathcal{T}^{\tau,t} \circ \mathcal{T}^{s,\tau}$. The other cases are symmetric.
\end{proof}

\subsection{Bounded Global Viscosity Solutions}

This subsection is dedicated to proving Theorem \ref{Glob=NW}, which characterizes the non-wandering elements of $\Omega(\mathcal{T})$ as corresponding to global viscosity solutions.

\begin{defi}
	\begin{enumerate}
		\item A \textit{global viscosity solution} is a viscosity solution $u(t,x)$ defined for all times $t \in \mathbb{R}$.
		\item We say that a scalar map $u \in \mathcal{C}(M, \mathbb{R})$ is \textit{global} if it can be associated to a global viscosity solution $u(t,x)$ with $u(0, \cdot) = u$. The set of global maps is denoted by $\mathcal{V}(\mathcal{T})$.
		\item We define the set $\mathcal{B}(\mathcal{T})$ of global maps $u$ in $\mathcal{C}(M, \mathbb{R})$ which can be associated to a bounded global viscosity solution $u: \mathbb{R} \times M \to \mathbb{R}$ with $u(0, \cdot) = u$.
	\end{enumerate}
\end{defi}
	
We aim to prove that the non-wandering set $\Omega(\mathcal{T})$ is equal to the set of global maps $\mathcal{V}(\mathcal{T})$. A first inclusion is given by the following proposition.

\begin{prop} \label{ViscosityGlobalNW}
	Every non-wandering element $v \in \Omega( \mathcal{T})$ is global and  corresponds to a unique global viscosity solution $v(t,x)$ defined by
	\begin{equation}
		v(t,x) = \mathcal{T}^t  v(x) = \lim_n \mathcal{T}^{t+k_n}v(x)
	\end{equation}
	where $k_n$ is a sequence such that $v = \lim_n v(k_n,\cdot)$ and where the limit is uniform on $t$.
\end{prop}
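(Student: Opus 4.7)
The plan is to use Proposition \ref{Isometry} to extend the orbit of $v$ backward in time, then combine the recurrence of $v$ with the $1$-periodicity of $L$ to derive the claimed uniform limit formula.

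First, I would define the global extension. Since $v \in \Omega(\mathcal{T}) = \Omega_0(\mathcal{T})$, Proposition \ref{Isometry} supplies operators $\mathcal{T}^{0,t}$ for every real $t$: the standard Lax-Oleinik for $t \geq 0$, and the isometric inverse of $\mathcal{T}^{t,0}: \Omega_t(\mathcal{T}) \to \Omega_0(\mathcal{T})$ for $t < 0$. Setting $v(t,x) := \mathcal{T}^{0,t} v(x)$, the composition identity (part 3 of Proposition \ref{Isometry}) applied with intermediate time $0$ gives $v(t, \cdot) = \mathcal{T}^{s,t} v(s, \cdot)$ for all real $s \leq t$, so $(t, x) \mapsto v(t, x)$ is a global viscosity solution with $v(0, \cdot) = v$.

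Next, to prove the limit formula, I would use that $\Omega(\mathcal{T}) = \mathcal{R}(\mathcal{T})$ (Proposition \ref{LONW}) to pick an increasing sequence of positive integers $k_n$ with $\mathcal{T}^{k_n} v \to v$. Fix $t \in \mathbb{R}$ and take $n$ large enough that $t + k_n \geq 0$. Applying the composition identity at the intermediate time $k_n$, together with the time-periodicity identity $\mathcal{T}^{k_n, t + k_n} = \mathcal{T}^{0, t} = \mathcal{T}^t$ (valid since $k_n$ is an integer and $L$ is $1$-periodic, interpreting $\mathcal{T}^t$ via the isometric inverse when $t < 0$), yields
\begin{equation*}
\mathcal{T}^{t + k_n} v \;=\; \mathcal{T}^{k_n, t + k_n}\, \mathcal{T}^{k_n} v \;=\; \mathcal{T}^t \bigl( \mathcal{T}^{k_n} v \bigr).
\end{equation*}
Since $\mathcal{T}^t$ is non-expanding for $t \geq 0$ (Proposition \ref{Contracting}) and an isometry on $\Omega_0$ for $t < 0$ (Proposition \ref{Isometry}), in both cases
\begin{equation*}
\bigl\| \mathcal{T}^{t + k_n} v - \mathcal{T}^t v \bigr\|_\infty \;\leq\; \bigl\| \mathcal{T}^{k_n} v - v \bigr\|_\infty \;\xrightarrow[n \to \infty]{}\; 0,
\end{equation*}
with the right-hand bound independent of $t$, which gives uniform convergence in $t$.

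Uniqueness will be the most delicate step. For $t \geq 0$ it follows from the standard well-posedness of the Cauchy problem for the Hamilton-Jacobi equation. For $t < 0$, I would first invoke Proposition \ref{Global=Bdd} to see that any alternative global extension $w$ must be bounded; its time slices then form a bounded equilipschitz family (Corollary \ref{Equicontinuity}), and by an Arzelà-Ascoli plus recurrence argument in the spirit of Proposition \ref{LONW}, $w(t, \cdot)$ must lie in $\Omega_t$. Since $\mathcal{T}^{t,0}: \Omega_t \to \Omega_0$ is then an isometric bijection and $\mathcal{T}^{t,0} w(t, \cdot) = v = \mathcal{T}^{t,0} v(t, \cdot)$, it follows that $w(t, \cdot) = v(t, \cdot)$. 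The hard part will be making this backward uniqueness argument watertight without implicitly appealing to the full equality $\Omega(\mathcal{T}) = \mathcal{V}(\mathcal{T})$ that Theorem \ref{Glob=NW} only establishes afterward.
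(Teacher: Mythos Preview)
Your proof is correct and follows essentially the same route as the paper: use Proposition \ref{Isometry} to define $v(t,\cdot)=\mathcal{T}^t v$, verify the cocycle identity makes this a global viscosity solution, and derive the uniform limit from the isometry bound $\Vert \mathcal{T}^{t+k_n}v-\mathcal{T}^t v\Vert_\infty = \Vert \mathcal{T}^{k_n}v-v\Vert_\infty$.

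On uniqueness you are actually more careful than the paper. The paper simply writes $\mathcal{T}^{-t,0}v_1(-t,\cdot)=v=\mathcal{T}^{-t,0}v_2(-t,\cdot)$ and ``applies $(\mathcal{T}^{-t,0})^{-1}$'', but that inverse is only defined on $\Omega_0(\mathcal{T})$ with image $\Omega_{-t}(\mathcal{T})$; nothing in the paper's proof guarantees a priori that an arbitrary global extension has $v_i(-t,\cdot)\in\Omega_{-t}(\mathcal{T})$. Your proposed fix (invoke Proposition \ref{Global=Bdd} for boundedness, then run the Arzel\`a--Ascoli plus non-expansiveness argument of Proposition \ref{Bdd=NW} to place the backward slices in $\Omega$) is exactly right and is non-circular, since neither of those propositions relies on Proposition \ref{ViscosityGlobalNW}. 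So your worry in the last sentence is unfounded: the dependency graph is clean, and you may cite Propositions \ref{Global=Bdd} and \ref{Bdd=NW} directly rather than redoing them.
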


\begin{proof}
	Let us show uniqueness. If $v_1$ and $v_2$ are two global viscosity solutions with initial date $v_1(0,\cdot) = v_2(0,\cdot) = v \in \Omega( \mathcal{T})$. Then for all positive times $t>0$, we have $v_1(t,\cdot) = \mathcal{T}^tv = v_2(0,\cdot)$ and 
	\begin{align*}
		\mathcal{T}^{-t,0} v_1(-t,\cdot) = v_1(0, \cdot) = v = v_2(0, \cdot) = \mathcal{T}^{-t,0} v_2(-t,\cdot)
	\end{align*}
	So that, by applying $\mathcal{T}^t = (\mathcal{T}^{-t,0})^{-1}$, we obtain $v_1 = v_2$.
	
	Now set $v_1(t,x) = \mathcal{T}^t  v(x)$. We have for all $s<t$, and by Proposition \ref{Isometry}, 
	\begin{align*}
		\mathcal{T}^{s,t}v_1(s,x) = \mathcal{T}^{s,t} \circ \mathcal{T}^s  v(x) = \mathcal{T}^t  v(x) = v_1(t,x)
	\end{align*}
	Hence, $v_1$ is a global viscosity solution of \eqref{HJalpha}. Moreover, since $\mathcal{T}^t$ is isometric on $\Omega( \mathcal{T})$, we get the limit
	\begin{align*}
		\Vert \mathcal{T}^{t+k_n}v - \mathcal{T}^{t}v \Vert_\infty = \Vert \mathcal{T}^{k_n}v - v \Vert_\infty \longrightarrow 0 \quad \text{as } n \to +\infty
	\end{align*}
	yielding the second equality with uniform convergence in time.
\end{proof}

The opposite inclusion $\mathcal{V}(\mathcal{T}) \subset \Omega(\mathcal{T})$ is less trivial. Our idea is to use the intermediary set $\mathcal{B}(\mathcal{T})$ of bounded global maps. The key proposition is stated as below.

\begin{prop} \label{Bdd=NW}
	A global viscosity solution of the Hamilton-Jacobi equation \ref{HJalpha} is bounded if and only if it is non-wandering. In other words, the following equality holds
	\begin{equation}
		\Omega(\mathcal{T}) = \mathcal{B}(\mathcal{T})
	\end{equation}
\end{prop}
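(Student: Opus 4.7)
The plan is to prove the two inclusions of $\Omega(\mathcal{T}) = \mathcal{B}(\mathcal{T})$ separately.

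For the forward inclusion $\Omega(\mathcal{T}) \subseteq \mathcal{B}(\mathcal{T})$, I would start from Proposition \ref{ViscosityGlobalNW}, which associates to any $v \in \Omega(\mathcal{T})$ its global viscosity solution $v(t,\cdot) = \mathcal{T}^t v$. To show this solution is uniformly bounded on $\mathbb{R} \times M$, the idea is to compare it to a bounded reference global solution. I would pick a weak-KAM solution $w_0 \in \fix(\mathcal{T}) \subset \Omega(\mathcal{T})$, whose existence is guaranteed by Corollary \ref{ExistenceKAMF}; its associated viscosity solution $w(t,\cdot) = \mathcal{T}^t w_0$ is $1$-periodic in $t$ (since $\mathcal{T}^{t+1} w_0 = \mathcal{T}^t \mathcal{T} w_0 = \mathcal{T}^t w_0$) and hence uniformly bounded on $\mathbb{R} \times M$. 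The isometry property of Proposition \ref{Isometry}.2 applied to $v, w_0 \in \Omega(\mathcal{T})$ gives
\[
	\|\mathcal{T}^t v - \mathcal{T}^t w_0\|_\infty = \|v - w_0\|_\infty \quad \text{for all } t \in \mathbb{R},
\]
and the triangle inequality $\|v(t,\cdot)\|_\infty \leq \|v - w_0\|_\infty + \|w(t,\cdot)\|_\infty$ yields the desired uniform bound.

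For the reverse inclusion $\mathcal{B}(\mathcal{T}) \subseteq \Omega(\mathcal{T})$, let $u(t,x)$ be a bounded global viscosity solution and set $u = u(0,\cdot)$. The strategy is to exhibit an element $w \in \mathcal{C}(M,\mathbb{R})$ such that $u \in \omega(w)$, and then invoke the minimality of $\omega$-limit sets to obtain recurrence of $u$. The hypothesis provides uniform boundedness of $(u(t,\cdot))_{t \in \mathbb{R}}$, while Corollary \ref{Equicontinuity} gives $\kappa_1$-equilipschitzness of the same family. By Arzelà-Ascoli, the sequence $(u(-n,\cdot))_{n \geq 0}$ is therefore precompact in $\mathcal{C}(M,\mathbb{R})$, and I would extract an increasing subsequence $k_n \to +\infty$ with $u(-k_n,\cdot) \to w$. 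Since $u$ is global and $L$ is $1$-periodic in time, $u = \mathcal{T}^{-k_n,0} u(-k_n,\cdot) = \mathcal{T}^{k_n} u(-k_n,\cdot)$, and the non-expansiveness of $\mathcal{T}^{k_n}$ (Proposition \ref{Contracting}) yields
\[
	\|u - \mathcal{T}^{k_n} w\|_\infty \leq \|u(-k_n,\cdot) - w\|_\infty \xrightarrow[n \to \infty]{} 0,
\]
so $\mathcal{T}^{k_n} w \to u$ in $\mathcal{C}(M,\mathbb{R})$, i.e. $u \in \omega(w)$. Applying the minimality assertion of Proposition \ref{LONW}.1 to the point $u \in \omega(w)$ gives $\omega(u) = \omega(w) \ni u$, so $u \in \omega(u)$ is recurrent, and Proposition \ref{LONW}.2 identifies this as $u \in \mathcal{R}(\mathcal{T}) = \Omega(\mathcal{T})$.

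The main obstacle is really the reverse inclusion. Forward boundedness of $u$ alone would not suffice, since Corollary \ref{ViscBounded} already shows all forward trajectories stay bounded; what is essential is the boundedness at \emph{negative} times, which is precisely what allows the Arzelà-Ascoli extraction on $(u(-n,\cdot))$ and produces the auxiliary $w$ whose $\omega$-limit absorbs $u$. This is why the global (two-sided) nature of the solution is indispensable; once the precompact sequence $(u(-k_n,\cdot))$ has been secured, the contractive structure of $\mathcal{T}$ together with the minimality of $\omega$-limit sets closes the argument in a clean way.
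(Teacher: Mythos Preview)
Your proof is correct. The overall architecture matches the paper's: establish the two inclusions separately, and for the reverse inclusion apply Arzel\`a--Ascoli to the negative-time sequence $(u(-n,\cdot))_n$ to extract a limit point. The details differ in two places, and both of your variants are arguably cleaner.

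For the forward inclusion, the paper argues that $v(n,\cdot)\in\omega(v)$ for every $n\in\mathbb{Z}$ by Proposition~\ref{LONW}, and then traps $v(t,\cdot)$ in the compact set $\mathcal{T}^{[0,1]}(\omega(v))$. You instead compare to a weak-KAM solution via the isometry of Proposition~\ref{Isometry}; this is equally valid and perhaps more transparent, since it makes explicit that the bound is $\|v-w_0\|_\infty+\sup_{\tau\in[0,1]}\|\mathcal{T}^\tau w_0\|_\infty$.

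For the reverse inclusion, the paper extracts $k_n$ with the additional requirement $k_{n+1}-k_n\to+\infty$, sets $k'_n=k_{n+1}-k_n$, and first proves the limit $v_\alpha$ is $\underline{k'}$-recurrent before transferring this to $v$. Your argument is shorter: you show directly that $u=\mathcal{T}^{k_n}u(-k_n,\cdot)$ lies within $\|u(-k_n,\cdot)-w\|_\infty$ of $\mathcal{T}^{k_n}w$, hence $u\in\omega(w)$, and then the minimality statement of Proposition~\ref{LONW}.\ref{Minimality} gives $u\in\omega(u)$ immediately. This avoids the $k_{n+1}-k_n$ device entirely by reusing the minimality already established; the paper's route, on the other hand, produces an explicit recurrence sequence for $u$ without appealing back to that proposition.
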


\begin{proof}
	We show double inclusion. We start with $\Omega(\mathcal{T}) \subset \mathcal{B}(\mathcal{T})$. Let $v$ be in $\Omega(\mathcal{T})$. By Proposition \ref{LONW}, we infer that for any integer $n \in \mathbb{Z}$, $v(n, \cdot)$ belongs to the compact set $\omega(v)$. Hence, for all real time $t$, we have 
	\begin{align*}
		v(t,\cdot) = \mathcal{T}^{t - \lfloor t \rfloor} v(\lfloor t \rfloor,\cdot) \in \mathcal{T}^{t-\lfloor t \rfloor} (\omega(v)) \subset  \mathcal{T}^{[0,1]} (\omega(v)) 
	\end{align*}       
	where the last set is compact in $\mathcal{C}(M, \mathbb{R})$ due to time continuity of the Lax-Oleinik operator $\mathcal{T}^t$. We deduce that $v$ belongs to $\mathcal{B}(\mathcal{T})$.\\
	
	Now let $v$ be in $\mathcal{B}(\mathcal{T})$. The famility $(v(-n, \cdot))_{n\geq 0}$ is bounded by definition of $\mathcal{B}(\mathcal{T})$ and equicontinuous by Corollary \ref{Equicontinuity}. Hence, applying the Arzelà-Ascoli theorem, there exists an increasing sequence of integers $k_n$ such that $\lim_n k_{n+1} - k_n = +\infty$ and $v(-k_n, \cdot)$ converges to a scalar map $v_\alpha$ in $\mathcal{C}(M, \mathbb{R})$. Set $k'_n = k_{n+1} - k_n$. We have
	\begin{align*}
		\Vert \mathcal{T}^{k'_n} v_\alpha - v_\alpha \Vert_\infty &\leq \Vert \mathcal{T}^{k'_n} v_\alpha - v( -k_n,\cdot) \Vert_\infty + \Vert v( -k_n,\cdot) - v_\alpha \Vert_\infty \\
		&\leq \Vert \mathcal{T}^{k'_n} v_\alpha - \mathcal{T}^{k'_n}v( -k_{n+1},\cdot) \Vert_\infty + \Vert v( -k_n,\cdot) - v_\alpha \Vert_\infty \\
		&\leq \Vert v_\alpha - v( -k_{n+1},\cdot) \Vert_\infty + \Vert v( -k_n,\cdot) - v_\alpha \Vert_\infty \longrightarrow 0 \quad \text{as } n \to +\infty
	\end{align*}
	where we used the non-expansiveness of $\mathcal{T}$. Hence, $\lim_n \mathcal{T}^{k'_n} v_\alpha = v_\alpha$ and $v_\alpha$ belongs to $\Omega(\mathcal{T})$. Consequently, we deduce that
	\begin{align*}
		\Vert \mathcal{T}^{k'_n} v - v \Vert_\infty &= \Vert \mathcal{T}^{k'_n + k_n} v(-k_n, \cdot) - \mathcal{T}^{k_n} v(-k_n, \cdot)  \Vert_\infty \\
		& \leq \Vert \mathcal{T}^{k'_n} v(-k_n, \cdot) - v(-k_n, \cdot)  \Vert_\infty \\
		&\leq \Vert \mathcal{T}^{k'_n} v(-k_n, \cdot) - \mathcal{T}^{k'_n} v_\alpha  \Vert_\infty + \Vert \mathcal{T}^{k'_n} v_\alpha  - v_\alpha  \Vert_\infty + \Vert v_\alpha - v(-k_n, \cdot)  \Vert_\infty \\
		&\leq \Vert v(-k_n, \cdot) - v_\alpha  \Vert_\infty + \Vert \mathcal{T}^{k'_n} v_\alpha  - v_\alpha  \Vert_\infty + \Vert v_\alpha - v(-k_n, \cdot)  \Vert_\infty \quad \longrightarrow 0 \quad \text{as } n \to +\infty
	\end{align*}
	Therefore, $\lim_n \mathcal{T}^{k'_n} v = v$ and $v$ belongs to $\Omega (\mathcal{T})$.

\end{proof}

\begin{rem}
	Note that this proposition implies that for a viscosity solution, being recurrent in positive times is equivalent to being recurrent in negative times.
\end{rem}

It remains to show the equality of the sets $\mathcal{V}(\mathcal{T}) = \mathcal{B}(\mathcal{T})$ stated in Proposition \ref{Bdd=NW}.

\begin{proof}[Proof of Proposition \ref{Bdd=NW}]
	We need to show that a global viscosity solution $u(t,x)$ is bounded. Positive time boundedness is given by Corollary \ref{ViscBounded}. For negative time boundedness, we fix a time $t \leq -1$ and a point $x \in M$. Let $y'$ be an arbitrary point of $M$ and consider the point $y \in M$ such that
	\begin{equation*}
		u(0,y') = u(t,y) + h^{t,0}(y,y')
	\end{equation*}		
	Then, using the $\kappa_1$-lipschitz regularity of both $h^{s,t}$ and $u(t,x)$ seen in Proposition \ref{Regularity} and Corollary \ref{Equicontinuity}, we get
	\begin{align*}
		|u(t,x)| &\leq |u(t,x) - u(t,y)| + |u(t,y)| \\
		&\leq |u(t,x) - u(t,y)| + |u(0,y')| + |h^{t,0}(y,y')| \\
		& \leq |u(t,x) - u(t,y)| + |u(0,y')| + |h^{t,0}(y,y') - h^{\lfloor t \rfloor,0}(y,y')| + |h^{\lfloor t \rfloor,0}(y,y')| \\
		& \leq \kappa_1.\diam (M) + \Vert u(0,\cdot) \Vert_\infty + \kappa_1| t - \lfloor t \rfloor | + \Vert M \Vert_\infty \\
		& \leq \Vert u(0,\cdot) \Vert_\infty + \kappa_1. \big( 3\diam (M) +1 \big)
	\end{align*}
	where we used the inequality of Proposition \ref{hfinite} on the map $M(x,y) = \sup\limits_{n \geq 1} h^{0,n}(x,y) = \sup\limits_{n \geq 1} h^{-n,0}(x,y)$.
\end{proof}

\begin{proof}[Proof of Theorem \ref{Glob=NW}]
	Combining Propositions \ref{Bdd=NW} and \ref{Global=Bdd}, we infer the desired equality
	\begin{equation*}
		\Omega(\mathcal{T}) = \mathcal{B}(\mathcal{T}) = \mathcal{V}(\mathcal{T})
	\end{equation*}
\end{proof}

\section{The Uniqueness Theorem in $\Omega(\mathcal{T})$} \label{SectionUniqueness}

This section is dedicated to proving a Uniqueness Theorem \ref{Uniqueness} for non-wandering viscosity solutions on the Mather set. This theorem will play a central role in the various representation formulas presented in the next sections, allowing to determine non-wandering viscosity solutions based on their restrictions to the Mather set.\\

This theorem was initially established by A. Fathi for weak-KAM solutions $\fix(\mathcal{T})$ in the autonomous framework (see \cite{fathi2008weak}) and by M. Zavidovique for the discrete case (see \cite{zavidovique2023discretecontinuousweakkam}). Subsequently, P. Bernard and J.-M. Roquejoffre extended it to non-wandering viscosity solutions in \cite{MR2041603}. We offer an alternative proof of their version, which is a more classical approach adapted from A. Fathi's original proof.

\subsection{Calibration on the Mather Set $\mathcal{M}$}

\begin{prop} \label{CalibNW}
	Let $x$ be an element of the Mather set $\mathcal{M}_0$ with lift $\tilde{x}$ in $\tilde{\mathcal{M}}_0$ and let $\gamma : \mathbb{R} \to M$ be the projection on $M$ of the Lagrangian flow at $\tilde{x}$ i.e. $\gamma(t) = \pi \circ \phi_L^t(\tilde{x})$. Then every recurrent viscosity solution $v$ in $\Omega(\mathcal{T})$ is calibrated by the curve $\gamma$.
\end{prop}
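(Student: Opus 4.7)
The direction
\[
v(t,\gamma(t)) \;\le\; v(s,\gamma(s)) + \int_s^t L(\tau,\gamma(\tau),\dot\gamma(\tau))\,d\tau + \alpha_0(t-s)
\]
is immediate from the definition of $\mathcal T^{s,t}$ for any absolutely continuous curve, so the whole content is the reverse inequality. I introduce the calibration defect
\[
F(t) \;=\; v(t,\gamma(t)) - v(0,\gamma(0)) - \int_0^t \bigl( L(\tau,\gamma(\tau),\dot\gamma(\tau)) + \alpha_0 \bigr)\,d\tau ,
\]
which by domination is non-increasing with $F(0)=0$, so the proposition is equivalent to $F\equiv 0$. Since $v$ is a bounded global viscosity solution (Theorem~\ref{Glob=NW} and Proposition~\ref{Global=Bdd}), since $\gamma$ is globally minimizing as an orbit of a Mather point (so that $\int_0^n (L+\alpha_0)\,d\tau = h^n(\gamma(0),\gamma(n))$ for integer $n$), and since $h^n$ is uniformly bounded by Proposition~\ref{hfinite}, the defect $F$ is globally bounded. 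The limits $F(\pm\infty)$ thus exist, and combined with the sign conditions $F\le 0$ on $\mathbb R_+$ and $F\ge 0$ on $\mathbb R_-$, it suffices to prove $F(-\infty)=F(+\infty)=0$.

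By density of $\tilde{\mathcal M}_0^R$ in $\tilde{\mathcal M}_0$ (Proposition~\ref{MatherNonempty}) and continuity in $\tilde x$ of both sides of the calibration identity (the action via continuity of $\phi_L$ and $L$, the values of $v$ via the equi-Lipschitz property of Corollary~\ref{Equicontinuity}), I reduce to $\tilde x \in \tilde{\mathcal M}_0^R$. Proposition~\ref{Isometry} makes $\mathcal T^{-k}v$ well-defined on $\Omega(\mathcal T)$ and gives the $\mathcal T$-recurrence of $v$ in both time directions (Proposition~\ref{LONW}); combined with the $\phi_L^{-1}$-recurrence of $\tilde x$, a simultaneous Poincar\'e-recurrence argument on the product of the compact invariant sets $\omega(v)$ and $\tilde{\mathcal M}_0$ yields integer times $k_n\to+\infty$ with
\[
\mathcal T^{-k_n}v \longrightarrow v \text{ uniformly on } M \qquad \text{and} \qquad \phi_L^{-k_n}(\tilde x) \longrightarrow \tilde x \text{ in } TM .
\]
In particular $\gamma(-k_n) \to x$. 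Exploiting the minimality of $\gamma$, the $1$-periodicity of $L$ (so $h^{-k_n,0}(\cdot,\cdot) = h^{k_n}(\cdot,\cdot)$ for integer $k_n$), and the Lipschitz regularity from Proposition~\ref{Regularity}, a direct computation produces
\[
F(-k_n) \;=\; (\mathcal T^{-k_n}v)(\gamma(-k_n)) - v(x) + h^{k_n}(x,x) + o(1) \;=\; h^{k_n}(x,x) + o(1) .
\]

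Refining the choice of $k_n$ so that $h^{k_n}(x,x)$ approaches its liminf, one then invokes the classical Peierls-barrier vanishing $\liminf_n h^n(x,x)=0$ at Mather points, obtained via a closing argument using Birkhoff applied to a minimizing measure together with the uniform bound of Proposition~\ref{hfinite}. Combined with the constraint $F(-k_n)\ge 0$, this pins down $F(-\infty)=0$. A symmetric argument using forward recurrence, with $\mathcal T^{k_n}v\to v$ and $\phi_L^{k_n}(\tilde x)\to\tilde x$, yields $F(+\infty)=0$, and the monotonicity of $F$ forces $F\equiv 0$, which is exactly the claimed calibration on every compact subinterval of $\mathbb R$.

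The main obstacle is the last coupled step. In Fathi's classical autonomous weak-KAM proof the fact that $\mathcal T v = v$ makes $v(-k_n,\gamma(-k_n)) = v(\gamma(-k_n)) \to v(x)$ automatic, so only the recurrence of $\tilde x$ is needed. Here $v$ is merely non-wandering, so the convergence of $v(-k_n,\gamma(-k_n))$ is not free: it relies on the bijective-isometry structure of $\mathcal T$ on $\Omega(\mathcal T)$ from Proposition~\ref{Isometry} and on selecting a single integer sequence along which both $v$ and $\tilde x$ recur. This coupling, together with the Peierls-vanishing at Mather points, is the non-trivial adaptation of Fathi's argument to the non-wandering setting that the discrete approach of Bernard and Roquejoffre in \cite{MR2041603} addresses by different means.
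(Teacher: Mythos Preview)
Your overall strategy---define the calibration defect $F$, show it is monotone and bounded, and pin down $F(\pm\infty)=0$ along well-chosen sequences---is reasonable, but the argument has a genuine gap at the coupling step, and a circularity within the paper's logical order.

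\textbf{Circularity.} You use that $\gamma$ is globally minimizing and that $\liminf_n h^n(x,x)=0$ for $x\in\mathcal M_0$. In this paper both facts are \emph{consequences} of Proposition~\ref{CalibNW} (see Remark~\ref{MatherMinimizing} and Proposition~\ref{MatherPeierls}), so invoking them here is circular. They can be proven independently, but you would need to supply those arguments.

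\textbf{The real gap.} Grant the two facts above. Your computation gives $F(-k_n)=h^{k_n}(x,x)+o(1)$ provided $k_n$ is a \emph{simultaneous} recurrence sequence for $v$ under $\mathcal T^{-1}$ and for $\tilde x$ under $\phi_L^{-1}$. First, ``Poincar\'e recurrence on the product'' is not a valid justification: no invariant measure on $\omega(v)\times\tilde{\mathcal M}_0$ is given, and the product of two recurrent points is not automatically recurrent. (This \emph{can} be repaired: since $\mathcal T^{-1}$ is a minimal isometry on $\omega(v)$, the point $v$ is distal, hence product-recurrent by Furstenberg's theorem; but that is not what you wrote.) Second, and more seriously, you then need $h^{k_n}(x,x)\to 0$ along \emph{that} sequence. ``Refining $k_n$ so that $h^{k_n}(x,x)$ approaches its liminf'' does not help: refining only accesses $\liminf_n h^{k_n}(x,x)$, which is a priori $\ge \liminf_m h^m(x,x)=0$ with no reason for equality. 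Since $F(-k_n)\to F(-\infty)$ by monotonicity, you would be concluding $F(-\infty)=\lim_n h^{k_n}(x,x)$, and nothing you have written forces this limit to be $0$. The same issue obstructs the forward-time half.

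The paper's proof avoids all of this: it integrates the domination defect against a minimizing measure $\mu$ with $\tilde x\in\supp\mu$, uses only the recurrence of $v$ (so that $\int v(t+k_n,\cdot)\,d\mu\to\int v(t,\cdot)\,d\mu$), and exploits $\int L\,d\mu=-\alpha_0$ to make the integrated defect vanish. Non-negativity and continuity then force the defect to vanish on all of $\supp\mu\ni\tilde x$. No simultaneous recurrence and no control of $h^{k_n}(x,x)$ are needed.
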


\begin{proof}
	Let $v$ be an element $\Omega(\mathcal{T})$. Let $k_n$ be a sequence of integers such that $v(k_n, \cdot)$ converges to $v$ in $\mathcal{C}(M, \mathbb{R})$. We know from the definition of viscosity solutions that $\mathcal{T}^tv = v(t, \cdot)$. Thus, for all real time $t$ and all point $\tilde{y}$ of $TM$, the definition of the operator $\mathcal{T}$ gives
	\begin{equation} \label{Domination1}
		v\big(t+k_n,\pi \circ \phi_L^{t,t+k_n}(\tilde{y})\big) - v\big(t,\pi \circ \tilde{y}\big) \leq A_L(\pi \circ \phi_L^{t,\tau}(\tilde{y}))  = \int_{t}^{t+k_n} \Big(  L(\tau, \phi_L^{t,\tau}(\tilde{y})) +\alpha_0 \Big) \; d\tau
	\end{equation}
	Let $\mu$ be a minimizing measure on $\mathbb{T}^1 \times TM$ that has $\tilde{x}$ in its support $\supp(\mu)$. We keep the same notation $\mu$ for its time-one periodic lift $\mu$ to $\mathbb{R} \times TM$. We integrate (\ref{Domination1}) in $(t,\tilde{y}) \in [0, 1]\times TM$ with respect to the lift $\mu$.
	\begin{multline} \label{DomIntegrated}
		\int_0^{1} \int_{TM}v\big(t+k_n,\pi \circ \phi_L^{t,t+k_n}(\tilde{y})\big) \; d\mu - \int_0^{1} \int_{TM} v(t,\pi ( \tilde{y})) \; d\mu \\
		\leq \int_0^{1} \int_{TM} \int_{t}^{t+k_n} \Big( L(\tau, \phi_L^{t,\tau}(\tilde{y})) +\alpha_0 \Big) \; d\tau \; d\mu
	\end{multline}
	
	We computate of the right-hand side while taking in consideration the time periodicity of the Lagrangian $L$ and the $\Phi_L^\tau$-invariance of $\mu$ where $\Phi_L^{k_n}$ has been defined in \eqref{InvarianceFlow}
	\begin{equation} \label{DominationRHS}
		\begin{split}
			\int_0^{1} \int_{TM} \int_{t}^{t+k_n} \Big( L(\tau, \phi_L^{t,\tau}(\tilde{y})) +\alpha_0 \Big) \; d\tau \; d\mu &= \int_0^{1} \int_{TM} \int_0^{k_n} \Big( L(t+ \tau, \phi_L^{t,t+ \tau}(\tilde{y})) +\alpha_0 \Big) \; d\tau \; d\mu \\
			&= \int_0^{k_n} \int_0^{1} \int_{TM} \Big( L(t+ \tau, \phi_L^{t,t+\tau}(\tilde{y})) +\alpha_0 \Big) \; d\mu \; d\tau \\
			&= \int_0^{k_n} \left( \int_0^{1} \int_{TM} L(t,\tilde{y}) \; d\mu \; +\alpha_0 \right) \; d\tau =0
		\end{split}
	\end{equation}

	We shift our focus to the left hand side of (\ref{DomIntegrated}). The $\Phi_L^{k_n}$-invariance of $\mu$ results in 
	\begin{align*}
		\int_0^{1} \int_{TM}v\big(t+k_n,\pi \circ \phi_L^{t,t+k_n}(\tilde{y})\big) \; d\mu &= \int_{k_n}^{k_n+1} \int_{TM}v(t,\pi (\tilde{y})) \; d\mu \\
		&= \int_0^1 \int_{TM}v(t+k_n,\pi (\tilde{y})) \; d\mu 
	\end{align*}	
	Thus 
	\begin{multline} \label{DominationLHS1}
		\int_0^{1} \int_{TM}v\big(t+k_n,\pi \circ \phi_L^{t,t+k_n}(\tilde{y})\big) \; d\mu - \int_0^{1} \int_{TM} v(t,\pi ( \tilde{y})) \; d\mu \\
		= \int_0^{1} \int_{TM}  v(t+k_n,\pi(\tilde{y})) \; d\mu - \int_0^{1} \int_{TM} v(t,\pi ( \tilde{y})) \; d\mu \\
		= \int_{0}^{1} \int_{TM} v(k_n+t,\pi(\tilde{y})) - v(t,\pi ( \tilde{y})) \; d\mu \\
	\end{multline}
	However, we know from Proposition \ref{ViscosityGlobalNW} that the restrictions $v(t+k_n,x)$ uniformly converge to $v$ on $\mathbb{R} \times M$. Hence, we deduce that 
	\begin{equation} \label{DominationLHS2}
		\lim_n \int_{0}^{1} \int_{TM} v(k_n+t,\pi(\tilde{y})) - v(t,\pi ( \tilde{y})) \; d\mu =0
	\end{equation}
	
	Now, Gathering \eqref{DomIntegrated}, \eqref{DominationRHS}, \eqref{DominationLHS1} and \eqref{DominationLHS2}, and if we define the defect of calibration $\delta_{k_n}(t,\tilde{y})$ by
	\begin{equation}
		\delta_{k_n}(t,\tilde{y}) = \alpha_0.k_n + \int_{t}^{t+k_n} L(\tau, \phi_L^{t,\tau}(\tilde{y})) \; d\tau - [v\big(t,\pi \circ \tilde{y}\big) - v\big(t,\pi \circ \tilde{y}\big)] \geq 0
	\end{equation}
	we get 
	\begin{align*}
		\lim_n \int_0^1 \int_{TM} \delta_{k_n}(t,\tilde{y}) \; d\mu =0
	\end{align*}
	where the integrand is non-negative and increasing in $n$. This implies that for $\mu$-almost all $(t,\tilde{y})$ in $\supp(\mu)$, we have for all $s>0$,
	\begin{equation*}
		\delta_{s}(t,\tilde{y}) = \alpha_0.s + \int_{t}^{t+s} L(\tau, \phi_L^{t,\tau}(\tilde{y})) \; d\tau - [v\big(t,\pi \circ \tilde{y}\big) - v\big(t,\pi \circ \tilde{y}\big)] =0
	\end{equation*} 
	And by continuity of $L$, $v$ and hence $\delta_{s}$, the equality extends to $\supp(\mu)$. Since $\mu$ is invariant by the Lagrangian flow $\phi_L$ and $\tilde{x}$ belongs to its support, we infer that the graph of the curve $(t,\gamma(t))$ also belongs to $\supp(\mu)$ so that for all negative integer $m \leq 0$, $(m, \gamma(m)$ also belongs to $\supp(\mu)$ and the associated curve are calibrated by all non-wandering viscosity solutions in $\Omega
( \mathcal{T})$. Moreover, by Proposition \ref{ViscosityGlobalNW}, we have $v(m +k_n, \cdot)$ converges to $v(m, \cdot)$ and in particular, $v(m, \cdot)$ belongs to $\Omega(\mathcal{T})$ and it calibrates by $\gamma(t+m)$. Therefore, $\gamma$ is calibrated by $v$ on $\mathbb{R}$.
\end{proof}

\begin{rem} \label{MatherMinimizing}
	We infer from Remark \ref{CalibEL} that all curves in the Mather set $\mathcal{M}$ are minimizing.
\end{rem}

\subsection{The Uniqueness Theorem}

\begin{lem} \label{AlphaMather}
	Let $u$ be in $\Omega(\mathcal{T})$ with corresponding global viscosity solution $u(t,x): \mathbb{R}\times M \to \mathbb{R}$. Let $x$ be an element of $M$ and $\gamma : ( - \infty,0] \to M$ with $\gamma(0)=x$ be a $u$-calibrated curve. Then there exists an increasing sequence of integers $k_n$ such that $\gamma(-k_n)$ converges to a point $x_\alpha$ of $\mathcal{M}_0$.
\end{lem}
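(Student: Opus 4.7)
The plan is to build a minimizing measure by averaging along the backward trajectory of $\gamma$, and then exploit weak-$\ast$ convergence to force the integer-time iterates of $\gamma$ into the Mather set.

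First I would combine the calibration identity
\begin{equation*}
\int_{-n}^{0} \bigl( L(\tau, \gamma(\tau), \dot{\gamma}(\tau)) + \alpha_0 \bigr) \, d\tau = u(0, x) - u(-n, \gamma(-n))
\end{equation*}
with the global boundedness of $u$ provided by Proposition \ref{Bdd=NW} (which identifies $\Omega(\mathcal{T})$ with the set of bounded global viscosity solutions), to conclude that the average Lagrangian action of $\gamma$ on $[-n, 0]$ tends to $-\alpha_0$.

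Next I would introduce the empirical probability measures
\begin{equation*}
\mu_n = \frac{1}{n} \sum_{k=1}^n \delta_{(\gamma(-k), \dot{\gamma}(-k))}
\end{equation*}
on $TM$ and their time-suspended counterparts $\hat{\mu}_n = \int_0^1 (\Phi_L^s)_\ast \mu_n \, ds$ on $\mathbb{T}^1 \times TM$. Since calibrated curves follow the Lagrangian flow (Remark \ref{CalibEL}) and $L$ is $1$-periodic in time, a direct change of variables shows that $\hat{\mu}_n$ equals $\frac{1}{n} \int_{-n}^0 \delta_{(\tau \bmod 1,\, \gamma(\tau),\, \dot{\gamma}(\tau))} \, d\tau$. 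By the A Priori Compactness Theorem \ref{APrioriCompactness}, these families are supported in a fixed compact set, so Prokhorov's theorem and a diagonal extraction yield a subsequence along which $\mu_n \rightharpoonup \mu^0$ and correspondingly $\hat{\mu}_n \rightharpoonup \hat{\mu}^0 = \int_0^1 (\Phi_L^s)_\ast \mu^0 \, ds$. A standard $O(1/n)$ boundary estimate, comparing $\hat{\mu}_n$ to its pushforward by $\Phi_L^\tau$, gives the $\Phi_L^\tau$-invariance of $\hat{\mu}^0$ for every $\tau$. Combining this with the action bound from the first step yields $\int L \, d\hat{\mu}^0 = -\alpha_0$, so $\hat{\mu}^0$ is a minimizing measure in the sense of Definition \ref{MatherDefi}. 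Therefore $\supp(\hat{\mu}^0) \subset \tilde{\mathcal{M}}$, and taking the slice at time $0$ gives $\supp(\mu^0) \subset \tilde{\mathcal{M}}_0$.

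Finally I would close the argument via the Portmanteau theorem: for every open neighbourhood $U$ of the compact set $\tilde{\mathcal{M}}_0$ in $TM$, one has $\mu^0(U) = 1$, hence $\liminf_n \mu_n(U) \geq 1$. In particular, the proportion of integers $k \in \{1, \ldots, n\}$ for which $(\gamma(-k), \dot{\gamma}(-k)) \in U$ tends to $1$, so infinitely many such $k$ exist. A diagonal argument over a decreasing sequence of neighbourhoods together with the compactness of $\tilde{\mathcal{M}}_0$ then produces an increasing sequence of positive integers $k_n$ such that $(\gamma(-k_n), \dot{\gamma}(-k_n))$ converges in $TM$ to some $(x_\alpha, v_\alpha) \in \tilde{\mathcal{M}}_0$; projecting on $M$ gives the required $x_\alpha \in \mathcal{M}_0$. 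The main obstacle I expect is the careful verification that $\hat{\mu}^0$ is invariant under the full continuous-time flow $\Phi_L^\tau$ (not only the time-one map) and that the slice-support inclusion $\supp(\mu^0) \subset \tilde{\mathcal{M}}_0$ can be deduced rigorously from $\supp(\hat{\mu}^0) \subset \tilde{\mathcal{M}}$; the remaining parts are routine Mather-style computations.
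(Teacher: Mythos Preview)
Your argument is correct and shares the core idea with the paper's proof: form empirical measures along the backward calibrated trajectory, use the A Priori Compactness to pass to a weak-$\ast$ limit, combine calibration with the boundedness of $u$ (Proposition~\ref{Bdd=NW}) to see that the limit is a minimizing measure, and conclude that its support lies in $\tilde{\mathcal{M}}$. The paper works directly with the continuous-time measures $\mu_t(\theta)=\frac{1}{t}\int_{-t}^0\theta(s,\gamma(s),\dot\gamma(s))\,ds$ on $\mathbb{T}^1\times TM$; to reach \emph{integer} times it then picks a point $(0,\tilde x)\in\supp(\mu)$, uses bump functions and a Lebesgue-measure estimate to produce real times $t_n\to\infty$ with $(\gamma(-t_n),\dot\gamma(-t_n))\to\tilde x$, and finally passes from $t_n$ to $\lfloor t_n\rfloor$ via continuity of the flow. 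Your variant of keeping, in parallel, the discrete integer-time measure $\mu_n=\frac{1}{n}\sum_{k=1}^n\delta_{(\gamma(-k),\dot\gamma(-k))}$ on $TM$ and applying Portmanteau to it directly is a genuine streamlining of that last step: it bypasses the bump-function argument and the real-to-integer conversion entirely. The trade-off is the extra verification you flag, namely that $\supp(\mu^0)\subset\tilde{\mathcal{M}}_0$ from $\supp(\hat\mu^0)\subset\tilde{\mathcal{M}}$; this is indeed routine once one observes that for $(x,v)\in\supp(\mu^0)$ and small $s$, the set $\Phi_L^s(\{0\}\times V)$ stays in any prescribed neighbourhood of $(0,x,v)$, so $(0,x,v)\in\supp(\hat\mu^0)$.
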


\begin{proof}
	Using Riesz representation theorem, we define for all positive time $t>0$ a Borel probability measure $\mu_t : \mathcal{C}(\mathbb{T}^1 \times TM, \mathbb{R}) \to \mathbb{R}$ by
	\begin{equation*}
		\mu_t(\theta) = \frac{1}{t} \int_{-t}^0 \theta(s, \gamma(s), \dot{\gamma}(s)) \;ds
	\end{equation*}
	We know from remark \ref{CalibEL} that $\gamma$ is minimizing. Hence, the A priori compactness of minimizing curves implies that $(\gamma, \dot{\gamma})$ belongs to a compact subset $K$ of $TM$, so that $\text{supp}(\mu_t) \subset K$ is compact. Consequently, we can extract an increasing sequence of integers $k_n$ such that $\mu_{k_n}$ weak-$\ast$ converges to a compactly supported probability measure $\mu$. Additionally, since the curve $\gamma$ is minimizing, it follows the Lagrangian flow $\phi_L$ and we deduce that the measure $\mu$ is $\Phi_L$-invariant. 
	
	Weak-$\ast$ convergence applied to $\theta = L$ gives
	 \begin{align*}
	 	\int_{\mathbb{T}^1 \times TM} L \; d\mu = \lim\limits_n \int_{\mathbb{T}^1 \times TM} L \; d\mu_{k_n} = \lim\limits_n \frac{1}{k_n} \int_{-k_n}^0 L(s, \gamma(s), \dot{\gamma}(s)) \;ds
	 \end{align*}
	 Addtionally, the calibration of $\gamma$ yields
	 \begin{align*}
	 	\frac{1}{k_n} \int_{-k_n}^0 L(s, \gamma(s), \dot{\gamma}(s)) \;ds + \alpha_0= \frac{1}{k_n}  \big[ u(0,x) - u(-k_n, \gamma(-k_n)) \big]
	 \end{align*}
	 However, Theorem \ref{Global=Bdd} and Proposition \ref{Bdd=NW} claim that $u$ is bounded. Thus, we deduce that
	 \begin{align*}
	 	\int_{\mathbb{T}\times TM} L \; d\mu + \alpha_0 =  \lim\limits_n \frac{1}{k_n}  \big[ u(0,x) - u(-k_n, \gamma(-k_n)) \big] = 0
	 \end{align*}
	 and the measure $\mu$ is minimizing. Therefore, by definition of the Mather set $\tilde{\mathcal{M}}$, we get the inclusion
	 \begin{equation*}
	 	\text{supp}(\mu) \subset \tilde{\mathcal{M}}
	 \end{equation*}
	 
	 We now show that $\supp(\mu)$ belongs to the $\alpha$-limit of the curve $\gamma$. Let $(0,\tilde{x}) = (0,x,v)$ be an element of $\text{supp}(\mu)$ and for all $m \geq 1$, let $A_m$ and $B_m$ be the balls of center $(0,\tilde{x})$ and respective radii $1/m$ and $1/(2m)$ in some chart of $\mathbb{T}^1 \times TM$. We consider a continuous bump map $\chi_m : \mathbb{T}^1 \times TM \to [0,1]$ supported on $A_m$ and equal to $1$ on $B_m$. Then, if we denote by $\chi_{A_m}$ the indicator function of the set $A_m$, we have $\chi_{B_m} \leq \chi_m \leq \chi_{A_m} $ and
	\begin{equation*}		
		\mu_{k_n}(B_m) \leq  \mu_{k_n}(\chi_m) \leq \mu_{k_n}(A_m) \quad \text{and} \quad 0<\mu(B_m) \leq  \mu(\chi_m) \leq \mu(A_m)
	\end{equation*}		 
	where the positivity of $\mu(B_m)$ is due to the fact that $B_m$ is a neighbourhood of $(0,\tilde{x}) \in \supp(\mu)$. And by definition of the weak-$\ast$ convergence, we obtain
	\begin{equation} \label{AlphaMatherDem1}
		0 < \mu(\chi_m)  = \lim_n  \mu_{k_n}(\chi_m) \leq \liminf_n \mu_{k_n}(A_m)
	\end{equation}
	
	Moreover, we have
	\begin{align*}
		\mu_{k_n}(A_m) &= \frac{1}{k_n} \int_{-k_n}^0 \chi_{A_m}(s, \gamma(s), \dot{\gamma}(s)) \;ds \\
		&= \frac{1}{k_n} \leb \big( \{ s \in [-k_n,0] \; | \; (s,\gamma(s), \dot{\gamma}(s)) \in A_m \} \big)
	\end{align*}
	where $\leb$ stands for the Lebesgue measure. We consider the real number $t_n \in \mathbb{R}$ defined by
	\begin{equation*}
		t_n := \inf \big\{ \tau \in [0,k_n] \; \big| \; \{ s \in [-k_n,0] \; | \; (s,\gamma(s), \dot{\gamma}(s)) \in A_m \} \subset [-\tau,0] \big\}
	\end{equation*}
	We have $(-t_n,\gamma(-t_n), \dot{\gamma}(-t_n))$ belongs to the closure $\overline{A}_m$ of $A_m$, and
	\begin{equation*}
		\mu_{k_n}(A_m)  \leq \frac{1}{k_n} \leb([-t_n,0]) = \frac{t_n}{k_n}
	\end{equation*}
	Thus, we infer from the inequalities \eqref{AlphaMatherDem1} that
	\begin{equation*}
		0 < \mu(\chi_m) \leq \liminf_n \frac{t_n}{k_n}
	\end{equation*}
	and since $k_n$ diverges to $+\infty$, we deduce that $\lim_n t_n = +\infty$.
	
	For all $m \geq 1$, we constructed a sequence $t_n$ such that 
	\begin{equation} \label{AlphaMatherDem2}
		\lim_n t_n = +\infty \quad \text{and} \quad (-t_n - \lfloor -t_n \rfloor ,\gamma(-t_n), \dot{\gamma}(-t_n)) \in \overline{A}_m \subset \mathbb{T}^1 \times TM
	\end{equation}
	By extracting, we get two increasing sequences $m_i$ and $t_{n_i}$ verifying \eqref{AlphaMatherDem2}. And since $m_i$ is increasing and $A_m$ is of radius $1/m_i$, we obtain 
	\begin{equation*}
		\lim_i \; (-t_{n_i} - \lfloor -t_{n_i} \rfloor ,\gamma(-t_{n_i}), \dot{\gamma}(-t_{n_i})) = (0, \tilde{x})
	\end{equation*}
	and in particular
	\begin{equation*}
		\lim_i \; (-t_{n_i} - \lfloor -t_{n_i} \rfloor ,\gamma(-t_{n_i})) = (0, x) \in \pi(\text{supp}(\mu)) \cap \big( \{0\} \times M \big)  \subset \mathcal{M}_0
	\end{equation*}
	
	In order to obtain integer times instead of $t_{n_i}$, we recall that $(\gamma,\dot{\gamma})$ belongs to a compact set $K$ of $TM$ and it follows the Lagrangian flow $\phi_L$ so that $\phi_L^{- \lfloor -t_{n_i} \rfloor,-t_{n_i}} = \phi_L^{0,-t_{n_i} - \lfloor -t_{n_i} \rfloor}$ restricted to $K$ converges uniformly to the identity and
	\begin{align*}
		\lim_i \; (\gamma(\lfloor -t_{n_i} \rfloor), \dot{\gamma}(\lfloor -t_{n_i} \rfloor)) = \lim_i \; \left(\phi_L^{0,-t_{n_i} - \lfloor -t_{n_i} \rfloor}\right)^{-1} (\gamma(-t_{n_i}), \dot{\gamma}(-t_{n_i})) = id(\tilde{x}) = \tilde{x}
	\end{align*}

\end{proof}

\begin{theo} \label{Uniqueness}
	Let $u$ and $v$ be two non-wandering viscosity solutions in $\Omega(\mathcal{T})$ such that $u_{|\mathcal{M}_0} = v_{|\mathcal{M}_0}$. Then $u=v$ everywhere.
\end{theo}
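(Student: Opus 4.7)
The plan is to reduce a global comparison between $u$ and $v$ to a comparison on the Mather set $\mathcal{M}_0$, exploiting that both solutions are calibrated by any curve coming from $\mathcal{M}_0$ (Proposition \ref{CalibNW}) and that backward $u$-calibrated curves issued from arbitrary points have $\alpha$-limits inside $\mathcal{M}_0$ (Lemma \ref{AlphaMather}).

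First I would fix $x \in M$ and invoke Proposition \ref{CalibExist} to obtain a $u$-calibrated curve $\gamma : (-\infty, 0] \to M$ with $\gamma(0) = x$. Lemma \ref{AlphaMather} then supplies an increasing sequence of integers $k_n$ such that $\gamma(-k_n) \to x_\alpha \in \mathcal{M}_0$. The calibration of $\gamma$ by $u$ together with the fact that $v$ is a viscosity sub-solution of \eqref{HJalpha} yields, for each $n$,
\[
v(0, x) - u(0, x) \leq v(-k_n, \gamma(-k_n)) - u(-k_n, \gamma(-k_n)).
\]

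Next, by the equilipschitz bound from Corollary \ref{Equicontinuity} and the uniform boundedness of global solutions from Proposition \ref{Bdd=NW}, the families $\bigl(u(-k_n, \cdot)\bigr)_n$ and $\bigl(v(-k_n, \cdot)\bigr)_n$ are relatively compact in $\mathcal{C}(M, \mathbb{R})$; after passing to a further subsequence via Arzelà--Ascoli I may assume $u(-k_n, \cdot) \to u_\alpha$ and $v(-k_n, \cdot) \to v_\alpha$ uniformly on $M$. The crucial observation to establish is that $u$ and $v$ coincide on $\mathcal{M}_0$ not only at time $0$ but at every negative integer time as well. To see this, I would pick $y_0 \in \mathcal{M}_0$ with lift $\tilde{y}_0 \in \tilde{\mathcal{M}}_0$; since $\Phi_L^n$ preserves $\tilde{\mathcal{M}}_0$ whenever $n$ is an integer (time-one periodicity of $\tilde{\mathcal{M}}$), the point $\tilde{z}_0 := \phi_L^n(\tilde{y}_0)$ still lies in $\tilde{\mathcal{M}}_0$. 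Applying Proposition \ref{CalibNW} to the orbit $\tau \mapsto \pi \circ \phi_L^\tau(\tilde{z}_0)$ gives calibration of both $u$ and $v$; subtracting the calibration equations between times $-n$ and $0$ and using the hypothesis $u(0, \pi \tilde{z}_0) = v(0, \pi \tilde{z}_0)$ forces $u(-n, y_0) = v(-n, y_0)$. Passing to the limit along $k_n$ shows $u_\alpha$ and $v_\alpha$ agree on $\mathcal{M}_0$, and in particular $u_\alpha(x_\alpha) = v_\alpha(x_\alpha)$.

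Finally, inserting this identity into the displayed inequality and letting $n \to \infty$ gives $v(x) \leq u(x)$. Exchanging the roles of $u$ and $v$ by starting from a $v$-calibrated curve at $x$ produces the reverse inequality, so $u = v$ everywhere. The main technical obstacle is the invariance argument establishing $u(-n, \cdot) = v(-n, \cdot)$ on $\mathcal{M}_0$ for every integer $n$; this step is precisely where Proposition \ref{CalibNW} and the preservation of $\tilde{\mathcal{M}}_0$ by the time-one Lagrangian flow are used together, and without it one cannot control the limit value $v_\alpha(x_\alpha) - u_\alpha(x_\alpha)$.
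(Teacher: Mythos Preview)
Your proposal is correct and follows essentially the same route as the paper: a backward calibrated curve from $x$, Lemma~\ref{AlphaMather} to land in $\mathcal{M}_0$, and Proposition~\ref{CalibNW} to propagate the equality $u_{|\mathcal{M}_0}=v_{|\mathcal{M}_0}$ to all negative integer times, then equicontinuity to close the argument. The only difference is cosmetic: the paper works directly with $u(-k_n,x_\alpha)-v(-k_n,x_\alpha)=0$ for every $n$ (using the equilipschitz bound to replace $\gamma(-k_n)$ by $x_\alpha$), so your Arzel\`a--Ascoli extraction of limits $u_\alpha,v_\alpha$ is an unnecessary detour---once you know $u(-k_n,\cdot)=v(-k_n,\cdot)$ on $\mathcal{M}_0$ for each $n$, equicontinuity alone gives $v(-k_n,\gamma(-k_n))-u(-k_n,\gamma(-k_n))\to 0$ without passing to subsequential limits.
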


\begin{proof}
	Let $(t,x)$ be a fixed element of $\mathbb{T}^1 \times M$ and $\gamma : (- \infty, t] \to M$ with $\gamma(t)=x$ be a curve calibrated by $v$ given by proposition \ref{CalibExist}. For all times $s_1 < s_2$, we have
	\begin{equation} \label{UniquenessDem1}
		v(s_2, \gamma(s_2)) - v(s_1, \gamma(s_1)) = \int_{s_1}^{s_2} L(\tau, \gamma(\tau), \dot{\gamma}(\tau)) \; d\tau + \alpha_0.(s_2-s_1)
	\end{equation}
	and from the definition of viscosity solutions 
	\begin{equation*}
		u(s_2, \gamma(s_2)) - u(s_1, \gamma(s_1)) \leq \int_{s_1}^{s_2} L(\tau, \gamma(\tau), \dot{\gamma}(\tau)) \; d\tau + \alpha_0.(s_2-s_1)
	\end{equation*}
	Replacing the right hand side by \eqref{UniquenessDem1}, we get
	\begin{equation*}
		u(s_2, \gamma(s_2)) - u(s_1, \gamma(s_1)) \leq v(s_2, \gamma(s_2)) - v(s_1, \gamma(s_1))
	\end{equation*}
	and more precisely 
	\begin{equation*}
		\big(u - v\big) (s_2, \gamma(s_2))\leq  \big(u-v\big)(s_1, \gamma(s_1))
	\end{equation*}
	Hence $(u-v)(s,\gamma(s))$ is non-increasing in time $s$. Since $u$ and $v$ are bounded by Theorem \ref{Global=Bdd} and Proposition \ref{Bdd=NW}, it follows that $\big(u - v\big) (s, \gamma(s))$ has a finite limit $l$ at $- \infty$ that we will determine.
	
	Now apply Lemma \ref{AlphaMather} to the curve $\gamma : \big(- \infty, \lfloor t \rfloor \big] \to M$ to get a an increasing sequence of integers $k_n$ and an element $x_\alpha$ of $\mathcal{M}_0$ such that $\gamma(- k_n)$ converges to $x_\alpha$. We are interested in computing
	\begin{equation*}
		l = \lim\limits_{s \to -\infty} u(s,\gamma(s)) - v(s,\gamma(s)) = \lim\limits_n u(-k_n,\gamma(-k_n)) - v(-k_n,\gamma(-k_n))
	\end{equation*}
	The equicontinuity of the families $\big( u(t,\cdot) \big)_t$ and $\big( v(t,\cdot) \big)_t$ given by Corollary \ref{Equicontinuity} allow to replace $\gamma(-k_n)$ by its limit $x_\alpha$ to get
	\begin{equation*}
		l = \lim\limits_n u(-k_n,x_\alpha) - v(-k_n,x_\alpha)
	\end{equation*}
	Set $x_\alpha(t) = \pi \circ \phi_L^{-k_n,t}(\tilde{x}_\alpha) $ where $\tilde{x}_\alpha \in \tilde{\mathcal{M}}_0$ is the lift of $x_\alpha$. Proposition \ref{CalibNW} implies that $x_\alpha(t)$ is calibrated by $u$ and $v$, and
	\begin{equation*}
		u(0, x_\alpha(0)) - u(-k_n,x_\alpha) = \int_{-k_n}^0 L(s,x_\alpha(s) \dot{x_\alpha}(s)) \; ds + \alpha_0.k_n = v(0, x_\alpha(0)) - v(-k_n,x_\alpha)
	\end{equation*}
	and since $x_\alpha(0)$ belongs to $\mathcal{M}_0$ and $u_{|\mathcal{M}_0} = v_{|\mathcal{M}_0}$, we obtain
	\begin{equation*}
		u(-k_n,x_\alpha) - v(-k_n,x_\alpha) = u(0, x_\alpha) - v(0, x_\alpha) =0
	\end{equation*}
	
	We proved that $(u-v)(s,\gamma(s))$ is non-increasing in time $s$ and has a null limit at $-\infty$. Therefore, $(u-v)(s,\gamma(s))$ is non-negative and
	\begin{equation*}
		u(t,x) = u(t, \gamma(t)) \leq  v(t, \gamma(t)) = v(t,x)
	\end{equation*}
	The symmetry between $u$ and $v$ gives the inverse inequality and we conclude that $u(t,x) = v(t,x)$ for all $(t,x)$ in $\mathbb{T}^1 \times M$.
\end{proof}

\section{Representation Formulas for the Non-Wandering Set $\Omega(\mathcal{T})$} \label{SectionRepresentation}

This is the central section of the article, divided into three subsections. The first subsection introduces the Peierls barrier, a crucial element for developing more general barriers. The second subsection presents a first, non-canonical representation formula, which is easily adaptable to explicit examples. The final subsection is dedicated to the introduction of the generalized Peierls barrier $\underline{k}$ mentioned in the introduction, and to the proof of the main representation Theorem \ref{kRepresentationTheorem} of this article.

\subsection{Peierls Barriers}

The Peierls barrier, introduced by Mather in \cite{MR1275203}, provides various viscosity solutions to the Hamilton-Jacobi equation. These solutions are crucial for describing all weak-KAM solutions (See \cite{CIS}). In this section, we introduce a generalized Peierls barriers, which will enable us to describe all non-wandering viscosity solutions through representation formulas.

\begin{defi} \label{pPeierlsDefi}
	\begin{enumerate}
		\item For any increasing sequence $\underline{p} = (p_n)_{n \geq 0}$ in $\mathbb{N}$, we define the \textit{$\underline{p}$-Peierls Barrier} $h^{\underline{p}}: M \times M \to \mathbb{R}$ by
		\begin{equation}
			h^{\underline{p}}(x,y) = \liminf_n h^{p_n}(x,y)
		\end{equation}
		with the corresponding time-dependant $\underline{p}$-barrier
		\begin{equation}
			h^{\underline{p}+t}(x,y)= h^{\underline{p}}(t,x,y) = \liminf_n h^{t+p_n}(x,y)
		\end{equation}
		where $h^t$ is the potential explicited in (\ref{Potential}).
		
		More generally, for all two times $s$ and $t$, we define
		\begin{equation} \label{PeierlsGeneral}
			h^{s,\underline{p}+t}(x,y) = \liminf_{n \to \infty} h^{s,p_n+t}(x,y)
		\end{equation}
		
		\item The \textit{Peierls Barrier} $h^\infty : M \times M \to \mathbb{R}$ is the $\underline{p}$-barrier for $p_n = n$ defined by
		\begin{equation}
			h^\infty(x,y) = \liminf_{n \to \infty} h^n(x,y)
		\end{equation}
	\end{enumerate}
\end{defi}

\begin{prop} \label{pPeierlsProp}
	\begin{enumerate}
		\item \label{PeierlsFiniteness}(Finiteness) For all $(t,x,y) \in \mathbb{R} \times M \times M$, the Peierls barrier $h^{\underline{p}}(t,x,y)$ is finite.
		\item \label{PeierlsRegularity} (Regularity) The Peierls barrier $h^{\underline{p}}$ is $\kappa_\varepsilon$-Lipschitz for all $\varepsilon >0$ with $\kappa_\varepsilon$ being the Lipschitz constant introduced in Proposition \ref{Regularity}.
		\item \label{pvisc} (Viscosity Solution) For all $x \in M$, $h^{\underline{p}}(\cdot,x,\cdot)$ is a viscosity solution of the Hamilton-Jacobi equation (\ref{HJalpha}).
		\item \label{PeierlsLiminfProp} (Liminf Property) For all points $x$ and $y$ in $M$ and all sequences of points $(x_n)_n$ and $(y_n)_n$ in $M$ respectively converging to $x$ and $y$, we have
		\begin{equation} \label{PeierlsLiminf}
			h^{\underline{p}}(x,y) = \liminf_n h^{p_n}(x_n,y_n)
		\end{equation}
	\end{enumerate}
\end{prop}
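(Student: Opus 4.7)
The plan is to dispatch the four items in sequence, leveraging the estimates and stability results of Section \ref{SectionTools}.

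For the finiteness (1), the uniform bound $\max(\Vert m \Vert_\infty, \Vert M \Vert_\infty) \leq 2\kappa_1 \diam(M)$ at $t = 0$ provided by Proposition \ref{hfinite}, together with the time continuity of $h$ which extends finiteness to arbitrary $t$, shows that the sequence $h^{t + p_n}(x, y)$ is uniformly bounded in $n$; hence its liminf is finite.

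For the regularity (2), fix $\varepsilon > 0$. Since $p_n \to \infty$, we have $t + p_n \geq \varepsilon$ for all $n$ sufficiently large, and Proposition \ref{Regularity} asserts that the time-dependent potentials $(t, x, y) \mapsto h^{t + p_n}(x, y)$ are all $\kappa_\varepsilon$-Lipschitz with a common constant. Passing to the liminf preserves $\kappa_\varepsilon$-Lipschitz regularity on the set where the limit is finite, which is everywhere by (1).

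For the viscosity solution property (3), I apply Proposition \ref{ViscosityLiminf}. Fix $x \in M$ and set $u_0 := h^1(x, \cdot) \in \mathcal{C}(M, \mathbb{R})$; by Proposition \ref{hprop}(2) combined with the $1$-periodicity of $L$ (which yields $\mathcal{T}^{0, t} = \mathcal{T}^{1, 1 + t}$ at the level of Lax-Oleinik operators), the associated viscosity solution is $u(t, y) = h^{1 + t}(x, y)$ on $[0, \infty) \times M$. Taking the increasing integer sequence $k_n := p_n - 1$ (valid for $n$ large enough, and the finitely many smaller indices do not affect the liminf), we compute
$$\liminf_n u(s + k_n, y) = \liminf_n h^{s + p_n}(x, y) = h^{\underline{p}}(s, x, y),$$
which is finite by (1). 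Proposition \ref{ViscosityLiminf} then concludes that $h^{\underline{p}}(\cdot, x, \cdot)$ is a viscosity solution of (\ref{HJalpha}).

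For the liminf identity (4), the point is that a uniform Lipschitz estimate absorbs the perturbation of the endpoints. For $n$ large enough that $p_n \geq 1$, Proposition \ref{Regularity} gives that $h^{p_n} : M \times M \to \mathbb{R}$ is $\kappa_1$-Lipschitz with a constant $\kappa_1$ independent of $n$. Therefore
$$|h^{p_n}(x_n, y_n) - h^{p_n}(x, y)| \leq \kappa_1 \bigl(d(x_n, x) + d(y_n, y)\bigr) \longrightarrow 0,$$
and taking the liminf on $n$ gives the desired identity $\liminf_n h^{p_n}(x_n, y_n) = \liminf_n h^{p_n}(x, y) = h^{\underline{p}}(x, y)$. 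I anticipate no serious obstacle; the only delicate point is the bookkeeping of the index shift $k_n = p_n - 1$ when invoking Proposition \ref{ViscosityLiminf}, together with the explicit use of the $1$-periodicity of $L$ to identify $h^{1 + t}(x, \cdot)$ with the canonical viscosity solution starting at time $0$ from $u_0 = h^1(x, \cdot)$.
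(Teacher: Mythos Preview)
Your proof is correct and follows essentially the same approach as the paper: items (1), (2), and (4) are dispatched exactly as the paper does, by invoking Propositions \ref{hfinite}, \ref{Regularity}, and the uniform Lipschitz estimate respectively. For item (3) the paper simply appeals to Proposition \ref{hprop}(2) to say that $(t,y) \mapsto h^t(x,y)$ is a viscosity solution and then applies Proposition \ref{ViscosityLiminf} directly; your version with the shift $u_0 = h^1(x,\cdot)$ and $k_n = p_n - 1$ is the same argument made more explicit so that the domain matches the $[0,+\infty)$ hypothesis of Proposition \ref{ViscosityLiminf}.
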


\begin{proof}
	\ref{PeierlsFiniteness}. Direct consequence of Proposition \ref{hfinite}.

	\ref{PeierlsRegularity}. Direct consequence of Proposition \ref{Regularity}.
	
	\ref{pvisc}. For all $x \in M$, we know from Proposition \ref{hprop} that $h^{p_n}(\cdot,x,\cdot)$ is a viscosity solution. Hence, we deduce from Proposition \ref{ViscosityLiminf} that $h^{\underline{p}}(\cdot,x,\cdot) = \liminf_n h^{p_n}(\cdot,x,\cdot)$ is also a viscosity solution.
	
	\ref{PeierlsLiminfProp}. We have from the regularity of $h$ that for all integers $n$ and $k \geq 1$
	\begin{align*}
		|h^k(x,y) - h^k(x_n,y_n)| \leq \kappa_1. \big( d(x,x_n) + d(y,y_n) \big)
	\end{align*}
	Hence, setting $k = p_n$ and taking the liminf on $n$, we get 
	\begin{align*}
		h^{\underline{p}}(x,y) = \liminf_n h^{p_n}(x,y) = \liminf_n h^{p_n}(x_n,y_n)
	\end{align*}
\end{proof}

We also give properties specific to the original Peierls barrier $h^\infty$.

\begin{prop} \label{PeierlsProp}
	\begin{enumerate}
		\item \label{PeierlsWeakKAM} (Weak-KAM Solution) For all $x \in M$, $h^\infty(x,\cdot)$ is a weak-KAM solution of the translated Hamilton-Jacobi equation (\ref{HJalpha}).
		\item \label{PeierlsTriangIneg} (Triangular Inequality) For all point $x$, $y$ and $z$ in $M$, we have the triangular inequality
		\begin{equation}\label{TriangInegPeierls}
			h^\infty(x,z) \leq h^\infty(x,y) + h^\infty(y,z)
		\end{equation}
		\item \label{PeierlsTriangIneg2} And for all times $s< \tau < t$, we have the triangular inequalities
		\begin{equation}\label{TriangInegPeierls2}
			\begin{split}
				h^{s,\infty+t}(x,z) &\leq h^{s,\infty+\tau}(x,y) + h^{\tau,t}(y,z)\\
				h^{s,\infty+t}(x,z) &\leq h^{s,\tau}(x,y) + h^{\tau,\infty + t}(y,z)
			\end{split}
		\end{equation}
		
		\item (Non-negativity) \label{PeierlsPositivity} For every point $x$ in $M$, we have $h^\infty(x,x) \geq 0$.
	\end{enumerate}
\end{prop}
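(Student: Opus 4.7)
My plan is to treat the four assertions in sequence, reducing each to the basic triangular inequality of Proposition~\ref{hprop} together with the time-periodicity identity $h^{s+k, t+k} = h^{s,t}$ for $k \in \mathbb{Z}$, which follows directly from the $1$-periodicity of $L$.

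For the weak-KAM property (item~\ref{PeierlsWeakKAM}), I would invoke item~\ref{pvisc} of Proposition~\ref{pPeierlsProp}, which gives that $h^\infty(\cdot,x,\cdot)$ is a viscosity solution and therefore $\mathcal{T} h^\infty(x,\cdot) = h^\infty(1,x,\cdot)$. The conclusion then reduces to the $1$-periodicity in time of the Peierls barrier, which is immediate from the definition:
\begin{equation*}
h^\infty(1,x,y) = \liminf_n h^{n+1}(x,y) = \liminf_n h^n(x,y) = h^\infty(x,y).
\end{equation*}

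For the triangular inequalities (items~\ref{PeierlsTriangIneg} and~\ref{PeierlsTriangIneg2}), my strategy is to apply the finite-time inequality \eqref{TriangIneg} to well-chosen time splittings, cancel integer shifts using the periodicity, and then pass to the $\liminf$. Concretely, for item~\ref{PeierlsTriangIneg} I would start from
\begin{equation*}
h^{m+n}(x,z) \leq h^m(x,y) + h^{m, m+n}(y,z) = h^m(x,y) + h^n(y,z),
\end{equation*}
freeze $n$ and take $\liminf_{m}$ to obtain $h^\infty(x,z) \leq h^\infty(x,y) + h^n(y,z)$, and then take $\liminf_n$ of the remaining $n$-dependent term. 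The two variants of item~\ref{PeierlsTriangIneg2} follow the same scheme: one splits $h^{s,t+n}(x,z)$ at $\tau + n$, so that the second piece $h^{\tau+n, t+n}(y,z) = h^{\tau,t}(y,z)$ is $n$-free, while the other splits at $\tau$, so the first piece $h^{s,\tau}(x,y)$ is $n$-free; a single $\liminf_n$ on the remaining piece then yields the desired inequality.

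Item~\ref{PeierlsPositivity} will follow as an immediate corollary of item~\ref{PeierlsTriangIneg} applied with $y = z = x$, which gives $h^\infty(x,x) \leq 2\, h^\infty(x,x)$; the finiteness asserted in item~\ref{PeierlsFiniteness} of Proposition~\ref{pPeierlsProp} legitimizes the subtraction and yields $h^\infty(x,x) \geq 0$. There is no serious obstacle in this proposition; the only step that requires care is ordering the two successive $\liminf$'s in item~\ref{PeierlsTriangIneg} correctly, so that after the first one the right-hand side is the sum of a finite constant and an $n$-dependent sequence, over which the second $\liminf$ distributes trivially.
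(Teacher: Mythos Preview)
Your proposal is correct. For items~\ref{PeierlsWeakKAM}--\ref{PeierlsTriangIneg2} your approach is essentially the same as the paper's, with only cosmetic differences in how the limits are organized: the paper extracts subsequences $k_n$, $q_n$ along which the liminfs are achieved and then takes the liminf along $k_n+q_n$, whereas you take two iterated $\liminf$'s directly. Both are standard and equally valid; your version is slightly more streamlined.

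For item~\ref{PeierlsPositivity} your route is genuinely different. The paper uses the existence of a weak-KAM solution $u$ (Corollary~\ref{ExistenceKAMF}) and the domination inequality $0 = u(n,x) - u(0,x) \leq h^n(x,x)$, while you deduce non-negativity purely from the triangular inequality $h^\infty(x,x) \leq 2\,h^\infty(x,x)$ together with finiteness. Your argument is more self-contained, since it does not appeal to the existence of weak-KAM solutions (which in the paper is itself established via the Peierls barrier). The paper's argument, on the other hand, is a special case of the more general fact that any $h$-dominated function forces $h^\infty(x,x)\geq 0$, a viewpoint that recurs later in the paper.
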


\begin{proof}
	\ref{PeierlsWeakKAM}. It is the explicited weak-KAM solution in the proof of Proposition \ref{ExistenceKAMF} on existence of weak-KAM solutions.\\
	
	\ref{PeierlsTriangIneg}. Fix three points $x$, $y$ and $z$ in $M$. Let $k_n$ and $q_n$ be two increasing sequences of integers such that $h^\infty(x,y) = \lim_n h^{k_n}(x,y)$ and $h^\infty(y,z) = \lim_n h^{q_n}(y,z)$. Then, we have
	\begin{align*}
		h^\infty(x,z) &=  \liminf_{n \to \infty} h^{n}(x,y) \leq \liminf_n h^{k_n + q_n}(x,y) \\
		&\leq \liminf_n h^{k_n}(x,y) + h^{q_n}(y,z) \\
		&= \lim_n h^{k_n}(x,y) + h^{q_n}(y,z) \\
		&= h^\infty(x,y) + h^\infty(y,z)
	\end{align*}
	where we used the triangular inequality (\ref{TriangIneg}) in the second line.\\
	
	\ref{PeierlsTriangIneg2}. As the previous point, we fix three points $x$, $y$ and $z$ in $M$ and three times $s$ and $\tau <t$. Let $k_n$ be an increasing sequences of integers such that $h^{s,\infty+\tau}(x,y) = \lim_n h^{s,k_n+\tau}(x,y)$. Then we have,
	\begin{align*}
		h^{s,\infty+t}(x,z) &=  \liminf_{n \to \infty} h^{s,n+t}(x,y) \leq \liminf_n h^{s,k_n +t}(x,y) \\
		&\leq \liminf_n h^{s,k_n +\tau}(x,y) + h^{k_n +\tau,k_n +t}(y,z) \\
		&=h^{s,\infty +\tau}(x,y) + h^{\tau,t}(y,z) 
	\end{align*}
	The second inequality is analogous.\\
	
	\ref{PeierlsPositivity}. Consider a weak-KAM solution $u$. Then, for all point $x$ in M and for all integer $n \geq 0$, we get from the definition of $u$ that
	\begin{align*}
		0 = u(x) - u(x) = u(n,x) - u(0,x) \leq  h^n(x,x)
	\end{align*}
	Taking the liminf on $n$, we get the inequality $h^\infty(x,x) \geq 0$.
\end{proof}

\begin{defi}
	We define the \textit{Peierls set} $\mathcal{A}_0$ in $M$ as follows
	\begin{equation}
		\mathcal{A}_0 = \{ x \in M \; | \; h^\infty(x,x) = 0 \}
	\end{equation}
\end{defi}

\begin{prop} \label{MatherPeierls}
	For all $x \in \mathcal{M}_0$, we have $h^\infty(x,x) = 0$. In other words, we have the inclusion of sets $\mathcal{M}_0 \subset \mathcal{A}_0$
\end{prop}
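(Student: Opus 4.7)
The plan is to prove both inequalities $h^\infty(x,x) \geq 0$ and $h^\infty(x,x) \leq 0$ for $x \in \mathcal{M}_0$. The first is free: it is exactly item \ref{PeierlsPositivity} of Proposition \ref{PeierlsProp}. So the real content is the upper bound. I would first establish the result on the recurrent projected Mather set $\mathcal{M}_0^R$ and then bootstrap to $\mathcal{M}_0$ using density and continuity.

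For $x \in \mathcal{M}_0^R$, let $\tilde{x} \in \tilde{\mathcal{M}}_0^R$ be a lift and $\gamma(t) = \pi \circ \phi_L^t(\tilde{x})$. By definition of recurrence under $\phi_L^{-1}$, there exists an increasing sequence $n_k \to +\infty$ such that $\phi_L^{-n_k}(\tilde{x}) \to \tilde{x}$, and in particular $\gamma(-n_k) \to x$. Now pick a weak-KAM solution $v \in \fix(\mathcal{T}) \subset \Omega(\mathcal{T})$, whose existence is guaranteed by Corollary \ref{ExistenceKAMF}. Proposition \ref{CalibNW} asserts that $\gamma$ is calibrated by $v$ on $\mathbb{R}$. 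Since $v$ is $1$-time-periodic and the integers $n_k$ are in play, the calibration equation applied to the interval $[-n_k, 0]$ reduces, using the time-periodicity of $L$ to identify $h^{-n_k, 0} = h^{n_k}$, to
\begin{equation*}
	v(x) - v(\gamma(-n_k)) = h^{n_k}(\gamma(-n_k), x).
\end{equation*}
As $n_k \to \infty$, the left-hand side tends to $0$ by continuity of $v$, so $h^{n_k}(\gamma(-n_k), x) \to 0$. The Lipschitz regularity of $h^{n_k}$ (Proposition \ref{Regularity}, with uniform constant $\kappa_1$) then gives
\begin{equation*}
	|h^{n_k}(x,x) - h^{n_k}(\gamma(-n_k), x)| \leq \kappa_1 \, d(\gamma(-n_k), x) \longrightarrow 0,
\end{equation*}
so $h^{n_k}(x,x) \to 0$. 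Taking the liminf over all integers, we conclude $h^\infty(x,x) \leq 0$, and the required equality $h^\infty(x,x) = 0$ holds on $\mathcal{M}_0^R$.

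To pass from $\mathcal{M}_0^R$ to $\mathcal{M}_0$, I would invoke Proposition \ref{MatherNonempty}, which states that $\tilde{\mathcal{M}}_0^R$ is dense in $\tilde{\mathcal{M}}_0$, hence $\mathcal{M}_0^R$ is dense in $\mathcal{M}_0$. Given $x \in \mathcal{M}_0$, choose $x_i \in \mathcal{M}_0^R$ with $x_i \to x$; then the Lipschitz regularity of $h^\infty$ (Proposition \ref{pPeierlsProp}, item \ref{PeierlsRegularity}) yields $h^\infty(x,x) = \lim_i h^\infty(x_i, x_i) = 0$.

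The only subtle point is making sure the calibration argument is legitimate with the right sign conventions and the right interpretation of recurrence under $\phi_L^{-1}$. The approximation by recurrent points is routine once one has the identity $v(x) - v(\gamma(-n_k)) = h^{n_k}(\gamma(-n_k), x)$, which is the heart of the argument: it reflects the fact that along Mather orbits the weak-KAM solution achieves the potential rather than merely bounding it.
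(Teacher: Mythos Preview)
Your proof is correct and follows essentially the same approach as the paper: work first on $\mathcal{M}_0^R$, use calibration of the Mather orbit by a weak-KAM solution together with recurrence to get $h^{n_k}(\gamma(-n_k),x)\to 0$, then pass to $h^\infty(x,x)\le 0$, and finally extend to $\mathcal{M}_0$ by density and continuity of $h^\infty$. The only cosmetic difference is that the paper packages the Lipschitz transfer step by invoking the liminf property \eqref{PeierlsLiminf} directly, while you unpack it by hand.
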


\begin{proof}
	Let $x$ be in $\mathcal{M}^R_0$ with lift $\tilde{x}$ in $\tilde{\mathcal{M}}$ and set $x(t) = \pi \circ \phi_L^t(\tilde{x})$. Let $k_n$ be an increasing sequence of integers such that $x(-k_n)$ converge to $x$. From Proposition \ref{CalibNW}, we have that the curve $x(t) = \pi \circ \phi_L^t(x,v)$ is calibrated by any weak-KAM solution $u$ and
	\begin{equation*}
		 h^{k_n}(x(-k_n),x) =  u(x) - u(-k_n,x(-k_n))  =  u(x) - u(x(-k_n))  \longrightarrow 0 \quad \text{as } n \to \infty
	\end{equation*}
	where we use the continuity of $u$. Thus, by the liminf Property \eqref{PeierlsLiminf}, and by the non-negativity Property \ref{PeierlsPositivity} of Proposition \ref{PeierlsProp}, we obtain
	\begin{equation*}
		0 \leq h^\infty(x,x) \leq h^{\underline{k}}(x,x) = \liminf_n h^{k_n}(x(-k_n),x) = 0
	\end{equation*}
	We proved that $h^\infty(x,x)=0$ on $\mathcal{M}^R_0$. Additionally, it was stated in Proposition \ref{MatherNonempty} that this set is dense in the Mather set $\mathcal{M}_0$. Therefore, we deduce by continuity that $h^\infty(x,x)=0$ on $\mathcal{M}_0$.
\end{proof}

\subsection{Representation Formula on $\mathcal{M}_0^R$}

In this subsection, we establish a non-canonical representation formula. To do this, we first introduce a non-canonical barrier $\underline{h}$, defined on the recurrent Mather set $\mathcal{M}_0^R$, utilizing various $\underline{p}$-Peierls barriers associated with different sequences.

\subsubsection{The Barrier $\underline{h}$} \label{Sectionhbar}

We start by introducing a new barrier $\underline{h}$ based on the $\underline{p}$-Peierls barriers. For that purpose, we consider for every point $x$ of the recurrent Mather set $\mathcal{M}_0^R$ with lift $\tilde{x}$ in $\tilde{\mathcal{M}}_0^R$ an increasing sequence $\underline{p}^x = (p^x_n)_{n \geq 0}$ of $\mathbb{N}$ such that $\tilde{x}$ is $-\underline{p}^x$-recurrent by the Lagrangian flow $\phi_L$.

\begin{defi}
	\begin{enumerate}
		\item We define the barrier $\underline{h} : \mathcal{M}_0^R \times M \to \mathbb{R}$ by
		\begin{equation}
			\underline{h}(x,y) = h^{\underline{p}^x}(x,y) = \liminf_n h^{p^x_n}(x,y)
		\end{equation}		 
		with the corresponding time-depending barrier
		\begin{equation}
			\underline{h}^t(x,y)= \underline{h}(t,x,y) = h^{\underline{p}^x+t}(x,y)
		\end{equation}
	\end{enumerate}
\end{defi}

\begin{rem}
	\begin{enumerate}
		\item Note that, in general, this barrier $\underline{h}(x,y)$ is not continuous on $x$.
		\item The definition depends heavily on the choice of the sequences $\underline{p}^x$, which is why $\underline{h}$ is not canonical. This issue will be addressed in the next subsection about representation formulas on $\mathcal{M}_0$.
	\end{enumerate}
\end{rem}

\begin{prop} \label{hpProp}
	\begin{enumerate}
		\item \label{hpVisc} For all $x \in \mathcal{M}_0^R$, the map $\underline{h}(\cdot,x,\cdot)$ is a viscosity solution of the Hamilton-Jacobi equation (\ref{HJalpha}).
		\item \label{hpRegularity} For all $x \in \mathcal{M}_0^R$, the map $\underline{h}(\cdot, x ,\cdot) : \mathbb{R} \times M\to \mathbb{R}$ is $\kappa_\varepsilon$-Lipschitz for all $\varepsilon >0$ with $\kappa_\varepsilon$ being the Lipschitz constant introduced in Proposition \ref{Regularity}.
		\item \label{hpnullMather} For all $x$ in $\mathcal{M}_0^R$, we have $\underline{h}(x,x) = h^{\underline{p}^x}(x,x) = 0$.
		\item \label{hpRecurrence} For all $x \in \mathcal{M}^R_0$, the viscosity solution $\underline{h}(\cdot,x,\cdot)$ is $\underline{p}^x$-recurrent and 
		\begin{equation} \label{hpRecurrenceFormula}
			\Vert \underline{h}^{p^x_n}(x,\cdot) - \underline{h}(x,\cdot) \Vert_\infty \leq 2\kappa_1. d(x(-p^x_n),x)
		\end{equation}
	\end{enumerate}
\end{prop}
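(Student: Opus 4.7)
Parts 1 and 2 are immediate consequences of Proposition \ref{pPeierlsProp}: since by construction $\underline{h}(\cdot,x,\cdot) = h^{\underline{p}^x}(\cdot,x,\cdot)$, the viscosity-solution property (item \ref{pvisc}) and the $\kappa_\varepsilon$-Lipschitz regularity (item \ref{PeierlsRegularity}) pass directly to $\underline{h}$.

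For Part 3, the plan is to apply the Liminf Property along the recurrence times. Fix any weak-KAM solution $u$, which exists by Corollary \ref{ExistenceKAMF}. By Proposition \ref{CalibNW}, the Mather curve $x(t) = \pi\circ\phi_L^t(\tilde{x})$ is $u$-calibrated, and since $u$ is time-$1$-periodic and $L$ is $1$-periodic, calibration over the interval $[-p_n^x, 0]$ gives
\begin{equation*}
h^{p_n^x}(x(-p_n^x), x) \;=\; u(x) - u(x(-p_n^x)) \;\longrightarrow\; 0 \quad \text{as } n \to \infty,
\end{equation*}
by continuity of $u$ together with the hypothesis $x(-p_n^x) \to x$. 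Applying the Liminf Property (item \ref{PeierlsLiminfProp} of Proposition \ref{pPeierlsProp}) with $x_n = x(-p_n^x)$ and $y_n = x$ yields $\underline{h}(x,x) = \liminf_n h^{p_n^x}(x(-p_n^x), x) = 0$; non-negativity $\underline{h}(x,x) \geq 0$ follows from the pointwise inequality $h^n(x,x) \geq 0$, itself a consequence of the domination of any weak-KAM solution by $h^n$.

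For Part 4, I first plan to upgrade Part 3 to the quantitative bound $0 \leq h^{p_n^x}(x,x) \leq 2\kappa_1 d(x(-p_n^x), x)$, obtained by combining the $\kappa_1$-Lipschitz estimate $u(x) - u(x(-p_n^x)) \leq \kappa_1 d(x(-p_n^x), x)$ (Corollary \ref{Equicontinuity}) with the $\kappa_1$-Lipschitz regularity of $h^{p_n^x}$ in its first argument (Proposition \ref{Regularity}). The upper-bound direction $\underline{h}^{p_n^x}(x,y) - \underline{h}(x,y) \leq 2\kappa_1 d(x(-p_n^x), x)$ then follows from the triangle inequality at the midpoint $(x, p_n^x)$, which together with the $1$-periodicity of $L$ yields
\begin{equation*}
h^{p_m^x + p_n^x}(x,y) \;\leq\; h^{p_n^x}(x,x) + h^{p_m^x}(x,y);
\end{equation*}
taking $\liminf_m$ and inserting the bound on $h^{p_n^x}(x,x)$ completes this direction.

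The reverse direction $\underline{h}(x,y) - \underline{h}^{p_n^x}(x,y) \leq 2\kappa_1 d(x(-p_n^x), x)$ is the chief obstacle. My plan is to use the Lax-Oleinik representation of $\underline{h}(x,\cdot)$ as the evolution of $\underline{h}^{-p_n^x}(x,\cdot)$ over time $p_n^x$, evaluated at the Tonelli optimizer $z^*_y$ realizing $\underline{h}^{p_n^x}(x,y) = \underline{h}(x, z^*_y) + h^{p_n^x}(z^*_y, y)$, to reduce the inequality to the uniform estimate
\begin{equation*}
\sup_{z \in M} \bigl[\underline{h}^{-p_n^x}(x, z) - \underline{h}(x, z)\bigr] \;\leq\; 2\kappa_1 d(x(-p_n^x), x).
\end{equation*}
To obtain this uniform bound I intend to invoke the monotone form of the Uniqueness Theorem \ref{Uniqueness}: the map $\min\bigl(\underline{h}^{-p_n^x}(x,\cdot),\, \underline{h}(x,\cdot) + 2\kappa_1 d(x(-p_n^x), x)\bigr)$ belongs to $\Omega(\mathcal{T})$ by Proposition \ref{ViscosityInf} (adding a constant preserves $\Omega(\mathcal{T})$), so it suffices to check the inequality on $\mathcal{M}_0$. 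On each Mather orbit, calibration by both $\underline{h}(x,\cdot)$ and $\underline{h}^{-p_n^x}(x,\cdot)$ forces $\underline{h}^{-p_n^x} - \underline{h}$ to be constant; on the orbit of $\tilde{x}$ this constant equals $\underline{h}^{-p_n^x}(x,x) \in [0, 2\kappa_1 d(x(-p_n^x), x)]$ (computed via the same calibration that produced Part 3), and propagating the bound to the remaining Mather orbits by a parallel calibration argument and the equi-Lipschitz regularity of the family $\{\underline{h}^t(x,\cdot)\}_t$ is the most delicate step of the proof.
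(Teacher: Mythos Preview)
Your treatment of Parts~1--3 and the upper half of Part~4 (i.e.\ $\underline{h}^{p_n^x}(x,y)-\underline{h}(x,y)\le 2\kappa_1 d(x(-p_n^x),x)$ via the triangle inequality $h^{p_m^x+p_n^x}(x,y)\le h^{p_n^x}(x,x)+h^{p_m^x}(x,y)$ and $\liminf_m$) matches the paper essentially verbatim.

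The gap is in the reverse inequality of Part~4. Your plan reduces to showing $\underline{h}^{-p_n^x}(x,z)-\underline{h}(x,z)\le 2\kappa_1 d(x(-p_n^x),x)$ on $\mathcal{M}_0$ and then invoking a monotone version of Theorem~\ref{Uniqueness}. But the key step---``calibration by both $\underline{h}(x,\cdot)$ and $\underline{h}^{-p_n^x}(x,\cdot)$ forces the difference to be constant on each Mather orbit''---does not follow from Proposition~\ref{CalibNW}. Calibration along a Mather curve $z(t)$ only gives that $t\mapsto v(t,z(t))-w(t,z(t))$ is constant; it says nothing about the map $z\mapsto v(0,z)-w(0,z)$ along the discrete $\phi_L^1$-orbit of $z$ in $\mathcal{M}_0$, which is what you need. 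Moreover, your computation of $\underline{h}^{-p_n^x}(x,x)$ via the same calibration argument as Part~3 would produce a bound in terms of $d(x(+p_n^x),x)$ rather than $d(x(-p_n^x),x)$, since the sequence $p_m^x-p_n^x$ is a recurrence sequence for $\phi_L^{+p_n^x}(\tilde x)$, not for $\tilde x$; and nothing in the hypotheses controls forward-time recurrence. Finally, the ``propagation to the remaining Mather orbits'' is left entirely unspecified.

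The paper bypasses all of this with a direct, elementary estimate that never leaves the potential~$h$ and uses only the orbit through~$x$. For fixed $n$, one picks a subsequence $n_i$ realizing $\lim_i h^{p_{n_i}^x+p_n^x}(x,y)=\underline{h}^{p_n^x}(x,y)$, and then for $k$ large enough writes
\[
h^{p_k^x}(x,y)\;\le\; h^{p_k^x-p_{n_i}^x-p_n^x}(x,x)\;+\;h^{p_{n_i}^x+p_n^x}(x,y).
\]
The first term is controlled by comparing $h^{q}(x,x)$ with $h^{q}(x(-p_k^x),x(-p_{n_i}^x-p_n^x))$ via the $\kappa_1$-Lipschitz estimate of Proposition~\ref{Regularity}, and the latter is bounded using calibration by a weak-KAM solution along $x(\cdot)$. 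Passing to the limit along a diagonal sequence $k_i\to\infty$ and using that $x(-p_{k_i}^x)\to x$ and $x(-p_{n_i}^x-p_n^x)\to x(-p_n^x)$ (continuity of $\phi_L^{-p_n^x}$) yields exactly $\underline{h}(x,y)\le 2\kappa_1 d(x(-p_n^x),x)+\underline{h}^{p_n^x}(x,y)$. No appeal to $\Omega(\mathcal{T})$, the Uniqueness Theorem, or other Mather orbits is required.
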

\begin{proof}
	\ref{hpVisc} and \ref{hpRegularity}. Since $\underline{h}(x,\cdot) = h^{\underline{p}^x}(x,\cdot) = \liminf_n h^{p^x_n}(x,\cdot)$, these properties follow immediately from Proposition \ref{pPeierlsProp}.\\

	\ref{hpnullMather}. Consider the point $\tilde{x} \in \tilde{\mathcal{M}}_0$ that projects to $x$ on $M$ and set $x(t)$ to be the curve $x(t) = \pi \circ \phi_L^t(\tilde{x})$. We have by definition of the sequence $p_n^x$ that $x(-p_n^x)$ converges to $x$. Then, we get
	\begin{align*}
		h^{\underline{p}^x}(x,x) = \liminf_n h^{p_n^x}(x,x) = \liminf_n h^{p_n^x}(x(-p_n^x),x)
	\end{align*}
	where we used the liminf property \eqref{PeierlsLiminf} of the Peierls barrier and the fact that $x(t)$ is minimizing seen in Remark \ref{MatherMinimizing}. Let $u$ be any weak-KAM solution. We know from proposition \ref{CalibNW} that $x(t)$ is calibrated by $u$, then by continuity and $1$-time periodicity of $u$, we get
	\begin{align*}
		h^{p_n^x}(x(-p_n^x),x) =  u(x) - u(-p_n^x,x(-p_n^x)) =  u(x) - u(x(-p_n^x)) \longrightarrow 0 \quad \text{as } n \to \infty
	\end{align*}
	Therefore, we deduce that 
	\begin{align*}
		h^{\underline{p}^x}(x,x) = \lim_n h^{p_n^x}(x(-p_n^x),x) = 0
	\end{align*}
	
	\ref{hpRecurrence}. The fact that $h^{\underline{p}}(y, \cdot)$ is recurrent follows from the fact that it is a global viscosity solution. This is due to Proposition \ref{hfinite} and the recurrence follows from Theorem \ref{Glob=NW}. To demonstrate $\underline{p}$-recurrence, we will need to prove the identity \eqref{hpRecurrenceFormula}. 
	
	Let $y$ be a fixed point of $\mathcal{M}_0^R$. Fix an integers $n$ and let $(n_i = n_i(n))_{i \geq 0}$ be an increasing sequence of integers depending on $n$ such that $h^{\underline{p}^y+p^y_n}(y,x) = \lim_i h^{p^y_{n_i}+p^y_n}(y,x)$. Now fix an integer $i \geq 0$ and let $k_0=k_0(n,i)$ be an integer such that for all $k \geq k_0$, we get $p^y_k > p^y_{n_i} + p^y_n$. For such $k \geq k_0$, we have
	\begin{equation}\label{hRecIneg2}
		h^{p^y_k}(y,x) \leq h^{p^y_k - p^y_{n_i} -p^y_n}(y,y) + h^{p^y_{n_i} +p^y_n}(y,x)
	\end{equation} 
	Let $\tilde{y}$ be the lift of  point of $y$ in the Mather set $\tilde{\mathcal{M}_0}$ and consider the curve $y(t) = \pi \circ \phi_L^t( \tilde{y})$. We know from the regularity Proposition \ref{Regularity} on the potential $h$ that for all integer $q \geq 1$
	\begin{align*}
		| h^q (y(-p^y_k), y(-p^y_{n_i}-p^y_n)) - h^q(y,y)| \leq \kappa_1. [d(y(-p^y_k),y) + d(y(-p^y_{n_i}-p^y_n),y)] 
	\end{align*}
	So that for $q = p^y_k - p^y_{n_i}-p^y_n$, we get	
	\begin{align} \label{hRecIneg3}
		| h^{p^y_k - p^y_{n_i}-p^y_n} ((y(-p^y_k), y(-p^y_{n_i}-p^y_n)) - h^{p^y_k - p^y_{n_i}-p^y_n}(y,y)| \leq \kappa_1. [d(y(-p^y_k),y) + d(y(-p^y_{n_i}-p^y_n),y)] 
	\end{align}

	Let $v$ be a weak-KAM solution. We know from Proposition \ref{CalibNW} that $y(t)$ is calibrated by $v$ and from Proposition \ref{Regularity} that $v= \mathcal{T}^1v$ is $\kappa_1$-Lipschitz, hence
	\begin{equation} \label{hRecIneg3.5}
		|h^{p^y_k - p^y_{n_i}-p^y_n} (y(-p^y_k), y(-p^y_{n_i}-p^y_n))| = |v(y(-p^y_{n_i}-p^y_n)) - v(y(-p^y_k)) | \leq \kappa_1.d( y(-p^y_{n_i}-p^y_n), y(-p^y_k))
	\end{equation}
	Hence, we deduce from (\ref{hRecIneg3}) that
	\begin{align*}
		h^{p^y_k - p^y_{n_i}-p^y_n}(y,y) &\leq h^{p^y_k - p^y_{n_i}-p^y_n} ((y(-p^y_k), y(-p^y_{n_i}-p^y_n))  + \kappa_1. [ d(y(-p^y_k),y) + d(y(-p^y_{n_i}-p^y_n),y)]  \\
		& \leq \kappa_1.[ d( y(-p^y_{n_i}-p^y_n), y(-p^y_k)) +   d(y(-p^y_k),y) + d(y(-p^y_{n_i}-p^y_n),y)]
	\end{align*}
	and from (\ref{hRecIneg2}) that
	\begin{align*}
		h^{p^y_k}(y,x) \leq \kappa_1.[ d( y(-p^y_{n_i}-p^y_n), y(-p^y_k)) +   d(y(-p^y_k),y) + d(y(-p^y_{n_i}-p^y_n),y)] + h^{p^y_{n_i} +p^y_n}(y,x)\\
	\end{align*}
	
	Let $k_i$ be an increasing sequence such that $k_i \geq k_0(n,i)$, i.e $p^y_{k_i} > p^y_{n_i} + p^y_n$, we obtain
	\begin{align*}
		\lim_{i} d( y(-p^y_{n_i}-p^y_n), y(-p^y_{k_i})) = d(y(-p^y_n),y)
	\end{align*}
	Then, taking the liminf on $i$ yields
	\begin{equation} \label{hRecIneg4}
		\begin{split}
			h^{\underline{p}^y}(y,x) &\leq \kappa_1.\lim_{i} \; [ d( y(-p^y_{n_i}-p^y_n), y(-p^y_{k_i})) +   d(y(-p^y_{k_i}),y) + d(y(-p^y_{n_i}-p^y_n),y)] + \lim_i h^{p^y_{n_i} +p^y_n}(y,x) \\
			& = 2\kappa_1.d(y(-p^y_n),y) + h^{\underline{p}^y + p^y_n}(y,x)
		\end{split}
	\end{equation}
	
	For the inverse inequality. Fix two integers $n$ and $k \geq 0$. The triangular inequality \eqref{TriangIneg} gives
	\begin{equation*}
		h^{p^y_n+ p^y_k}(y,x) \leq h^{p^y_n}(y,y) + h^{p^y_k}(y,x)
	\end{equation*}
	Moreover, we know from the Lipschitz regularity of the potential $h$ that
	\begin{equation*}
		|h^{p^y_n}(y,y) - h^{p^y_n}(y(-p^y_n),y)| \leq \kappa_1.d(y(-p^y_n),y)
	\end{equation*}
	Bounding $h^{p^y_n}(y(-p^y_n),y)$ as in (\ref{hRecIneg3.5}), we get
	\begin{align*}
		h^{p^y_n+ p^y_k}(y,x) & \leq h^{p^y_n}(y(-p^y_n),y) + \kappa_1.d(y(-p^y_n),y) + h^{p^y_k}(y,x) \\
		& \leq 2\kappa_1.d(y(-p^y_n),y) + h^{p^y_k}(y,x)
	\end{align*}
	Then, we Take the liminf on $k$ to obtain the desired inequality 
	\begin{equation}\label{hRecIneg1}
		h^{\underline{p}^y+ p^y_n}(y,x) \leq 2\kappa_1.d(y(-p^y_n),y)  + h^{\underline{p}^y}(y,x)
	\end{equation}
	
	Gathering the inequalities (\ref{hRecIneg1}) and (\ref{hRecIneg4}) leads to
	\begin{equation} \label{hRecIneg5}
		\Vert \underline{h}^{p^y_n}(y,\cdot) - \underline{h}(y,\cdot) \Vert_\infty \leq 2\kappa_1.d(y(-p^y_n),y) \longrightarrow 0 \quad \text{as } n\to +\infty
	\end{equation}
	Therefore, $\underline{h}(y,\cdot)$ is $\underline{p}^y$-recurrent.
\end{proof}

\subsubsection{The Representation Formula}

Recall from Definition \ref{DominationDef} the notion of domination. We will work with $\underline{h}$-dominated maps on $\mathcal{M}_0^R$, i.e maps of $\dom(\mathcal{M}_0^R, \underline{h})$.

\begin{theo} \label{RepresentationTheorem}
	We have the following bijection
	\begin{equation} \label{RepresentationBijection}
		\begin{matrix}
			\Psi_{\underline{h}}: \dom(\mathcal{M}_0^R, \underline{h}) & \longrightarrow & \Omega(\mathcal{T}) \\
			\psi & \longmapsto & \inf\limits_{y \in \mathcal{M}_0^R} \{ \psi(y) + \underline{h}(y, \cdot ) \}
		\end{matrix}
	\end{equation}
	with its inverse being the restriction map
	\begin{equation} \label{InverseMaph}
		\begin{matrix}
			\Phi_{\underline{h}} : \Omega(\mathcal{T}) & \longrightarrow & \dom(\mathcal{M}_0^R, \underline{h}) \\
			v & \longmapsto & v_{| \mathcal{M}_0^R}
		\end{matrix}
	\end{equation}       
\end{theo}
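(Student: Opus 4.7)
The plan is to verify that both maps are well-defined and that they are mutual inverses. Two structural facts from the paper will do most of the work: every $\underline{h}(\cdot,x,\cdot)$ is itself a global viscosity solution of \eqref{HJalpha} (Proposition~\ref{hpProp}.\ref{hpVisc}), and $\underline{h}(x,x) = 0$ on $\mathcal{M}_0^R$ (Proposition~\ref{hpProp}.\ref{hpnullMather}). The calibration of Mather curves by non-wandering solutions (Proposition~\ref{CalibNW}) together with the Uniqueness Theorem~\ref{Uniqueness} will then bridge $\Omega(\mathcal{T})$ and its restriction to the Mather set.

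First I would establish that $\Psi_{\underline{h}}(\psi)\in\Omega(\mathcal{T})$. The natural candidate for the global extension is
\[
u(t,x) \;=\; \inf_{y \in \mathcal{M}_0^R}\bigl\{ \psi(y) + \underline{h}(t,y,x) \bigr\}.
\]
Finiteness is routine: an upper bound follows by plugging a fixed $y_0 \in \mathcal{M}_0^R$, together with the uniform bound $|\underline{h}(t,y,x)| \leq 2\kappa_1 \diam(M) + \kappa_1|t|$ coming from Proposition~\ref{hfinite} and the $\kappa_1$-Lipschitz estimate of Proposition~\ref{hpProp}.\ref{hpRegularity}; boundedness of $\psi$ on $\mathcal{M}_0^R$, itself a consequence of $\underline{h}$-domination and the same bound on $\underline{h}$, gives the lower bound. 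Since each $\underline{h}(\cdot,y,\cdot)$ is a global viscosity solution, Proposition~\ref{ViscosityInf} implies that $u$ is a global viscosity solution, and Theorem~\ref{Glob=NW} places $u(0,\cdot) = \Psi_{\underline{h}}(\psi)$ in $\Omega(\mathcal{T})$.

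The main obstacle lies in showing that $\Phi_{\underline{h}}(v) = v|_{\mathcal{M}_0^R}$ is $\underline{h}$-dominated. Fix $v \in \Omega(\mathcal{T})$, fix $x \in \mathcal{M}_0^R$ with lift $\tilde{x}$, and set $x(t) = \pi \circ \phi_L^t(\tilde{x})$. The plan is to combine the calibration identity of Proposition~\ref{CalibNW},
\[
v(-p^x_n, x(-p^x_n)) \;=\; v(x) - h^{p^x_n}(x(-p^x_n), x),
\]
with the viscosity inequality $v(y) \leq v(-p^x_n, x) + h^{p^x_n}(x,y)$. Using the $\kappa_1$-equicontinuity of $v(t,\cdot)$ (Corollary~\ref{Equicontinuity}), the $\kappa_1$-Lipschitz regularity of $h^n$ (Proposition~\ref{Regularity}), and the defining property $x(-p^x_n) \to x$ of the sequence $\underline{p}^x$, all discrepancies are $O(d(x, x(-p^x_n))) = o(1)$, yielding
\[
v(y) - v(x) \;\leq\; h^{p^x_n}(x,y) - h^{p^x_n}(x,x) + o(1).
\]
The delicate point will be passing to the $\liminf$: I would extract a subsequence $(k_n)$ along which $h^{p^x_{k_n}}(x,y) \to \underline{h}(x,y)$ and, by boundedness of $\{h^{p^x_n}(x,x)\}$, a further subsequence along which $h^{p^x_{k_n}}(x,x) \to L$ for some cluster value $L \geq \liminf_n h^{p^x_n}(x,x) = \underline{h}(x,x) = 0$. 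Along this doubly-extracted subsequence one obtains $v(y) - v(x) \leq \underline{h}(x,y) - L \leq \underline{h}(x,y)$, which is precisely the required $\underline{h}$-domination.

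Finally, the two compositions are identities. For $\Phi_{\underline{h}} \circ \Psi_{\underline{h}}(\psi)(x)$ with $x \in \mathcal{M}_0^R$, evaluating the infimum at $y = x$ gives $\Psi_{\underline{h}}(\psi)(x) \leq \psi(x) + \underline{h}(x,x) = \psi(x)$, while the $\underline{h}$-domination $\psi(x) - \psi(y) \leq \underline{h}(y,x)$ provides the reverse bound $\psi(y) + \underline{h}(y,x) \geq \psi(x)$ for every $y \in \mathcal{M}_0^R$. For $\Psi_{\underline{h}} \circ \Phi_{\underline{h}}(v) = v$, the preceding identity shows that $\Psi_{\underline{h}}(v|_{\mathcal{M}_0^R})$ and $v$ agree on $\mathcal{M}_0^R$; by continuity and density of $\mathcal{M}_0^R$ in $\mathcal{M}_0$ (Proposition~\ref{MatherNonempty}) the equality propagates to $\mathcal{M}_0$, and the Uniqueness Theorem~\ref{Uniqueness} then extends it to all of $M$.
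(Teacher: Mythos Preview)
Your proof is correct and follows essentially the same architecture as the paper's: well-definition of $\Psi_{\underline{h}}$ via Proposition~\ref{ViscosityInf} and Theorem~\ref{Glob=NW}, $\underline{h}$-domination of restrictions via calibration on Mather curves, and the two inverse identities via $\underline{h}(x,x)=0$ together with the Uniqueness Theorem~\ref{Uniqueness}.

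The only noteworthy divergence is in the domination step. You start the viscosity inequality from the point $x$ at time $-p^x_n$, which produces the extraneous term $h^{p^x_n}(x,x)$ and forces a double subsequence extraction to control it. The paper instead starts from the point $x(-p^x_n)$, writes $v(y)-v(-p^x_n,x(-p^x_n)) \le h^{p^x_n}(x(-p^x_n),y)$, observes that the left-hand side tends to $v(y)-v(x)$ (by calibration against a weak-KAM solution $u$, which is $1$-periodic so that $v(x)-v(-p^x_n,x(-p^x_n)) = u(x)-u(x(-p^x_n)) \to 0$), and then invokes the liminf property~\eqref{PeierlsLiminf} to replace $x(-p^x_n)$ by $x$ in the barrier directly. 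This avoids both the $h^{p^x_n}(x,x)$ term and any subsequence juggling. Your route is valid but the paper's is cleaner; in effect you are partially re-deriving Proposition~\ref{hpProp}.\ref{hpnullMather} inside the argument rather than simply citing it.
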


\begin{proof}
	Note that the maps of the form $\inf\limits_{y \in \mathcal{M}_0^R} \{ \psi(y) + \underline{h}(y, \cdot ) \}$ are viscosity solutions due to Proposition \ref{ViscosityInf}. They are globally defined so that they belong to $\Omega(\mathcal{T})$ by Theorem \ref{Glob=NW}. This justifies the well-definition of the map $\Psi_{\underline{h}}$. We need to prove that $\Phi_{\underline{h}}$ is well-defined. Let $v$ be an element of $\Omega(\mathcal{T})$. We show that $v_{| \mathcal{M}_0^R}$ is $\underline{h}$-dominated. Let $x$ and $y$ be two elements of $\mathcal{M}_0^R$ and $\tilde{x}$ the lift of $x$ to $\tilde{\mathcal{M}}_0^R$. Consider the curve $x(t) = \pi \circ \phi_L^t(\tilde{x})$. We know from Proposition \ref{CalibNW} that $x(t)$ is calibrated by $v$ and by any weak-KAM solution $u$. Then for all negative time $t$ we have
	\begin{equation*}
		v(x) - v(t,x(t)) = h^{t,0}(x(t),x) = u(x) - u(t,x(t))
	\end{equation*}
	Since $x(-p^x_n)$ converges to $x$, we have for $t= -p^x_n$
	\begin{equation*}
		\lim_n v(x) - v(-p^x_n, x(-p^x_n)) = \lim_n u(x) - u(-p^x_n,x(-p^x_n))  =  \lim_n u(x) - u(x(-p^x_n)) =0
	\end{equation*}
	We now use the definition of the Lax-Oleinik operator $\mathcal{T}$ to deduce that
	\begin{equation*}
		v(y) - v(x) = \lim_n v(0,y) - v(-p^x_n, x(-p^x_n)) \leq \liminf_n h^{p^x_n}(x(-p^x_n),y) = \liminf_n h^{p^x_n}(x,y) = h^{\underline{p}^x}(x,y) = \underline{h}(x,y)
	\end{equation*}
	where we used the liminf property (\ref{PeierlsLiminf}) of the Barrier $h^{\underline{p}^x}$. We obtained the desired $\underline{h}$-domination which justifies the well-definition of the map $\Phi_{\underline{h}}$.  \\
	
	Let us show that the map $\Phi_{\underline{h}}$ is a left inverse of the map $\Psi_{\underline{h}}$. Take $v = \Psi_{\underline{h}}(\psi)= \inf\limits_{y \in \mathcal{M}_0^R} \{ \psi(y) + \underline{h}(y, \cdot ) \} \in \Omega(\mathcal{T})$. Then, for all $x$ and $y$ in $\mathcal{M}_0^R$, the domination condition gives the inequality
	\begin{equation} \label{hInvInégalité2}
		\psi(x) + \underline{h}(x,x) = \psi(x) \leq \psi(y) + \underline{h}(y, x )
	\end{equation}
	where we recall from Property \ref{hpnullMather} of Proposition \ref{hpProp} that $ \underline{h}(x,x) = h^{\underline{p}^x}(x,x) =0$. We obtain for all $x \in \mathcal{M}_0^R$
	\begin{equation*}
		v(x) = \Psi_{\underline{h}}(\psi)(x)= \inf\limits_{y \in \mathcal{M}_0^R} \{ \psi(y) + \underline{h}(y, x ) \} = \psi(x) + \underline{h}(x,x) = \psi(x)
	\end{equation*}
	In other words, $\Phi_{\underline{h}} \circ \Psi_{\underline{h}}(\psi) = \psi$.\\
	
	We now show that $\Phi_{\underline{h}}$ is the right inverse of the map $\Psi_{\underline{h}}$. Let $v \in \Omega(\mathcal{T})$ and consider $w = \Psi_{\underline{h}} \circ \Phi_{\underline{h}} (v) \in \Omega(\mathcal{T})$. We need to prove the these two maps $v$ and $w$ are equal. By the uniqueness Theorem \ref{Uniqueness}, it suffices to prove that they coincide on the Mather set $\mathcal{M}_0$. Let $x$ be an element of $\mathcal{M}_0^R$. We infer from the $\underline{h}$-domination of $v$ on $\mathcal{M}_0^R$ and from \eqref{hInvInégalité2} that
	\begin{align*}
		w(x) = \Psi_{\underline{h}} ( v_{|\mathcal{M}_0^R} )(x) =  \inf_{y \in \mathcal{M}_0^R} \{ v(y) + \underline{h}(y,x) \} = v(x) + \underline{h}(x,x) = v(x)
	\end{align*}
	Thus, $w_{|\mathcal{M}_0^R} = v_{|\mathcal{M}_0^R}$ and by continuity of viscosity solutions and density of $\mathcal{M}_0^R$ in $\mathcal{M}_0$, we deduce that $w_{|\mathcal{M}_0} = v_{|\mathcal{M}_0}$. We have shown that $v$ and $w$ are two elements of $\Omega( \mathcal{T})$ which coincide on the Mather set $\mathcal{M}_0$, then we get from the uniqueness Theorem \ref{Uniqueness} the equality $v= w =  \Psi_{\underline{h}} \circ \Phi_{\underline{h}} (v)$. This concludes the proof of the theorem.
\end{proof}

\begin{rem} \label{RemarkRepresentationTheorem}
	\begin{enumerate}
		\item As stated in the beginning of this subsection, the proof shows that $\mathcal{M}_0^R$ can be replaced by any of its dense subsets.
		
		\item Note that for all $\underline{h}$-dominated map $\psi \in  \dom(\mathcal{M}_0^R, \underline{h})$, we have $\psi = \Phi_{\underline{h}} \circ \Psi_{\underline{h}} (\psi) = \Psi_{\underline{h}} (\psi)_{| \mathcal{M}_0^R}$ where $\Psi_{\underline{h}} (\psi) \in \Omega(\mathcal{T})$ is continuous. This implies that all elements $\psi$ of $\dom(\mathcal{M}_0^R, \underline{h})$ are continuous. The reason of that will be explained by the fact that the domination by the barrier $\underline{h}$ implies the domination by another continuous barrier $\underline{k}$ introduced in the next subsection.
	\end{enumerate}
\end{rem}

Following the last remark, it is possible to restrict the set of points $y$ over which we take the infimum to a subset of $\mathcal{M}_0$, which may not be dense. This reduction is carried out in the next subsection.

\subsection{Representation Formula on $\mathcal{M}_0$}

This subsection introduces a new generalized barrier $\underline{k}$, defined on the entire Mather set $\mathcal{M}_0$. This barrier is independent of the choice of sequences $\underline{p}^x$ and satisfies the triangular inequality. These properties make $\underline{k}$ suitable for the more general representation formula presented in Theorem \ref{kRepresentationTheorem}, which we prove in this subsection.

\subsubsection{The Generalized Peierls Barrier $\underline{k}$}

We begin by introducing a generalized Peierls barrier $\underline{k}$, derived from the barrier $\underline{h}$, by enforcing the triangular inequality. This, as we will see, ensures the continuity of $\underline{k}$ and results in a canonical barrier that is independent of the choice of sequences $\underline{p}^x$ (Corollary \ref{kMaxMinCorollary}).

\begin{defi} \label{kDefi}
	\begin{enumerate}
		\item We define the \textit{generalized Peierls Barrier} $\underline{k} : \mathcal{M}_0^R \times M \to \mathbb{R}$ by
		\begin{equation}
			\underline{k}(x,y) = \inf \left\{ \sum_{i=0}^{N-1} \underline{h}(x_i, x_{i+1}) \; | \; x_0 = x, \; x_N = y, \; x_i \in \mathcal{M}_0^R, \; N \geq 1 \right\}
		\end{equation}
		with time dependence 
		\begin{equation}
			\underline{k}(t,x,y) = \underline{k}^t(x,y) = \inf \left\{ \sum_{i=0}^{N-2} \underline{h}(x_i, x_{i+1}) + \underline{h}^t(x_{N-1}, y) \; | \; x_0 = x, \; x_N = y, \; x_i \in \mathcal{M}_0^R, \; N \geq 1 \right\}
		\end{equation}
		\item We define the map $\underline{d} : \mathcal{M}_0^R \times \mathcal{M}_0^R \to \mathbb{R}$ by
		\begin{equation}
			\underline{d}(x,y) = \underline{k}(x,y) + \underline{k}(y,x)
		\end{equation}
	\end{enumerate}
\end{defi}

\begin{prop} \label{kProp}
	We have
	\begin{enumerate}
		\item (\textit{Viscosity Solution}) \label{kVisc} For all $x \in \mathcal{M}_0^R$, the map $\underline{k}(\cdot,x,\cdot)$ is a viscosity solution of the Hamilton-Jacobi equation \eqref{HJalpha}.
		\item \label{k>p} For all $(t,x,y) \in \mathbb{R} \times \mathcal{M}_0^R \times M$, we have 
		\begin{equation} \label{k>pFormula}
			h^{\infty+t}(x,y) \leq \underline{k}^t(x,y) \leq  \underline{h}^t(x,y)
		\end{equation}
		\item \label{kDiag=0} For all $x \in \mathcal{M}_0^R$, $\underline{k}(x,x) =0$.
		\item (\textit{Triagular Inequality}) \label{kTriang} For all times $t \in \mathbb{R}$ and all points $x,y \in \mathcal{M}_0^R$ and $z \in M$, we have the triangular inequality
		\begin{equation} \label{kTriangularInegFormula}
			\underline{k}^t(x,z) \leq \underline{k}(x,y) + \underline{k}^t(y,z)
		\end{equation}
		\item (\textit{Regularity}) \label{kReg} The barrier $\underline{k}^t$ is $\kappa_\varepsilon$-Lipschitz on $\mathcal{M}_0^R \times M$ for all $\varepsilon >0$ with $\kappa_\varepsilon$ being the Lipschitz constant introduced in Proposition \ref{Regularity}.
	\end{enumerate}
\end{prop}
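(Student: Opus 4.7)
The plan is to prove the five properties in the stated order, since each relies on the previous ones. For (\ref{kVisc}), the crucial observation is that $\underline{k}^t(x, \cdot)$ is by definition the infimum over chains $(x_0 = x, x_1, \ldots, x_{N-1}, x_N = y)$ in $\mathcal{M}_0^R$ of maps of the form $y \mapsto C(x_0, \ldots, x_{N-1}) + \underline{h}^t(x_{N-1}, y)$. Each such summand is a constant plus the viscosity solution $\underline{h}^t(x_{N-1}, \cdot)$ (Property \ref{hpVisc} of Proposition \ref{hpProp}), hence a viscosity solution itself. Once finiteness of $\underline{k}^t$ is obtained from claim (\ref{k>p}), Proposition \ref{ViscosityInf} yields that $\underline{k}^t(x, \cdot)$ is a viscosity solution.

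The \textbf{main technical step} is the lower bound in (\ref{k>p}). The upper bound $\underline{k}^t(x, y) \leq \underline{h}^t(x, y)$ is immediate from taking the trivial chain of length $N = 1$. For the lower bound, I fix any chain $x_0 = x, x_1, \ldots, x_N = y$ in $\mathcal{M}_0^R$ and any indices $n_0, \ldots, n_{N-1}$, and iterate the triangular inequality of Proposition \ref{hprop}. Since the $p^{x_i}_{n_i}$ are integers, the time-periodicity of $L$ lets me identify each intermediate time-shifted potential with the one anchored at $0$, giving
$$h^{P_N + t}(x, y) \leq \sum_{i=0}^{N-2} h^{p^{x_i}_{n_i}}(x_i, x_{i+1}) + h^{p^{x_{N-1}}_{n_{N-1}} + t}(x_{N-1}, y),$$
where $P_N = \sum_{i=0}^{N-1} p^{x_i}_{n_i}$. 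I then take $\liminf$ successively on $n_{N-1}, n_{N-2}, \ldots, n_0$: on the left, $P_N \to \infty$ as any single $n_i \to \infty$, so $\liminf_{n_i} h^{P_N + t}(x, y) \geq h^{\infty + t}(x, y)$; on the right, each liminf reassembles the corresponding $\underline{h}$. Infimizing over chains then yields $h^{\infty + t}(x, y) \leq \underline{k}^t(x, y)$ and in passing gives finiteness of $\underline{k}^t$.

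Claim (\ref{kDiag=0}) follows by applying the sandwich (\ref{k>p}) at $(x, x, t = 0)$: it collapses because $h^\infty(x, x) = 0$ (Property \ref{PeierlsPositivity} of Proposition \ref{PeierlsProp} combined with Proposition \ref{MatherPeierls}) and $\underline{h}(x, x) = 0$ (Property \ref{hpnullMather} of Proposition \ref{hpProp}). The triangular inequality (\ref{kTriang}) is a direct concatenation argument: gluing near-optimal chains for $\underline{k}(x, y)$ and $\underline{k}^t(y, z)$ at the common endpoint $y$ produces an admissible chain from $x$ to $z$ whose total cost is the sum. Finally, for regularity (\ref{kReg}), the $y$-variable Lipschitz constant is inherited from that of $\underline{h}^t(x_{N-1}, \cdot)$ (Property \ref{hpRegularity} of Proposition \ref{hpProp}), since an infimum of $\kappa_\varepsilon$-Lipschitz maps is $\kappa_\varepsilon$-Lipschitz. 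For the first variable in $\mathcal{M}_0^R$, the triangular inequality (\ref{kTriang}) combined with the estimate $\underline{k}(x', x) \leq \underline{h}(x', x) \leq \underline{h}(x', x') + \kappa_\varepsilon\, d(x, x') = \kappa_\varepsilon\, d(x, x')$ (using Properties \ref{hpnullMather} and \ref{hpRegularity} of Proposition \ref{hpProp}) and symmetry in $(x, x')$ yields the desired Lipschitz estimate.
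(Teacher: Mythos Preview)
Your proposal is correct and follows essentially the same route as the paper: infimum of viscosity solutions for (\ref{kVisc}), the chain-plus-iterated-triangular-inequality argument for the lower bound in (\ref{k>p}), the sandwich for (\ref{kDiag=0}), concatenation for (\ref{kTriang}), and the triangular inequality reducing first-variable Lipschitz to a bound on $\underline{k}(x,x')$. The only cosmetic difference is that for the latter bound the paper uses $|\underline{k}(x,x')| = |\underline{k}(x,x') - \underline{k}(x,x)| \leq \kappa_\varepsilon d(x,x')$ via the already-established second-variable Lipschitz of $\underline{k}$, whereas you go through $\underline{h}$; both are equally valid.
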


\begin{proof}
	\ref{kVisc}. Let $x$ be a point of $\mathcal{M}_0^R$. We have
	\begin{equation*}
		\underline{k}^t(x,y) = \inf \left\{ \sum_{i=0}^{N-2} \underline{h}(x_i, x_{i+1}) + \underline{h}^t(x_{N-1}, y) \; | \; x_0 = x, \; x_N = y, \; x_i \in \mathcal{M}_0^R, \; N \geq 1 \right\}
	\end{equation*}
	By Proposition \ref{hpProp}, this infimum is taken over viscosity solutions. Hence, we infer from Proposition \ref{ViscosityInf} that $\underline{k}(\cdot,x,\cdot)$ is also a viscosity solution. Moreover, following the regularity Property \ref{hpRegularity} of Proposition \ref{hpProp}, we deduce that this solution is $\kappa_\varepsilon$-Lipschitz for all $\varepsilon >0$.\\
	
	\ref{k>p}. The second inequality follows immediately from the definition of $\underline{k}$. We prove the first inequality. Let $(x,y)$ be an element of $\mathcal{M}_0^R \times M$ and let $(x_i)_{0 \leq x_i \leq N-1}$ be a sequence of elements of $\mathcal{M}_0^R$ with $x_0 = x$ and set $x_N = y$. we have
	\begin{align*}
		\sum_{i=0}^{N-2} \underline{h}(x_i, x_{i+1}) + \underline{h}^{t}(x_{N-1},y) &= \sum_{i=0}^{N-2} h^{\underline{p}^{x_i}}(x_i, x_{i+1}) + h^{\underline{p}^{x_{N-1}}+t}(x_{N-1},y) \\
		&= \sum_{i=0}^{N-2} \liminf_{n_i} h^{p_{n_i}^{x_i}}(x_i, x_{i+1}) + \liminf_{n_{N-1}} h^{p_{n_{N-1}}^{x_{N-1}}+t}(x_{N-1}, y) \\
		&= \liminf_{n_1,..,n_{N-1}} \sum_{i=0}^{N-2} h^{p_{n_i}^{x_i}}(x_i, x_{i+1}) + h^{p_{n_{N-1}}^{x_{N-1}}+t}(x_{N-1}, y)\\
		&\geq \liminf_{n_1,..,n_{N-1}} h^{\sum_{i=0}^{N-1} p_{n_i}^{x_i}+t} (x,y) \\
		&\geq \liminf_n h^{n+t}(x,y) = h^{\infty+t}(x,y)
	\end{align*}
	Taking the infimum on such sequences, we deduce the inequality $ \underline{k} \geq h^\infty$.\\
	
	\ref{kDiag=0}. Let $x$ be a point of $\mathcal{M}_0^R$. By the previous property, we have
	\begin{equation*}
		h^\infty(x,x) \leq \underline{k}(x,x) \leq \underline{h}(x,x)
	\end{equation*}
	Moreover, we know from Proposition \ref{MatherPeierls} and Property \ref{hpnullMather} of Proposition \ref{hpProp} that $h^\infty(x,x) = \underline{h}(x,x) =0$. Hence, $\underline{k}(x,x) = 0$.\\
	
	\ref{kTriang}. Let $x$ and $y$ be points of $\mathcal{M}_0^R$ and let $z$ be a point of $M$. Consider two sequences of points $(x_i)_{0 \leq i \leq N}$ with $(x_0,x_N) = (x,y)$ and $(y_j)_{0 \leq j \leq N'}$ with $(y_0,y_N') = (y,z)$. Note that $x_N = y_0 = y$. Hence, concatenating them into a third sequence $(z_i)_{0 \leq k \leq N+N'}$ and using the definition of the barrier $\underline{k}$, we obtain
	\begin{align*}
		\underline{k}(x,z) \leq \sum_{i=0}^{N+N'-2} \underline{h}(z_i, z_{i+1}) + \underline{h}^t(z_{N+N'-1}, z_{N+N'}) = \sum_{i=0}^{N-1} \underline{h}(x_i, x_{i+1}) + \sum_{j=0}^{N'-2} \underline{h}(y_j, y_{j+1}) + \underline{h}^t(y_{N'-1}, y_{N'})
	\end{align*}
	Taking the infimum over such sequences yields the desired triangular inequality.\\
	
	\ref{kReg}. Let $(t,x,y)$ and $(t',x',y')$ be two elements of $\mathbb{R} \times \mathcal{M}_0^R \times M$. We have
	\begin{equation} \label{kRegDem1}
		|\underline{k}^t(x,y) - \underline{k}^{t'}(x',y')| \leq |\underline{k}^t(x,y) - \underline{k}^t(x',y)| + |\underline{k}^t(x',y) - \underline{k}^{t'}(x',y')|
	\end{equation}
	We have seen in the proof of the first point that 
	\begin{equation} \label{kRegDem2}
		|\underline{k}^t(x',y) - \underline{k}^{t'}(x',y')| \leq \kappa_\varepsilon.[d(y,y') + |t-t'|]
	\end{equation}
	We need to bound the other term of the rand-hand side of \eqref{kRegDem1}. We know from the triangular inequality that
	\begin{equation}
		\underline{k}^t(x,y) \leq  \underline{k}(x,x')  +  \underline{k}^t(x',y) \quad \text{and} \quad \underline{k}^t(x',y) \leq \underline{k}(x',x) + \underline{k}^t(x,y)
	\end{equation}
	Thus, we get
	\begin{align*}
		|\underline{k}^t(x,y) - \underline{k}^t(x',y)| \leq \max \big\{ |\underline{k}(x,x')|, |\underline{k}(x',x)| \big\}
	\end{align*}
	Moreover, we proved that $\underline{k}(x,x) = \underline{k}(x',x') = 0$, which yields
	\begin{align*}
		|\underline{k}(x,x')| = |\underline{k}(x,x') - \underline{k}(x,x) | \leq \kappa_\varepsilon.d(x,x')
	\end{align*}
	and similarly $|\underline{k}(x',x)| \leq \kappa_\varepsilon.d(x,x')$. This leads to the bounding
	\begin{equation} \label{kRegDem3}
		|\underline{k}^t(x,y) - \underline{k}^t(x',y)| \leq \kappa_\varepsilon.d(x,x')
	\end{equation}
	Gathering \eqref{kRegDem1},\eqref{kRegDem2} and \eqref{kRegDem3}, we conclude that
	\begin{align*}
		|\underline{k}^t(x,y) - \underline{k}^{t'}(x',y')| \leq \kappa_\varepsilon.[ d(x,x') + d(y,y') + |t-t'|]
	\end{align*}
	which is the desired Lipschitz inequality.
\end{proof}

\begin{cor}
	For all time $t \in \mathbb{R}$, the barrier $\underline{k}^t$ extends in a unique way to the set $\mathcal{M}_0 \times M$. The extended barrier $\underline{k}: \mathbb{R} \times \mathcal{M}_0 \times M \to \mathbb{R}$ possesses all the properties featured in Proposition \ref{kProp}.
\end{cor}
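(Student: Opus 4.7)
The plan is a routine extension-by-density argument, exploiting the uniform Lipschitz regularity of $\underline{k}^t$ proved in Property \ref{kReg} together with the density of $\mathcal{M}_0^R$ in $\mathcal{M}_0$ guaranteed by Proposition \ref{MatherNonempty}.

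First, I would construct the extension. Given $x \in \mathcal{M}_0$, pick any sequence $(x_n)_n \subset \mathcal{M}_0^R$ with $x_n \to x$. From the Lipschitz inequality
\begin{equation*}
	|\underline{k}^t(x_n,y) - \underline{k}^t(x_m,y)| \leq \kappa_\varepsilon \, d(x_n,x_m),
\end{equation*}
the sequence $(\underline{k}^t(x_n,\cdot))_n$ is Cauchy in $\mathcal{C}(M,\mathbb{R})$, uniformly on any time interval bounded away from $0$. Its uniform limit depends only on $x$, not on the approximating sequence (by the same Lipschitz bound applied to two different sequences). Denote this limit $\underline{k}^t(x,\cdot)$; it is the unique continuous extension. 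Uniqueness on $\mathcal{M}_0 \times M$ follows from density and continuity.

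Next, I would transfer each property of Proposition \ref{kProp} to the extension. For Property \ref{kVisc}, the extended $\underline{k}(\cdot,x,\cdot)$ with $x \in \mathcal{M}_0$ is the uniform limit of the viscosity solutions $\underline{k}(\cdot,x_n,\cdot)$ for $x_n \in \mathcal{M}_0^R$, so by the stability result Proposition \ref{ViscosityLim} (viscosity solutions are closed under uniform limits of their initial conditions), $\underline{k}(\cdot,x,\cdot)$ is itself a viscosity solution. For the lower bound in Property \ref{k>p}, pass to the limit in $h^{\infty+t}(x_n,y) \leq \underline{k}^t(x_n,y)$ using continuity of $h^\infty$ (Property \ref{PeierlsRegularity} of Proposition \ref{pPeierlsProp}); I would note that the upper bound $\underline{k} \leq \underline{h}$ does not naturally extend because $\underline{h}$ depends on a choice of sequences $\underline{p}^x$ at each point of $\mathcal{M}_0^R$ and is not canonically defined on $\mathcal{M}_0 \setminus \mathcal{M}_0^R$. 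For Property \ref{kDiag=0}, write
\begin{equation*}
	|\underline{k}(x,x) - \underline{k}(x_n,x_n)| \leq \kappa_\varepsilon \, [d(x,x_n) + d(x,x_n)] \longrightarrow 0,
\end{equation*}
and since $\underline{k}(x_n,x_n) = 0$ on $\mathcal{M}_0^R$, this gives $\underline{k}(x,x) = 0$. The triangular inequality of Property \ref{kTriang} passes to the limit since every term depends continuously on its arguments. Finally, Property \ref{kReg} holds on the extension because the Lipschitz constant is preserved under uniform limits.

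There is no real obstacle here: the whole proof is a standard application of uniform continuity plus density. The only subtle point worth flagging is the asymmetry of Property \ref{k>p}, where only the lower bound $h^{\infty+t}(x,y) \leq \underline{k}^t(x,y)$ survives the extension, since $\underline{h}$ is inherently tied to the non-canonical sequences $\underline{p}^x$ and is not a priori defined outside $\mathcal{M}_0^R$; this asymmetry is consistent with the very motivation for introducing $\underline{k}$, namely producing a canonical barrier on the whole Mather set.
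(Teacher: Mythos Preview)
Your proof is correct and follows essentially the same approach as the paper: density of $\mathcal{M}_0^R$ plus the uniform Lipschitz bound gives the unique extension, and the viscosity-solution property is transferred via stability under uniform limits (the paper does this by hand using non-expansiveness of $\mathcal{T}^{s,t}$, which is exactly the content of Proposition \ref{ViscosityLim}). Your remark on the upper bound $\underline{k}\le\underline{h}$ in Property \ref{k>p} is a valid observation that the paper glosses over; indeed $\underline{h}$ is not canonically defined on $\mathcal{M}_0\setminus\mathcal{M}_0^R$, so only the lower bound $h^{\infty+t}\le\underline{k}^t$ survives the extension in a meaningful way.
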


\begin{proof}
	The Property \ref{kReg} of Proposition \ref{kProp} implies that the map $\underline{k}^t$ is uniformly continuous on the dense subset $\mathcal{M}_0^R \times M$ of the compact set $\mathcal{M}_0 \times M$. Hence, it extends uniquely to $\mathcal{M}_0 \times M$. 
	
	All the properties extensions are straightforward except for the viscosity solutions. We prove that for all $x \in \mathcal{M}_0$, the map $\underline{k}(\cdot,x,\cdot)$ is a viscosity solution of the Hamilton-Jacobi equation \eqref{HJ}. Let $x$ be in $\mathcal{M}_0$ and let $x_n$ be a sequence of $\mathcal{M}_0^R$ that converges to $x$. For all times $s < t$, the non-expensiveness of the Lax-Oleinik semi-group stated in Proposition \ref{Contracting} leads to
	\begin{align*}
		\Vert \mathcal{T}^{s,t} \underline{k}^s(x_n, \cdot) - \mathcal{T}^{s,t} \underline{k}^s(x, \cdot) \Vert_\infty \leq \Vert \underline{k}^s(x_n, \cdot) - \underline{k}^s(x, \cdot) \Vert_\infty \leq \kappa_1.d(x_n,x) \longrightarrow 0 \quad \text{as } n\to +\infty
	\end{align*}
	Then $\lim_n \mathcal{T}^{s,t} \underline{k}^s(x_n, \cdot) = \mathcal{T}^{s,t} \underline{k}^s(x, \cdot)$. 
	
	Moreover, since $\underline{k}(\cdot, x_n, \cdot)$ is a viscosity solution, we have $\mathcal{T}^{s,t} \underline{k}^s(x_n, \cdot) = \underline{k}^t(x_n, \cdot)$ with
	\begin{align*}
		\Vert \underline{k}^t(x_n, \cdot) - \underline{k}^t(x, \cdot) \Vert_\infty \leq \kappa_1.d(x_n,x) \longrightarrow 0 \quad \text{as } n\to +\infty
	\end{align*}
	Therefore
	\begin{align*}
		\mathcal{T}^{s,t} \underline{k}^s(x, \cdot) = \mathcal{T}^{s,t} \underline{k}^s(x_n, \cdot) = \lim_n \underline{k}^t(x_n, \cdot) = \underline{k}^t(x, \cdot)
	\end{align*}
	and $\underline{k}(\cdot, x, \cdot)$ is a viscosity solution.
\end{proof}

\subsubsection{The Generalized Representation Formula}

Before diving into the proof of the main result of this paper, let us give some representation formulas that follow directly from Theorem \ref{RepresentationTheorem}.

\begin{prop} \label{PropDominationk}
	\begin{enumerate}
		\item We have equality $\dom(\mathcal{M}_0^R, \underline{k}) = \dom(\mathcal{M}_0^R, \underline{h})$.
		\item The bijection $\Psi_{\underline{h}}$ expressed in \eqref{RepresentationBijection} is equal to
		\begin{equation} \label{RepresentationBijectionBis}
			\begin{matrix}
				\dom(\mathcal{M}_0^R, \underline{k}) & \longrightarrow & \Omega(\mathcal{T}) \\
				\psi & \longmapsto & \inf\limits_{y \in \mathcal{M}_0^R} \{ \psi(y) + \underline{k}(y, \cdot ) \}
			\end{matrix}
		\end{equation}
		\item Extending this formula by continuity to the Mather set $\mathcal{M}_0$, we get the bijection
		\begin{equation} \label{RepresentationBijectionTris}
			\begin{matrix}
				\dom(\mathcal{M}_0, \underline{k}) & \longrightarrow & \Omega(\mathcal{T}) \\
				\psi & \longmapsto & \inf\limits_{y \in \mathcal{M}_0} \{ \psi(y) + \underline{k}(y, \cdot ) \}
			\end{matrix}
		\end{equation}
	\end{enumerate}
\end{prop}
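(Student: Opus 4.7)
For item 1, one inclusion is free from the inequality $\underline{k} \leq \underline{h}$ proved in Property \ref{k>p} of Proposition \ref{kProp}. The interesting direction $\dom(\mathcal{M}_0^R, \underline{h}) \subset \dom(\mathcal{M}_0^R, \underline{k})$ is driven by the chain-definition of $\underline{k}$ in Definition \ref{kDefi}: given an $\underline{h}$-dominated $\psi$, for any chain $x_0 = x, x_1, \ldots, x_N = y$ in $\mathcal{M}_0^R$ I telescope to obtain
\[ \psi(y) - \psi(x) = \sum_{i=0}^{N-1} \bigl(\psi(x_{i+1}) - \psi(x_i)\bigr) \leq \sum_{i=0}^{N-1} \underline{h}(x_i, x_{i+1}), \]
and taking the infimum over such chains yields the required $\underline{k}$-domination.

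For item 2, the inequality $\inf_{y \in \mathcal{M}_0^R}\{\psi(y) + \underline{k}(y, x)\} \leq \Psi_{\underline{h}}(\psi)(x)$ is immediate from $\underline{k} \leq \underline{h}$. For the reverse, I expand each $\underline{k}(y, x)$ by its chain definition and apply the same telescoping, now on the first $N-1$ terms of the chain: for any chain $y_0 = y, y_1, \ldots, y_{N-1}, y_N = x$ with intermediate points in $\mathcal{M}_0^R$,
\[ \psi(y) + \sum_{i=0}^{N-2} \underline{h}(y_i, y_{i+1}) + \underline{h}(y_{N-1}, x) \geq \psi(y_{N-1}) + \underline{h}(y_{N-1}, x) \geq \Psi_{\underline{h}}(\psi)(x). \]
Taking the infimum over chains and then over $y \in \mathcal{M}_0^R$ gives the matching inequality and hence equality with $\Psi_{\underline{h}}(\psi)(x)$.

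For item 3, the key observation is that every $\underline{k}$-dominated $\psi$ on $\mathcal{M}_0^R$ is automatically $\kappa_1$-Lipschitz: combining the diagonal vanishing $\underline{k}(x,x) = 0$ from Property \ref{kDiag=0} with the Lipschitz regularity of $\underline{k}$ from Property \ref{kReg}, I obtain $|\underline{k}(x,y)|, |\underline{k}(y,x)| \leq \kappa_1 \, d(x,y)$ and hence $|\psi(x) - \psi(y)| \leq \kappa_1 \, d(x,y)$. Since $\mathcal{M}_0^R$ is dense in the compact set $\mathcal{M}_0$ by Proposition \ref{MatherNonempty}, $\psi$ extends uniquely by continuity to a Lipschitz map on $\mathcal{M}_0$ which remains $\underline{k}$-dominated thanks to the continuity of $\underline{k}$ on $\mathcal{M}_0 \times M$; restriction being the inverse of this extension yields a bijection $\dom(\mathcal{M}_0, \underline{k}) \leftrightarrow \dom(\mathcal{M}_0^R, \underline{k})$. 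The infima over $\mathcal{M}_0$ and $\mathcal{M}_0^R$ then agree by continuity of $y \mapsto \psi(y) + \underline{k}(y, x)$ and density, so composing with the bijection of item 2 produces \eqref{RepresentationBijectionTris}. The whole argument is essentially bookkeeping around the chain-telescoping identity, so I anticipate no real obstacle.
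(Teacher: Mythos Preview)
Your proof is correct. Item 1 matches the paper exactly. For item 2 you take a genuinely different route: you prove the pointwise equality of the two infima directly, using the chain definition of $\underline{k}$ and telescoping to show that each term $\psi(y)+\underline{k}(y,x)$ is bounded below by $\Psi_{\underline{h}}(\psi)(x)$. The paper instead argues at the level of $\Omega(\mathcal{T})$: it observes that both $\Psi_{\underline{h}}(\psi)$ and $\inf_{y}\{\psi(y)+\underline{k}(y,\cdot)\}$ are global viscosity solutions which restrict to $\psi$ on $\mathcal{M}_0^R$ (using $\underline{k}(x,x)=0$ and $\underline{k}$-domination), hence coincide on $\mathcal{M}_0$ by density, and then invokes the Uniqueness Theorem \ref{Uniqueness} to conclude. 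Your approach is more elementary and self-contained---it needs neither the Uniqueness Theorem nor the fact that the $\underline{k}$-infimum lands in $\Omega(\mathcal{T})$---while the paper's approach illustrates how the heavy machinery (calibration on the Mather set, uniqueness) absorbs such computations. For item 3 your argument is actually more detailed than the paper's, which essentially leaves the continuity extension as an exercise; your derivation of the Lipschitz bound on $\psi$ from $\underline{k}(x,x)=0$ and Property~\ref{kReg} is exactly the right mechanism.
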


\begin{rem} \label{RemarkDominationk}
		We can note that the equality $\dom(\mathcal{M}_0^R, \underline{k}) = \dom(\mathcal{M}_0^R, \underline{h})$ explains the continuity of $\underline{h}$-dominated maps deduced from Theorem \ref{RepresentationTheorem}.
\end{rem}

\begin{proof}
	\textit{Domination.} The inclusion $\dom(\mathcal{M}_0^R, \underline{k}) \subset  \dom(\mathcal{M}_0^R, \underline{h})$ comes from the inequality $\underline{k} \leq \underline{h}$. Let us prove the inverse inclusion. Fix a map $\psi \in \dom(\mathcal{M}_0^R, \underline{h})$. For all sequence $(x_i)_{0\leq i \leq N}$ in $\mathcal{M}_0^R$, we have
	\begin{equation*}
		\psi(x_{i+1}) - \psi(x_i) \leq \underline{h}(x_i,x_{i+1})
	\end{equation*}
	Thus, summing on $i$ yields
	\begin{equation*}
		\psi(x_N) - \psi(x_0) \leq \sum_{i=0}^{N-1} \underline{h}(x_i,x_{i+1})
	\end{equation*}
	Then, taking the infimum on such sequences linking $x_0 = x$ to $x_N = y$ in $\mathcal{M}_0^R$, we obtain the domination inequality
	\begin{equation*}
		\psi(y) - \psi(x) \leq \underline{k}(x,y)
	\end{equation*}
	which shows that $\psi$ belongs to $\dom(\mathcal{M}_0^R, \underline{k})$.\\
	
	\textit{Formula.} Denote by $\widetilde{\Psi}_{\underline{h}}$ the map expressed in \eqref{RepresentationBijectionBis}. Let $\psi$ be an element of $\dom(\mathcal{M}_0^R, \underline{k}) = \dom(\mathcal{M}_0^R, \underline{h})$ and consider $v = \Psi_{\underline{h}}(\psi)$ and $\tilde{v}= \widetilde{\Psi}_{\underline{h}}(\psi)$. As in the proof of Theorem \ref{RepresentationTheorem}, we have $v_{|\mathcal{M}_0^R} = \tilde{v}_{|\mathcal{M}_0^R} = \psi$. Hence, by density of $\mathcal{M}_0^R$ in the Mather set $\mathcal{M}_0$ and by continuity of the maps $v$ and $\tilde{v}$, we deduce that $v_{|\mathcal{M}_0} = \tilde{v}_{|\mathcal{M}_0}$. Since $v$ and $\tilde{v}$ belong to the non-wandering set $\Omega(\mathcal{T})$, we conclude using the Uniqueness Theorem \ref{Uniqueness} that $\Psi_{\underline{h}}(\psi) = v = \tilde{v}= \widetilde{\Psi}_{\underline{h}}(\psi)$.
\end{proof}

We now establish the representation formula for the non-wandering set $\Omega(\mathcal{T})$ using the generalized Peierls barrier $\underline{k}$. To reduce the set over which the infimum is taken, we introduce an equivalence relation defined by the map $\underline{d}$ which is a pseudometric due to the following proposition.

\begin{prop} \label{kPseudometric}
	The map $\underline{d}$ is a pseudometric on $\mathcal{M}_0$.
\end{prop}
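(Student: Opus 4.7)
The plan is to verify the four axioms of a pseudometric---non-negativity, vanishing on the diagonal, symmetry, and the triangular inequality---directly from the properties of $\underline{k}$ established in Proposition \ref{kProp}. All four will follow almost immediately from Properties \ref{kDiag=0} (diagonal vanishing) and \ref{kTriang} (triangular inequality) of that proposition, together with the symmetric definition $\underline{d}(x,y) = \underline{k}(x,y) + \underline{k}(y,x)$.

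First, symmetry is obvious from the definition, since $\underline{d}(x,y) = \underline{k}(x,y)+\underline{k}(y,x) = \underline{k}(y,x)+\underline{k}(x,y) = \underline{d}(y,x)$. For the diagonal, Property \ref{kDiag=0} gives $\underline{k}(x,x) = 0$ for every $x \in \mathcal{M}_0$ (the extension to $\mathcal{M}_0$ preserves this), whence $\underline{d}(x,x) = 0$.

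Non-negativity is the key observation: applying the triangular inequality \eqref{kTriangularInegFormula} of Property \ref{kTriang} at $t=0$ with $z=x$ yields
\begin{equation*}
    0 = \underline{k}(x,x) \leq \underline{k}(x,y) + \underline{k}(y,x) = \underline{d}(x,y)
\end{equation*}
for all $x,y \in \mathcal{M}_0$. Note that we do \emph{not} prove $\underline{d}(x,y) = 0 \Rightarrow x = y$ here---this is precisely why $\underline{d}$ is merely a pseudometric (and motivates quotienting by the equivalence relation $\sim$ to obtain the generalized static classes).

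Finally, for the triangular inequality, given three points $x,y,z \in \mathcal{M}_0$, we apply \eqref{kTriangularInegFormula} twice:
\begin{equation*}
    \underline{k}(x,z) \leq \underline{k}(x,y) + \underline{k}(y,z) \quad \text{and} \quad \underline{k}(z,x) \leq \underline{k}(z,y) + \underline{k}(y,x).
\end{equation*}
Summing these two inequalities gives $\underline{d}(x,z) \leq \underline{d}(x,y) + \underline{d}(y,z)$. No step presents any serious obstacle; the entire argument is a direct bookkeeping consequence of the triangular inequality for $\underline{k}$, and the only subtlety worth emphasizing is that the triangular inequality of $\underline{k}$ only holds when its first two arguments lie in $\mathcal{M}_0$, which is exactly the setting here.
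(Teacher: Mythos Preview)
Your proof is correct and follows essentially the same approach as the paper's own proof: both verify symmetry, vanishing on the diagonal, non-negativity, and the triangular inequality directly from Properties \ref{kDiag=0} and \ref{kTriang} of Proposition \ref{kProp}, in the same order and with the same key steps. Your additional remarks about why $\underline{d}$ is only a pseudometric and about the domain constraint on the triangular inequality of $\underline{k}$ are accurate and clarifying.
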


\begin{proof}
	The symmetry $\underline{d}(x,y) = \underline{d}(y,x)$ is immediate. Moreover, we know from the Property \ref{kDiag=0} of Proposition \ref{kProp} that for all $x \in \mathcal{M}_0$, $\underline{d}(x,x) = 2.\underline{k}(x,x) = 0$.
	
	\textit{Non-Negativity.} For any elements $x$ and $y$ of $\mathcal{M}_0$, the Triangular inequality \eqref{kTriangularInegFormula} yields
	\begin{align*}
		\underline{d}(x,y)&= \underline{k}(x,y) + \underline{k}(y,x) \geq \underline{k}(x,x)= 0
	\end{align*}
	
	\textit{Triangular Inequality.} Fix three elements $x$, $y$ and $z$ in $\mathcal{M}_0$. Applying the triangular inequality \eqref{kTriangularInegFormula} twice, we get
	\begin{align*}
		\underline{d}(x,z) &= \underline{k}(x,z) + \underline{k}(z,x) \\
		&\leq \underline{k}(x,y) + \underline{k}(y,z) + \underline{k}(z,y) + \underline{k}(y,x) \\
		&\leq \underline{d}(x,y) + \underline{d}(y,z) 
	\end{align*}
\end{proof}

\begin{defi} \label{kStaticClassesDefi}
	\begin{enumerate}
		\item We define the equivalence relation $\sim$ on $\mathcal{M}_0$ by
		\begin{equation}
			x \sim y \Longleftrightarrow \underline{d}(x,y) = 0
		\end{equation}

		\item The \textit{generalized static classes} are the equivalence classes of the equivalence relation $\sim$. We denote by $\underline{\mathbb{M}}$ the set of generalized static classes. We represent every element of $\underline{\mathbb{M}}$ by an element of $\mathcal{M}_0$ so that we have the inclusion $\underline{\mathbb{M}} \subset \mathcal{M}_0$.
	\end{enumerate}
\end{defi}

Recall from Definition \ref{DominationDef} the notion of domination. We will work with the set $\dom(\underline{\mathbb{M}}, \underline{k})$ of $\underline{k}$-dominated maps on the set of generalized static classes $\underline{\mathbb{M}}$. We can now prove the main result of this paper.

\begin{proof}[Proof of Theorem \ref{kRepresentationTheorem}]
	We saw in Proposition \ref{PropDominationk} that $\dom(\mathcal{M}_0^R, \underline{k}) = \dom(\mathcal{M}_0^R, \underline{h})$. And we know from Theorem \ref{RepresentationTheorem} that all the elements of $\Omega(\mathcal{T})$ are $\underline{h}$-dominated on $\mathcal{M}^R_0$. Hence, they are $\underline{k}$-dominated on $\mathcal{M}_0^R$ and by continuity on the Mather set $\mathcal{M}_0$. In particular, $\underline{k}$-domination holds in $\underline{\mathbb{M}} \subset \mathcal{M}_0$. Following the proof of Theorem \ref{RepresentationTheorem}, this allows to prove that the maps $\Psi_{\underline{k}}$ and $\Phi_{\underline{k}}$ are well-defined and that $\Phi_{\underline{k}} \circ \Psi_{\underline{k}} = Id_{\dom(\underline{\mathbb{M}}, \underline{k})}$. However, the identity $\Psi_{\underline{k}} \circ \Phi_{\underline{k}} = Id_{\Omega(\mathcal{T})}$ requires to show that the Uniqueness Theorem \ref{Uniqueness} holds on the set of generalized static classes $\underline{\mathbb{M}}$.
	
	Let us show this. Let $v \in \Omega(\mathcal{T})$ and consider $w = \Psi_{\underline{k}} \circ \Phi_{\underline{k}} (v) \in \Omega(\mathcal{T})$. We need to prove that $v$ and $w$ are equal. By the uniqueness Theorem \ref{Uniqueness}, it suffices to prove that they coincide on the Mather set $\mathcal{M}_0$. Let $x$ be an element of $\underline{\mathbb{M}} \subset \mathcal{M}_0$. Then due to the $\underline{k}$-domination of $v_{| \underline{\mathbb{M}}}$, and similarly to \eqref{hInvInégalité2}, we have
	\begin{align*}
		w(x) = \Psi_{\underline{k}} \circ \Phi_{\underline{k}} (v)(x) = \inf_{y \in \underline{\mathbb{M}}} \{ v(y) + \underline{k}(y,x) \} = v(x) + \underline{k}(x,x) = v(x)
	\end{align*}
	Let $x$ be a any element of $\mathcal{M}_0$ and $y \in \underline{\mathbb{M}}$ such that $x \sim y$. Then, due to the $\underline{k}$-domination of $v_{| \mathcal{M}_0}$, we have the inequalities
	\begin{align*}
		v(x) - v(y) \leq \underline{k}(y,x) \quad \text{and} \quad  v(y) - v(x) \leq \underline{k}(x,y)
	\end{align*}
	Taking the sum, we obtain
	\begin{align*}
		0 = [v(x) - v(y)] + [v(y) - v(x)] \leq \underline{k}(y,x) + \underline{k}(x,y) = \underline{d}(y,x) = 0
	\end{align*}
	which implies equality in the two inequalities and
	\begin{align*}
		v(x) - v(y) = \underline{k}(y,x) \quad \text{and} \quad  v(y) - v(x) = \underline{k}(x,y)
	\end{align*}
	Moreover, for all $z \in \underline{\mathbb{M}}$, the $\underline{k}$-domination writes
	\begin{align*}
		v(x) \leq v(z) + \underline{k}(z,x)
	\end{align*}
	Therefore, we obtain
	\begin{equation} \label{kRepDem1}
		w(x) = \inf_{z \in \underline{\mathbb{M}}} \{ v(z) + \underline{k}(z,x) \} = v(y) + \underline{k}(y,x) = v(x)
	\end{equation}
	The maps $v$ and $w$ are two elements of $\Omega( \mathcal{T})$ which coincide on the Mather set $\mathcal{M}_0$. Then the Uniqueness Theorem \ref{Uniqueness} results in the equality $v= w =  \Psi_{\underline{k}} \circ \Phi_{\underline{k}} (v)$.
\end{proof}

\begin{rem}
	It was noted in Remark \ref{RemarkRepresentationTheorem} that $\mathcal{M}^R_0$ can be replaced by any dense subset $\mathcal{M}'_0$. It is still possible to consider the generalized static classes $\underline{\mathbb{M}}'$ inside $\mathcal{M}'_0$. Taking countable dense sets allows to avoid the use of the axiom of choice. 
\end{rem}

We conclude this section by providing an application of this representation formula which shows that the generalized Peierls barrier $\underline{k}$ does not depend on the choice of the sequences $\underline{p}^x$ used to define the barrier $\underline{h}$.

\begin{cor} \label{kMaxMinCorollary}
	For all points $x_0 \in \mathcal{M}_0$ and $x \in M$, we have the formulas
	\begin{equation} \label{kMaxMinOmega}
		\begin{split}
			\max_{v \in \Omega(\mathcal{T})} \{v(x) - v(x_0) \} &= \underline{k}(x_0,x)  \\
			\min_{v \in \Omega(\mathcal{T})} \{v(x) - v(x_0) \} &= \inf_{\substack{y \in \underline{\mathbb{M}} \\ y \nsim x_0}} \{ - \underline{k}(y,x_0) + \underline{k}(y,x) \}
		\end{split}
	\end{equation}
\end{cor}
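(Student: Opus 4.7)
My plan is to derive both formulas from the representation Theorem \ref{kRepresentationTheorem} and the fact that $\underline{k}(y,\cdot)\in\Omega(\mathcal{T})$ for every $y\in\mathcal{M}_0$ (Property \ref{kVisc}), which supplies a family of explicit viscosity solutions that will attain the extrema. Throughout, the key algebraic fact I will exploit is that whenever $y\sim x_0$, the triangular inequality run in both directions forces the identity $\underline{k}(y,x)-\underline{k}(y,x_0)=\underline{k}(x_0,x)$, since $\underline{d}(x_0,y)=0$ means $\underline{k}(y,x_0)=-\underline{k}(x_0,y)$.

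For the maximum formula, I first show $v(x)-v(x_0)\le\underline{k}(x_0,x)$ for every $v\in\Omega(\mathcal{T})$. Picking $y_0\in\underline{\mathbb{M}}$ with $y_0\sim x_0$, the analysis inside the proof of Theorem \ref{kRepresentationTheorem} gives the exact identity $v(x_0)-v(y_0)=\underline{k}(y_0,x_0)$, while the representation formula yields $v(x)\le v(y_0)+\underline{k}(y_0,x)$; subtracting and applying the equality $\underline{k}(y_0,x)-\underline{k}(y_0,x_0)=\underline{k}(x_0,x)$ described above delivers the bound. Attainment is immediate by taking $v:=\underline{k}(x_0,\cdot)\in\Omega(\mathcal{T})$, which satisfies $v(x_0)=\underline{k}(x_0,x_0)=0$ by Property \ref{kDiag=0}.

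For the minimum formula I use the representation to expand
\begin{equation*}
  v(x)-v(x_0)=\inf_{y\in\underline{\mathbb{M}}}\bigl\{(v(y)-v(x_0))+\underline{k}(y,x)\bigr\},
\end{equation*}
then invoke the $\underline{k}$-domination on $\mathcal{M}_0$, i.e.\ $v(y)-v(x_0)\ge-\underline{k}(y,x_0)$, to obtain $v(x)-v(x_0)\ge\inf_{y\in\underline{\mathbb{M}}}\{-\underline{k}(y,x_0)+\underline{k}(y,x)\}$. For the reverse bound, I test with the candidates $v_{y_0}:=\underline{k}(y_0,\cdot)\in\Omega(\mathcal{T})$ for $y_0\in\underline{\mathbb{M}}$, which produce $v_{y_0}(x)-v_{y_0}(x_0)=\underline{k}(y_0,x)-\underline{k}(y_0,x_0)$; taking the infimum over $y_0$ closes the loop. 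Finally, to justify the restriction to $y\nsim x_0$, I observe that the key identity of the first paragraph implies $-\underline{k}(y,x_0)+\underline{k}(y,x)=\underline{k}(x_0,x)$ for $y\sim x_0$, whereas the triangular inequality gives $-\underline{k}(y,x_0)+\underline{k}(y,x)\le\underline{k}(x_0,x)$ for all $y$; therefore, whenever the set $\{y\in\underline{\mathbb{M}}\colon y\nsim x_0\}$ is non-empty, the infimum over it already captures the infimum over the whole of $\underline{\mathbb{M}}$.

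The main obstacle is the bookkeeping of the two faces of $\underline{k}$-domination — the one-sided inequality on $\mathcal{M}_0\times M$ versus the exact equalities that occur within a single static class. The identity $\underline{k}(y,x)-\underline{k}(y,x_0)=\underline{k}(x_0,x)$ for $y\sim x_0$ is the technical crux: it is simultaneously what forces the max to be exactly $\underline{k}(x_0,x)$, and what shows that throwing out the class $[x_0]$ from the infimum on the right-hand side of the min formula does not alter its value.
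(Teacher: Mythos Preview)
Your proof is essentially correct and follows the same route as the paper: the representation Theorem~\ref{kRepresentationTheorem}, the $\underline{k}$-domination of elements of $\Omega(\mathcal{T})$ on $\mathcal{M}_0$, the triangular inequality for $\underline{k}$, and the fact that $\underline{k}(y,\cdot)\in\Omega(\mathcal{T})$ for $y\in\mathcal{M}_0$. Your treatment of the maximum formula and of the reduction to $y\nsim x_0$ is the same as the paper's; the identity $\underline{k}(y,x)-\underline{k}(y,x_0)=\underline{k}(x_0,x)$ for $y\sim x_0$ is exactly the ``claim'' the paper isolates.

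There is one small gap in the minimum formula. Your sandwich argument via the family $v_{y_0}=\underline{k}(y_0,\cdot)$ establishes
\[
\inf_{v\in\Omega(\mathcal{T})}\{v(x)-v(x_0)\}=\inf_{y\in\underline{\mathbb{M}}}\{-\underline{k}(y,x_0)+\underline{k}(y,x)\},
\]
but does not show that the left-hand infimum is \emph{attained}, whereas the statement asserts a $\min$. The paper addresses this by constructing a single witness $v^-:=\Psi_{\underline{k}}(\psi^-)\in\Omega(\mathcal{T})$ with $\psi^-(y)=-\underline{k}(y,x_0)$, checking $\psi^-\in\dom(\underline{\mathbb{M}},\underline{k})$ via the triangular inequality, and verifying $v^-(x_0)=0$ together with $v^-(x)=\inf_{y\nsim x_0}\{-\underline{k}(y,x_0)+\underline{k}(y,x)\}$. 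This $v^-$ is nothing but the pointwise infimum of your shifted candidates $v_{y_0}-v_{y_0}(x_0)$, and the representation theorem (or Proposition~\ref{ViscosityInf}) is what guarantees that this infimum stays in $\Omega(\mathcal{T})$. Adding this observation closes the gap.
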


\begin{proof}
	Let us prove the first identity. We will use the representation formula \eqref{RepresentationBijectionTris}. We consider $\psi^+ : \mathcal{M}_0 \to \mathbb{R}$ defined by $\psi^+(x) = \underline{k}(x_0,x)$. We know from the Property \ref{kDiag=0} of Proposition \ref{kProp} that $\psi^+(x_0) = \underline{k}(x_0,x_0)=0$, and from the triangular inequality \eqref{kTriangularInegFormula} that for all $y \in \mathcal{M}_0$,
	\begin{align*}
		\underline{k}(x_0,x) \leq \underline{k}(x_0,y) + \underline{k}(y,x)
	\end{align*}
	which yields
	\begin{align*}
		\psi^+(x) - \psi^+(y) = \underline{k}(x_0,x) - \underline{k}(x_0,y) \leq  \underline{k}(x,y)
	\end{align*}
	Then, $\psi^+$ belongs to $\dom (\mathcal{M}_0, \underline{k})$. We consider the map $v^+ = \Psi_{\underline{k}}(\psi^+) \in \Omega(\mathcal{T})$. We have $v^+(x_0) = \psi^+(x_0) = 0$. Hence, the triangular inequality results in
	\begin{equation*}
		v^+(x) = \inf_{y \in \mathcal{M}_0} \{ \underline{k}(x_0,y) + \underline{k}(y,x) \} = \underline{k}(x_0,x)
	\end{equation*}
	We showed that $v^+ = \Psi_{\underline{k}}(\psi^+) = \underline{k}(x_0, \cdot)$.

	Additionally, for all $\psi \in \dom (\mathcal{M}_0, \underline{k})$ such that $\psi(x_0) = 0$, the domination condition yields for all $x \in \mathcal{M}_0$
	\begin{align*}
		\psi(x) \leq \inf_{y \in \mathcal{M}_0} \{ \psi(y) + \underline{k}(y,x) \} \leq \psi(x_0) + \underline{k}(x_0,x) = \underline{k}(x_0,x) = \psi^+(x)
	\end{align*}
	Hence, for all $v = \Psi_{\underline{k}}(\psi)$, we have
	\begin{align*}
		v(x) = \inf_{y \in \mathcal{M}_0} \{ \psi(y) + \underline{k}(y,x) \} \leq \inf_{y \in \mathcal{M}_0} \{ \psi^+(y) + \underline{k}(y,x) \} = v^+(x) = \underline{k}(x_0,x)
	\end{align*}
	Since all elements $v \in \Omega(\mathcal{T})$ with $v(x_0) = 0$ are of the form $\Psi_{\underline{k}}(\psi)$ with $\psi \in \dom (\mathcal{M}_0, \underline{k})$ and $\psi(x_0) = 0$, we deduce the first identity of \eqref{kMaxMinOmega}.\\
	
	We now prove the second formula. For this formula, we use the representation formula of Theorem \ref{kRepresentationTheorem}. Consider $\psi^- : \underline{\mathbb{M}} \to \mathbb{R}$ defined by $\psi^-(x) = -\underline{k}(x,x_0)$. We have by triangular inequality that
	\begin{equation*}
		\psi^-(x) - \psi^-(y) = \underline{k}(y,x_0) - \underline{k}(x,x_0) \leq \underline{k}(y,x)
	\end{equation*}
	Thus, $\psi^-$ belongs to $\dom (\underline{\mathbb{M}}, \underline{k})$. We consider the map $v^- = \Psi_{\underline{k}}(\psi^-) \in \Omega(\mathcal{T})$ and let $x_1$ be in $\underline{\mathbb{M}}$ such that $x_0 \sim x_1$. As proved by identity \eqref{kRepDem1}, we have
	\begin{align*}
		v^-(x_0) = \psi^-(x_1) + \underline{k}(x_1,x_0) =-\underline{k}(x_1,x_0) + \underline{k}(x_1,x_0) = 0
	\end{align*}

	We claim that for all $x \in M$,
	\begin{equation*}
		\underline{k}(x_1,x) = \underline{k}(x_1,x_0) + \underline{k}(x_0,x)
	\end{equation*}
	Indeed, we apply the triangular inequality \eqref{kTriangularInegFormula} twice to get
	\begin{align*}
		\underline{k}(x_1,x) &\leq \underline{k}(x_1,x_0) + \underline{k}(x_0,x) \\
		&\leq \underline{k}(x_1,x_0) + \underline{k}(x_0,x_1) + \underline{k}(x_1,x) \\
		&= \underline{d}(x_0,x_1) + \underline{k}(x_1,x) = \underline{k}(x_1,x)
	\end{align*}
	where we used the definition $\underline{d}(x_0,x_1)=0$ of $x_0 \sim x_1$. Hence, we deduce the equality everywhere and the claim is proved. 
	
	Consequently, we get for all $y \in \underline{\mathbb{M}}$
	\begin{align*}
		\psi^-(y) + \underline{k}(y,x) = -\underline{k}(y,x_0)+ \underline{k}(y,x) \leq \underline{k}(x_0,x) = -\underline{k}(x_1,x_0) + \underline{k}(x_1,x) = \psi^-(x_1) +\underline{k}(x_1,x)
	\end{align*}
	and
	\begin{align*}
		v^-(x) = \inf_{y \in \underline{\mathbb{M}}} \{ \psi^-(y) +   \underline{k}(y,x) \} = \inf_{\substack{y \in \underline{\mathbb{M}} \\ y \nsim x_0}} \{ - \underline{k}(y,x_0) + \underline{k}(y,x) \}
	\end{align*}
	Let $\psi$ be an element of $\dom (\underline{\mathbb{M}}, \underline{k})$ such that $\psi(x_0) = 0$ and $v = \Psi_{\underline{k}}(\psi)$. The domination condition yields
	\begin{align*}
		\psi(x) \geq \sup_{y \in \underline{\mathbb{M}}} \{ \psi(y) - \underline{k}(x,y) \} \geq \psi(x_0) - \underline{k}(x,x_0) = \psi^{-}(x)
	\end{align*}
	Therefore, $v \geq v^-$ and we obtain the desired identity.
\end{proof}

\section{Applications} \label{SectionApplication}

We explore several applications of the representation formula for different examples of Tonelli Hamiltonians.\\

First, we examine autonomous Tonelli Hamiltonians, for which we will prove Fathi's Convergence Theorem \ref{FathiTh}. Next, we will treat the case of Tonelli Hamiltonians with $N$-periodic or $\underline{p}$-recurrent Mather sets.\\

Additionally, the representation formula can be used to identify specific subsets of the non-wandering set $\Omega(\mathcal{T})$, such as $\fix(\mathcal{T})$ and $\per_n(\mathcal{T})$. The representation of weak-KAM solutions of $\fix(\mathcal{T})$ in the non-autonomous case was established by G. Contreras, R. Iturriaga, and H. S\'anchez-Morgado in \cite{CIS}.\\

It should be noted that these applications are not direct results of the theorems but rather adaptations of their proofs.

\subsection{The Autonomous Case}

We consider the Autonomous framework. Let $H : T^*M \to \mathbb{R}$ be a Tonelli Hamiltonian with corresponding Lagrangian $L : TM \to \mathbb{R}$. In this case, the studied objects $\phi_L^{s,t}$, $\mathcal{T}^{s,t}$, $h^{s,t}$,etc.. verify $\phi_L^{s,t} = \phi_L^{t-s}$, $\mathcal{T}^{s,t} = \mathcal{T}^{t-s}$, $h^{s,t} = h^{t-s}$,etc..\\

Note that the minimizing measures used to define the Mather set in \eqref{MatherDefFormula} are defined on $TM$ instead of $\mathbb{T}^1 \times TM$ so that the Mather set $\tilde{\mathcal{M}} =\tilde{\mathcal{M}}_0$ is included in $TM$.\\

Let us first prove the following weaker version of Fathi's Theorem \ref{FathiTh} following ideas of P.Bernard and J-M.Roquejoffre used in \cite{MR2041603}.

\begin{theo} \label{AutonomousRepresentationTheorem}
	For an autonomous Tonelli Hamiltonian $H : T^*M \to \mathbb{R}$, we have $\Omega(\mathcal{T}) = \fix ( \mathcal{T})$ and the representation formula is given by the following bijection
	\begin{equation} \label{AutonomousRepresentationBijection}
		\begin{matrix}
			\Psi_{1}: \dom(\underline{\mathbb{M}}, h^\infty ) & \longrightarrow & \Omega(\mathcal{T}) \\
			\psi & \longmapsto & \inf\limits_{y \in \underline{\mathbb{M}}} \{ \psi(y) + h^\infty(y, \cdot ) \}
		\end{matrix}
	\end{equation}
	with its inverse being the restriction map 
	\begin{equation} \label{AutonomousInverseMap}
		\begin{matrix}
			\Phi_1 : \Omega(\mathcal{T}) & \longrightarrow & \dom(\underline{\mathbb{M}}, h^\infty ) \\
			v & \longmapsto & v_{| \underline{\mathbb{M}}}
		\end{matrix}
	\end{equation}  
\end{theo}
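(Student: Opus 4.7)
The strategy is to adapt the proof of Theorem~\ref{kRepresentationTheorem} to the autonomous setting, with the Peierls barrier $h^\infty$ playing the role of the generalized barrier $\underline{k}$. The essential new feature is that $h^\infty(y,\cdot)$ is itself a weak-KAM solution (Proposition~\ref{PeierlsProp}.\ref{PeierlsWeakKAM}), not merely a viscosity solution. I would first check that $\Psi_1$ lands in $\fix(\mathcal{T}) \subset \Omega(\mathcal{T})$: for $\psi \in \dom(\underline{\mathbb{M}}, h^\infty)$, the map $\Psi_1(\psi) = \inf_{y \in \underline{\mathbb{M}}}\{\psi(y) + h^\infty(y,\cdot)\}$ is an infimum of weak-KAM solutions, so the commutation of $\mathcal{T}$ with infima established in the proof of Proposition~\ref{ViscosityInf}, together with $\mathcal{T} h^\infty(y,\cdot) = h^\infty(y,\cdot)$, yields $\mathcal{T} \Psi_1(\psi) = \Psi_1(\psi)$. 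The identity $\Phi_1 \circ \Psi_1 = \mathrm{Id}$ then follows verbatim from the argument in Theorem~\ref{kRepresentationTheorem}, using $h^\infty(y,y) = 0$ on $\mathcal{M}_0$ (Proposition~\ref{MatherPeierls}) combined with the $h^\infty$-domination hypothesis.

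The main work is to show that every $v \in \Omega(\mathcal{T})$ restricts to an $h^\infty$-dominated map on $\underline{\mathbb{M}}$. Following the proof of Theorem~\ref{RepresentationTheorem}, the calibration of Mather orbits by $v$ (Proposition~\ref{CalibNW}), combined with the Lipschitz regularity of $h^n$ and the vanishing $h^{p_n^x}(x,x) \to 0$ from Proposition~\ref{hpProp}.\ref{hpnullMather}, gives the bound
\[
v(y) - v(x) \;\leq\; \liminf_n h^{p_n^x}(x,y) \;=\; \underline{h}(x,y)
\]
for any recurrence sequence $(p_n^x)$ of $\tilde{x}$ under $\phi_L^{-1}$. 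To upgrade this $\underline{h}$-domination to the desired $h^\infty$-domination, I would exploit the autonomous subadditivity $h^{p+m}(x,y) \leq h^p(x,x) + h^m(x,y)$: given a minimizing sequence $(m_j)$ for $h^\infty(x,y) = \liminf_n h^n(x,y)$ and sufficiently rapid return times $(p_j)$ of $\tilde{x}$ with $h^{p_j}(x,x) \to 0$, the concatenated sequence $q_j := p_j + m_j$ achieves $h^{q_j}(x,y) \to h^\infty(x,y)$. By taking $p_j$ large enough so that $d(\phi_L^{-p_j}(\tilde{x}), \tilde{x})$ dominates the Lipschitz expansion of $\phi_L^{-m_j}$ acting on a neighbourhood of $\tilde{x}$, one arranges that $(q_j)$ remains a valid recurrence sequence of $\tilde{x}$, and therefore $\underline{h}(x,y) \leq h^\infty(x,y)$ for the choice $\underline{p}^x := (q_j)$.

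Finally, the identity $\Psi_1 \circ \Phi_1 = \mathrm{Id}_{\Omega(\mathcal{T})}$ follows from the Uniqueness Theorem~\ref{Uniqueness} exactly as in the proof of Theorem~\ref{kRepresentationTheorem}: for $v \in \Omega(\mathcal{T})$, both $v$ and $w := \Psi_1(\Phi_1(v))$ lie in $\Omega(\mathcal{T})$ and coincide on $\underline{\mathbb{M}}$, hence on $\mathcal{M}_0$ via the triangular and double-domination inequalities associated with the equivalence relation $\sim$, and therefore everywhere. Since $w \in \fix(\mathcal{T})$ by the first part, this forces $v \in \fix(\mathcal{T})$ and simultaneously proves $\Omega(\mathcal{T}) = \fix(\mathcal{T})$. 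The principal obstacle of the proof is precisely the construction of the sequence $(q_j)$: the two requirements $h^{q_j}(x,y) \to h^\infty(x,y)$ and $\phi_L^{-q_j}(\tilde{x}) \to \tilde{x}$ are a priori in tension and must be reconciled via a careful quantitative balance between the recurrence speed of $\tilde{x}$ and the Lipschitz constant of $\phi_L^{-m_j}$ over the intermediate time interval.
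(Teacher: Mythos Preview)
Your construction of the sequence $(q_j)$ has a genuine gap. You propose $q_j = p_j + m_j$ where $(p_j)$ is a recurrence sequence for $\tilde{x}$ and $(m_j)$ realizes the liminf defining $h^\infty(x,y)$, and you claim that choosing $p_j$ large enough (so that $d(\phi_L^{-p_j}(\tilde{x}),\tilde{x})$ beats the Lipschitz constant of $\phi_L^{-m_j}$) makes $(q_j)$ a recurrence sequence. But this Lipschitz estimate only yields
\[
\phi_L^{-q_j}(\tilde{x}) \;=\; \phi_L^{-m_j}\bigl(\phi_L^{-p_j}(\tilde{x})\bigr) \;\approx\; \phi_L^{-m_j}(\tilde{x}),
\]
and there is no reason for $\phi_L^{-m_j}(\tilde{x})$ to be close to $\tilde{x}$. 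Conversely, if you instead pick $p_j$ so that $q_j = p_j + m_j$ \emph{is} a recurrence time (which is possible by minimality of the orbit closure), then $p_j$ itself is no longer a recurrence time of $\tilde{x}$, and you lose the control $h^{p_j}(x,x)\to 0$ needed in the subadditivity bound. The two requirements you identify as being ``in tension'' are genuinely incompatible for this construction, and the quantitative balance you allude to does not resolve it.

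The paper sidesteps this entirely with a differential argument that has no analogue in your sketch. For $v\in\Omega(\mathcal{T})$ and the reference weak-KAM solution $u=h^\infty(x_0,\cdot)$, both are calibrated by the Mather orbit $x_0(t)=\pi\circ\phi_L^t(\tilde{x}_0)$, so Theorem~\ref{CalibRegularity} gives $d_x v(t,x_0(\tau)) = \partial_v L(\phi_L^\tau(\tilde{x}_0)) = d_x u(t,x_0(\tau))$. Hence $f:=v-u$ has zero spatial differential along the orbit, and integrating gives $f(t,x_0(s))=f(t,x_0)$ for all $s$; combined with the time-constancy $f(t,x_0(t))=f(0,x_0)$ from calibration, one obtains $v(-n,x_0)=v(x_0)$ for \emph{every} integer $n$, not just along a subsequence. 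The $h^\infty$-domination $v(y)-v(x)\le h^n(x,y)$ then holds for all $n$, and passing to the liminf is immediate. This is the specifically autonomous input: the graph property of the Mather set forces all non-wandering solutions to share the same gradient there.
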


\begin{proof}
	Let $v$ in $\Omega (\mathcal{T})$. Fix a point $x_0 \in \mathcal{M}$ with lift $\tilde{x}_0$ in $\tilde{\mathcal{M}}$, set $x(t) = \pi \circ \phi_L^t(\tilde{x})$ and consider the weak-KAM solution $u(x) = h^\infty(x_0,x)$. We will show that the map $f(t,x) = v(t,x) - u(t,x)$ is constant on $\mathbb{R} \times \{x_0(\tau); \tau \in \mathbb{R}\} \subset \mathbb{R} \times \mathcal{M}$. We know from Proposition \ref{CalibNW} that the curve $x_0(t)$ is calibrated by both $u$ and $v$, and we have
	\begin{equation*}
		u(t,x_0(t)) - u(0,x_0) = h^t(x_0,x_0(t)) = v(t,x_0(t)) - v(0,x_0)
	\end{equation*}
	Hence, we get
	\begin{equation*}
		f(t,x_0(t)) = v(t,x_0(t)) - u(t,x_0(t)) = v(0,x_0) - u(0,x_0) = f(0,x_0)
	\end{equation*}
	and $f(t,x_0(t))$ is constant on time. Moreover, we know from the regularity on calibrated curves Proposition \ref{CalibRegularity} that for all $y \in \mathcal{M}$ with lift $\tilde{y} \in \tilde{\mathcal{M}}$, we have
	\begin{equation*}
		d_xf(t,y) = d_xv(t,y) - d_xu(t,y) = \partial_vL (\tilde{y}) - \partial_vL (\tilde{y}) =0
	\end{equation*}
	Thus, since the curve $x_0(t)$ is $C^1$ regular, we can integrate along it and get for any time $s \in \mathbb{R}$
	\begin{equation*}
		f(t,x_0(s)) = f(t,x_0) + \int_0^s d_xf(t,x_0(\tau)).\dot{x}_0(\tau) \; d\tau = f(t,x_0)
	\end{equation*}
	and we deduce that that map $f(t,x)$ is constant on the set $\mathbb{R} \times \{x_0(\tau); \tau \in \mathbb{R}\}$. In particular, using the fact that $h^{\infty-n}(x_0,x_0) = h^\infty(x_0,x_0) =0$ established in Proposition \ref{MatherPeierls}, we obtain
	\begin{align*}
		v(-n,x_0) &= v(-n, x_0) - u(-n,x_0) = f(-n,x_0) = f(-n,x_0(-n))\\
		& = f(0,x_0) = v(0, x_0) - u(0,x_0) = v(0, x_0) = v(x_0)
	\end{align*}
	which stand for any $x_0 \in \mathcal{M}$. Consequently, and by definition of viscosity solutions, we get for all $(x,y)$ in $\mathcal{M} \times M$
	\begin{align*}
		v(y) - v(x) = v(y) - v(-n,x) \leq h^n(x,y)
	\end{align*}
	and taking the liminf on $n$, we obtain the domination
	\begin{align*}
		v(y) - v(x) \leq h^\infty(x,y)
	\end{align*}
	The identity \eqref{kMaxMinOmega} implies that for any $(x,y)$ in $\mathcal{M} \times M$, 
	\begin{align*}
		\underline{k}(x,y) = \max_{v \in \Omega(\mathcal{T})} \{v(y) - v(x) \} \leq h^\infty(x,y)
	\end{align*}
	and the Property \ref{k>p} of Proposition \ref{kProp} gives the inverse inequality, which results in the equality $\underline{k} = h^\infty$. Therefore, the bijection established in Theorem \ref{kRepresentationTheorem} translates into \eqref{AutonomousRepresentationBijection}. And for all $v = \Psi_1(\psi)$ for some $\psi \in \dom(\underline{\mathbb{M}}, h^\infty)$, Proposition \ref{ViscosityInf} gives
	\begin{align*}
		\mathcal{T}v(x)= v(1,x) &= \inf\limits_{y \in \underline{\mathbb{M}}} \{ \psi(y) + h^{\infty+1}(y, \cdot ) \} \\
		&= \inf\limits_{y \in \underline{\mathbb{M}}} \{ \psi(y) + h^\infty(y, \cdot ) \}  = v(x)
	\end{align*}
	which justifies that $v$ belongs to $\fix (\mathcal{T})$ and hence that $\Omega(\mathcal{T}) = \fix ( \mathcal{T})$.
\end{proof}

\begin{rem}
	\begin{enumerate}
		\item Note from the proof that the key point to obtain weak-KAM solutions is to establish the domination by $h^\infty$. We will see in the next examples that these dominations by different barriers can detect the periodicity of the $\underline{p}$-recurrence of viscosity solutions for some sequence $p$.
		\item This result is a weaker version of the theorem of A.Fathi \cite{MR1650261}, which more generally shows that for all times $t$, $\underline{k}^t(x,,y) = h^\infty(x,y) = \lim_{s \to +\infty} h^s(x,y)$. This indicates that the weak-KAM solutions $v \in \Omega(\mathcal{T})$ are independent of the time $t$ and that they are viscosity solutions of the stationary Hamilton-Jacobi equation
		\begin{equation} \label{HJAutonomous}
			H(x,d_xu) = \alpha_0
		\end{equation} 
	\end{enumerate}	
\end{rem}

\begin{proof}[Proof of Corollary \ref{FathiTh}]
	Fix a time $\tau >0$ and let $\tilde{H}(t,x,p) = H(t+\tau,x,p)$. We add a tilde in the notation of all the objects $\tilde{L}$, $\tilde{\mathcal{T}}$, $\tilde{\alpha}_0$ associated to $\tilde{H}$.
	
	The potential $\tilde{h}_0$ associated to $\tilde{H}$ verifies for all points $x$ and $y$ in $M$ and all times $s<t$
	\begin{align*}
		\tilde{h}_0^{s,t}(x,y) &= \inf \left\{ \int_s^t L(\zeta + \tau, \tilde{\gamma}(\zeta), \dot{\tilde{\gamma}}(\zeta)) \; d\zeta \; \left| \;
		\begin{matrix}
			\tilde{\gamma} : & [s,t] \to M \\
			& s \mapsto x \\
			& t \mapsto y
		\end{matrix} \right.	\right\} \\
		&= \inf \left\{ \int_{s+\tau}^{t+\tau} L(\zeta, \gamma(\zeta), \dot{\gamma}(\zeta)) \; d\zeta \; \left| \;
		\begin{matrix}
			\gamma : & [s+\tau,t+\tau] \to M \\
			& s \mapsto x \\
			& t \mapsto y
		\end{matrix} \right.	\right\} \\
		&=h_0^{s+\tau,t+\tau}(x,y)
	\end{align*}
	and in particular, its Lax-Oleinik operator $\tilde{\mathcal{T}}^t_0$ verifies $\tilde{\mathcal{T}}^t_0 = \mathcal{T}^{\tau,t+\tau}_0$.
	
	Moreover, using the characterization of the \Mane critical value mentioned in Remark \ref{ManeCharacterization}, we deduce that $\tilde{\alpha}_0 = \alpha_0$. Hence, the Peierls barriers verify $\tilde{h}^{\infty} = h^{\tau,\infty+ \tau}$, the full Lax-Oleinik operator verifies $\tilde{\mathcal{T}}^t = \mathcal{T}^{\tau,t+\tau}$, and the non-wandering set of $\tilde{\mathcal{T}}$ is given by $\Omega(\tilde{\mathcal{T}}) = \Omega(\mathcal{T}^{\tau,1+\tau})$.\\

	Recall from Proposition \ref{Isometry} the definition of $\Omega_\tau(\mathcal{T}) = \mathcal{T}^\tau \Omega(\mathcal{T})$. We claim that $\Omega(\tilde{\mathcal{T}}) = \Omega_\tau(\mathcal{T})$. Indeed, since by Proposition \ref{Isometry} the map $\mathcal{T}^\tau$ is invertible in the two sets, we get
	\begin{align*}
		\Omega_\tau(\mathcal{T}) = \mathcal{T}^\tau \Omega(\mathcal{T}) &= \{\mathcal{T}^\tau u \in \mathcal{C}(M, \mathbb{R}) \; | \; u \textit{ is a limit point of } \mathcal{T}^nu = \mathcal{T}^{\tau,n}\mathcal{T}^\tau u\} \\
		&=\{\mathcal{T}^\tau u \in \mathcal{C}(M, \mathbb{R}) \; | \; \mathcal{T}^\tau u \textit{ is a limit point of } \mathcal{T}^\tau \mathcal{T}^{\tau,n}\mathcal{T}^\tau u = \mathcal{T}^{\tau,n+ \tau}\mathcal{T}^\tau u  \} \\
		&=\{v \in \mathcal{C}(M, \mathbb{R}) \; | \; u \textit{ is a limit point of }  \mathcal{T}^{\tau,n+ \tau} v \} \\
		&=\Omega(\mathcal{T}^{\tau,1+\tau}) = \Omega(\tilde{\mathcal{T}})  
	\end{align*}
	
	Therefore, applying the representation formula \ref{AutonomousRepresentationBijection} once for $\Omega(\tilde{\mathcal{T}})$ on $\mathcal{M}_0(\tilde{L})$ and once for $\Omega_\tau(\mathcal{T})$ on $\mathcal{M}_0$, we get the following formulas
	\begin{equation} \label{AutonomousRepresentationBijectiont1}
		\begin{matrix}
			\tilde{\Psi}_{1}: \dom(\mathcal{M}_0(\tilde{L}), h^{\tau,\infty+\tau} ) & \longrightarrow & \Omega_\tau(\mathcal{T}) \\
			\psi & \longmapsto & \inf\limits_{y \in \mathcal{M}_0(\tilde{L})} \{ \psi(y) + h^{\tau,\infty+\tau}(y, \cdot ) \}
		\end{matrix}
	\end{equation}
	and 
	\begin{equation} \label{AutonomousRepresentationBijectiont2}
		\begin{matrix}
			\Psi^\tau_{1}: \dom(\mathcal{M}_0, h^\infty ) & \longrightarrow & \Omega_\tau(\mathcal{T}) \\
			\psi & \longmapsto & \inf\limits_{y \in \mathcal{M}_0} \{ \psi(y) + h^{\infty+\tau}(y, \cdot ) \}
		\end{matrix}
	\end{equation}
	Since we are working in the autonomous case, we have $\tilde{H} = H$ and $\mathcal{M}_0(\tilde{L}) = \mathcal{M}_0$. Hence, Corollary \ref{kMaxMinCorollary} yields for all $x$ in $\mathcal{M}_0$ and $y$ in $M$
	\begin{equation}
		h^\infty(x,y) = h^{\tau,\infty+\tau}(x,y) = \sup_{v \in \Omega_\tau(\mathcal{T})} \{v(y) - v(x)\} = h^{\infty+\tau}(x,y) 
	\end{equation}
	We deduce that all the elements of $\Omega(\mathcal{T})$ are constant in time and hence are solutions of the stationary Hamilton-Jacobi equation \eqref{HJAutonomous}. In particular, $\Omega(\mathcal{T}) = \Omega_\tau(\mathcal{T})$. And Theorem \ref{AutonomousRepresentationTheorem}asserts that $\Omega(\mathcal{T}) = \fix ( \mathcal{T})$, we obtain that $\fix(\mathcal{T}) = \bigcap_{t >0} \fix(\mathcal{T}^t)$.\\
	
	Now let $u$ be a scalar map in $\mathcal{C}(M, \mathbb{R})$. We have from Proposition \ref{LONW} that $\omega(u) \subset \Omega(\mathcal{T}) = \fix(\mathcal{T})$, and by the minimality property, we deduce that $\omega(u)$ is a singleton so that $\mathcal{T}^nu$ converges to a weak-KAM solution $v$. Hence, for all time $t >0$, $\mathcal{T}^{n+t}u$ converges to $v(t,\cdot) = v$. Therefore, we deduce that $\mathcal{T}^nu$ converges to the weak-KAM solution $v$ of the Hamilton-jacobi equation \eqref{HJAutonomous}.
\end{proof}

\subsection{Periodic Viscosity Solutions}

In this subsection, we present a representation formula for periodic viscosity solutions for a fixed integer period $n \geq 1$. This formula generalizes the representation formula for weak-KAM solutions i.e one-time periodic solutions, as established in \cite{CIS}.\\

To derive such a formula, we need to identify the appropriate domination barrier. This is provided by the $n$-barrier $h^{n\infty}$, which was first introduced by A. Fathi and J. N. Mather in \cite{MR1792479}.

\begin{defi} \label{StaticClassesDefi}
	\begin{enumerate}
		\item We define the \textit{$n$-Peierls Barrier} $h^{n \infty} : M \times M \to \mathbb{R}$ as the $\underline{p}$-Peierls barrier for $p_k = n.k$, i.e
		\begin{equation}
			h^{n \infty}(x,y) = \liminf_{p \to \infty} h^{np}(x,y)
		\end{equation}

		\item We define the subset $\mathcal{M}_n$ of the Mather set $\mathcal{M}_0$ by
		\begin{equation}
			\mathcal{M}_n := \{ x \in \mathcal{M}_0 \; | \; h^{n\infty}(x,x) =0 \}
		\end{equation}
		\item We define the map $d_n : \mathcal{M}_n \times \mathcal{M}_n \to \mathbb{R}$ by
		\begin{equation}
			d_n(x,y) = h^{n\infty}(x,y) + h^{n\infty}(y,x)
		\end{equation}
		
		\item We define the equivalence relation $\sim_n$ on $\mathcal{M}_n$ by
		\begin{equation}
			x \sim_n y \Longleftrightarrow d_n(x,y) = 0
		\end{equation}

		\item The \textit{$n$-static classes} are the equivalence classes of the equivalence relation $\sim_n$. We denote by $\mathbb{M}_n$ the set of $n$-static classes. We assume that every class of $\mathbb{M}_n$ is represented by an element of $\mathcal{M}_n$ so that we have the inclusion $\mathbb{M}_n \subset \mathcal{M}_n \subset \mathcal{M}_0$.
		
		\item We say that a map $\psi : X \to \mathbb{R}$ is $n$-dominated on a set $X$ if it is $h^{n\infty}$-dominated on $X$.
	\end{enumerate}
\end{defi}

\begin{rem}
	Analogously to Proposition \ref{kPseudometric}, and due to the triangular inequality \eqref{TriangInegPeierls} satisfied by the $n$-Peierls barrier $h^{n\infty}$, the map $d_n$ is a pseudometric on $\mathcal{M}_0$, which justifies that $\sim_n$ is an equivalence relation.
\end{rem}

We will work on the set $\dom(\mathbb{M}_n, h^{n\infty})$ of $n$-dominated maps on the set of $n$-static classes $\mathbb{M}_n$ to prove Theorem \ref{PeriodicRepresentationTheorem}.

\begin{proof}[Proof of Theorem \ref{PeriodicRepresentationTheorem}]
	We first show that the maps $\Psi_n$ and $\Phi_n$ are well defined. Let $\psi$ be an element of $\dom(\mathbb{M}_n, h^{n\infty})$ and set $v = \Psi_n(\psi)$. We know from Property \ref{pvisc} of Proposition \ref{pPeierlsProp} that $h^{n\infty}(\cdot, y,\cdot)$ is a viscosity solution for all $y \in \mathbb{M}_n$. Hence, we infer from Proposition \ref{ViscosityInf} that $v$ is a viscosity solution and that
	\begin{align*}
		\mathcal{T}^n v(x) &= \inf\limits_{y \in \mathbb{M}_n} \{ \psi(y) + \mathcal{T}^n h^{n\infty}(y, \cdot )(x) \} \\
		&= \inf\limits_{y \in \mathbb{M}_n} \{ \psi(y) + h^{n\infty+n}(y,x) \} \\
		&= \inf\limits_{y \in \mathbb{M}_n} \{ \psi(y) + h^{n\infty}(y,x) \} = v(x)
	\end{align*}
	Thus, $v \in \per_n(\mathcal{T})$ and $\Psi_n$ is well defined.
	
	Now let $v$ be an element of $\per_n(\mathcal{T})$ and $x$ and $y$ be two points of $\mathbb{M}_n$. We have by definition of $v$ that for any integer $k$, 
	\begin{align*}
		v(y) - v(x) = v(nk,y)-v(x) \leq h^{nk}(x,y)
	\end{align*}
	and taking the liminf on $k$ leads to 
	\begin{align*}
		v(y) - v(x) \leq h^{n\infty}(x,y)
	\end{align*}
	We showed that $\Phi_n(v) = v_{|\mathbb{M}_n}$ is $n$-dominated, which justifies the well-definition of the map $\Phi_n$.\\
	
	The proof of the identity $\Psi_n \circ \Phi_n = Id_{\dom(\mathbb{M}_n, h^{n\infty})}$ is analogous what was done in the proof of Theorem \ref{RepresentationTheorem}. We show that $\Phi_n \circ \Psi_n = Id_{\per_n(\mathcal{T})}$. Let $v$ be an element of $\per_n(\mathcal{T})$ and set $w = \Phi_n \circ \Psi_n (v) \in \per_n(\mathcal{T})$. We aim to prove that $w = v$. By the domination condition and following the proof of Theorem \ref{kRepresentationTheorem}, we get successively that $v_{|\mathbb{M}_n} = w_{|\mathbb{M}_n}$ and $v_{|\mathcal{M}_n} = w_{|\mathcal{M}_n}$. Let $x$ be an element of the Mather set $\mathcal{M}_0$ with lift $\tilde{x}$ in $\tilde{\mathcal{M}}_0$ and set $x(t) = \pi \circ \phi_L^t(\tilde{x})$. By compactness of $M$, there exist an increasing sequence $k_i$ of integers such that $\lim_i k_{i+1} - k_i=+\infty$ and $x(-nk_i)$ converges to a point $x_\alpha$ belonging to the closed set $\mathcal{M}_0$. We set $x_\alpha(t) = \pi \circ \phi_L^t(\tilde{x}_\alpha)$ with $\tilde{x}_\alpha$ is the lift of $x_\alpha$ to $\tilde{\mathcal{M}}_0$. By Proposition \ref{CalibNW}, the curve $x_\alpha(t)$ is calibrated by the $n$-periodic viscosity solutions $v$ and $w$. Hence, using the non-negativity \ref{PeierlsPositivity} of Proposition \ref{PeierlsProp} and the liminf Properties \eqref{PeierlsLiminf}, we get
	\begin{align*}
		0 \leq h^{n\infty}(x_\alpha,x_\alpha) &\leq \lim_i h^{n(k_{i+1}-k_i)}(x(-nk_{i+1}),x(-nk_i)) \\
		&= \lim_i v(-nk_i,x(-nk_i)) - v(-nk_{i+1},x(-nk_{i+1})) \\
		&= \lim_i v(0,x(-nk_i)) - v(0,x(-nk_{i+1})) \\
		&= v(x_\alpha) - v(x_\alpha) = 0
	\end{align*}
	Thus, $x_\alpha$ belongs to $\mathcal{M}_n$. Therefore, we obtain that $v(x_\alpha) = w(x_\alpha)$, and by calibration 
	\begin{align*}
		w(x) & = w(-nk_i,x(-nk_i)) + h^{nk_i}(x(-nk_i),x) \\
		&= w(x(-nk_i)) + h^{nk_i}(x(-nk_i),x) \\
		&= \lim_i w(x(-nk_i)) + h^{nk_i}(x(-nk_i),x) \\
		&= \lim_i w(x_\alpha) + h^{nk_i}(x(-nk_i),x) \\
		&= \lim_i v(x_\alpha) + h^{nk_i}(x(-nk_i),x) \\
		&= \lim_i v(x(-nk_i)) + h^{nk_i}(x(-nk_i),x) = v(x)
	\end{align*}
	which yields $v_{|\mathcal{M}_0} = w_{|\mathcal{M}_0}$ and by the Uniquenesse Theorem \ref{Uniqueness}, $v= w = \Phi_n \circ \Psi_n (v)$.
\end{proof}

\begin{rem} \label{PeriodicRemark}
	\begin{enumerate}
		\item We can derive from this proof and that of Theorem \ref{Uniqueness} the following uniqueness theorem for periodic viscosity solutions
		\begin{prop} \label{UniquenessPeriodic}
			For any viscosity solutions $v \in \per_n(\mathcal{T})$ and $w \in \Omega (\mathcal{T})$, if $v_{|\mathcal{M}_n} = w_{|\mathcal{M}_n}$, then $v=w$.
		\end{prop}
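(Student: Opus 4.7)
The plan is to adapt the proof of Theorem \ref{Uniqueness}, exploiting that $v\in\per_n(\mathcal T)\subset\Omega(\mathcal T)$, so both solutions lie in $\Omega(\mathcal T)$. By Theorem \ref{Uniqueness} it suffices to upgrade the hypothesis $v_{|\mathcal M_n}=w_{|\mathcal M_n}$ to $v_{|\mathcal M_0}=w_{|\mathcal M_0}$. Fix $x\in\mathcal M_0$ with lift $\tilde x\in\tilde{\mathcal M}_0$ and consider the Mather orbit $x(t)=\pi\circ\phi_L^t(\tilde x)$. By Proposition \ref{CalibNW}, $x(t)$ is calibrated by both $v$ and $w$; subtracting the two calibration equations shows that $(v-w)(t,x(t))$ is constant in $t$, equal to $c_x=v(x)-w(x)$, the quantity we want to vanish.

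Next, following the proof of Theorem \ref{PeriodicRepresentationTheorem}, I would use A priori compactness and the $n$-periodicity of $v$ to extract a subsequence $m_j\to\infty$ with $m_{j+1}-m_j\to\infty$ and $x(-nm_j)\to x_\alpha$ where $x_\alpha\in\mathcal M_n$. The $n$-periodicity enters precisely through the estimate $h^{n(m_{j+1}-m_j)}(x(-nm_{j+1}),x(-nm_j))=v(x(-nm_j))-v(x(-nm_{j+1}))\to 0$, so the liminf defining $h^{n\infty}(x_\alpha,x_\alpha)$ is zero. Similarly, along the Mather orbit $x_\alpha(t)=\pi\circ\phi_L^t(\tilde x_\alpha)$, the difference $(v-w)(t,x_\alpha(t))$ is constant; by the hypothesis at $x_\alpha\in\mathcal M_n$ this constant equals $(v-w)(0,x_\alpha)=0$, giving in particular $w(-nm_j,x_\alpha(-nm_j))=v(x_\alpha(-nm_j))$ via $v$'s $n$-periodicity.

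Evaluating $c_x=(v-w)(-nm_j,x(-nm_j))$ and using $v(-nm_j,\cdot)=v$, I get $c_x=v(x(-nm_j))-w(-nm_j,x(-nm_j))$; the first term converges to $v(x_\alpha)$ by continuity, and by the $\kappa_1$-equilipschitz family (Corollary \ref{Equicontinuity}), the second term differs from $w(-nm_j,x_\alpha)$ and from $w(-nm_j,x_\alpha(-nm_j))$ by quantities controlled by $d(x(-nm_j),x_\alpha)$ and $d(x_\alpha,x_\alpha(-nm_j))$ respectively. Combining with $w(-nm_j,x_\alpha(-nm_j))=v(x_\alpha(-nm_j))\to v(x_\alpha)$ would yield $c_x=0$ and hence $v=w$ on $\mathcal M_0$, concluding via Theorem \ref{Uniqueness}.

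The main obstacle is closing the loop $x_\alpha(-nm_j)\to x_\alpha$, i.e.\ ensuring that $\tilde x_\alpha$ is recurrent under $\phi_L^{-n}$ along the sequence $m_j$. I plan to secure this by a diagonal refinement: the relation $\phi_L^{-nm_{j+1}}(\tilde x)=\phi_L^{-n(m_{j+1}-m_j)}\phi_L^{-nm_j}(\tilde x)$ with both $\phi_L^{-nm_j}(\tilde x)$ and $\phi_L^{-nm_{j+1}}(\tilde x)$ accumulating at $\tilde x_\alpha$ forces $\phi_L^{-n(m_{j+1}-m_j)}(\tilde x_\alpha)$ to approach $\tilde x_\alpha$ when the earlier indices are chosen deep enough in the $\alpha$-limit and one controls the modulus of continuity of $\phi_L^{-nq}$ on a fixed compact neighbourhood of $\tilde x_\alpha$ in $\tilde{\mathcal M}_0$; equivalently, one may first establish that $\mathcal M_n$ is $\phi_L^n$-invariant (so that $\pi\circ\phi_L^{nm_j}(\tilde x_\alpha)\in\mathcal M_n$ and the hypothesis can be applied directly at $x_\alpha(0)$), which follows via Poincaré recurrence on the compact invariant set $\tilde{\mathcal M}_0$ together with calibration by a weak-KAM solution along an asymptotically returning orbit. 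This recurrence/invariance step is the only point where the asymmetry between $v$ (periodic) and $w$ (merely non-wandering) requires extra care, since the symmetric half of the proof of Theorem \ref{Uniqueness}—using a $w$-calibrated curve—is unavailable without $w$'s periodicity.
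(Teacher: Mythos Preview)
Your strategy—upgrading $v_{|\mathcal{M}_n}=w_{|\mathcal{M}_n}$ to $v_{|\mathcal{M}_0}=w_{|\mathcal{M}_0}$ via the argument of Theorem~\ref{PeriodicRepresentationTheorem} and then invoking Theorem~\ref{Uniqueness}—is exactly what the paper intends; it only asserts that the proposition ``can be derived'' from those two proofs without supplying details. You also correctly isolate the genuine difficulty: the proof of Theorem~\ref{PeriodicRepresentationTheorem} uses the $n$-periodicity of \emph{both} solutions at the step $w(-nk_i,x(-nk_i))=w(x(-nk_i))$, so with $w$ merely non-wandering an extra ingredient is needed to close the loop at $x_\alpha$.

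Of your two proposed resolutions, the diagonal refinement (a) is shaky as written: the relation $\phi_L^{-nm_{j+1}}(\tilde x)=\phi_L^{-n(m_{j+1}-m_j)}\phi_L^{-nm_j}(\tilde x)$ does not by itself yield $\phi_L^{-n(m_{j+1}-m_j)}(\tilde x_\alpha)\to\tilde x_\alpha$, because the Lipschitz constant of $\phi_L^{-nq}$ on any fixed neighbourhood grows with $q$ and cannot be controlled uniformly. Route (b) is the correct one, but Poincar\'e recurrence is not the natural justification. The cleanest argument uses the viewpoint of Remark~\ref{PeriodicRemark}.2: $h^{n\infty}$ is the Peierls barrier for the Tonelli Hamiltonian $H_n=nH(nt,x,p)$, so $\{z:h^{n\infty}(z,z)=0\}$ is the projected Aubry set for $H_n$, whose lift is $\phi_{L_n}^1=\phi_L^n$-invariant by standard weak-KAM theory; since the Mather orbit through $\tilde x_\alpha\in\tilde{\mathcal M}_0$ is calibrated by every element of $\per_n(\mathcal T)=\fix(\mathcal T^n)$, it lies in this lifted Aubry set, and hence $x_\alpha(nk_i)\in\mathcal M_n$ for every $i$. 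With this in hand, you can flow $\tilde x_\alpha$ \emph{forward} to time $0$ exactly as in the proof of Theorem~\ref{Uniqueness}: the constancy of $(v-w)$ along the shifted Mather orbit $t\mapsto x_\alpha(t+nk_i)$ gives $(v-w)(-nk_i,x_\alpha)=(v-w)(0,x_\alpha(nk_i))=0$ directly from the hypothesis, and your argument concludes without ever needing backward recurrence of $\tilde x_\alpha$ along the specific sequence $m_j$.
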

		
		\item The generalization presented here can be considered a direct application of the representation of weak-KAM solutions demonstrated in \cite{CIS}, as $n$-periodic solutions can be viewed as weak-KAM solutions of the Hamiltonian $H_n = nH(nt,x,p)$.
	\end{enumerate}
\end{rem}

This theorem allows us to more easily describe the non-wandering set of systems in which every element of the Mather set is periodic with a uniform integer period, as stated in Corollary \ref{PeriodicMatherOmega}.

\begin{proof}[Proof of Corollary \ref{PeriodicMatherOmega}]
	\ref{PeriodicMatherOmega}. It suffices to prove that for any $v \in \Omega(\mathcal{T})$, $v_{|\mathcal{M}_0}$ is $N$-dominated on $\mathcal{M}_0$. Let $x$ and $y$ be two points of $\mathcal{M}_0$. Let $\tilde{y}$ be the lift of $y$ in $\tilde{\mathcal{M}}_0$ and set $y(t) = \pi \circ \phi^t_L(\tilde{y})$. If $u$ is a weak-KAM solution, we know from Proposition \ref{CalibNW} that $y(t)$ is calibrated by both $u$ and $v$. Hence, for all integer $k \geq 0$, we have 
	\begin{equation*}
		\begin{split}
			v(kN,y) -v(0,y) &= v(kN,y(kN)) -v(0,y) = h^{kN}(y,y(kN)) \\
			&= u(kN,y(kN)) -u(0,y) = u(0,y) - u(0,y) = 0
		\end{split}
	\end{equation*}
	where we used the $N$-periodicity of $y(t)$ and the $1$-time periodicity of the weak-KAM solution $u$. Thus, by definition of viscosity solutions, we get
	\begin{align*}
		v(y) - v(x) = v(kN,y) - v(x) \leq h^{kN}(x,y)
	\end{align*}
	and taking the liminf on $k$, we conclude that
	\begin{align*}
		v(y) - v(x) \leq  h^{N\infty}(x,y)
	\end{align*}
	and in particular, $v_{|\mathbb{M}_N}$ belongs to $\dom (\mathbb{M}_N , h^{N\infty})$. Therefore, $v = \Psi_N( v_{|\mathbb{M}_N} )$ belongs to $\per_N(\mathcal{T})$.  and we conclude that $\Omega( \mathcal{T}) = \per_N (\mathcal{T})$.
\end{proof}

\subsection{$\underline{p}$-Recurrent Viscosity Solutions}

In this section, we fix a sequence $\underline{p}$ of increasing positive integers and we discuss to which extend it is possible to represent $\underline{p}$-recurrent viscosity solutions.

\subsubsection{\underline{p}-Domination}

We define the natural domination notion associated to $\underline{p}$-recurrence.

\begin{defi} \label{pdom}
	a map $\psi : X \to \mathbb{R}$ is said \textit{$\underline{p}$-dominated} in a set $X \subset M$ if it is $h^{\underline{p}}$-dominated.
\end{defi}

Then we have the following

\begin{prop} \label{RecurrentpDomination}
	Let $v \in \Omega(\mathcal{T})$ be a $\underline{p}$-recurrent viscosity solution. Then $v$ is $\underline{p}$-dominated in $M$.
\end{prop}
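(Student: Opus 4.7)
The plan is to prove this proposition directly from the definition of the Lax-Oleinik operator and the $\underline{p}$-recurrence hypothesis, without invoking any of the deeper representation machinery.

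First, I would fix two arbitrary points $x, y \in M$ and a positive integer $n$. From the definition of $\mathcal{T}^{p_n}$ in Definition \ref{LODef}, applied with initial data $v$ at the point $y$, I get the pointwise upper bound
\begin{equation*}
\mathcal{T}^{p_n} v(y) \;=\; \inf_{z \in M} \bigl\{ v(z) + h^{p_n}(z,y) \bigr\} \;\leq\; v(x) + h^{p_n}(x,y),
\end{equation*}
where the potential $h^{p_n}$ (with the $\alpha_0$ correction folded in) is the one used in \eqref{Potential}.

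Next, I would take the liminf on $n \to \infty$. Since $v$ is $\underline{p}$-recurrent, the left-hand side converges uniformly, and in particular pointwise, to $v(y)$. On the right-hand side, $v(x)$ is a constant and the liminf of $h^{p_n}(x,y)$ is by definition $h^{\underline{p}}(x,y)$. Therefore
\begin{equation*}
v(y) \;=\; \liminf_n \mathcal{T}^{p_n} v(y) \;\leq\; v(x) + h^{\underline{p}}(x,y),
\end{equation*}
which is exactly the $\underline{p}$-domination inequality $v(y) - v(x) \leq h^{\underline{p}}(x,y)$ of Definition \ref{pdom}. Since $x$ and $y$ were arbitrary, $v$ is $\underline{p}$-dominated on $M$.

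There is essentially no obstacle here: the only non-trivial input is that the convergence $\mathcal{T}^{p_n}v \to v$ passes through the liminf of $h^{p_n}(x,y)$, and this is automatic because the inequality holds term by term. All the work has really been done in establishing the finiteness and regularity of $h^{\underline{p}}$ (Proposition \ref{pPeierlsProp}), which guarantees that the liminf is a well-defined real number.
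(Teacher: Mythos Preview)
Your proof is correct and follows essentially the same approach as the paper's own proof: bound $\mathcal{T}^{p_n}v(y)$ above by $v(x)+h^{p_n}(x,y)$ via the infimum definition, then pass to the liminf using $\underline{p}$-recurrence. Your write-up is in fact more explicit than the paper's (which compresses everything into a single line and contains a harmless typo in the arguments of $h^{p_n}$).
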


\begin{proof}
	By definition of viscosity solution, we have for any $x$ and $y$ in $M$
	\begin{align*}
		v(y) - v(x) = \lim_n v(p_n,y) - v(x) \leq \liminf_n h^{p_n}(y,x) = h^{\underline{p}}(x,y)
	\end{align*}
\end{proof}

\begin{rem}(Failure of Reprenting $\underline{p}$-recurrent viscosity solutions)
	\begin{enumerate}
		\item (Failure of Uniqueness) Unlike what was observed for periodic viscosity solutions in Remark \ref{PeriodicRemark}, there is no uniqueness theorem in the set
		\begin{equation}
			\mathcal{M}_{\underline{p}} := \{ x \in \mathcal{M}_0 \; | \; h^{\underline{p}}(x,x) = 0 \}
		\end{equation}
		Hence, if one hopes to get a representation formula of $\underline{p}$-recurrent viscosity solutions, it must be on the entire Mather set $\mathcal{M}_0$.
		\item (Failure of $\underline{p}$-recurrence for $h^{\underline{p}}$) If we define the injective map
		\begin{equation}
			\begin{matrix}
			\tilde{\Psi}_{\underline{p}} \dom(\mathcal{M}_0, h^{\underline{p}}) & \longrightarrow & \Omega(\mathcal{T}) \\
			\psi & \longmapsto & \inf\limits_{y \in \mathcal{M}_0} \{ \psi(y) + h^{\underline{p}}(y, \cdot ) \}
		\end{matrix}
		\end{equation}
		Then, by the $\underline{p}$-domination Proposition \ref{RecurrentpDomination}, we get that all $\underline{p}$-recurrent maps belong to the image $\tilde{\Psi}_{\underline{p}}(\dom(\mathcal{M}_0, h^{\underline{p}}))$. However, the $\underline{p}$-recurrence of $h^{\underline{p}}(x,\cdot)$ does not generally hold if $x$ is not $\underline{p}$-recurrent under the projected Lagrangian flow. Even if this condition were satisfied, there is no guarantee that the infimum $v$ of $\underline{p}$-recurrent viscosity solutions $v_n$ would be $\underline{p}$-recurrent, unless there is uniform $\underline{p}$-recurrence for $v_n$.
	\end{enumerate}	
\end{rem}

Nevertheless, a representation formula using the $\underline{p}$-barrier $h^{\underline{p}}$ is possible when it coincides with the general barrier $\underline{h}$ defined in Subsection \ref{Sectionhbar}. In the next subsections, we will consider cases where we can choose $\underline{h} = \underline{h}^p$.

\subsubsection{The Representation Formula for Mather Sets with $\underline{p}$-Recurrent Elements}

In this section, we assume that there exists a sequence $\underline{p}$ of increasing positive integers for which the Mather set $\tilde{\mathcal{M}}_0$ verifies
\begin{equation}
	\forall \tilde{x} \in \tilde{\mathcal{M}}_0, \quad \lim_n \phi_L^{-p_n}( \tilde{x} ) = \tilde{x}
\end{equation}

\begin{prop} \label{RecurrenthpProp}
	In this case, for all $x$ and $y$ in $\mathcal{M}_0$ and $z$ in $M$
	\begin{equation} \label{RecurrentTriangIneg}
		h^{\underline{p}}(x,z) \leq h^{\underline{p}} (x,y) + h^{\underline{p}}(y,z)
	\end{equation}
	and the following equality holds
	\begin{equation} \label{Recurrenthp=k}
		\underline{k} = h^{\underline{p}}_{| \mathcal{M}_0 \times M}
	\end{equation}
\end{prop}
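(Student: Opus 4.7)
The plan is to prove the equality \eqref{Recurrenthp=k} first and then deduce the triangular inequality \eqref{RecurrentTriangIneg} as an immediate consequence, since a direct attack on \eqref{RecurrentTriangIneg} through concatenation of minimizing curves is delicate (the sequence $\underline{p}$ is not closed under addition, so intermediate times produced by such concatenations generally fall outside $\underline{p}$).

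First I would observe that under the hypothesis, every $\tilde{x} \in \tilde{\mathcal{M}}_0$ is $-\underline{p}$-recurrent under $\phi_L$, so $\mathcal{M}_0^R = \mathcal{M}_0$. In the construction of $\underline{h}$ from Subsection \ref{Sectionhbar}, one may then consistently choose $\underline{p}^x = \underline{p}$ for every $x \in \mathcal{M}_0$, which yields $\underline{h}(x,\cdot) = h^{\underline{p}}(x,\cdot)$ on $\mathcal{M}_0 \times M$. The upper bound $\underline{k}(x,z) \leq h^{\underline{p}}(x,z)$ then follows immediately from Property \ref{k>p} of Proposition \ref{kProp}.

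Next, I would establish that $h^{\underline{p}}(x,x) = 0$ for every $x \in \mathcal{M}_0$. Let $\tilde{x}$ be the lift of $x$ to $\tilde{\mathcal{M}}_0$ and let $x(t) = \pi \circ \phi_L^t(\tilde{x})$. By hypothesis $x(-p_n) \to x$, and the curve $x(t)$ is minimizing and calibrated by any weak-KAM solution $u$ (Proposition \ref{CalibNW}), so $h^{p_n}(x(-p_n), x) = u(x) - u(x(-p_n))$, which tends to $0$ by continuity and $1$-periodicity of $u$. Combining the liminf property \eqref{PeierlsLiminf} with the inequality $h^{\underline{p}} \geq h^\infty$ and Proposition \ref{MatherPeierls} yields $h^{\underline{p}}(x,x) = 0$. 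For the reverse inequality I would use the key observation that $v := h^{\underline{p}}(x,\cdot)$ is a viscosity solution of \eqref{HJalpha} by Property \ref{pvisc} of Proposition \ref{pPeierlsProp}, finite everywhere by Property \ref{PeierlsFiniteness}, hence global, and therefore $v \in \Omega(\mathcal{T})$ by Theorem \ref{Glob=NW}. Plugging $v$ into the variational characterization of $\underline{k}$ provided by Corollary \ref{kMaxMinCorollary} gives
\[
\underline{k}(x, z) = \max_{w \in \Omega(\mathcal{T})}\{w(z) - w(x)\} \;\geq\; v(z) - v(x) = h^{\underline{p}}(x, z) - h^{\underline{p}}(x, x) = h^{\underline{p}}(x, z),
\]
so \eqref{Recurrenthp=k} is established. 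The triangular inequality \eqref{RecurrentTriangIneg} then follows at once from Property \ref{kTriang} of Proposition \ref{kProp}:
\[
h^{\underline{p}}(x, z) = \underline{k}(x, z) \leq \underline{k}(x, y) + \underline{k}(y, z) = h^{\underline{p}}(x, y) + h^{\underline{p}}(y, z).
\]

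The main obstacle is recognizing that the right route is through Corollary \ref{kMaxMinCorollary} rather than through a direct combinatorial argument: the naive attempt — concatenating a minimizing curve from $x$ to $y$ in time $p_k$, following the orbit of $\tilde{y}$ from $y$ to $y(p_n) \approx y$, and then going from $y(p_n)$ to $z$ in time $p_q$ — produces only a bound on $h^{p_k + p_n + p_q}(x,z)$, and the total time $p_k + p_n + p_q$ generally does not belong to $\underline{p}$, so the estimate cannot be converted into a bound on $h^{\underline{p}}(x,z) = \liminf_r h^{p_r}(x,z)$. The variational route through $\underline{k}$ encapsulates the uniqueness theorem and the full representation formula and thus sidesteps this obstruction cleanly.
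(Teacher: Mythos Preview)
Your proof is correct but inverts the paper's order of implication. The paper proves the triangular inequality \eqref{RecurrentTriangIneg} \emph{first}, by a direct argument you dismissed as delicate: with the choice $\underline{h} = h^{\underline{p}}$, Property~\ref{hpRecurrence} of Proposition~\ref{hpProp} says $h^{\underline{p}}(x,\cdot)$ is $\underline{p}$-recurrent, so $h^{\underline{p}}(x,z) = \lim_n h^{\underline{p}+p_n}(x,z)$. Combining this with the viscosity-solution inequality $h^{\underline{p}+p_n}(x,z) \leq h^{\underline{p}}(x,y) + h^{p_n}(y,z)$ and taking the $\liminf$ in $n$ yields \eqref{RecurrentTriangIneg} at once. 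The equality $\underline{k} = h^{\underline{p}}$ then follows because the triangular inequality forces every chain sum in the definition of $\underline{k}$ to dominate $\underline{h}(x_0,x_N)$, giving $\underline{k} \geq \underline{h}$, while $\underline{k} \leq \underline{h}$ is \eqref{k>pFormula}.

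Your route through Corollary~\ref{kMaxMinCorollary} is valid and conceptually nice (it leverages the full representation theorem), but it is heavier machinery than needed. Your diagnosis of the obstacle is off: the paper does not concatenate curves at times $p_k + p_n + p_q$; it exploits recurrence of the \emph{solution} $h^{\underline{p}}(x,\cdot)$ rather than recurrence of points along curves, so the time $\underline{p} + p_n$ collapses back to $\underline{p}$ in the limit. That is the trick you missed.
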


\begin{proof}
	We choose $\underline{h}(x,y) = h^{\underline{p}}(x,y)$. Fix three $x$, $y$ in $\mathcal{M}_0$ and $z$ in $M$. The triangular inequality \eqref{TriangIneg} on the potential $h$ gives for all integer $n \geq 0$,
	\begin{equation*}
		h^{\underline{p} + p_n} (x,z) \leq h^{\underline{p}} (x,y) + h^{p_n}(y,z)
	\end{equation*}
	We know from Property \ref{hpRecurrence} of Proposition \ref{hpProp} that the viscosity solution $\underline{h}(x,\cdot) = h^{\underline{p}}(x,\cdot)$ is $\underline{p}$-recurrent. Thus, taking the liminf on $n$ in the preceding inequality yields
	\begin{align*}
		h^{\underline{p}}(x,z) = \lim_n h^{\underline{p} + p_n} (x,z) & \leq h^{\underline{p}} (x,y) + \liminf_n h^{p_n}(y,z) = h^{\underline{p}} (x,y) + h^{\underline{p}}(y,z)
	\end{align*}
	
	The triangular inequality results in $\underline{h} \leq \underline{k}$ on $\mathcal{M}_0^R \times M= \mathcal{M}_0 \times M$. And since the inverse inequality is given by \eqref{k>pFormula}, we deduce the equality $ h^{\underline{p}} = \underline{h} = \underline{k}$ on $\mathcal{M}_0 \times M$.
\end{proof}

Following this proposition, we can apply Theorem \ref{kRepresentationTheorem} to obtain

\begin{theo} \label{RecurrentRepresentationTheorem}
	In this case, the representation formula is given by the following bijection
	\begin{equation} \label{RecurrentRepresentationBijection}
		\begin{matrix}
			\Psi_{\underline{p}}: \dom(\underline{\mathbb{M}}, h^{\underline{p}}) & \longrightarrow & \Omega(\mathcal{T}) \\
			\psi & \longmapsto & \inf\limits_{y \in \underline{\mathbb{M}}} \{ \psi(y) + h^{\underline{p}}(y, \cdot ) \}
		\end{matrix}
	\end{equation}
	with its inverse being the restriction map
	\begin{equation} \label{RecurrentRepresentationBijection}
		\begin{matrix}
			\Phi_{\underline{p}}: \Omega(\mathcal{T}) & \longrightarrow &  \dom(\underline{\mathbb{M}}, h^{\underline{p}}) \\
			v & \longmapsto & v_{|\underline{\mathbb{M}}}
		\end{matrix}
	\end{equation}    
\end{theo}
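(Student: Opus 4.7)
The plan is to deduce this theorem as a direct consequence of the main representation Theorem \ref{kRepresentationTheorem} combined with Proposition \ref{RecurrenthpProp}, which identifies $\underline{k}$ and $h^{\underline{p}}$ on $\mathcal{M}_0 \times M$ under the recurrence hypothesis. The bulk of the work has already been done in proving \eqref{Recurrenthp=k}; once we have that identification, the bijection of Theorem \ref{kRepresentationTheorem} translates term-by-term into the one stated here.

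First, I would observe that since $\underline{\mathbb{M}} \subset \mathcal{M}_0$ and $\underline{k}$ coincides with $h^{\underline{p}}$ on $\mathcal{M}_0 \times M$ by Proposition \ref{RecurrenthpProp}, the two domination conditions agree on $\underline{\mathbb{M}}$, that is $\dom(\underline{\mathbb{M}}, \underline{k}) = \dom(\underline{\mathbb{M}}, h^{\underline{p}})$. In particular, the pseudometric $\underline{d}(x,y) = \underline{k}(x,y) + \underline{k}(y,x)$ restricted to $\mathcal{M}_0 \times \mathcal{M}_0$ agrees with $d^{\underline{p}}(x,y) = h^{\underline{p}}(x,y) + h^{\underline{p}}(y,x)$, so the generalized static classes $\underline{\mathbb{M}}$ can equivalently be described via $h^{\underline{p}}$. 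This guarantees that the infimum in \eqref{RecurrentRepresentationBijection} is well-defined and that both maps $\Psi_{\underline{p}}$ and $\Phi_{\underline{p}}$ land in the expected spaces.

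Next, for any $\psi \in \dom(\underline{\mathbb{M}}, h^{\underline{p}}) = \dom(\underline{\mathbb{M}}, \underline{k})$, I would substitute into the bijection from Theorem \ref{kRepresentationTheorem}: since the infimum in $\Psi_{\underline{k}}(\psi)(x) = \inf_{y \in \underline{\mathbb{M}}} \{\psi(y) + \underline{k}(y,x)\}$ is taken over $y \in \underline{\mathbb{M}} \subset \mathcal{M}_0$, each term $\underline{k}(y,x)$ equals $h^{\underline{p}}(y,x)$ by \eqref{Recurrenthp=k}, and therefore $\Psi_{\underline{k}}(\psi) = \Psi_{\underline{p}}(\psi)$ pointwise. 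Similarly the restriction map $\Phi_{\underline{k}}$ is literally the same map as $\Phi_{\underline{p}}$, since both send $v \in \Omega(\mathcal{T})$ to $v_{|\underline{\mathbb{M}}}$, and the fact that this restriction is $h^{\underline{p}}$-dominated on $\underline{\mathbb{M}}$ is again a consequence of the identification $\underline{k} = h^{\underline{p}}$ on $\mathcal{M}_0 \times \mathcal{M}_0$ together with the known $\underline{k}$-domination from Theorem \ref{kRepresentationTheorem}.

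Finally, since $\Psi_{\underline{k}} = \Psi_{\underline{p}}$ and $\Phi_{\underline{k}} = \Phi_{\underline{p}}$ as set-theoretic maps between the identified domains and codomains, the fact that $\Psi_{\underline{k}}$ and $\Phi_{\underline{k}}$ are mutually inverse bijections from Theorem \ref{kRepresentationTheorem} immediately yields the conclusion. I do not anticipate any real obstacle here — the substantive work was the identification $\underline{k} = h^{\underline{p}}$ in Proposition \ref{RecurrenthpProp}, which rested on the $\underline{p}$-recurrence property \eqref{hpRecurrenceFormula} of the barrier $\underline{h}(x,\cdot) = h^{\underline{p}}(x,\cdot)$ and the triangular inequality for the potential $h$. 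The theorem under proof is essentially a repackaging of Theorem \ref{kRepresentationTheorem} in the more concrete and computable terms of the $\underline{p}$-Peierls barrier, made possible by the recurrence hypothesis on $\tilde{\mathcal{M}}_0$.
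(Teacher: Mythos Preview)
Your proposal is correct and matches the paper's approach exactly: the paper simply states that Theorem \ref{RecurrentRepresentationTheorem} follows from Theorem \ref{kRepresentationTheorem} via the identification $\underline{k} = h^{\underline{p}}$ on $\mathcal{M}_0 \times M$ established in Proposition \ref{RecurrenthpProp}, which is precisely the deduction you carry out in detail.
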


\begin{rem}
	\begin{enumerate}
		\item Note that, even if every recurrent viscosity solutions $v \in \Omega(\mathcal{T})$ are expressed as the infimum of $\underline{p}$-recurrent ones, it does not imply that $v$ is itself $\underline{p}$-recurrent. This is due to the non-uniform $\underline{p}$-recurrent of the viscosity solutions $h^{\underline{p}}(x,\cdot)$ for $x \in \mathcal{M}_0$.
	
		Finding a sequence $\underline{q}$ such that $v$ is $\underline{q}$-recurrent remains highly non-trivial, even in this case. 
		
		\item It is possible to verify through the proof of Theorem \ref{kRepresentationTheorem} that this formula remains valid if we assume that for all $\tilde{x} \in \tilde{\mathcal{M}}_0$, either $\lim_n \phi_L^{+ p_n}(\tilde{x}) = \tilde{x}$ or $\lim_n \phi_L^{- p_n}(\tilde{x}) = \tilde{x}$. Positive or negative time recurrence does not matter for $h^{\underline{p}}$ to satisfy the triangular inequality. Hence, $d_{\underline{p}}(x,y) = h^{\underline{p}}(x,y) + h^{\underline{p}}(y,x)$ is a pseudometric on $\mathcal{M}_0$, making it possible to define its associated equivalence relation $\sim_{\underline{p}}$ and $\underline{p}$-static classes $\mathbb{M}_{\underline{p}}$. Since a uniqueness result on $\mathbb{M}_{\underline{p}}$ holds, the remainder of the proof follows analogously.
		
		\item This Representation Formula is still valid if we only had $\phi_L^1$-recurrence on a dense subset $\mathcal{M}'_0$ of $\mathcal{M}_0$. In this case, we would still have by continuity that $\underline{k} = h^{\underline{p}}$ and a uniqueness theorem on $\mathcal{M}'_0$.
		
		\item Following this last remark, this case includes the situation where the Mather set contains a dense set of periodic curves with integer periods. If $q_n$ is the sequence of these periods, we take $p_n = \prod_{k=0}^n q_k$.
	\end{enumerate}
\end{rem}

\subsubsection{The Representation Formula for Mather Sets with Uniformly $\underline{p}$-Recurrent Elements}

We assume that there exists an increasing sequence of positive integers $\underline{p}$ such that $\phi^{-p_n}_{L|\tilde{\mathcal{M}}_0}$ uniformly converges to the identity.

\begin{prop} \label{UniformlyRecurrenthp}
	In this case, the maps $h^{\underline{p}}(x, \cdot)$ are uniformly $\underline{p}$-recurrent. Their convergence speed is controlled by the convergence of $\phi^{-p_n}_{L|\tilde{\mathcal{M}}_0}$ to the identity. More precisely, if for any $y \in \mathcal{M}_0$ with lift $\tilde{y}$ in $\tilde{\mathcal{M}}_0$ we set the curve $y(t) = \pi \circ \phi_L^t(\tilde{y})$, then
	\begin{equation}
		\Vert \underline{h}^{p_n}(x,\cdot) - \underline{h}(x,\cdot) \Vert_\infty \leq 2\kappa_1.  \sup_{y \in \mathcal{M}_0} d(y(-p_n),y)
	\end{equation}
\end{prop}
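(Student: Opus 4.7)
The plan is to mirror the proof of Property \ref{hpRecurrence} of Proposition \ref{hpProp}, but with the crucial improvement that the sequence $\underline{p} = (p_n)$ is now \emph{common} to every point $x \in \mathcal{M}_0$. This allows us to define $\underline{h}(x,y) := h^{\underline{p}}(x,y)$ using this single sequence, and every Mather point is $-\underline{p}$-recurrent under $\phi_L$ by assumption. So the construction from Subsection \ref{Sectionhbar} applies coherently, and we just need to track that the resulting bound is uniform in $x$.

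First, I would establish the upper estimate $h^{\underline{p}+p_n}(x,y) \leq 2\kappa_1 \cdot d(x(-p_n),x) + h^{\underline{p}}(x,y)$: start from the triangular inequality $h^{p_n + p_k}(x,y) \leq h^{p_n}(x,x) + h^{p_k}(x,y)$; replace $h^{p_n}(x,x)$ by $h^{p_n}(x(-p_n),x)$ at the cost of a $\kappa_1 \cdot d(x(-p_n),x)$ error using Proposition \ref{Regularity}; and bound $h^{p_n}(x(-p_n),x)$ by the oscillation of any fixed weak-KAM solution $v$ along the calibrated curve $x(t)$ (Proposition \ref{CalibNW}), which gives $|h^{p_n}(x(-p_n),x)| \leq \kappa_1 \cdot d(x(-p_n),x)$. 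Taking the liminf on $k$ yields the desired bound.

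For the reverse estimate $h^{\underline{p}}(x,y) \leq 2\kappa_1 \cdot d(x(-p_n),x) + h^{\underline{p}+p_n}(x,y)$, I would fix $n$, pick a sequence $n_i$ realizing the liminf defining $h^{\underline{p}+p_n}(x,y)$, and, for $k$ large compared to $n + n_i$, apply the triangular inequality $h^{p_k}(x,y) \leq h^{p_k - p_{n_i} - p_n}(x,x) + h^{p_{n_i}+p_n}(x,y)$. The first term is again bounded, via the Lipschitz regularity and the calibration of $x(t)$, by $\kappa_1 \cdot [d(x(-p_k),x) + d(x(-p_{n_i}-p_n),x) + d(x(-p_{n_i}-p_n), x(-p_k))]$. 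Passing to a suitable diagonal limit $k = k_i \to \infty$ followed by $i \to \infty$ — exactly as in the proof of Property \ref{hpRecurrence} — eliminates the terms involving $p_k$ and $p_{n_i}$ using the $-\underline{p}$-recurrence of $\phi_L$ at $\tilde{x}$, and leaves the single contribution $d(x(-p_n),x)$.

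Combining both bounds gives $\Vert \underline{h}^{p_n}(x,\cdot) - \underline{h}(x,\cdot) \Vert_\infty \leq 2\kappa_1 \cdot d(x(-p_n),x)$ for every $x \in \mathcal{M}_0$. The uniformity claim is then immediate: taking the supremum on $x \in \mathcal{M}_0$ on the right-hand side yields the announced estimate, and the hypothesis that $\phi_L^{-p_n}$ converges uniformly to the identity on $\tilde{\mathcal{M}}_0$ guarantees $\sup_{y \in \mathcal{M}_0} d(y(-p_n),y) \to 0$. No genuinely new ingredient is needed beyond what appears in Proposition \ref{hpProp}; the only subtle point is making sure that the doubly-indexed limiting argument in the reverse estimate passes through cleanly when one tracks the $x$-dependence, which is where the uniform continuity of $\phi_L$ restricted to the compact set $\tilde{\mathcal{M}}_0$ does its work.
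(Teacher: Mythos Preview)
Your proposal is correct and follows the same underlying argument as the paper, but it is far more laborious than necessary. The paper's proof is a single line: since under the uniform recurrence hypothesis every $\tilde{x} \in \tilde{\mathcal{M}}_0$ is $-\underline{p}$-recurrent, one may take $p^x_n = p_n$ for all $x$ in the construction of $\underline{h}$, and then identity \eqref{hpRecurrenceFormula} from Proposition \ref{hpProp} gives immediately $\Vert \underline{h}^{p_n}(x,\cdot) - \underline{h}(x,\cdot) \Vert_\infty \leq 2\kappa_1 \cdot d(x(-p_n),x)$; taking the supremum over $x \in \mathcal{M}_0$ on the right-hand side yields the stated bound. You instead re-derive \eqref{hpRecurrenceFormula} from scratch, reproducing the triangular-inequality and calibration estimates already packaged into Property \ref{hpRecurrence}; this is valid but redundant, since that result is proved in full generality and can simply be invoked.
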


\begin{proof}
	This is a direct consequence of identity \eqref{hpRecurrenceFormula}.
\end{proof}

The representation formula results in the following corollary which implies Corollary \ref{UniformlyRecurrentCor}.

\begin{cor}
	All the elements $v$ of $\Omega(\mathcal{T})$ are $\underline{p}$-recurrent and
	\begin{equation}
		\Vert v(p_n,\cdot) - v \Vert_\infty \leq 2\kappa_1.  \sup_{y \in \mathcal{M}_0} d(y(-p_n),y)
	\end{equation}
\end{cor}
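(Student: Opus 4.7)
The plan is to combine the representation formula from Theorem \ref{kRepresentationTheorem} with the uniform control on the barrier provided by Proposition \ref{UniformlyRecurrenthp}, viewing the generalized Peierls barrier $\underline{k}$ as the relevant object in place of $\underline{h}$.

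First, I would observe that uniform convergence of $\phi_L^{-p_n}|_{\tilde{\mathcal{M}}_0}$ to the identity implies in particular pointwise convergence, so the hypothesis of Proposition \ref{RecurrenthpProp} is satisfied. Consequently, choosing the sequences $\underline{p}^x = \underline{p}$ for every $x \in \mathcal{M}_0$, I may identify $\underline{h} = h^{\underline{p}} = \underline{k}$ on $\mathcal{M}_0 \times M$. Then Proposition \ref{UniformlyRecurrenthp} translates verbatim into the uniform estimate
\begin{equation*}
	\sup_{x \in \mathcal{M}_0} \, \Vert \underline{k}^{p_n}(x, \cdot ) - \underline{k}(x, \cdot ) \Vert_\infty \;\leq\; 2\kappa_1 \sup_{y \in \mathcal{M}_0} d(y(-p_n), y) \;=:\; \varepsilon_n,
\end{equation*}
where $y(t) = \pi \circ \phi_L^t(\tilde{y})$ for the lift $\tilde{y} \in \tilde{\mathcal{M}}_0$ of $y$. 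By the uniform recurrence assumption, $\varepsilon_n \to 0$.

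Next, I fix $v \in \Omega(\mathcal{T})$ and invoke Theorem \ref{kRepresentationTheorem} to write
\begin{equation*}
	v(x) \;=\; \inf_{y \in \underline{\mathbb{M}}} \{ v(y) + \underline{k}(y, x) \}.
\end{equation*}
Applying $\mathcal{T}^{p_n}$ to both sides and using Proposition \ref{ViscosityInf} to commute the Lax-Oleinik operator with the infimum (together with the fact that $\underline{k}(\cdot, y, \cdot)$ is itself a viscosity solution by Property \ref{kVisc} of Proposition \ref{kProp}, and that $\mathcal{T}^{p_n}$ is equivariant under the addition of the constant $v(y)$), I obtain the parallel representation
\begin{equation*}
	v(p_n, x) \;=\; \mathcal{T}^{p_n} v(x) \;=\; \inf_{y \in \underline{\mathbb{M}}} \{ v(y) + \underline{k}^{p_n}(y, x) \}.
\end{equation*}

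Finally, using the elementary inequality $|\inf_y F(y,x) - \inf_y G(y,x)| \leq \sup_y |F(y,x) - G(y,x)|$ applied to the two expressions for $v$ and $v(p_n, \cdot)$, then the inclusion $\underline{\mathbb{M}} \subset \mathcal{M}_0$, and then the first-step estimate, I conclude
\begin{equation*}
	\Vert v(p_n, \cdot) - v \Vert_\infty \;\leq\; \sup_{y \in \underline{\mathbb{M}}} \Vert \underline{k}^{p_n}(y, \cdot) - \underline{k}(y, \cdot) \Vert_\infty \;\leq\; 2\kappa_1 \sup_{y \in \mathcal{M}_0} d(y(-p_n), y),
\end{equation*}
which is the desired bound and simultaneously forces $\lim_n \mathcal{T}^{p_n} v = v$. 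The crucial feature is that the right-hand side is independent of $v$, so the convergence is uniform on $\Omega(\mathcal{T})$, as claimed. There is essentially no hard step: the main conceptual content was absorbed into Theorem \ref{kRepresentationTheorem} and Proposition \ref{UniformlyRecurrenthp}; what remains is the bookkeeping of passing the uniform estimate on $\underline{k}(x, \cdot)$ through the infimum in the representation formula.
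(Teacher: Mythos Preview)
Your proof is correct and follows essentially the same route as the paper: represent $v$ via the barrier on $\underline{\mathbb{M}}$, push $\mathcal{T}^{p_n}$ through the infimum using Proposition \ref{ViscosityInf}, and then apply the uniform estimate from Proposition \ref{UniformlyRecurrenthp} inside the infimum. The only cosmetic difference is that you invoke the general Theorem \ref{kRepresentationTheorem} and then use Proposition \ref{RecurrenthpProp} to identify $\underline{k} = h^{\underline{p}}$, whereas the paper cites the already-specialized Theorem \ref{RecurrentRepresentationTheorem} directly; after that identification the two arguments are identical.
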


\begin{proof}
	Let $v$ be an element of $\Omega( \mathcal{T})$. By the Representation Theorem \ref{RecurrentRepresentationTheorem}, there exists a $\underline{p}$-dominated map $\psi \in \dom (\underline{\mathbb{M}}, h^{\underline{p}})$ such that $v = \Psi_{\underline{p}}(\psi)$. Then, Proposition \ref{ViscosityInf} shows that for all positive integer $n$
	\begin{align*}
		\mathcal{T}^nv(x) & = \inf_{y \in \underline{\mathbb{M}} } \; \big\{ \psi(y) + \mathcal{T}^n h^{\underline{p}}(y,\cdot)(x)  \}  \big\} \\
			&= \inf_{y \in \underline{\mathbb{M}} } \; \big\{ \psi(y) + h^{\underline{p}+n}(y,x)   \big\}
	\end{align*}
	Moreover, we infer from Proposition \ref{UniformlyRecurrenthp} that for all $x \in M$, $y \in \mathcal{M}_0$ and $n \geq 0$
	\begin{align*}
		\big| [\psi(y) + h^{\underline{p} + p_n}(y,x)] - [\psi(y) + h^{\underline{p}}(y,x)] \big|&=  | h^{\underline{p} + p_n}(y,x) -  h^{\underline{p}}(y,x) | \leq 2\kappa_1. \sup_{z \in \mathcal{M}_0} d(z(p_n),z)
	\end{align*}
	where the bound is uniform on $y \in \mathcal{M}_0$. Taking the infimum on $y$ yields
	\begin{align*}
		||v(p_n) - v ||_\infty \leq 2\kappa_1. \sup_{z \in \mathcal{M}_0} d(z(p_n),z) \longrightarrow 0 \quad \text{as } n \to \infty
	\end{align*}
	and $v$ is a $\underline{p}$-recurrent viscosity solution.
\end{proof}

\paragraph{Acknowledgement.} The author is extremely grateful to Marie-Claude Arnaud for her careful reading, and to Jaques Féjoz, Jean-Michel Roquejoffre and Maxime Zavidovique for very informative discussions.

\bibliographystyle{alpha}
\addcontentsline{toc}{section}{References}
\bibliography{Biblio}

\end{document}